\def\smallsection#1{\smallskip\noindent\textbf{#1}.}
\newtheorem{theo}{Theorem}
\newtheorem{prop}{Proposition}[section]
\newtheorem{defi}[prop]{Definition}
\newtheorem{lemm}[prop]{Lemma}
\numberwithin{equation}{section}
\DeclareMathOperator{\Res}{Res}
\DeclareMathOperator{\comp}{comp}
\DeclareMathOperator{\even}{even}
\let\Im=\Imag
\DeclareMathOperator{\loc}{loc}
\DeclareMathOperator{\Op}{Op}
\let\Re=\Real
\DeclareMathOperator{\supp}{supp}
\DeclareMathOperator{\Vol}{Vol}
\DeclareMathOperator{\WF}{WF}
\DeclareMathOperator{\Tr}{Tr}
\def\WFh{\WF_h}
\def\II{I_{\comp}(\Lambda^\circ)}
\title[Resonance projectors and asymptotics]%
{Resonance projectors and asymptotics\\
for $r$-normally hyperbolic trapped sets}
\author{Semyon Dyatlov}
\email{dyatlov@math.berkeley.edu}
\address{Department of Mathematics, University of California,
Berkeley, CA 94720, USA}
\begin{document}

\begin{abstract}
We prove a Weyl law for scattering resonances
in a strip near the real axis when the trapped set is $r$-normally hyperbolic with $r$ large
and a pinching condition on the normal expansion rates holds. Our
dynamical assumptions are stable under smooth perturbations and
are motivated by wave dynamics for black holes.  The key step is a construction
of a Fourier integral operator which microlocally projects onto the resonant states.
In addition to the Weyl law, this operator provides new information
about microlocal properties of resonant states.
\end{abstract}

\maketitle

\addtocounter{section}{1}
\addcontentsline{toc}{section}{1. Introduction}

For a Schr\"odinger operator $h^2\Delta_g+V(x)$, $V\in
C^\infty(X;\mathbb R)$, on a compact Riemannian manifold $(X,g)$ the
Weyl law (see for example~\cite[Theorem~10.1]{d-sj}) provides an
asymptotic for the number of eigenvalues (bound states) $\lambda_j(h)$ as $h\to 0$:
\begin{equation}
  \label{e:weyl-compact}
\#(\lambda_j(h)\in [\alpha_0,\alpha_1])=(2\pi h)^{-n}\big(\Vol_{\sigma}(p_V^{-1}([\alpha_0,\alpha_1]))+\mathcal O(h)\big).
\end{equation}
Here $n$ is the dimension of $X$, $p_V(x,\xi)=|\xi|_{g}^2+V(x)$ is the
(semiclassical) principal symbol of the Schr\"odinger operator,
defined on the cotangent bundle $T^*X$, and $\Vol_\sigma$ is the
symplectic volume on $T^*X$.

Scattering resonances are a natural generalization of bound states to
noncompact manifolds; they are the poles of the meromorphic
continuation of the resolvent to the lower half-plane $\{\Im\omega\leq
0\}\subset \mathbb C$, see~\eqref{e:r-v}
and~\S\S\ref{s:framework-schrodinger}, \ref{s:framework-ah}. However,
there are very few results giving Weyl asymptotics of resonances in
the style of~\eqref{e:weyl-compact}.  The first one is probably due to
Regge~\cite{regge}, with some of the following results
including~\cite{zw-u,sj-v,sj-z-bands,sj-new,f-t}~-- see the discussion
of related work below.

This paper provides a new Weyl asymptotic formula for resonances,
under the assumption that the trapped set is \emph{$r$-normally
hyperbolic} and expansion rates satisfy a pinching condition~-- see
Theorems~\ref{t:gaps} and~\ref{t:weyl-law}. These dynamical
assumptions are motivated by the study of black holes, see~\cite{k-s};
this continues the previous work of the author~\cite{skds,xpd,zeeman},
and the application to stationary perturbations of Kerr--de Sitter
black holes is given in~\cite{thesis}. See also~\cite{GSWW} and~\cite[Remark~1.1]{n-z-new} for
applications of normally hyperbolic trapping to molecular dynamics.
Since the imaginary part of a resonance can be interpreted as the
exponential decay rate of the corresponding linear wave, we study
\emph{long-living resonances}, that is those in strips of size $Ch$
around the real axis. More precisely, we establish an asymptotic
formula for the number of resonances in a band located between two
resonance free strips.

\smallsection{Setup}
To illustrate the results, we consider
semiclassical Schr\"odinger operators on $X=\mathbb R^n$,
studied in detail in~\S\ref{s:framework-schrodinger}:
\begin{equation}
  \label{e:P-V}
P_V:=h^2\Delta+V(x),\quad V\in C_0^\infty(\mathbb R^n;\mathbb R).
\end{equation}
Here $\Delta=-\sum_j \partial_{x_j}^2$ is the Euclidean Laplacian.
The results apply under the more general assumptions of~\S\S\ref{s:framework-assumptions}
and~\ref{s:dynamics}, in particular in the setting of even asymptotically hyperbolic manifolds~--
see~\S\ref{s:framework-ah} and Appendix~\ref{s:example}.
\emph{Resonances} are the poles of the meromorphic continuation of the resolvent
\begin{equation}
  \label{e:r-v}
R_V(\omega)=(P_V-\omega^2)^{-1}:L^2(\mathbb R^n)\to H^2(\mathbb R^n),\quad
\Im\omega>0,
\end{equation}
across the ray $(0,\infty)\subset\mathbb C$, as a family of operators $L^2_{\comp}(\mathbb R^n)\to H^2_{\loc}(\mathbb R^n)$.
For the proofs, it is convenient to consider a different operator with the same set of poles
\begin{equation}
  \label{e:r-cal-omega}
\mathcal R(\omega)=\mathcal P(\omega)^{-1}:\mathcal H_2\to\mathcal H_1,
\end{equation}
where $\mathcal H_1=H^2_h(\mathbb R^n)$ is a semiclassical Sobolev space,
$\mathcal H_2=L^2(\mathbb R^n)$, and $\mathcal P(\omega):\mathcal H_1\to\mathcal H_2$
is constructed from $P_V$ using the method of complex scaling (see~\S\ref{s:framework-schrodinger}).

To formulate dynamical assumptions, let $p_V(x,\xi)=|\xi|^2+V(x)$, fix energy intervals
$[\alpha_0,\alpha_1]\Subset [\beta_0,\beta_1]\subset (0,\infty)$,
put $p=\sqrt{p_V}$ on $p_V^{-1}([\beta_0^2,\beta_1^2])$
(see~\eqref{e:p} for the general case)
and define the \emph{incoming/outgoing tails} $\Gamma_\pm$ and the
\emph{trapped set} $K$ as
$$
\Gamma_\pm:=\{\rho\in p_V^{-1}([\beta_0^2,\beta_1^2])\mid
\exp(tH_p)(\rho)\not\to \infty\text{ as }t\to\mp\infty\},\quad
K:=\Gamma_+\cap\Gamma_-.
$$
Here $\exp(tH_{p})$ denotes the Hamiltonian flow of $p$.
We assume that (see~\S\ref{s:dynamics} for details)
$\Gamma_\pm$ are sufficiently smooth
codimension one submanifolds intersecting transversely at $K$, which is symplectic,
and the flow is \emph{$r$-normally hyperbolic} for large $r$ in the sense
that the minimal expansion rate $\nu_{\min}$ of the flow $\exp(tH_p)$ in the directions transverse to $K$ is much greater
than the maximal expansion rate $\mu_{\max}$ along $K$~-- see~\eqref{e:nu-min}, \eqref{e:mu-max}, \eqref{e:r-nh}.
These assumptions are stable under small smooth perturbations
of the symbol $p$, using the results of~\cite{HPS}~-- see~\S\ref{s:stability}.

\begin{figure}
\includegraphics{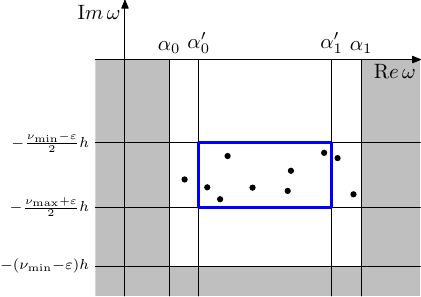}
\qquad
\includegraphics{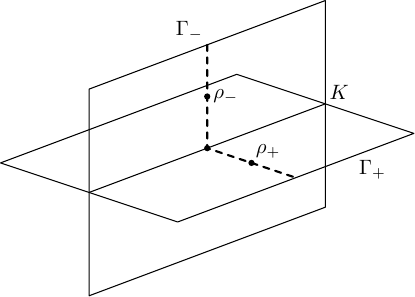}
\hbox to\hsize{\hss (a)\hss\hss (b)\hss}
\caption{(a) An illustration of Theorem~\ref{t:weyl-law}, with~\eqref{e:weyl-law}
counting resonances in the outlined box. The unshaded regions above
and below the box are the resonance-free regions of Theorem~\ref{t:gaps}.
(b) The canonical relation $\Lambda^\circ$, with the flow lines of $\mathcal V_\pm$ dashed.}
\label{f:Lambda}
\end{figure}

\smallsection{Distribution of resonances}
Let $\nu_{\max}$ be the maximal expansion
rate of the flow $\exp(tH_p)$ in the directions transverse to the trapped set,
see~\eqref{e:nu-max}. The following theorem provides a resonance free region
with a polynomial resolvent bound:
\begin{theo}
  \label{t:gaps}
Let the assumptions of~\S\S\ref{s:framework-assumptions} and~\ref{s:dynamics}
hold and fix $\varepsilon>0$. Then for
\begin{equation}
  \label{e:omega-gaps}
\Re\omega\in [\alpha_0,\alpha_1],\quad
\Im\omega\in [-(\nu_{\min}-\varepsilon)h,0]\setminus
\textstyle{1\over 2}(-(\nu_{\max}+\varepsilon)h,-(\nu_{\min}-\varepsilon)h),
\end{equation}
$\omega$ is not a resonance and we have the bound%
\footnote{The estimate~\eqref{e:gaps} implies, in the case~\eqref{e:P-V}, cutoff resolvent
bounds $\|\chi R_V(\omega) \chi\|_{L^2\to H^2_h}=\mathcal O(h^{-2})$
for any fixed $\chi\in C_0^\infty(\mathbb R^n)$. This explicit bound improves slightly
the bounds on the decay of correlations in~\cite[Theorem~1]{n-z-new}.}
\begin{equation}
  \label{e:gaps}
\|\mathcal R(\omega)\|_{\mathcal H_2\to\mathcal H_1}\leq Ch^{-2}.
\end{equation}
\end{theo}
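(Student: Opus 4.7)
The plan is to establish the bound \eqref{e:gaps} by separate microlocal arguments on the two components of the resonance-free region, both reducing to the hyperbolic dynamics near the trapped set $K$.

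Outside any fixed neighborhood of $K$ the flow is non-trapping on the relevant energy shells, so standard propagation of singularities combined with the absorbing behavior of the complex-scaled operator $\mathcal{P}(\omega)$ yields
\begin{equation*}
\|u\|_{\mathcal{H}_1}\leq Ch^{-1}\|\mathcal{P}(\omega)u\|_{\mathcal{H}_2}+C\|Au\|_{\mathcal{H}_1}
\end{equation*}
for any pseudodifferential cutoff $A$ microlocalized to a small neighborhood of $K$. It remains to control $\|Au\|_{\mathcal{H}_1}$ in each of the two strips.

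For the upper strip $\Im\omega\in(-\tfrac12(\nu_{\min}-\varepsilon)h,0]$ I would invoke the positive commutator argument of Nonnenmacher--Zworski: construct a logarithmic escape function $G_+$ from the defining functions $\varphi_\pm$ of $\Gamma_\pm$, vanishing on $K$ and satisfying $H_p G_+\geq\nu_{\min}-\varepsilon/2$ on a deleted neighborhood of $K$. Conjugating $\mathcal{P}(\omega)$ by $\exp(tG_+^w/h)$ with $t>0$ small produces an effective imaginary symbol bounded below by $t(\nu_{\min}-\varepsilon/2)+2\omega_0\Im\omega/h+o(1)$, which is strictly positive throughout the strip; sharp G\aa rding then gives $\|Au\|\leq Ch^{-2}\|\mathcal{P}(\omega)u\|$, with the extra factor of $h^{-1}$ beyond the standard non-trapping $\mathcal{O}(h^{-1})$ bound coming from the logarithmic size of $G_+^w$. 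For the lower strip $\Im\omega\in[-(\nu_{\min}-\varepsilon)h,-\tfrac12(\nu_{\max}+\varepsilon)h]$ the argument is dual: using $r$-normal hyperbolicity one brings $\mathcal{P}(\omega)$ into an approximate microlocal normal form near $K$, conjugating it to a model operator whose resonances lie in well-separated bands at $\Im\omega\simeq -(k+\tfrac12)\nu h$ with $\nu\in[\nu_{\min},\nu_{\max}]$. The lower strip sits in the gap between the $k=0$ and $k=1$ bands, which is open precisely under the pinching assumption; in this gap the model is elliptically invertible with the same $h^{-2}$ loss, and the errors of the normal-form reduction are absorbed because $r$ is large.

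The principal obstacle is the lower gap: the normal-form reduction demands high regularity of $\Gamma_\pm$, and the remainders must be shown to fit inside the narrow strip of width bounded below by $\nu_{\min}-\tfrac12\nu_{\max}>0$. This is where the quantitative $r$-normal hyperbolicity enters, controlling the order to which the Taylor expansion of $p$ at $K$ in the transverse directions can be exploited inside the pseudodifferential (or more likely, FIO) calculus, and ultimately sets the size of $r$ required for the theorem.
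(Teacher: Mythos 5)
Your reduction away from the trapped set and your treatment of the first strip $\{\Im\omega>-{1\over2}(\nu_{\min}-\varepsilon)h\}$ are reasonable: the escape-function/conjugation argument of Nonnenmacher--Zworski does give that gap (the paper says as much in the discussion of related work), although you would still need to track why the loss is exactly $h^{-2}$ rather than $h^{-N}$ from the logarithmic weight. The genuine gap is in the second strip $\Im\omega\in[-(\nu_{\min}-\varepsilon)h,-{1\over2}(\nu_{\max}+\varepsilon)h]$. You propose to conjugate $\mathcal P(\omega)$ to a microlocal normal form near $K$ whose ``resonances lie in well-separated bands'' and then invert elliptically in the gap between the $k=0$ and $k=1$ bands. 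No such normal form exists in the generality considered here: the transverse expansion rate is not a single number $\nu$ but varies over $K$ between $\nu_{\min}$ and $\nu_{\max}$ (which is exactly why the band is an interval of width ${1\over2}(\nu_{\max}-\nu_{\min})h$ rather than a line), the dynamics on $K$ itself is arbitrary, and a full conjugation to a model would require analyticity, complete integrability, or non-resonance conditions that are not assumed. Indeed the paper points out (\S\ref{s:improvements}) that even the weaker task of constructing projectors onto the \emph{higher} bands already fails by these methods, precisely because one cannot conjugate everything to the model case simultaneously. Moreover the model transverse operator is of the form $x_nhD_{x_n}-ih/2$, which is hyperbolic, not elliptic, in the relevant region, so ``elliptic invertibility in the gap'' is not the right mechanism.

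What replaces the normal form in the paper is the single Fourier integral projector $\Pi$ with $\Pi^2=\Pi+\mathcal O(h^\infty)$, $[P,\Pi]=\mathcal O(h^\infty)$, and the splitting $u=\Pi u+(1-\Pi)u$. On the kernel of $\Pi$ one writes $1-\Pi=\Theta_-\Xi$ with $\Pi\Theta_-=\mathcal O(h^\infty)$ and runs a positive commutator argument on $v=\Xi u$, where commuting $\Theta_-$ through $P-\omega$ shifts the spectral parameter up by $ihZ_-$ with $\sigma(Z_-)=c_->\nu_{\min}-\varepsilon$; this gives control for all $\Im\omega\geq-(\nu_{\min}-\varepsilon)h$ (Proposition~\ref{l:estimate-kernel}). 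On the image of $\Pi$ one uses the transport equation $\Theta_+\Pi u=\mathcal O(h^\infty)$ to reduce quadratic forms to integrals over the flow lines of $\mathcal V_+$, after which the commutator symbol $(\nu+{1\over2}c_+\circ\pi_+)\chi_+$ has a definite sign exactly when $\nu\leq-{1\over2}(\nu_{\max}+\varepsilon)$ or $\nu\geq-{1\over2}(\nu_{\min}-\varepsilon)$ (Proposition~\ref{l:estimate-image}). The intersection of the two regimes is the stated resonance-free region. Your proposal contains no analogue of this splitting, and without it (or the operators $\Theta_\pm$ solving~\eqref{e:intro-ideals}) there is no single mechanism that controls all of phase space near $K$ in the second strip. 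Finally, $r$-normal hyperbolicity enters not through Taylor expansion of $p$ transverse to $K$ but through the regularity of solutions to the transport equations $H_pa=f$, $a|_K=0$ (Lemma~\ref{l:ode}) needed to build $\Pi$ to all orders in $h$.
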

In particular, we get a resonance free strip $\{\Im\omega>-{\nu_{\min}-\varepsilon\over 2}h\}$,
recovering in our situation the results of~\cite{ge-sj,w-z,n-z-new}.

Under the pinching condition
\begin{equation}
  \label{e:pinching}
\nu_{\max}<2\nu_{\min},
\end{equation}
we get a \emph{second} resonance free strip $\{\Im\omega\in [-(\nu_{\min}-\varepsilon)h,-(\nu_{\max}+\varepsilon)h/2]\}$.
We can then count the resonances in the band between the two strips, see Figure~\ref{f:Lambda}(a):
\begin{theo}
  \label{t:weyl-law}
Let the assumptions of~\S\S\ref{s:framework-assumptions} and~\ref{s:dynamics}
and the condition~\eqref{e:pinching} hold.
Fix $\varepsilon>0$ such that $\nu_{\max}+\varepsilon<2(\nu_{\min}-\varepsilon)$.
Then, with $\Res$ denoting the set of resonances counted with multiplicities (see~\eqref{e:multiplicity}),
\begin{equation}
  \label{e:weyl-law}
\begin{gathered}
\#\big(\Res\cap \{\Re \omega\in [\alpha'_0,\alpha'_1],\ \Im\omega\in{\textstyle{1\over 2}}[-(\nu_{\max}+\varepsilon)h,-(\nu_{\min}-\varepsilon)h]\}\big)\\
=(2\pi h)^{1-n}(\Vol_\sigma(K\cap p^{-1}([\alpha'_0,\alpha'_1]))+o(1)),
\end{gathered}\end{equation}
as $h\to 0$, for every $[\alpha'_0,\alpha'_1]\subset(\alpha_0,\alpha_1)$
such that
$p^{-1}(\alpha'_j)\cap K$ has zero measure in $K$.
Here $\Vol_\sigma$ denotes the symplectic volume on $K$, defined by
$d\Vol_\sigma=\sigma_S^{n-1}/(n-1)!$. 
\end{theo}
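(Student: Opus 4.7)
The strategy is to express the left-hand side of \eqref{e:weyl-law} as the rank of an approximate spectral projector $\Pi$ onto resonant states in the band, and then to analyze $\Pi$ as a Fourier integral operator whose trace can be computed from its principal symbol on the trapped set.

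\textbf{Step 1: Spectral projector via contour integral.} Under the pinching condition \eqref{e:pinching} and for $\varepsilon$ small enough, Theorem~\ref{t:gaps} guarantees a closed contour $\gamma\subset\mathbb C$ that encircles the band
$$\{\Re\omega\in[\alpha'_0,\alpha'_1],\ \Im\omega\in\tfrac12[-(\nu_{\max}+\varepsilon)h,-(\nu_{\min}-\varepsilon)h]\}$$
and lies entirely inside the upper and lower resonance-free strips where $\|\mathcal R(\omega)\|_{\mathcal H_2\to\mathcal H_1}=\mathcal O(h^{-2})$; the vertical sides at $\Re\omega=\alpha'_j$ can be handled by the zero-measure assumption $p^{-1}(\alpha'_j)\cap K=0$ together with a propagation-and-gluing argument near the edges. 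I would define
$$\Pi:=\frac{1}{2\pi i}\oint_\gamma \mathcal R(\omega)\,\partial_\omega \mathcal P(\omega)\,d\omega,$$
which by the standard meromorphic Fredholm theory is a finite-rank projector on $\mathcal H_1$ whose rank equals the resonance count in \eqref{e:weyl-law} (including multiplicity as in \eqref{e:multiplicity}).

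\textbf{Step 2: Fourier integral operator structure.} The next task is to identify $\Pi$ microlocally with an $h$-Fourier integral operator associated to the canonical relation $\Lambda^\circ\subset T^*X\times T^*X$ pictured in Figure~\ref{f:Lambda}(b): the graph of the map obtained by flowing a point $\rho\in\Gamma_-$ along the leaf of $\mathcal V_-$ to $K$ and then along $\mathcal V_+$ out into $\Gamma_+$. Outside any neighborhood of $K$, propagation of singularities together with the resolvent bound on $\gamma$ forces $\Pi$ to be negligible, so the analysis concentrates at $K$. Using the smooth foliations of $\Gamma_\pm$ provided by $r$-normal hyperbolicity (with $r$ large), I would construct a model FIO $\Pi_{\operatorname{mod}}$ on $\Lambda^\circ$ satisfying $\Pi_{\operatorname{mod}}^2=\Pi_{\operatorname{mod}}+\mathcal O(h^\infty)$ and $[\mathcal P(\omega),\Pi_{\operatorname{mod}}]=\mathcal O(h^\infty)$ microlocally near $K$ for $\omega\in\gamma$, and then show $\Pi=\Pi_{\operatorname{mod}}+\mathcal O(h^\infty)$ in the relevant operator norm by comparing contour integrals.

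\textbf{Step 3: Trace and symbol computation.} The trace of an FIO associated to $\Lambda^\circ$ localizes on its fixed-point set, which by transversal intersection of $\Gamma_+$ and $\Gamma_-$ at $K$ is precisely the diagonal of $K$, a symplectic submanifold of dimension $2(n-1)$. A stationary phase computation over the normal directions (where the Hessian is controlled by the transverse expansion rates $\nu_{\min},\nu_{\max}$) gives
$$\Tr\Pi\,=\,(2\pi h)^{1-n}\!\!\int_{K\cap p^{-1}([\alpha'_0,\alpha'_1])}\!\sigma_\Pi\,d\Vol_\sigma+o(h^{1-n}),$$
and a direct normalization argument, coming from the requirement $\Pi^2=\Pi+\mathcal O(h^\infty)$, yields $\sigma_\Pi\equiv 1$ on $K$. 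This produces \eqref{e:weyl-law}.

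\textbf{Main obstacle.} The technical heart is Step~2: proving that $\Pi$ is a genuine FIO associated to $\Lambda^\circ$, with the stated symbol, and controlling the remainder in a strong enough norm for the trace computation of Step~3 to succeed. The pinching condition \eqref{e:pinching} is precisely what is needed here: the gap $(\nu_{\min}-\varepsilon)/2<(\nu_{\min}-\varepsilon)$ separates the band from the rest of the resonance set, while $\nu_{\max}<2\nu_{\min}$ prevents the transport equations along $\mathcal V_\pm$ used to construct $\sigma_\Pi$ from developing divergent symbol growth, and the $r$-normal hyperbolicity with $r$ large provides enough regularity of $\Gamma_\pm$ for iterated parametrix constructions to close up.
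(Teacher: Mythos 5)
Your overall architecture (contour integral of the resolvent, identification with a Fourier integral operator on $\Lambda^\circ$, trace via the symbol on the diagonal of $K$) is the right heuristic, and your Step~3 matches the paper's Proposition~\ref{l:trace-basic}. But there is a genuine gap at the heart of Step~2, which is exactly the point where the paper's actual argument lives.

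You assert that the Riesz-type operator $\frac{1}{2\pi i}\oint_\gamma\mathcal R(\omega)\partial_\omega\mathcal P(\omega)\,d\omega$ coincides, modulo $\mathcal O(h^\infty)$ in a trace-relevant norm, with an FIO $\Pi_{\operatorname{mod}}$ associated to $\Lambda^\circ$, "by comparing contour integrals." No mechanism is offered for evaluating that contour integral: on the horizontal segments the only available information is the $\mathcal O(h^{-2})$ bound of Theorem~\ref{t:gaps}, which after integrating over a segment of length $\sim 1$ gives nothing close to $\mathcal O(h^\infty)$, and the commutation/idempotency properties of $\Pi_{\operatorname{mod}}$ do not by themselves pin down the contour integral. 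The paper's substitute for this step is the pair of results in \S\S\ref{s:estimate-spectral}--\ref{s:trace}: microlocalization of $\Pi\mathcal R(\omega)$ in the spectral parameter (Proposition~\ref{l:res-mic}) shows that on the image of $\Pi$ the resolvent is approximated by $\frac{i}{h}\int e^{is(P-\omega)/h}\psi(s)\,ds$ taken over \emph{negative} times on the upper horizontal line and over \emph{positive} times on the lower one (Proposition~\ref{l:trace-approx}); the difference of the two horizontal integrals is then an explicit FIO whose symbol on $j_K(K)$ is $2\pi i\chi(p)$ (Proposition~\ref{l:trace-kiwi}). This top/bottom asymmetry is the actual source of the nonzero count, and your symmetric "closed contour encircling the band" formulation hides it entirely. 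Relatedly, the vertical sides of $\gamma$ pass through the band where no resolvent bound holds; the paper avoids them with a cutoff $\tilde\chi$ supported away from $\Re\omega=\alpha'_j$, combined with a Grushin problem reducing the count to zeroes of a determinant $F(\omega)$, exponential bounds on $F$, Jensen's inequality for an a priori bound $M(h)=\mathcal O(h^{-N})$, and Titchmarsh's Lemma~$\alpha$ — none of which your "propagation-and-gluing argument near the edges" replaces.

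Two smaller corrections: the operator you define in Step~1 need not be a projector for a nonlinear pencil $\mathcal P(\omega)$ (its \emph{trace} is the multiplicity count by Gohberg--Sigal, which suffices, but "rank of a projector" is not the right invariant); and the pinching condition $\nu_{\max}<2\nu_{\min}$ has nothing to do with the transport equations — its only role is to open the second resonance-free strip so the lower horizontal contour can be placed where $\mathcal R(\omega)$ is polynomially bounded. Solvability and regularity of the transport equations come from $r$-normal hyperbolicity, $\nu_{\min}>r\mu_{\max}$.
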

A band structure similar to the one exhibited in Theorems~\ref{t:gaps} and~\ref{t:weyl-law},
with Weyl laws in each band, has been obtained in~\cite{f-t} for a related setting
of Anosov diffeomorphisms, see the discussion below.

\smallsection{The resonance projector}
The key tool in proving Theorems~\ref{t:gaps} and~\ref{t:weyl-law} is a microlocal projector $\Pi$
corresponding to resonances in the band~\eqref{e:weyl-law}.
We construct it as a
\emph{Fourier integral operator} (see~\S\ref{s:prelim-fio}),
associated to the canonical relation $\Lambda^\circ\subset T^*X\times T^*X$
defined as follows.
Let $\mathcal V_\pm\subset T\Gamma_\pm$ be the symplectic complements of $T\Gamma_\pm$
in $T_{\Gamma_\pm}(T^*X)$. For some neighborhoods $\Gamma_\pm^\circ, K^\circ$
of $K\cap p^{-1}([\alpha_0,\alpha_1])$ in $\Gamma_\pm,K$, respectively,
we can define the projections
$\pi_\pm:\Gamma_\pm^\circ\to K^\circ$
along the flow lines of $\mathcal V_\pm$~-- see~\S\ref{s:projections}.
We define (see also~\cite{bfrz})
\begin{equation}
  \label{e:intro-Lambda}
\Lambda^\circ:=\{(\rho_-,\rho_+)\in \Gamma_-^\circ\times\Gamma_+^\circ\mid
\pi_-(\rho_-)=\pi_+(\rho_+)\}.
\end{equation}
Then $\Lambda^\circ$ is a canonical relation, see~\S\ref{s:projections}; it is
pictured on Figure~\ref{f:Lambda}(b).

We now construct an operator $\Pi$ with the following properties
(see Theorem~\ref{t:our-Pi} in~\S\ref{s:construction-1} for details,
including a uniqueness statement):
\begin{enumerate}
\item\label{c:1} $\Pi$ is a compactly supported Fourier integral operator associated to $\Lambda^\circ$;
\item\label{c:2} $\Pi^2=\Pi+\mathcal O(h^\infty)$ microlocally near $K\cap p^{-1}([\alpha_0,\alpha_1])$;
\item\label{c:3} $[P,\Pi]=\mathcal O(h^\infty)$ microlocally near $K\cap p^{-1}([\alpha_0,\alpha_1])$.
\end{enumerate}
Here $P$ is a pseudodifferential operator equal to $\sqrt{P_V}$ microlocally
in $p_V^{-1}([\beta_0^2,\beta_1^2])$ (see~Lemma~\ref{l:resolution} for the general case).
Conditions~\eqref{c:2} and~\eqref{c:3} mimic idempotency and commutation properties of
spectral projectors of self-adjoint operators.

The operator $\Pi$ is constructed iteratively, solving a degenerate transport equation on
each step, with regularity of resulting functions
guaranteed by $r$-normal hyperbolicity. The obtained operator
provides a rich microlocal structure, which makes it possible
to locally relate our situation to the Taylor expansion, ultimately
proving Theorems~\ref{t:gaps} and~\ref{t:weyl-law}.
See~\S\ref{s:ideas} for a more detailed
explanation of the ideas behind the proofs.

\smallsection{Related work}
A particular consequence of Theorem~\ref{t:gaps} is a resonance free strip $\{\Im\omega>-{\nu_{\min}-\varepsilon\over 2}h\}$.
For \emph{normally hyperbolic} trapped sets, such strips (also called spectral gaps) have been obtained by
G\'erard--Sj\"ostrand~\cite{g-sj-gap}
for operators with analytic coefficients and possibly non-smooth $\Gamma_\pm$;
Wunsch--Zworski~\cite{w-z} for sufficiently smooth $\Gamma_\pm$, without specifying the size of the gap;
and Dolgopyat~\cite{dolgopyat}, Liverani~\cite{liverani}, and Tsujii~\cite{tsujii} for contact Anosov flows.
The recent preprint of Nonnenmacher and Zworski~\cite{n-z-new} gives a gap of optimal size for
a variety of normally hyperbolic trapped sets with very weak assumptions on the regularity of $\Gamma_\pm$;
in our special case, the gap of~\cite{n-z-new} coincides with the one given by Theorem~\ref{t:gaps}.
For a related, yet quite different, case of \emph{hyperbolic} trapped sets
(where the flow is hyperbolic in all directions, but no assumptions are made on the regularity of $\Gamma_\pm$ and $K$),
such gaps are known under a pressure condition, see~\cite{n-z-acta} and the references given there.

Upper bounds for the number of resonances in strips near the real axis have been proved in different situations,
both for normally hyperbolic and for hyperbolic trapping,
by Sj\"ostrand~\cite{sj90}, Guillop\'e--Lin--Zworski~\cite{GLZ}, Sj\"ostrand--Zworski~\cite{sj-z},
Non\-nenmacher--Sj\"ostrand--Zworski~\cite{n-sj-z,n-sj-z2}, Faure--Sj\"ostrand~\cite{f-sj},
Datchev--Dyatlov~\cite{fwl}, and Datchev--Dyatlov--Zworski~\cite{ddz}; see~\cite{n-sj-z2} or~\cite{fwl}
for a more detailed overview. The optimal known bounds follow the~\emph{fractal Weyl law},
\begin{equation}
  \label{e:fwl}
\#(\Res\cap \{\Re\omega\in[\alpha_0,\alpha_1],\ |\Im\omega|\leq C_0h\})\leq Ch^{-1-\delta}.
\end{equation}
Here $C_0$ is any fixed number and
$2\delta+2$ is bigger than the upper Minkowski dimension of the trapped set $K$
(inside $T^*X$), or equal to it if $K$ is of pure dimension. In our case,
$\dim K=2n-2$, therefore the Weyl law~\eqref{e:weyl-law} saturates the bound~\eqref{e:fwl}.

Much less is known about lower bounds for hyperbolic or normally hyperbolic trapped sets~--
some special completely integrable cases were studied in particular by G\'erard--Sj\"ostrand~\cite{ge-sj},
S\'a Barreto--Zworski~\cite{sb-z}, and the author~\cite{zeeman},
a lower bound with a smaller power of $h^{-1}$ than~\eqref{e:fwl} for certain
hyperbolic surfaces was proved by Jakobson--Naud~\cite{j-n},
and Weyl laws have been established in some situations in~\cite{sj-z-bands,sj-v,f-t,f-t2,f-t3}~-- see below.
It has been conjectured~\cite[Definition~6.1]{non} that for $C_0$ large enough, a lower
bound matching~\eqref{e:fwl} holds, but no such bound for non-integer $\delta$ has been proved so far.

There also exists a Weyl asymptotic for surfaces with cusps, see M\"uller~\cite{muller}; in this case, the infinite ends
of the manifold are
so narrow that almost all trajectories are trapped, and the Weyl law in strips coincides
with the Weyl law in disks, with a power $h^{-n}$. Other Weyl asymptotics in large regions in the complex
plane have been obtained
by Zworski~\cite{zw-u} for one-dimensional potential scattering and by Sj\"ostrand~\cite{sj-new}
for Schr\"odinger operators with randomly perturbed potentials.

Finally, some situations where resonances form several bands of different depth
were studied in~\cite{sj-z-bands,st-v,sj-v,f-t,f-t2,f-t3}.
Sj\"ostrand--Zworski~\cite{sj-z-bands} showed existence of cubic bands of resonances for strictly convex obstacles,
under a pinching condition on the curvature, with a Weyl law in each band.
Stefanov--Vodev~\cite{st-v} studied the elasticity problem outside of a convex obstacle with Neumann
boundary condition and showed existence of resonances $\mathcal O((\Re\omega)^{-\infty})$ close
to the real line and a gap below this set of resonances; a Weyl law for resonances close to the
real line was proved by Sj\"ostrand--Vodev~\cite{sj-v}.
A case bearing some similarities to the one considered here,
namely contact Anosov diffeomorphisms, has been studied by Faure--Tsujii~\cite{f-t};
their work~\cite{f-t2,f-t3} handles contact Anosov flows~-- the latter can be put
in the framework of~\S\ref{s:framework-assumptions} using the work of Faure--Sj\"ostrand~\cite{f-sj}.

The results of~\cite{f-t,f-t2,f-t3} for the dynamical setting include, under a pinching condition, the band structure of resonances
(with the first band analogous to the one in Theorem~\ref{t:weyl-law}) and Weyl asymptotics
in each band; the trapped set has to be normally hyperbolic, symplectic, and smooth, however the
manifolds $\Gamma_\pm$ need only have H\"older regularity, and no assumption of $r$-normal hyperbolicity is made.
These considerably weaker assumptions on regularity are crucial for Anosov flows and maps, as
one cannot even expect $\Gamma_\pm$ to be $C^2$ in most cases. The lower
regularity is in part handled by conjugating $\mathcal P(\omega)$ by the exponential of
an escape function, similar to the one in~\cite[Lemma~4.2]{ddz}~-- this reduces the analysis
to an $\mathcal O(h^{1/2})$ sized neighborhood of the trapped set. It then suffices to construct
only the principal part of the projector $\Pi$ to first order on the trapped set; such projector
is uniquely defined locally on $K$ (by putting the principal symbol to be equal to 1 on $K$),
without the need for the global construction of~\S\ref{s:construction-1} or the transport
equation~\eqref{e:intro-transport}. The present paper however was motivated by resonance expansions
on perturbations of slowly rotating black holes,
where the more restrictive $r$-normal hyperbolicity assumption is satisfied and it is important
to have an operator $\Pi$ defined to all orders in $h$ and away, as well as on, the trapped set.
Another advantage of such a global operator is the study of resonant states, see~\S\ref{s:resonant-states}.

\section{Outline of the paper}
  \label{s:outline}

In this section, we explain informally the ideas behind the construction
of the projector $\Pi$ and the proofs of Theorems~\ref{t:gaps}
and~\ref{t:weyl-law}, list some directions in which the results
could possibly be improved, and describe the structure of the paper.

\subsection{Ideas of the proofs and concentration of resonant states}
  \label{s:ideas}\quad

\smallsection{Construction of $\Pi$}
An important tool is the model case (see~\S\ref{s:model})
\begin{equation}
  \label{e:intro-model}
X=\mathbb R^n,\quad
\Gamma_-^0=\{x_n=0\},\quad
\Gamma_+^0=\{\xi_n=0\},\quad
\Pi^0 f(x',x_n)=f(x',0).
\end{equation}
Any operator satisfying properties~\eqref{c:1} and~\eqref{c:2} of $\Pi$ listed in
the introduction can be microlocally
conjugated to $\Pi^0$
(see Proposition~\ref{l:reduce-model} and part~2 of Proposition~\ref{l:idempotents}).
However, there is no canonical way of doing this, and to construct $\Pi$ globally,
we need to use property~\eqref{c:3}, which eventually reduces to solving the transport equation
on $\Gamma_\pm$
\begin{equation}
  \label{e:intro-transport}
H_p a=f,\quad
a|_K=0,
\end{equation}
where $f$ is a given smooth function on $\Gamma_\pm$ with $f|_K=0$. The solution
to~\eqref{e:intro-transport} exists and is unique for any normally hyperbolic
trapped set, by representing $a(\rho)$ as an exponentially converging integral of
$f$ over the forward ($\Gamma_-$) or backward ($\Gamma_+$) flow line of $H_p$ starting at $\rho$.
However, to know that $a$ lies in $C^r$ we need $r$-normal hyperbolicity (see Lemma~\ref{l:ode}).
This explains why $r$-normal hyperbolicity, and not just normal hyperbolicity, is needed
to construct the operator $\Pi$.

\smallsection{Proof of Theorem~\ref{t:gaps}}
The proof in~\S\ref{s:resolvent-bounds}
is based on positive commutator arguments, with additional microlocal structure
coming from the projector $\Pi$ and the annihilating operators $\Theta_\pm$ discussed below.
However, here we present a more intuitive (but harder to make rigorous)
argument based on propagation by
$$
U(t)=e^{-itP/h},
$$
which is a Fourier integral operator quantizing the Hamiltonian flow $e^{tH_p}$
(see Proposition~\ref{l:schrodinger}).
Note that we use not the original operator $\mathcal P(\omega)$, but the operator
$P$ constructed in Lemma~\ref{l:resolution}, equal to $\sqrt{P_V}$ for the case~\eqref{e:P-V};
this means that $U(t)$ is the wave, rather than the Schr\"odinger, propagator.
We will only care about the behavior of $U(t)$ near the trapped set; for this purpose, we
introduce a pseudodifferential cutoff $\mathcal X$ microlocalized in a neighborhood of $K$.
For a family of functions $f=f(h)$
whose semiclassical wavefront set
(as discussed in~\S\ref{s:prelim-basics}) is contained in a small neighborhood of $K\cap p^{-1}([\alpha_0,\alpha_1])$,
Theorem~\ref{t:gaps} follows from the following two estimates
(a rigorous analog of~\eqref{e:ee-kernel} is Proposition~\ref{l:estimate-kernel},
and of~\eqref{e:ee-image}, Proposition~\ref{l:estimate-image}): for $t>0$,
\begin{gather}
  \label{e:ee-kernel}
\|\mathcal X U(t)(1-\Pi)f\|_{L^2}\leq (Ch^{-1}e^{-(\nu_{\min}-\varepsilon/2)t}+\mathcal O(h^\infty))\|f\|_{L^2},\\
  \label{e:ee-image}
\begin{aligned}
C^{-1}e^{-{(\nu_{\max}+\varepsilon/2)t\over 2}}\|\mathcal X\Pi f\|_{L^2}-\mathcal O(h^\infty)\|f\|_{L^2}
\leq \|\mathcal X U(t) \Pi f\|_{L^2}
\\\leq Ce^{-{(\nu_{\min}-\varepsilon/2)t\over 2}}\|\mathcal X\Pi f\|_{L^2}+\mathcal O(h^\infty)\|f\|_{L^2}.
\end{aligned}
\end{gather}
The estimates~\eqref{e:ee-kernel} and~\eqref{e:ee-image} are of independent value, as they
give information about the long time behavior of solutions to the wave equation, resembling
resonance expansions of linear waves; an application to black holes
is given in~\cite{thesis}. Note however that these estimates are nontrivial
only when $t=\mathcal O(\log(1/h))$, because of the $\mathcal O(h^\infty)$ error term.

The resonance free region~\eqref{e:omega-gaps} of Theorem~\ref{t:gaps}
is derived from here as follows. Assume that $\omega$ is a resonance in~\eqref{e:omega-gaps}.
Then there exists a \emph{resonant state}, namely a function $u\in\mathcal H_1$ such that $\mathcal P(\omega)u=0$
and $\|u\|_{\mathcal H_1}\sim 1$.
We formally have $U(t)u=e^{-it\omega/h}u$. Also, $u$ is microlocalized on the outgoing tail
$\Gamma_+$, which is propagated by the flow $e^{tH_p}$ towards infinity; this means that
if $f:=\mathcal X_1 u$ for a suitably chosen pseudodifferential cutoff $\mathcal X_1$, then 
$\Pi u=\Pi f+\mathcal O(h^\infty)$ and for $t>0$,
$$
U(t)f=e^{-it\omega/h}f+\mathcal O(h^\infty)\quad\text{microlocally near }\WFh(\mathcal X).
$$
Since $\Pi$ commutes with $P$ modulo $\mathcal O(h^\infty)$, it also commutes with $U(t)$,
which gives
$$
\begin{gathered}
\mathcal X U(t)(1-\Pi)f=e^{-it\omega/h}\mathcal X(1-\Pi)f+\mathcal O(h^\infty),\\
\mathcal X U(t)\Pi f=e^{-it\omega/h}\mathcal X\Pi f+\mathcal O(h^\infty).
\end{gathered}
$$
Since $\Im\omega\geq-(\nu_{\min}-\varepsilon)h$, we
take $t=N\log(1/h)$ for arbitrarily large constant $N$ in~\eqref{e:ee-kernel} to get
$\|\mathcal X (1-\Pi)f\|_{L^2}=\mathcal O(h^\infty)$. Since
$\Im\omega\not\in (-(\nu_{\max}+\varepsilon)h/2,-(\nu_{\min}-\varepsilon)h/2)$, by~\eqref{e:ee-image}
we get $\|\mathcal X \Pi f\|_{L^2}=\mathcal O(h^\infty)$. Together, they give $\|\mathcal X f\|_{L^2}=\mathcal O(h^\infty)$,
implying by standard outgoing estimates (see Lemma~\ref{l:smart-bound}) that $\|u\|_{\mathcal H_1}=\mathcal O(h^\infty)$,
a contradiction.

We now give an intuitive explanation for~\eqref{e:ee-kernel} and~\eqref{e:ee-image}. We start by considering the
model case~\eqref{e:intro-model}, with the pseudodifferential cutoff $\mathcal X$ replaced by
the multiplication operator by some $\chi\in C_0^\infty(\mathbb R^n)$.
For the operator
$P$, we consider the model (somewhat inappropriate since the actual Hamiltonian vector field $H_p$
is typically nonvanishing on $K$, contrary to the model case, but reflecting the nature of the
flow in the transverse directions) $P=x_n\cdot hD_{x_n}-ih/2$; here the term $-ih/2$ makes $P$ symmetric.
We then have in the model case, $p=x_n\xi_n$, $e^{tH_p}(x,\xi)=(x',e^tx_n,\xi',e^{-t}\xi_n)$,
$\nu_{\min}=\nu_{\max}=1$, and
$$
U(t)f(x',x_n)=e^{-t/2}f(x',e^{-t}x_n).
$$
Then~\eqref{e:ee-kernel} (in fact, a better estimate with $e^{-3t/2}$ in place of $e^{-t}$~--
see the possible improvements subsection below) follows by Taylor expansion at $x_n=0$.
More precisely, we use the following form of this expansion: for $f\in C_0^\infty(\mathbb R^n)$,
\begin{equation}
  \label{e:intro-Xi}
(1-\Pi^0) f=x_n\cdot g,\quad
g(x',x_n):={f(x',x_n)-f(x',0)\over x_n},
\end{equation}
and one can show that $\|g\|_{L^2}\leq Ch^{-1}\|f\|_{H^1_h}$, the factor $h^{-1}$
coming from taking one nonsemiclassical derivative to obtain $g$ from $f$
(see Lemma~\ref{l:Xi-0}). Then
$\chi U(t)(1-\Pi^0)f=\chi U(t)x_n U(-t)U(t)g$, where (by a special case of Egorov's theorem
following by direct computation)
$\chi U(t)x_n U(-t)$ is a multiplication operator by
\begin{equation}
  \label{e:zz-kernel}
\chi U(t)x_n U(-t)=\chi(x)e^{-t}x_n=\mathcal O(e^{-t});
\end{equation}
this shows that $\|\chi U(t)(1-\Pi^0)f\|_{L^2}\leq Ce^{-t}\|g\|_{L^2}
\leq Ch^{-1}e^{-t}\|f\|_{H^1_h}$ and~\eqref{e:ee-kernel} follows.

To show~\eqref{e:ee-image} in the model case, we start with the identity
$$
\|\chi U(t)\Pi^0 f\|_{L^2}=\|\chi_t\Pi^0 f\|_{L^2},\quad
\chi_t:=U(-t)\chi U(t).
$$
If $\chi\in C_0^\infty(\mathbb R^n)$, then $\chi_t(x)=\chi(x',e^tx_n)$ has shrinking
support as $t\to\infty$. To compare $\|\chi_t\Pi^0 f\|_{L^2}$
to $\|\chi\Pi^0 f\|_{L^2}$, we use the following fact:
\begin{equation}
  \label{e:zz-image}
hD_{x_n}\Pi^0 f=0.
\end{equation}
This implies that for each $a(x)\in C_0^\infty(\mathbb R^n)$, the
inner product $\langle a\Pi^0 f,\Pi^0 f\rangle$ depends
only on the function $b(x')=\int_{\mathbb R}a(x',x_n)\,dx_n$;
writing $\|\chi\Pi^0 f\|_{L^2}^2$ and $\|\chi_t\Pi^0 f\|_{L^2}^2$
as inner products, we get
$\|\chi_t\Pi^0 f\|_{L^2}^2=e^{-t}\|\chi \Pi^0 f\|_{L^2}^2$
and~\eqref{e:ee-image} follows.

The proofs of~\eqref{e:ee-kernel} and~\eqref{e:ee-image} in the general case work as in the model case,
once we find appropriate replacements for differential operators
$x_n$ and $hD_{x_n}$ in~\eqref{e:intro-Xi} and~\eqref{e:zz-image}. It turns out
that one needs to take pseudodifferential operators $\Theta_\pm$ solving,
microlocally near $K\cap p^{-1}([\alpha_0,\alpha_1])$,
\begin{equation}
  \label{e:intro-ideals}
\Pi\Theta_-=\mathcal O(h^\infty),\quad
\Theta_+\Pi=\mathcal O(h^\infty),
\end{equation}
then $\Theta_-$ is a replacement for $x_n$ and $\Theta_+$, for $hD_{x_n}$.
Note that $\Theta_\pm$ are not unique, in fact solutions to~\eqref{e:intro-ideals}
form one-sided ideals in the algebra of pseudodifferential operators~-- see
\S\S\ref{s:ideals} and~\ref{s:construction-2}.
The principal symbols of $\Theta_\pm$ are defining functions of $\Gamma_\pm$.

\smallsection{Concentration of resonant states}
As a byproduct of the discussion above, we obtain new information about microlocal
concentration of resonant states, that is, functions $u\in\mathcal H_1$ such that
$\mathcal P(\omega)u=0$ and $\|u\|_{\mathcal H_1}\sim 1$. It is well-known
(see for example~\cite[Theorem~4]{n-z-acta}) that the wavefront set of $u$
is contained in $\Gamma_+\cap p^{-1}(\Re\omega)$. The new information we obtain is that
if $\omega$ is a resonance in the band given by Theorem~\ref{t:weyl-law}
(that is, $\Im\omega>-(\nu_{\min}-\varepsilon)h$), then
by~\eqref{e:ee-kernel}, $u=\Pi u+\mathcal O(h^\infty)$ microlocally near $K$.
Then by~\eqref{e:intro-ideals}, $\Theta_+u=\mathcal O(h^\infty)$
near $K$, that is, $u$ solves a pseudodifferential equation; note that
the Hamiltonian flow lines of the principal symbol of $\Theta_+$ are transverse
to the trapped set. This implies in particular that any corresponding semiclassical defect
measure is determined uniquely by a measure on the trapped
set which is conditionally invariant under $H_p$, similarly
to the damped wave equation. See Theorem~\ref{t:resonant-states}
in~\S\ref{s:resonant-states} for details.

\smallsection{Proof of Theorem~\ref{t:weyl-law}}
We start with constructing a well-posed Grushin problem,
representing resonances as zeroes of a certain Fredholm determinant $F(\omega)$.
Using complex analysis (essentially the argument principle), we reduce
counting resonances to computing a contour integral of the logarithmic
derivative $F'(\omega)/F(\omega)$, which, taking
$\nu_-=-(\nu_{\max}+\varepsilon)/2$, $\nu_+=-(\nu_{\min}-\varepsilon)/2$,
is similar to (see~\S\ref{s:trace}
for the actual expression) 
$$
{1\over 2\pi i}(\mathcal I_--\mathcal I_+),\quad
\mathcal I_\pm:=\int_{\Im\omega=h\nu_\pm} \tilde\chi(\omega)\Tr(\Pi \mathcal R(\omega))\,d\omega
$$
for some cutoff function $\tilde\chi(\omega)$. The integration is over the region where Theorem~\ref{t:gaps}
gives polynomial bounds on the resolvent $\mathcal R(\omega)$, and we can use the methods developed
for the proof of this theorem to evaluate both integrals, yielding Theorem~\ref{t:weyl-law}.
An important additional tool, explaining in particular why the two integrals do not cancel each other,
is microlocal analysis in the spectral parameter $\omega$, or equivalently a study of the
essential support of the Fourier transform of $\Pi\mathcal R(\omega)$ in $\omega$~-- see~\S\S\ref{s:estimate-spectral}
and~\ref{s:trace}.

\subsection{Possible improvements}
  \label{s:improvements}

First of all, it would be interesting to see if one could construct further bands
of resonances, lying below the one in Theorem~\ref{t:weyl-law}. One expects these
bands to have the form
$$
\{\Im\omega\in [-(k+1/2)(\nu_{\max}+\varepsilon)h,-(k+1/2)(\nu_{\min}-\varepsilon)h]\},\quad
k\in \mathbb Z,\ k\geq 0,
$$
and to have a Weyl law in the $k$-th band under the pinching condition $(k+1/2)\nu_{\max}<(k+3/2)\nu_{\min}$.
Note that the presence of the second band of resonances improves the size of the second
resonance free strip in Theorem~\ref{t:gaps} and gives a weaker  pinching condition $\nu_{\max}<3\nu_{\min}$
for the Weyl law in the first band. The proofs are expected to work similarly to the present paper, if one
constructs a family of operators $\Pi_0=\Pi,\Pi_1,\dots,\Pi_k$ such that
$\Pi_j$ is $h^{-j}$ times a Fourier integral operator associated to $\Lambda^\circ$,
$\Pi_j\Pi_k=\mathcal O(h^\infty)$, and $[P,\Pi_j]=\mathcal O(h^\infty)$
(microlocally near $K\cap p^{-1}([\alpha_0,\alpha_1])$). However, the method
of~\S\ref{s:construction-1} does not apply directly to construct $\Pi_k$ for
$k>0$, since one cannot conjugate all $\Pi_j$ to the model case,
which is the base of the crucial Proposition~\ref{l:idempotents}.

Another direction would be to consider the case when the
operator $P$ is quantum completely integrable on the trapped set
(a notion that needs to be made precise), and derive a quantization condition for resonances
like the one for the special case of black holes~\cite{sb-z,zeeman}.
The author also believes that the results of the present paper should
be adaptable to the situation when $\Gamma_\pm$ have codimension higher
than~1, which makes it possible to revisit the distribution of
resonances generated by one closed hyperbolic trajectory, studied in~\cite{ge-sj}.

An interesting special case lying on the intersection of the current work and~\cite{f-t,f-t2,f-t3}
is given by geodesic flows on compact manifolds of constant negative curvature; the corresponding
manifolds $\Gamma_\pm$ and $K$ are smooth in this situation. While $r$-normal hyperbolicity
does not hold (in fact, $\mu_{\max}=\nu_{\min}=\nu_{\max}$), the rigid algebraic structure
of hyperbolic quotients suggests that one could still look for the
projector $\Pi$ as a (smooth) Fourier integral operator~-- in terms of the construction of~\S\ref{s:construction-1},
the transport equation~\eqref{e:intro-transport}, while not yielding a smooth solution
for an arbitrary choice of the right-hand side $f$, will have a smooth solution for the
specific functions $f$ arising in the construction.

Finally, a natural question is improving the $o(1)$
remainder in the Weyl law~\eqref{e:weyl-law}. Obtaining an $\mathcal O(h^\delta)$
remainder for $\delta<1$ does not seem to require conceptual changes to the
microlocal structure of the argument; however, for the $\mathcal O(h)$ remainder
of H\"ormander~\cite{hohoho} or the $o(h)$ remainder of Duistermaat--Guillemin~\cite{du-gu}, one would
need a finer analysis of the interaction of the operator $\Pi$ with the Schr\"odinger
propagator, and more assumptions on the flow on the trapped set might be needed.
Moreover, the complex analysis argument of~\S\ref{s:weyl-law} does
not work in the case of an $\mathcal O(h)$ remainder; a reasonable replacement would be
to adapt to the considered case the work of Sj\"ostrand~\cite{sj-dwe} on the damped wave equation.

\subsection{Structure of the paper}
  \label{s:structure}
\begin{itemize}
\item In~\S\ref{s:prelim}, we review the tools we need from semiclassical analysis.
\item In~\S\ref{s:framework}, we present a framework which makes it possible
to handle resonances and the spatial infinity in an abstract fashion.
The assumptions we make are listed in~\S\ref{s:framework-assumptions}, followed
by some useful lemmas (\S\ref{s:framework-corollaries}) and applications to
Schr\"odinger operators (\S\ref{s:framework-schrodinger}) and even asymptotically hyperbolic manifolds (\S\ref{s:framework-ah}).
\item In~\S\ref{s:nh}, we study $r$-normally hyperbolic trapped sets, stating
the dynamical assumptions (\S\ref{s:dynamics}), discussing their stability under perturbations
(\S\ref{s:stability}), and deriving some corollaries (\S\S\ref{s:phi-pm}--\ref{s:transport}).
\item In~\S\ref{s:calculus}, we study in detail Fourier integral operators associated to $\Lambda^\circ$,
and in particular properties of operators solving $\Pi^2=\Pi+\mathcal O(h^\infty)$.
\item In~\S\ref{s:global-construction}, we construct the projector $\Pi$ and the annihilating operators $\Theta_\pm$.
\item In~\S\ref{s:resolvent-bounds}, we prove Theorem~\ref{t:gaps}, establish microlocal estimates on
the resolvent, and study the microlocal concentration of resonant states (\S\ref{s:resonant-states}).
\item In~\S\ref{s:grushin}, we formulate a well-posed Grushin problem for $\mathcal P(\omega)$,
representing resonances as zeroes of a certain Fredholm determinant.
\item In~\S\ref{s:trace}, we prove a trace formula for $\mathcal R(\omega)$
microlocally on the image of $\Pi$.
\item In~\S\ref{s:weyl-law}, we prove the Weyl asymptotic for resonances (Theorem~\ref{t:weyl-law}).
\item In Appendix~\ref{s:example}, we provide an example of an asymptotically hyperbolic manifold
satisfying the dynamical assumptions of~\S\ref{s:dynamics}.
\end{itemize}

\section{Semiclassical preliminaries}
\label{s:prelim}

In this section, we review semiclassical pseudodifferential operators,
wavefront sets, and Fourier integral operators;
the reader is directed to~\cite{e-z,d-sj} for a detailed treatment
and~\cite{ho3,ho4,gr-sj} for the closely related microlocal case.

\subsection{Pseudodifferential operators and microlocalization}
\label{s:prelim-basics}

Let $X$ be a manifold without boundary.
Following~\cite[\S 9.3 and~14.2]{e-z}, we consider the symbol classes
$S^k(T^*X)$, $k\in\mathbb R$, consisting of smooth functions $a$ on the cotangent bundle
$T^*X$ satisfying in local coordinates
$$
\sup_h\sup_{x\in K}|\partial^\alpha_x\partial^\beta_\xi a(x,\xi;h)|\leq C_{\alpha\beta K}\langle\xi\rangle^{k-|\beta|},
$$
for each multiindices $\alpha,\beta$ and each compact set $K\subset X$.
The corresponding class of semiclassical pseudodifferential operators is denoted $\Psi^k(X)$.
The residual symbol class $h^\infty S^{-\infty}$ consists of symbols decaying
rapidly in $h$ and $\xi$ over compact subsets of $X$; the operators
in the corresponding class $h^\infty\Psi^{-\infty}$ have Schwartz kernels in $h^\infty C^\infty(X\times X)$.
Operators in $\Psi^k$ are bounded, uniformly in $h$, between the semiclassical
Sobolev spaces $H^s_{h,\comp}(X)\to H^{s-k}_{h,\loc}(X)$, see~\cite[(14.2.3)]{e-z} for the
definition of the latter.

Note that for noncompact $X$, we impose no restrictions on the behavior of symbols as $x\to\infty$.
Accordingly, we cannot control the behavior of operators in $\Psi^k(X)$ near spatial infinity;
in fact, a priori we only require them to act $C_0^\infty(X)\to C^\infty(X)$
and on the spaces of distributions $\mathcal E'(X)\to \mathcal D'(X)$.
However, each $A\in\Psi^k(X)$ can be written as the sum of an $h^\infty\Psi^{-\infty}$
remainder and an operator properly supported uniformly in $h$~-- see for example~\cite[Proposition~18.1.22]{ho3}.
Properly supported pseudodifferential
operators act $C_0^\infty\to C_0^\infty$ and $C^\infty\to C^\infty$ and therefore can be multiplied
with each other, giving an algebra structure on the whole $\Psi^k$, modulo $h^\infty\Psi^{-\infty}$.

To study the behavior of symbols near fiber infinity, we use the fiber-radial
compactified cotangent bundle $\overline T^*X$, a manifold with boundary
whose interior is diffeomorphic to $T^*X$ and whose boundary $\partial\overline T^*X$
is diffeomorphic to the cosphere bundle over $X$~-- see for example~\cite[\S 2.1]{v1}.
We will restrict ourselves to the space of \emph{classical symbols}, i.e. those having an asymptotic expansion
$$
a(x,\xi;h)\sim\sum_{j\geq 0}h^ja_j(x,\xi),
$$
with $a_j\in S^{k-j}$ classical in the sense that $\langle\xi\rangle^{j-k}a_j$
extends to a smooth function on $\overline T^*X$.
The principal symbol
$\sigma(A):=a_0\in S^k$ of an operator is defined independently of quantization.
We say that $A\in\Psi^k$ is elliptic at some $(x,\xi)\in\overline T^*X$ if
$\langle \xi\rangle^{-k}\sigma(A)$ does not vanish at $(x,\xi)$.

Another invariant object associated to $A\in\Psi^k(X)$ is its wavefront set
$\WFh(A)$, which is a closed subset of $\overline T^*X$; a point
$(x,\xi)\in \overline T^*X$ does not lie in $\WFh(A)$ if and only if
there exists a neighborhood $U$ of $(x,\xi)$ in $\overline T^*X$ such that
the full symbol of $A$ (in any quantization) is in $h^\infty S^{-\infty}$ in
this neighborhood. Note that $\WFh(A)=\emptyset$ if and only if $A=\mathcal O(h^\infty)_{\Psi^{-\infty}}$.
We say that $A_1=A_2+\mathcal O(h^\infty)$ \emph{microlocally} in some $U\subset\overline T^*X$, if $\WFh(A-B)\cap U=\emptyset$.

We denote by $\Psi^{\comp}(X)$ the space of all operators $A\in\Psi^0(X)$ such that
$\WFh(A)$ is a compact subset of $T^*X$, in particular not intersecting the
fiber infinity $\partial \overline T^*X$. Note that $\Psi^{\comp}(X)\subset \Psi^k(X)$
for all $k\in \mathbb R$.

\smallsection{Tempered distributions and operators}
Let $u=u(h)$ be an $h$-dependent family of distributions in $\mathcal D'(X)$.
We say that $u$ is $h$-tempered
(or polynomially bounded), if for each $\chi\in C_0^\infty(X)$, there exists
$N$ such that $\|\chi u\|_{H^{-N}_h}=\mathcal O(h^{-N})$. The class of $h$-tempered distributions
is closed under properly supported pseudodifferential operators.
For an $h$-tempered $u$, define the wavefront set
$\WFh(u)$, a closed subset of $\overline T^*X$, as follows:
$(x,\xi)\in \overline T^*X$ does not lie in $\WFh(u)$ if and only if there exists
a neighborhood $U$ of $(x,\xi)$ in $\overline T^*X$ such that for each
properly supported $A\in\Psi^0(X)$ with $\WFh(A)\subset U$, we have
$Au=\mathcal O(h^\infty)_{C^\infty}$. We have $\WFh(u)=\emptyset$ if and only
if $u=\mathcal O(h^\infty)_{C^\infty}$. We say that $u=v+\mathcal O(h^\infty)$ microlocally on some
$U\subset \overline T^*X$ if $\WFh(u-v)\cap U=\emptyset$.

Let $X_1$ and $X_2$ be two manifolds.
An operator $B:C_0^\infty(X_1)\to\mathcal D'(X_2)$ is identified with its Schwartz kernel
$\mathcal K_B(y,x)\in\mathcal D'(X_2\times X_1)$:
\begin{equation}
  \label{e:op-kernel}
Bf(y)=\int_{X_1} \mathcal K_B(y,x)u(x)\,dx,\quad
u\in \mathcal C_0^\infty(X_1).
\end{equation}
Here we assume that $X_1$ is equipped with some smooth density $dx$; later, we will also
assume that densities on our manifolds are specified when talking about adjoints.

We say that $B$ is $h$-tempered if $\mathcal K_B$ is, and define the wavefront set of $B$ as
\begin{equation}
  \label{e:op-wf}
\WFh(B):=\{(x,\xi,y,\eta)\in \overline T^*(X_1\times X_2)\mid (y,\eta,x,-\xi)\in\WFh(K_B)\}.
\end{equation}
If $B\in\Psi^k(X)$, then the wavefront set of $B$ as an $h$-tempered operator
is equal to its wavefront set as a pseudodifferential operator, under
the diagonal embedding $\overline T^*X\to \overline T^*(X\times X)$.
  
\subsection{Lagrangian distributions and Fourier integral operators}
  \label{s:prelim-fio}

We now review the theory of Lagrangian distributions; for details, the reader is directed
to~\cite[Chapters~10--11]{e-z}, \cite[Chapter~6]{gu-st1}, or~\cite[\S 2.3]{svn},
and to~\cite[Chapter~25]{ho4} or~\cite[Chapters~10--11]{gr-sj} for the closely
related microlocal setting. Here, we only present the relatively simple local
part of the theory; geometric constructions of invariant symbols will be done
by hand when needed, without studying the structure of the bundles obtained
(see~\S\ref{s:general}). For a more complete discussion, see for example~\cite[\S 3]{qeefun}.

A semiclassical Lagrangian distribution locally takes the form
\begin{equation}
  \label{e:lagrangian}
u(x;h)=(2\pi h)^{-m/2} \int_{\mathbb R^m} e^{{i\over h}\Phi(x,\theta)}a(x,\theta;h)\,d\theta.
\end{equation}
Here $\Phi$ is a nondegenerate phase function, i.e. a real-valued function defined
on an open subset of $X\times \mathbb R^m$, for some $m$, such that
the differentials $d(\partial_{\theta_1}\Phi),\dots,d(\partial_{\theta_m}\Phi)$ are linearly
independent on the critical set
$$
\mathcal C_\Phi := \{(x,\theta)\mid \partial_\theta \Phi(x,\theta)=0\}.
$$
The amplitude $a(x,\theta;h)$ is a classical symbol (that is,
having an asymptotic expansion in nonnegative integer powers of $h$
as $h\to 0$) compactly supported
inside the domain of $\Phi$. The resulting function $u(x;h)$
is smooth, compactly supported, $h$-tempered, and\begin{equation}
  \label{e:lagrangian-wf}
\WFh(u)\subset \{(x,\partial_x\Phi(x,\theta))\mid (x,\theta)\in \mathcal C_\Phi\cap\supp a\}.
\end{equation}
We say that $\Phi$ generates the (immersed, and we shrink the domain
of $\Phi$ to make it embedded) Lagrangian submanifold
$$
\Lambda_\Phi:=\{(x,\partial_x\Phi(x,\theta))\mid (x,\theta)\in \mathcal C_\Phi\};
$$
note that $\WFh(u)\subset\Lambda_\Phi$. Moreover, if we restrict
$\Phi$ to $\mathcal C_\Phi$ and pull it back to $\Lambda_\Phi$,
then $d\Phi$ equals the canonical 1-form $\xi\,dx$ on $\Lambda_\Phi$.

In general, assume that $\Lambda$ is an embedded Lagrangian
submanifold of $T^*X$ which is moreover \emph{exact} in the sense that
the canonical form $\xi\,dx$ is exact on $\Lambda$; we fix an
\emph{antiderivative} on $\Lambda$, namely a function $F$ such that
$\xi\,dx=dF$ on $\Lambda$. (This is somewhat similar to the notion of
Legendre distributions, see~\cite[\S 11]{m-z}.) Then we say that a
compactly supported $h$-tempered family of distributions $u$ is a
(compactly microlocalized) Lagrangian distribution associated to
$\Lambda$, if $u$ can be written as a finite sum of
expressions~\eqref{e:lagrangian}, with phase functions $\Phi_j$
generating open subsets of $\Lambda$, plus an $\mathcal
O(h^\infty)_{C_0^\infty}$ remainder, where $\Phi_j$ are normalized (by
adding a constant) so that the pull-back to $\Lambda$ of the
restriction of $\Phi_j$ to $\mathcal C_{\Phi_j}$ equals $F$. (Without
such normalization, passing from one phase function to the other
produces a factor $e^{is\over h}$ for some constant $s$, which does
not preserve the class of classical symbols~-- this is an additional
complication of the theory compared to the nonsemiclassical case.)
Denote by $I_{\comp}(\Lambda)$ the class of all Lagrangian
distributions associated to $\Lambda$.  For $u\in I_{\comp}(\Lambda)$,
we have $\WFh(u)\subset\Lambda$; in particular, $\WFh(u)$ does not
intersect the fiber infinity $\partial\overline T^*X$.

If now $X_1,X_2$ are two manifolds of dimensions
$n_1,n_2$ respectively, and $\Lambda\subset T^*X_1\times T^*X_2$ is an exact canonical
relation (with some fixed antiderivative), then an operator $B:C^\infty(X_1)\to C_0^\infty(X_2)$ is called a (compactly microlocalized)
Fourier integral operator associated to $\Lambda$, if its Schwartz kernel $\mathcal K_B(y,x)$ is
$h^{-(n_1+n_2)/4}$ times a Lagrangian distribution associated to
$$
\{(y,\eta,x,-\xi)\in T^*(X_1\times X_2)\mid (x,\xi,y,\eta)\in\Lambda\}.
$$
We write $B\in I_{\comp}(\Lambda)$; note that $\WFh(B)\subset\Lambda$.
A particular case is when $\Lambda$ is the graph of a
canonical transformation $\varkappa:U_1\to U_2$, with $U_j$ open subsets in $T^*X_j$.
Operators associated to canonical transformations (but not general relations!) are
bounded $H^s_h\to H^{s'}_h$ uniformly in $h$, for each $s,s'$.

Compactly microlocalized Fourier integral operators associated to the identity
transformation are exactly compactly supported pseudodifferential operators in $\Psi^{\comp}(X)$.
Another example of Fourier integral operators
is given by Schr\"odinger propagators, see for instance~\cite[Theorem~10.4]{e-z}%
\footnote{\cite[Theorem~10.4]{e-z} is stated for self-adjoint $P$, rather
than operators with real-valued principal symbols; however, the proof works
similarly in the latter case, with the transport equation acquiring an additional
zeroth order term due to the subprincipal part of $P$.}
or~\cite[Proposition~3.8]{qeefun}:
\begin{prop}
  \label{l:schrodinger}
Assume that $P\in\Psi^{\comp}(X)$ is compactly supported,
$\WFh(P)$ is contained in some compact subset $V\subset T^*X$,
and $p=\sigma(P)$ is real-valued. Then for $t\in\mathbb R$ bounded by any fixed constant,
the operator $e^{-itP/h}:L^2(X)\to L^2(X)$ is the sum of the identity and
a compactly supported operator microlocalized in $V\times V$. Moreover, for each compactly
supported $A\in\Psi^{\comp}(X)$, $Ae^{-itP/h}$ and $e^{-itP/h}A$ are
smooth families of Fourier integral operators associated to the Hamiltonian
flow $e^{tH_p}:T^*X\to T^*X$.
\end{prop}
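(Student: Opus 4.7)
The plan is to reduce to the self-adjoint case already treated in \cite[Theorem~10.4]{e-z}, handle the non-self-adjoint correction by Duhamel, then combine the resulting WKB/geometric-optics representation with a microlocalization argument based on the assumption $\WFh(P)\subset V$.

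First I would verify that $U(t):=e^{-itP/h}$ makes sense and is $L^2$-bounded uniformly for $|t|$ in any fixed compact set. Since $p=\sigma(P)$ is real, the operator $Q:=\tfrac12(P+P^*)\in\Psi^{\comp}(X)$ is formally self-adjoint with the same principal symbol as $P$, and $R:=P-Q\in h\Psi^{\comp}(X)$ has operator norm $\mathcal O(h)$. Thus $e^{-itQ/h}$ is unitary and $U(t)$ is obtained from it via the Duhamel identity $U(t)=e^{-itQ/h}-\tfrac{i}{h}\int_0^t e^{-i(t-s)Q/h}R\,U(s)\,ds$, which gives $\|U(t)\|_{L^2\to L^2}=\mathcal O(1)$ by Gr\"onwall.

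Next I would show that $U(t)-I$ is microlocalized in $V\times V$. If $B\in\Psi^{\comp}(X)$ has $\WFh(B)\cap V=\emptyset$, then $BP,PB\in h^\infty\Psi^{-\infty}$, and differentiating $U(t)B-B$ in $t$ and iterating the Duhamel formula gives $U(t)B=B+\mathcal O(h^\infty)$; the symmetric argument gives $BU(t)=B+\mathcal O(h^\infty)$. Taking $B$ with $\WFh(I-B)\subset V$ then shows that $U(t)-I$ is supported and microlocalized in $V$ up to $\mathcal O(h^\infty)$, which in particular makes $U(t)-I$ a compactly supported operator (after correcting by an $\mathcal O(h^\infty)_{\Psi^{-\infty}}$ remainder).

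For the Fourier integral operator statement, fix $A\in\Psi^{\comp}(X)$. For sufficiently small $|t|$, I would make the WKB ansatz
\begin{equation*}
\mathcal K_{U(t)A}(x,y;h)=(2\pi h)^{-n}\int e^{i(S(t,x,\eta)-y\cdot\eta)/h}a(t,x,\eta;h)\,d\eta,
\end{equation*}
with $S$ determined by the Hamilton--Jacobi equation $\partial_t S+p(x,\partial_x S)=0$, $S(0,x,\eta)=x\cdot\eta$, and $a\sim\sum_{j\ge 0}h^j a_j$ determined by transport equations along the Hamiltonian flow of $p$ with initial data coming from the full symbol of $A$. The Hamilton--Jacobi equation has a smooth solution for small $t$ because the graph of $e^{tH_p}$ stays close to the diagonal, so $(x,\eta)\mapsto(x,\partial_x S)$ parametrizes the graph; the only place where $P$ being non-self-adjoint matters is that the zeroth transport equation for $a_0$ acquires an additional zeroth-order term from the subprincipal symbol of $P$, which does not affect existence or smoothness of solutions. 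By construction this phase generates the graph of $e^{tH_p}$, so $U(t)A\in I_{\comp}(\operatorname{graph}(e^{tH_p}))$ for small $t$, smoothly in $t$.

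To extend to any bounded $t$, I would use the group law: write $U(t)=U(t/N)^N$ with $N$ large, so that each factor is covered by the small-time construction, and apply the composition calculus for compactly microlocalized Fourier integral operators associated to canonical transformations, which yields an operator in $I_{\comp}(\operatorname{graph}(e^{tH_p}))$ since the composition of the graphs is the graph of the composition (with automatic clean intersection). The argument for $e^{-itP/h}A$ is identical; for $Ae^{-itP/h}$ one either runs the same construction with the roles of input and output reversed, or observes that $A\,e^{-itP/h}=e^{-itP/h}\,\tilde A+\mathcal O(h^\infty)$ where $\tilde A:=e^{itP/h}Ae^{-itP/h}\in\Psi^{\comp}(X)$ by Egorov's theorem. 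The main obstacle is bookkeeping in this last step: one has to check that the composition of the small-time FIOs remains compactly microlocalized (which follows from Step~2, since composing with $U(t/N)-I$ only affects the wavefront in $V$) and that the amplitudes stay classical through each composition, which is guaranteed by the normalization of the phase functions using the antiderivative of the canonical one-form on $\operatorname{graph}(e^{tH_p})$.
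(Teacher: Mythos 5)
Your proposal is correct and follows essentially the same route as the paper, which simply invokes the standard WKB construction of \cite[Theorem~10.4]{e-z} together with the footnoted remark that for merely real-valued principal symbol the transport equation picks up an extra zeroth-order term from the subprincipal part of $P$ --- precisely the point you make. Your additional steps (Duhamel against the self-adjoint part for uniform $L^2$ bounds, the wavefront-set localization in $V\times V$ using $\WFh(P)\subset V$, and the group law $U(t)=U(t/N)^N$ with the composition calculus for graphs) are the standard ingredients implicit in that citation.
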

Here we put the antiderivative $F$ for the identity transformation to
equal zero, and extend it to the antiderivative $F_t$ on the graph of $e^{tH_p}$
by putting
$$
F_t(\gamma(0),\gamma(t)):=tp(\gamma(0))-\int_{\gamma([0,t])}\xi\,dx
$$
for each flow line $\gamma$ of $H_p$.
The corresponding phase function is produced by a solution to the Hamilton--Jacobi equation~\cite[Lemma~10.5]{e-z}.

We finally discuss products of Fourier integral operators. Assume that
$B_j\in I_{\comp}(\Lambda_j)$, $j=1,2$, where $\Lambda_1\subset T^*X_1\times T^*X_2$ and
$\Lambda_2\subset T^*X_2\times T^*X_3$ are exact canonical relations. Assume moreover that
$\Lambda_1,\Lambda_2$ satisfy the following transversality assumption:
the manifolds $\Lambda_1\times\Lambda_2$ and $T^*X_1\times \Delta(T^*X_2)\times T^*X_3$,
where $\Delta(T^*X_2)\subset T^*X_2\times T^*X_2$ is the diagonal, intersect
transversely inside $T^*X_1\times T^*X_2\times T^*X_2\times T^*X_3$, and their
intersection projects diffeomorphically onto $T^*X_1\times T^*X_3$.
Then $B_2B_1\in I_{\comp}(\Lambda_2\circ\Lambda_1)$, where 
\begin{equation}
  \label{e:comp-fio}
\Lambda_2\circ\Lambda_1:=\{(\rho_1,\rho_3)\mid \exists \rho_2\in T^*X_2:
(\rho_1,\rho_2)\in\Lambda_1,\ (\rho_2,\rho_3)\in\Lambda_2\},
\end{equation}
and, if $F_j$ is the antiderivative on $\Lambda_j$, then
$F_1(\rho_1,\rho_2)+F_2(\rho_2,\rho_3)$ is the antiderivative on $\Lambda_2\circ\Lambda_1$.
See for example~\cite[Theorem~25.2.3]{ho4} or~\cite[Theorem~11.12]{gr-sj} for the closely related microlocal case,
which is adapted directly to the semiclassical situation.

The transversality condition is
always satisfied when at least one of the $\Lambda_j$ is the graph of a canonical transformation.
In particular, one can always multiply a pseudodifferential
operator by a Fourier integral operator, and obtain a Fourier integral operator associated
to the same canonical relation.

\subsection{Basic estimates}

In this section, we review some standard semiclassical estimates, parametrices,
and microlocalization statements.

Throughout the section, we
assume that $k,s\in\mathbb R$, $P,Q\in\Psi^k(X)$ are properly supported and $u,f$ are $h$-tempered distributions
on $X$, in the sense of~\S\ref{s:prelim-basics}.

We start with the elliptic estimate, see for instance~\cite[Proposition~2.2]{zeeman}:
\begin{prop} (Elliptic estimate)
  \label{l:elliptic}
Assume that $Pu=f$. Then:

1. If $A,B\in\Psi^0(X)$ are compactly supported and $P,B$ are elliptic on $\WFh(A)$, then
\begin{equation}
  \label{e:ell-est}
\|Au\|_{H^s_h}\leq C\|Bf\|_{H^{s-k}_h}+\mathcal O(h^\infty).
\end{equation}

2. We have
\begin{equation}
  \label{e:ell-wf}
\WFh(u)\subset \WFh(f)\cup \{\langle\xi\rangle^{-k}\sigma(P)=0\}.
\end{equation}
\end{prop}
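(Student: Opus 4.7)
The plan is to deduce both statements from a microlocal parametrix construction for $P$, with part~2 following from part~1 by a standard cutoff argument. The essential content is therefore the quantitative estimate~\eqref{e:ell-est}.

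For part~1, I would first construct a properly supported microlocal inverse $Q \in \Psi^{-k}(X)$ of $P$ near $\WFh(A)$. Since $\langle\xi\rangle^{-k}\sigma(P)$ is nonzero on the compact set $\WFh(A)$, the standard symbolic iteration (start from $\chi/\sigma(P)$ for a cutoff $\chi$ elliptic on $\WFh(A)$ and supported where $P$ is elliptic, then correct modulo $h\Psi^{-k-1}$ and Borel sum the resulting expansion) yields $Q$ with $\WFh(Q)$ contained in an arbitrarily small neighborhood of $\WFh(A)$ and
$$
QP = A + R, \qquad R = \mathcal O(h^\infty)_{\Psi^{-\infty}}\ \text{microlocally on}\ \WFh(A).
$$
By arranging $\WFh(Q)$ to lie inside the elliptic set of $B$, an analogous construction produces $\tilde B \in \Psi^0(X)$, properly supported, with $\tilde B B = I + \mathcal O(h^\infty)$ microlocally on $\WFh(Q)$. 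Using $Pu = f$,
$$
Au = Q(Pu) - Ru = Qf - Ru = (Q\tilde B)(Bf) + Q(I - \tilde B B)f - Ru.
$$
Because $u$ and $f$ are $h$-tempered, the last two terms are $\mathcal O(h^\infty)$ in every semiclassical Sobolev norm, while $Q\tilde B \in \Psi^{-k}(X)$ is bounded $H^{s-k}_{h,\loc}\to H^s_{h,\loc}$ uniformly in $h$, yielding~\eqref{e:ell-est}.

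For part~2, fix $(x_0,\xi_0)\in\overline T^*X$ with $(x_0,\xi_0)\notin\WFh(f)$ and $\langle\xi_0\rangle^{-k}\sigma(P)(x_0,\xi_0)\neq 0$. Choose a small neighborhood $U$ of $(x_0,\xi_0)$ inside the elliptic set of $P$ and disjoint from $\WFh(f)$, and pick compactly supported $A, B \in \Psi^0(X)$ with $\WFh(A), \WFh(B)\subset U$, with $A$ elliptic at $(x_0,\xi_0)$, and with $B$ elliptic on $\WFh(A)$. Then $Bf = \mathcal O(h^\infty)$ in every $H^{s-k}_h$, so applying part~1 at each $s$ gives $\|Au\|_{H^s_h} = \mathcal O(h^\infty)$, which by the ellipticity of $A$ at $(x_0,\xi_0)$ and the definition of $\WFh(u)$ shows $(x_0,\xi_0)\notin\WFh(u)$. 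The main technical point is coordinating the wavefront sets of $Q$, $\tilde B$, and the various cutoffs so that the composition $Q\tilde B$ is legitimate and the error operators truly fall in $h^\infty\Psi^{-\infty}$ on a neighborhood of $\WFh(A)$; a secondary subtlety, relevant when $(x_0,\xi_0)$ lies on $\partial\overline T^*X$, is propagating uniform $\langle\xi\rangle$-estimates through the symbolic iteration, which is handled by working with classical symbols on the fiber-radial compactification.
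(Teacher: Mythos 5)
Your proposal is correct and follows exactly the route the paper points to: it is the standard elliptic parametrix argument, which the text says is the usual proof of Proposition~\ref{l:elliptic} via Proposition~\ref{l:eparametrix} (the paper itself only cites the proof elsewhere). One small wording issue: for the identity $QP=A+R$ to hold as written, the symbolic iteration should be seeded with $\sigma(A)/\sigma(P)$ rather than a cutoff $\chi/\sigma(P)$ (equivalently, set $Q=AP'$ with $P'$ the microlocal inverse of $P$), and the resulting $R$ is then $\mathcal O(h^\infty)_{\Psi^{-\infty}}$ globally, not merely microlocally on $\WFh(A)$ --- which is exactly what one needs for the bound $\|Ru\|=\mathcal O(h^\infty)$ on an $h$-tempered $u$.
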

Proposition~\ref{l:elliptic} is typically proved using the following fact, which
is of independent interest:
\begin{prop} (Elliptic parametrix)
  \label{l:eparametrix}
If $V\subset \overline T^*X$ is compact and $P$ is elliptic on $V$, then
there exists a compactly supported operator $P'\in\Psi^{-k}(X)$ such that
$PP'=1+\mathcal O(h^\infty),P'P=1+\mathcal O(h^\infty)$ microlocally
near $V$. Moreover, $\sigma(P')=\sigma(P)^{-1}$ near $V$.
\end{prop}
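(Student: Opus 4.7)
The plan is the standard semiclassical symbolic parametrix construction, followed by the usual identification of left and right parametrices. Since $V$ is compact in $\overline T^*X$ and $P$ is elliptic on $V$, I would first choose an open neighborhood $U\subset\overline T^*X$ of $V$ with compact closure on which $P$ remains elliptic, and a cutoff $\chi\in C_0^\infty(\overline T^*X)$ equal to $1$ on a smaller neighborhood $V'$ of $V$ and supported in $U$. Its base projection to $X$ is compact, so all quantizations below can be taken to have compactly supported Schwartz kernels.

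The first approximation is $Q_0:=\Op_h(q_0)$ with $q_0:=\chi/\sigma(P)\in S^{-k}$, which lies in $\Psi^{-k}(X)$ and is compactly supported. By the symbol calculus, $\sigma(PQ_0)=\chi$, so $PQ_0=I-hR_1$ microlocally on $V'$ for some compactly supported $R_1\in\Psi^{-1}(X)$. I would then iterate: given $Q_0,\dots,Q_j$ with $P\sum_{i\leq j}h^iQ_i=I-h^{j+1}R_{j+1}$ microlocally on a neighborhood of $V$, set $Q_{j+1}$ to be a compactly supported quantization whose principal symbol is $\chi\,\sigma(R_{j+1})/\sigma(P)\in S^{-k-j-1}$; the next remainder then picks up a factor of $h$ as required. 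Asymptotic (Borel) summation produces a compactly supported $P_R\in\Psi^{-k}(X)$ with $P_R\sim\sum_{j\geq 0}h^jQ_j$ and $PP_R=I+\mathcal O(h^\infty)$ microlocally near $V$. Running the analogous construction on the left (starting from $\sigma(Q_0 P)=\chi$) yields a compactly supported $P_L\in\Psi^{-k}(X)$ with $P_L P=I+\mathcal O(h^\infty)$ microlocally near $V$.

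The two candidates are reconciled by the standard microlocal associativity argument: microlocally on a neighborhood of $V$,
\[
P_L=P_L(PP_R)+\mathcal O(h^\infty)=(P_L P)P_R+\mathcal O(h^\infty)=P_R+\mathcal O(h^\infty),
\]
where one slightly shrinks the neighborhood at each application of a microlocal identity. Hence $P':=P_R$ (equivalently $P_L$) satisfies both required identities. The principal-symbol claim is immediate from $\sigma(P')=\sigma(Q_0)=\chi/\sigma(P)$ together with $\chi\equiv 1$ near $V$.

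The only point that requires real care, rather than being genuinely difficult, is maintaining compact support throughout the iteration: because $\supp\chi$ is compact in $\overline T^*X$ and its base projection is compact in $X$, each $Q_j$ can be chosen to have compactly supported Schwartz kernel, and the asymptotic sum can be arranged to inherit this property. Everything else is routine bookkeeping inside the semiclassical pseudodifferential calculus developed in~\cite{e-z}.
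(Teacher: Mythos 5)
Your construction is correct and is exactly the standard symbolic parametrix argument that the paper relies on without proof (it states Proposition~\ref{l:eparametrix} as a known fact of the semiclassical calculus). The key points — ellipticity on a compact subset of $\overline T^*X$ giving $\chi/\sigma(P)\in S^{-k}$ classical, the Neumann-type iteration with nested cutoffs, Borel summation, and the microlocal identification $P_L=P_R$ near $V$ — are all handled appropriately.
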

We next give a version of propagation of singularities which allows
for a complex absorbing operator~$Q$, see for instance~\cite[\S2.3]{v1}:
\begin{prop} (Propagation of singularities)
  \label{l:microhyperbolic}
Assume that $\sigma(P)$ is real-valued, $\sigma(Q)\geq 0$, and $(P\pm iQ)u=f$. Then:

1. If $A_1,A_2,B\in\Psi^0(X)$ are compactly supported
and for each flow line $\gamma(t)$ of the Hamiltonian field $\pm\langle\xi\rangle^{1-k}H_{\sigma(P)}$ such that $\gamma(0)\in\WFh(A_1)$,
there exists $t\geq 0$ such that $A_2$ is elliptic at $\gamma(t)$ and 
$B$ is elliptic on the segment $\gamma([0,t])$, then
\begin{equation}
  \label{e:mhp-est}
\|A_1u\|_{H^s_h}\leq C\|A_2u\|_{H^s_h}+Ch^{-1}\|Bf\|_{H^{s-k+1}_h}+\mathcal O(h^\infty).
\end{equation}

2. If $\gamma(t)$, $0\leq t\leq T$, is a flow line of $\pm\langle\xi\rangle^{1-k}H_{\sigma(P)}$, then
$$
\gamma([0,T])\cap \WFh(f)=\emptyset,\
\gamma(T)\not\in\WFh(u)\ \Longrightarrow\ \gamma(0)\not\in\WFh(u).
$$
\end{prop}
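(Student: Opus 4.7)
My plan is to prove Proposition~\ref{l:microhyperbolic} by the standard positive commutator method, adapted to the complex absorbing setting (cf.~\cite{v1}). I treat part~1 in detail; part~2 follows by applying part~1 with $A_1,A_2,B$ microlocalized near $\gamma(0)$, $\gamma(T)$, and a small tube around $\gamma([0,T])$ respectively, so that $\|A_2u\|_{H^s_h}$ and $\|Bf\|_{H^{s-k+1}_h}$ are $\mathcal O(h^\infty)$ by the hypotheses of part~2, which then forces $\|A_1u\|_{H^s_h}=\mathcal O(h^\infty)$ for any $A_1$ elliptic at $\gamma(0)$.

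The first step is to construct an escape function $a\in S^0(T^*X)$, supported in the elliptic set of $B$, with $a\geq 0$, $a$ elliptic on $\WFh(A_1)$, and satisfying
\[
\mp\,\langle\xi\rangle^{1-k}H_{\sigma(P)}\,a \;\geq\; c\,b_1^2 \;-\; C\,b_2^2
\]
for some $b_1,b_2\in S^0$ with $b_1$ elliptic on $\WFh(A_1)$ and $\supp b_2$ contained in the elliptic set of $A_2$; the sign $\mp$ matches the $\pm$ of the hypothesis. Such $a$ is obtained by integrating a small bump $b_1^2$ along the rescaled flow from $\WFh(A_1)$ for time bounded by the (finite) escape time into the elliptic set of $A_2$, then multiplying by a smooth cutoff to the elliptic set of $B$; the hypothesis of part~1 plus compactness of $\WFh(A_1)$ guarantees this works uniformly.

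The main computation is to quantize $a\langle\xi\rangle^{2s-k+1}$ to a self-adjoint $A\in\Psi^{2s-k+1}(X)$ and pair $(P\pm iQ)u=f$ against $Au$. Using $\sigma(P)$ real (so $P-P^*\in h\Psi^{k-1}$), the standard manipulation yields the commutator identity
\[
\langle\tfrac{i}{h}[A,P]u,u\rangle \;\pm\; \tfrac{1}{h}\langle\{A,Q\}u,u\rangle
\;=\; \tfrac{2}{h}\Im\langle f,Au\rangle \;+\; \mathcal O(h)\|u\|^2_{H^s_{h,\loc}},
\]
with the sign on the $Q$-term correlated with the $\pm$ in the hypothesis. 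The principal symbol of $\tfrac{i}{h}[A,P]$ is $-H_{\sigma(P)}(a\langle\xi\rangle^{2s-k+1})$, which by the escape inequality dominates $c\,b_1^2\langle\xi\rangle^{2s-k+1}-C\,b_2^2\langle\xi\rangle^{2s-k+1}$ in the appropriate symbol class, and the principal symbol $2aq\langle\xi\rangle^{2s-k+1}$ of $\{A,Q\}$ is nonnegative. Applying sharp G\aa rding to both terms, using Cauchy--Schwarz together with the elliptic parametrix of Proposition~\ref{l:eparametrix} on $\Im\langle f,Au\rangle$ (since $B$ is elliptic on $\WFh(A)$), and rearranging gives the rough estimate
\[
\|B_1u\|^2_{H^s_h} \;\leq\; C\|B_2u\|^2_{H^s_h} + Ch^{-2}\|Bf\|^2_{H^{s-k+1}_h} + \mathcal O(h)\|u\|^2_{H^s_{h,\loc}}.
\]
Iterating finitely many times, with the support of the escape function shrunk at each step so that the residual $\|u\|^2$ term is microlocalized in a progressively smaller neighborhood of $\WFh(A_1)$ contained in the elliptic set of $B$, absorbs the last term modulo $\mathcal O(h^\infty)$ and produces~\eqref{e:mhp-est}.

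The main obstacle is the absorbing term. Since $Q\in\Psi^k$ has the same order as $P$, the quantity $\tfrac{1}{h}\langle\{A,Q\}u,u\rangle$ is not a priori small, so the sign correlation between $\pm iQ$ in the equation and $\pm H_{\sigma(P)}$ in the flow is essential: with the correlation as stated, $\{A,Q\}$ appears on the same side as the good commutator contribution and is harmless by nonnegativity, while for the opposite correlation one would have to use the free a priori bound $\|Q^{1/2}u\|^2_{\loc}\leq C|\langle f,u\rangle|+\mathcal O(h)\|u\|^2$ coming from pairing the equation with $u$ alone (which is available only for the \emph{dissipative} sign choice) to absorb it. Once this sign discipline is arranged, everything else is a standard positive commutator argument.
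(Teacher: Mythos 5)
The paper gives no proof of this proposition; it is quoted as a standard fact with a pointer to \cite[\S 2.3]{v1}, so your argument has to be measured against the standard positive-commutator proofs in the literature (Vasy, Nonnenmacher--Zworski, Datchev--Vasy). Your overall strategy, the reduction of part~2 to part~1, and the sign correlation you identify between $\pm iQ$ and $\pm H_{\sigma(P)}$ are all correct: with the stated correlation the anticommutator term does land on the side of the inequality that is being bounded from below, with nonnegative principal symbol.

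The genuine gap is in the claim that this term is then ``harmless by nonnegativity.'' The absorbing contribution enters as $h^{-1}\langle (AQ+Q^*A)u,u\rangle$, and sharp G\aa rding applied to its nonnegative principal symbol $2aq\langle\xi\rangle^{2s-k+1}$ only yields a lower bound of the form $-C\,h\cdot h^{-1}\|\widetilde Bu\|^2=-C\|\widetilde Bu\|^2$ with $\widetilde B$ elliptic on all of $\supp a$. This error is of the \emph{same order in $h$} as your main positive term $c\|B_1u\|^2$, and it is microlocalized on the whole escape-function support rather than on $\WFh(A_1)\cup\mathrm{ell}(A_2)$, so it is not controlled by the right-hand side of \eqref{e:mhp-est}. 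The same problem occurs with the $O(1)$ term $\langle RAu,u\rangle$ coming from $P-P^*\in h\Psi^{k-1}$, which you have recorded as $\mathcal O(h)\|u\|^2$ but which is in fact $\mathcal O(1)$ after the division by $h$. Your iteration with shrinking supports gains powers of $h$ and so only removes genuinely $\mathcal O(h)$ residuals; it cannot absorb an $\mathcal O(1)$ defect of the same size as the main term, and trying to re-estimate $\|\widetilde Bu\|^2$ by the same propagation argument is circular. The standard fix is to strengthen the escape inequality to $\mp\langle\xi\rangle^{1-k}H_{\sigma(P)}a\geq Ma-Cb_2^2$ with $M$ a large constant (e.g.\ $a=\chi^2e^{-Mt}$ in flow-adapted coordinates), so that the commutator term dominates $M\langle \Op(a)u,u\rangle\gtrsim M\|Cu\|^2$ and all the $\mathcal O(1)$ errors supported in $\supp a$, whose constants are independent of $M$, are absorbed by taking $M$ large; alternatively one replaces sharp G\aa rding by Fefferman--Phong on $\Re Q$ (or reduces to $Q\geq 0$ as an operator) together with the exact-square structure $A=C^*C$, which downgrades the absorbing error to $\mathcal O(h)\|\widetilde Bu\|^2$ and makes your iteration applicable. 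As written, the construction ``integrate a small bump $b_1^2$ along the flow,'' which only gives $\mp H_{\sigma(P)}a\geq cb_1^2-Cb_2^2$, is not enough to close the argument.
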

For $Q=0$, Proposition~\ref{l:microhyperbolic} can be viewed as a microlocal version of uniqueness of solutions
to the Cauchy problem for hyperbolic equations; a corresponding microlocal existence fact
is given by
\begin{prop} (Hyperbolic parametrix)
  \label{l:hparametrix}
Assume that $\sigma(P)$ is real-valued, $\WFh(f)\subset T^*X$ is compact,
$U,V\subset T^*X$ are compactly contained
open sets, and for each flow line $\gamma(t)$ of the Hamiltonian
field $H_{\sigma(P)}$ such that $\gamma(0)\in\WFh(f)$, there exists $t\in\mathbb R$
such that $\gamma(t)\in U$ and $\gamma(s)\in V$ for all $s$ between $0$ and $t$.

Then there exists an $h$-tempered family $v(h)\in C_0^\infty(X)$ such that
$\WFh(v)\subset V$ and
$$
\|v\|_{L^2}\leq Ch^{-1}\|f\|_{L^2},\quad
\|Pv\|_{L^2}\leq C\|f\|_{L^2},\quad
\WFh(Pv-f)\subset U.
$$
\end{prop}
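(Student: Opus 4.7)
The plan is to construct $v$ as a time-integral of the Schr\"odinger propagator applied to $f$, with the integration time tailored so that the flow drives $\WFh(f)$ into $U$ while keeping all trajectory segments inside $V$. Since only the behavior of $P$ near $\overline V$ will matter, first pick $\tilde P\in \Psi^{\comp}(X)$ agreeing with $P$ microlocally on a neighborhood of $\overline V$ (by a symbolic cutoff), and set $U(t):=e^{-it\tilde P/h}$. By Proposition~\ref{l:schrodinger}, for $t$ in any bounded interval, $U(t)$ is (after composition with compactly microlocalized cutoffs) an $L^2$-bounded Fourier integral operator quantizing the Hamiltonian flow $e^{tH_{\sigma(P)}}$.

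Next set up a microlocal partition with uniform escape times. For each $\rho\in\WFh(f)$ choose $t(\rho)\in\mathbb R$ such that $e^{t(\rho)H_p}(\rho)\in U$ and $e^{sH_p}(\rho)\in V$ for every $s$ between $0$ and $t(\rho)$; by continuity of the flow and openness of $U,V$, this extends to a neighborhood $W_\rho$ of $\rho$ with the same time $T_\rho:=t(\rho)$. Extract a finite subcover $W_1,\dots,W_N$ of the compact set $\WFh(f)$ with times $T_1,\dots,T_N$, and pick $A_j\in\Psi^{\comp}(X)$ with $\WFh(A_j)\subset W_j$ and $\sum_j A_j=1$ microlocally near $\WFh(f)$. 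Set $f_j:=A_j f$, so that $f=\sum_j f_j+\mathcal O(h^\infty)_{C^\infty}$, and define
\begin{equation*}
v_j:=\frac{i}{h}\int_0^{T_j} U(t)f_j\,dt,\qquad v:=\sum_{j=1}^N v_j.
\end{equation*}

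Since $P U(t)=ih\,\partial_t U(t)$ microlocally near $\overline V$, integration in $t$ gives $Pv_j=f_j-U(T_j)f_j$ modulo $\mathcal O(h^\infty)$, so $Pv-f=-\sum_j U(T_j)f_j+\mathcal O(h^\infty)$. Propagation of wavefront sets under the Fourier integral operator $U(T_j)$ shows $\WFh(U(T_j)f_j)\subset e^{T_j H_p}(\WFh(A_j))\subset U$ by construction, giving $\WFh(Pv-f)\subset U$. Similarly $\WFh(v_j)\subset\{e^{tH_p}(\rho):\rho\in\WFh(f_j),\ t\text{ between }0\text{ and }T_j\}\subset V$, so $\WFh(v)\subset V$. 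Uniform $L^2$-boundedness of $U(t)$ for $|t|\le\max_j |T_j|$ yields $\|v\|_{L^2}\le Ch^{-1}\|f\|_{L^2}$ and $\|Pv\|_{L^2}\le C\|f\|_{L^2}$. Since $\WFh(v)\subset V$ has compact $x$-projection and does not touch fiber infinity, $v$ is automatically smooth and $\mathcal O(h^\infty)_{C^\infty}$ off any neighborhood of that projection; multiplying by a cutoff $\chi\in C_0^\infty(X)$ equal to $1$ near the $x$-projection of $V$ places $v$ in $C_0^\infty(X)$ at the cost of an $\mathcal O(h^\infty)_{C^\infty}$ error.

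The main obstacle is bookkeeping: one must arrange $\tilde P$ so that its symbol agrees with that of $P$ on a neighborhood containing every trajectory segment $\{e^{tH_p}(\rho):\rho\in\WFh(A_j),\ 0\le t\le T_j\}\subset V$ that enters the construction, and then verify that both the algebraic identity $P U(t)=ih\,\partial_t U(t)$ and the Fourier integral representation of $U(t)A_j$ from Proposition~\ref{l:schrodinger} hold modulo $\mathcal O(h^\infty)$ on the support of the integrand. Once this microlocal setup is fixed, the construction is a transparent Duhamel argument, and the bounds and wavefront statements follow by direct inspection.
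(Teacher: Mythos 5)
Your proof is correct and follows essentially the same route as the paper's: a microlocal partition of unity reducing to (locally) uniform escape times, followed by the Duhamel integral $v=\frac{i}{h}\int_0^{T}e^{-itP/h}f\,dt$, with Proposition~\ref{l:schrodinger} giving the $L^2$ bounds and wavefront containments, and the replacement of $P$ by a compactly microlocalized $\tilde P$ justified because $\WFh(v)\subset V$. The only cosmetic difference is that the paper inserts a smooth time cutoff $\chi(t)$ so that the error $Pv-f$ appears as $\int(\partial_t\chi)e^{-itP/h}f\,dt$ supported in times where the flow has already entered $U$, whereas you keep the sharp endpoint and control the boundary term $U(T_j)f_j$ directly by Egorov-type wavefront propagation; both correctly localize the error to $U$.
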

\begin{proof}
By applying a microlocal partition of unity to $f$, we may assume that
there exists $T>0$ (the case $T<0$ is considered similarly and the case $T=0$ is trivial
by putting $v=0$)
such that for each flow line $\gamma(t)$ of $H_{\sigma(P)}$
such that $\gamma(0)\in\WFh(f)$, we have $\gamma(T)\in U$ and $\gamma([0,T])\in V$.
Take $\varepsilon\in (0,T)$ such that $\gamma([T-\varepsilon,T])\subset U$
for each such $\gamma$.
Since $V$ is compactly contained in $T^*X$, we may assume that $P$ is compactly
supported and $P\in\Psi^{\comp}(X)$.
We then take $\chi\in C_0^\infty(-\infty,T)$ such that
$\chi=1$ near $[0,T-\varepsilon]$ and put
$$
v:={i\over h}\int_0^T \chi(t)e^{-itP/h}f\,dt.
$$
Then $\|v\|_{L^2}\leq Ch^{-1}\|f\|_{L^2}$ and
$\WFh(v)\subset V$ by Proposition~\ref{l:schrodinger}. Integrating by parts, we compute
$$
Pv=-\int_0^T \chi(t)\partial_t e^{-itP/h}f\,dt=f+\int_0^T(\partial_t\chi(t))e^{-itP/h}f\,dt;
$$
therefore, $\|Pv\|_{L^2}\leq C\|f\|_{L^2}$ and by Proposition~\ref{l:schrodinger}, $\WFh(Pv-f)\subset U$.
\end{proof}
We also need the following version of the sharp G\r arding inequality, see~\cite[Theorem~4.32]{e-z}
or~\cite[Proposition~5.2]{skds}:
\begin{prop} (Sharp G\r arding inequality)
  \label{l:garding}
Assume that $A\in\Psi^{\comp}(X)$ is compactly
supported and
$\Re\sigma(A)\geq 0$ near $\WFh(u)$. Assume also that $B\in\Psi^{\comp}(X)$ is compactly supported
and elliptic on $\WFh(A)\cap\WFh(u)$. Then
$$
\Re\langle Au,u\rangle\geq -Ch\|Bu\|_{L^2}^2-\mathcal O(h^\infty).
$$
\end{prop}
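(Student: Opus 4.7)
The plan is to reduce to the standard, globally non-negative form of the sharp G\r arding inequality~\cite[Theorem~4.32]{e-z} by replacing $A$ with an auxiliary operator whose principal symbol has nonnegative real part \emph{everywhere}, and then replacing $u$ with a compactly microlocalized truncation that automatically lies in $C_0^\infty(X)$.

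First I would pick a relatively compact open neighborhood $V\subset T^*X$ of the compact set $\WFh(A)\cap\WFh(u)$ on which both $\Re\sigma(A)\ge 0$ and $B$ is elliptic (which exists by the hypotheses). Choose $X_1\in\Psi^{\comp}(X)$ compactly supported with $\WFh(X_1)\subset V$ and $X_1\equiv 1$ microlocally on $\WFh(A)\cap\WFh(u)$, and set $\tilde A:=X_1 A X_1^*\in\Psi^{\comp}(X)$. Then $\sigma(\tilde A)=|\sigma(X_1)|^2\sigma(A)$ has $\Re\ge 0$ on $V$ and vanishes outside $V$, so $\Re\sigma(\tilde A)\ge 0$ globally. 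The difference $A-\tilde A = A(1-X_1^*)+(1-X_1)A X_1^*$ has wavefront set contained in $\WFh(A)$, and each summand vanishes microlocally where $X_1\equiv 1$; in particular, $\WFh(A-\tilde A)\cap\WFh(u)=\emptyset$, so $(A-\tilde A)u=\mathcal O(h^\infty)_{C^\infty}$ and
\[
\Re\langle Au,u\rangle=\Re\langle\tilde A u,u\rangle+\mathcal O(h^\infty).
\]

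Next I would choose a slightly larger cutoff $X_0\in\Psi^{\comp}(X)$ compactly supported with $X_0\equiv 1$ microlocally on $\WFh(\tilde A)$ and $\WFh(X_0)$ still inside the elliptic set of $B$. Then $X_0\tilde A X_0^*=\tilde A+\mathcal O(h^\infty)_{\Psi^{-\infty}}$, so, writing $v:=X_0^* u\in C_0^\infty(X)$,
\[
\langle\tilde A u,u\rangle=\langle\tilde A v,v\rangle+\mathcal O(h^\infty).
\]
Applying the standard sharp G\r arding inequality to $\tilde A\in\Psi^{\comp}(X)$ (which has $\Re\sigma(\tilde A)\ge 0$ globally) on $v\in L^2(X)$ gives $\Re\langle\tilde A v,v\rangle\ge -Ch\|v\|_{L^2}^2$. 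Finally, because $B$ is elliptic on $\WFh(X_0)$, the elliptic parametrix (Proposition~\ref{l:eparametrix}) produces $B'\in\Psi^{\comp}(X)$ with $X_0^* B'B=X_0^*+\mathcal O(h^\infty)_{\Psi^{-\infty}}$, so $\|v\|_{L^2}\le C\|Bu\|_{L^2}+\mathcal O(h^\infty)$, and combining these yields the desired bound.

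The main obstacle is not analytic but organizational: one has to nest the cutoffs $X_1,X_0$ carefully inside the region $\{\Re\sigma(A)\ge 0\}\cap\{B\text{ elliptic}\}$ and verify that every error operator appearing has wavefront set disjoint from $\WFh(u)$, so that its action on the $h$-tempered distribution $u$ is genuinely $\mathcal O(h^\infty)$ and no global norm of $u$ leaks into the final estimate.
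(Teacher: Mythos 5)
The paper does not give its own proof of this proposition; it simply cites~\cite[Theorem~4.32]{e-z} (the global sharp G\r arding inequality on $\mathbb R^n$) and~\cite[Proposition~5.2]{skds}. Your argument is a correct and standard filling-in of that gap, and it takes exactly the route one would expect from those references: sandwich $A$ between cutoffs $X_1,X_1^*$ microsupported where $\Re\sigma(A)\ge 0$ so that $\tilde A=X_1AX_1^*$ has globally nonnegative real part of the principal symbol, check $\WFh(A-\tilde A)\cap\WFh(u)=\emptyset$ so the replacement costs only $\mathcal O(h^\infty)$, truncate $u$ to $v=X_0^*u\in C_0^\infty$ using a second cutoff nested in the elliptic set of $B$, apply the global inequality, and finish with the elliptic parametrix bound $\|v\|_{L^2}\le C\|Bu\|_{L^2}+\mathcal O(h^\infty)$. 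Each of these steps is correct; in particular the identity $A-\tilde A=A(1-X_1^*)+(1-X_1)AX_1^*$, the symbol computation $\sigma(\tilde A)=|\sigma(X_1)|^2\sigma(A)$, and the handling of the $\mathcal O(h^\infty)$ remainders against the $h$-tempered $u$ all check out. The one point you might flag more explicitly is that the ``standard'' sharp G\r arding you invoke for $\tilde A$ is being used on a manifold rather than on $\mathbb R^n$; this is harmless here because $\tilde A$ is compactly supported and compactly microlocalized (so one can pass to a compact piece of $X$ and use, e.g., anti-Wick quantization or~\cite[Proposition~5.2]{skds} directly), but a naive chart-by-chart reduction does not preserve nonnegativity of the symbol, so a one-line justification or reference is warranted.
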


\section{Abstract framework near infinity}
  \label{s:framework}

In this section, we provide an abstract microlocal framework for studying resonances;
the general assumptions are listed in~\S\ref{s:framework-assumptions}.
Rather than considering resonances as poles of the meromoprhic continuation of the cutoff
resolvent, we define them as solutions of a nonselfadjoint eigenvalue problem
featuring a holomorphic family of Fredholm operators, $\mathcal P(\omega)$.
We assume that the dependence of the principal symbol of $\mathcal P(\omega)$
on $\omega$ can be resolved in a convex neighborhood $\mathcal U$ of the trapped set,
yielding the $\omega$-independent symbol $p$ (and the operator $P$ later in Lemma~\ref{l:resolution}).
Finally, we require the existence of a semiclassically outgoing parametrix for $\mathcal P(\omega)$,
resolving it modulo an operator microlocalized near the trapped set.

In~\S\ref{s:framework-corollaries}, we derive several
useful corollaries of our assumptions, making it possible to treat spatial infinity as a black box
in the following sections. Finally, in~\S\S\ref{s:framework-schrodinger} and
\ref{s:framework-ah},
we provide two examples of situations when the assumptions
of~\S\ref{s:framework-assumptions} (but not necessarily the dynamical assumptions
of~\S\ref{s:dynamics}) are satisfied:
Schr\"odinger operators on $\mathbb R^n$, studied using complex scaling, and
Laplacians on even asymptotically hyperbolic manifolds, handled using~\cite{v1,v2}.

\subsection{General assumptions}
  \label{s:framework-assumptions}

\newcounter{assumptions}

Assume that:
\begin{enumerate}
\item \label{a:basic}
$X$ is a smooth $n$-dimensional manifold without boundary, possibly noncompact,
with a prescribed volume form;
\item $\mathcal P(\omega)\in\Psi^k(X)$ is a family of
properly supported semiclassical pseudodifferential operators
depending holomorphically on $\omega$ lying in an open simply connected set $\Omega\subset\mathbb C$
such that $\mathbb R\cap\Omega$ is connected, with principal symbol $\mathbf p(x,\xi,\omega)$;
\item \label{a:spaces}
$\mathcal H_1,\mathcal H_2$ are $h$-dependent Hilbert spaces such that
$H^{N}_{h,\comp}(X)\subset \mathcal H_j\subset H^{-N}_{h,\loc}(X)$ for some $N$,
with norms of embeddings $\mathcal O(h^{-N})$, and $\mathcal P(\omega)$
is bounded $\mathcal H_1\to\mathcal H_2$ with norm $\mathcal O(1)$;
\item \label{a:fredholm}
for some fixed $[\alpha_0,\alpha_1]\subset \mathbb R\cap\Omega$ and
$C_0>0$, the operator $\mathcal P(\omega):\mathcal H_1\to\mathcal H_2$
is a Fredholm operator of index zero in the region
\begin{equation}
  \label{e:omega-region}
\Re\omega\in [\alpha_0,\alpha_1],\quad
|\Im\omega|\leq C_0h.
\end{equation}
\setcounter{assumptions}{\value{enumi}}
\end{enumerate}
Together with invertibility of $\mathcal P(\omega)$
in a subregion of~\eqref{e:omega-region} proved in Theorem~\ref{t:gaps},
by Analytic Fredholm Theory~\cite[Theorem~D.4]{e-z} our
assumptions imply that
\begin{equation}
  \label{e:resolvent}
\mathcal R(\omega):=\mathcal P(\omega)^{-1}:\mathcal H_2\to\mathcal H_1
\end{equation}
is a meromorphic family of operators with poles of finite rank for $\omega$ satisfying~\eqref{e:omega-region}.
\emph{Resonances} are defined as poles of $\mathcal R(\omega)$. 
Following~\cite[Theorem~2.1]{go-si}, we define the multiplicity of a resonance $\omega_0$ as
\begin{equation}
  \label{e:multiplicity}
{1\over 2\pi i}\Tr\oint_{\omega_0} \mathcal P(\omega)^{-1}\partial_\omega\mathcal P(\omega)\,d\omega.
\end{equation}
Here $\oint_{\omega_0}$ stands for the integral over a contour enclosing $\omega_0$, but no other
poles of $\mathcal R(\omega)$. Since $\mathcal R(\omega)$ has poles of finite rank, we see
that the integral in~\eqref{e:multiplicity} yields a finite dimensional operator on $\mathcal H_1$ and thus
one can take the trace. The fact that the resulting multiplicity is a positive integer
will follow for example from the representation of resonances as zeroes of a Fredholm determinant,
in part~1 of Proposition~\ref{l:grushin-ultimate}. See also~\cite[Appendix~A]{sj-dwe}.

We next fix a `physical region' $\mathcal U$ in phase space, where most of our analysis will take
place, in particular the intersection of the trapped set with the relevant energy
shell will be contained in $\mathcal U$. The region $\mathcal U$ will be contained
in a larger region $\mathcal U'$, which is used to determine when trajectories have escaped from $\mathcal U$.
(See~\eqref{e:sets-schrodinger} and~\eqref{e:sets-ah} for the definitions of $\mathcal U,\mathcal U'$
for the examples we consider.)
We assume that:
\begin{enumerate}
\setcounter{enumi}{\value{assumptions}}
\item \label{a:u'} $\mathcal U'\subset T^*X$ is open and bounded, and each compactly supported
$A\in\Psi^{\comp}(X)$ with $\WFh(A)\subset\mathcal U'$ is bounded $L^2\to\mathcal H_j,
\mathcal H_j\to L^2$, $j=1,2$, with norm $\mathcal O(1)$;
\item\label{a:s-a}
$\mathcal P(\omega)^*= \mathcal P(\omega)+\mathcal O(h^\infty)$
microlocally in $\mathcal U'$ when $\omega\in \mathbb R\cap \Omega$;
\item\label{a:p-res}
for each $(x,\xi)\in \mathcal U'$, the equation
$\mathbf p(x,\xi,\omega)=0$, $\omega\in\Omega$
has unique solution
\begin{equation}
  \label{e:p}
\omega=p(x,\xi).
\end{equation}
Moreover, $p(x,\xi)\in\mathbb R$ and $\partial_\omega \mathbf p(x,\xi,p(x,\xi))<0$
for $(x,\xi)\in \mathcal U'$;
\item \label{a:U}
$\mathcal U\subset \mathcal U'$ is a compactly contained open subset,
whose closure $\overline{\mathcal U}$ is
relatively convex with respect to the Hamiltonian flow of $p$, i.e.
if $\gamma(t),0\leq t\leq T$, is a flow line of $H_p$ in $\mathcal U'$
and $\gamma(0),\gamma(T)\in \overline{\mathcal U}$, then $\gamma([0,T])\subset \overline{\mathcal U}$;
\setcounter{assumptions}{\value{enumi}} 
\end{enumerate}
Note that for $\omega\in \mathbb R\cap\Omega$, Hamiltonian flow lines
of $p$ in $\mathcal U'\cap p^{-1}(\omega)$ are rescaled Hamiltonian flow lines
of $\mathbf p(\,\cdot\,,\omega)$ in $\{\rho\in\mathcal U'\mid \mathbf p(\rho,\omega)=0\}$.
The symbol $p$ is typically the square root of the principal symbol
of the original Laplacian or Schr\"odinger
operator, see~\eqref{e:p-schrodinger} and~\eqref{e:p-ah}.

We can now define the \emph{incoming/outgoing tails} $\Gamma_\pm\subset\overline{\mathcal U}$ as
follows: $\rho\in\overline{\mathcal U}$ lies in $\Gamma_\pm$ if and only if $e^{\mp tH_p}(\rho)$
stays in $\overline{\mathcal U}$ for all $t\geq 0$. Define the \emph{trapped set} as	
\begin{equation}
  \label{e:trapped-set}
K:=\Gamma_+\cap\Gamma_-.
\end{equation}
Note that $\Gamma_\pm$ and $K$ are closed subsets of $\overline{\mathcal U}$
(and thus the sets $\Gamma_\pm$ defined here are smaller than the original $\Gamma_\pm$ defined in the introduction),
and $e^{tH_p}(\Gamma_\pm)\subset\Gamma_\pm$ for $\mp t\geq 0$,
thus $e^{tH_p}(K)=K$ for all $t$. We assume that, with $\alpha_0,\alpha_1$
defined in~\eqref{e:omega-region},
\begin{enumerate}
  \setcounter{enumi}{\value{assumptions}}
\item\label{a:k-compact}
$K\cap p^{-1}([\alpha_0,\alpha_1])$ is a nonempty compact subset of $\mathcal U$.
\setcounter{assumptions}{\value{enumi}}
\end{enumerate}
Finally, we assume the existence of a semiclassically outgoing parametrix,
which will make it possible to reduce our analysis to a neighborhood of the
trapped set in~\S\ref{s:framework-corollaries}:
\begin{enumerate}
  \setcounter{enumi}{\value{assumptions}}
\item \label{a:parametrix}
$\mathcal Q\in\Psi^{\comp}(X)$ is compactly supported, $\WFh(\mathcal Q)\subset \mathcal U$,
and the operator
\begin{equation}
  \label{e:r'}
\mathcal R'(\omega):=(\mathcal P(\omega)-i \mathcal Q)^{-1}:\mathcal H_2\to \mathcal H_1
\end{equation}
satisfies, for $\omega$ in~\eqref{e:omega-region},
\begin{equation}
  \label{e:r'-bound}
\|\mathcal R'(\omega)\|_{\mathcal H_2\to \mathcal H_1}\leq Ch^{-1};
\end{equation}
\item \label{a:outgoing}
for $\omega$ in~\eqref{e:omega-region},
$\mathcal R'(\omega)$ is \emph{semiclassically outgoing} in the following sense:
if $(\rho,\rho')\in\WFh(\mathcal R'(\omega))$ and $\rho,\rho'\in \mathcal U'$,
there exists $t\geq 0$ such that $e^{tH_p}(\rho)=\rho'$ and $e^{sH_p}(\rho)\in \mathcal U'$
for $0\leq s\leq t$. (See Figure~\ref{f:outgoing}(a) below.)
\end{enumerate}

\subsection{Some consequences of general assumptions}
  \label{s:framework-corollaries}

In this section, we derive several corollaries of the assumptions of~\S\ref{s:framework-assumptions},
used throughout the rest of the paper. 

\smallsection{Global properties of the flow}
We start with two technical lemmas:
\begin{lemm}
  \label{l:the-flow}
Assume that $\rho\in\Gamma_\pm$. Then as
$t\to\mp\infty$, the distance $d(e^{tH_p}(\rho),K)$ converges to zero. 
\end{lemm}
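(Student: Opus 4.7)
The plan is to use a compactness-plus-contradiction argument, exploiting the convexity assumption~(8) and the fact that $\overline{\mathcal U}$ is compact (as $\mathcal U$ is compactly contained in the bounded open set $\mathcal U'$). By symmetry it suffices to treat $\rho\in\Gamma_+$; we then want to show that the $\alpha$-limit set of the backward trajectory lies in $K$.

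Suppose for contradiction that there exist $\varepsilon>0$ and a sequence $t_n\to-\infty$ with $d(e^{t_nH_p}(\rho),K)\geq\varepsilon$. Since $\rho\in\Gamma_+$, every $\rho_n:=e^{t_nH_p}(\rho)$ lies in $\overline{\mathcal U}$, which is compact, so after passing to a subsequence $\rho_n\to\rho_\infty$ with $d(\rho_\infty,K)\geq\varepsilon$. The heart of the proof is to show $\rho_\infty\in K$, i.e. that the whole orbit $\{e^{sH_p}(\rho_\infty):s\in\mathbb R\}$ lies in $\overline{\mathcal U}$, which then contradicts $d(\rho_\infty,K)\geq\varepsilon$.

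Fix $s\in\mathbb R$. For all $n$ large enough, $s+t_n\leq 0$, so that $e^{(s+t_n)H_p}(\rho)=e^{-(-s-t_n)H_p}(\rho)$ is defined and lies in $\overline{\mathcal U}$ by the definition of $\Gamma_+$. In particular, all these points lie in a compact subset of $\mathcal U'$, where the Hamiltonian flow of $p$ is smooth. Applying continuous dependence of the flow on initial conditions to $\rho_n\to\rho_\infty$ gives
\[
e^{sH_p}(\rho_\infty)=\lim_{n\to\infty}e^{sH_p}(\rho_n)=\lim_{n\to\infty}e^{(s+t_n)H_p}(\rho)\in\overline{\mathcal U}.
\]
Here we may iterate/bootstrap in $s$ to ensure that $e^{sH_p}(\rho_\infty)$ is defined for all $s$: the limit identity shows the flow through $\rho_\infty$ remains in the compact set $\overline{\mathcal U}\subset\mathcal U'$, and so cannot leave the domain of definition of $H_p$ in finite time. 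Thus $e^{sH_p}(\rho_\infty)\in\overline{\mathcal U}$ for every $s\in\mathbb R$, which means $\rho_\infty\in\Gamma_+\cap\Gamma_-=K$, the desired contradiction.

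The main (minor) obstacle is the bookkeeping in the last step: one must confirm both that $e^{sH_p}(\rho_\infty)$ is defined for all $s\in\mathbb R$ and that continuous dependence on initial data applies, which requires knowing that all relevant orbit segments remain in a common compact subset of the open set $\mathcal U'$ where $p$ is defined. Compactness of $\overline{\mathcal U}$ and the fact that $\rho\in\Gamma_+$ supplies backward orbits of $\rho$ lying in $\overline{\mathcal U}$ for arbitrarily large negative times together provide exactly this; convexity of $\overline{\mathcal U}$ (assumption~(8)) is not strictly needed for this particular lemma, though it ensures the characterization of $\Gamma_\pm$ via one-sided stays in $\overline{\mathcal U}$ is consistent.
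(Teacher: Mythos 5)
Your proof is correct but takes a somewhat different route from the paper's. Both arguments pass to a subsequential limit $\rho_\infty$ of the orbit; the divergence is in how the contradiction with $d(\rho_\infty,K)\geq\varepsilon$ is obtained. The paper (working with $\rho\in\Gamma_-$, $t\to+\infty$) only observes $\rho_\infty\in\Gamma_-\setminus K$, hence $\rho_\infty\notin\Gamma_+$; it then picks $T\geq 0$ with $e^{-TH_p}(\rho_\infty)\notin\overline{\mathcal U}$, propagates this by continuity to $\gamma(t_j-T)\notin\overline{\mathcal U}$ for $j$ large, and contradicts convexity of $\overline{\mathcal U}$ (assumption~\eqref{a:U}) using $\gamma(0),\gamma(t_j)\in\overline{\mathcal U}$. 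You instead prove $\rho_\infty\in K$ directly: since $e^{sH_p}(\rho_\infty)=\lim_n e^{(s+t_n)H_p}(\rho)$ with $s+t_n\leq 0$ eventually, the entire two-sided orbit of $\rho_\infty$ stays in the closed set $\overline{\mathcal U}$, giving $\rho_\infty\in\Gamma_+\cap\Gamma_-=K$. Your observation that convexity is not strictly needed for this particular lemma is therefore correct; your argument uses only compactness of $\overline{\mathcal U}\Subset\mathcal U'$, closedness of $\Gamma_\pm$, and continuous dependence of the flow on initial data. The bootstrap on well-definedness of $e^{sH_p}(\rho_\infty)$ is the one spot that could be phrased more crisply (on the maximal interval of existence the orbit stays in the compact $\overline{\mathcal U}\subset\mathcal U'$, so that interval is all of $\mathbb R$), but the reasoning is sound. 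The paper's version is marginally shorter because it reuses the already-available convexity hypothesis; yours is more self-contained dynamically and explicitly identifies the $\alpha$-limit set as a subset of $K$.
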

\begin{proof}
We consider the case $\rho\in\Gamma_-$. Put
$\gamma(t):=e^{tH_p}(\rho)$, then $\gamma(t)\in\Gamma_-$
for all $t\geq 0$. Assume
that $d(\gamma(t),K)$ does not converge to zero
as $t\to+\infty$, then there exists a sequence of times $t_j\to +\infty$
such that $\gamma(t_j)$ does not lie in a fixed neighborhood
of $K$. By passing to a subsequence, we may assume that
$\gamma(t_j)$ converge to some $\rho_\infty\in\Gamma_-\setminus K$.
Then $\rho_\infty\not\in\Gamma_+$; therefore, there exists
$T\geq 0$ such that $e^{-TH_p}(\rho_\infty)\not\in\overline{\mathcal U}$.
For $j$ large enough, we have $\gamma(t_j-T)=e^{-TH_p}(\gamma(t_j))\not\in\overline{\mathcal U}$
and $t_j\geq T$; this contradicts convexity of $\overline{\mathcal U}$
(assumption~\eqref{a:U}).
\end{proof}
%
\begin{lemm}
  \label{l:the-flow-2}
Assume that $U_1$ is a neighborhood
of $K$ in $\overline{\mathcal U}$. Then there exists a neighborhood
$U_2$ of $K$ in $\overline{\mathcal U}$ such that for each flow line
$\gamma(t)$, $0\leq t\leq T$ of $H_p$ in $\overline{\mathcal U}$,
if $\gamma(0),\gamma(T)\in U_2$, then $\gamma([0,t])\subset U_1$.
\end{lemm}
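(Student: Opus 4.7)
The plan is to argue by contradiction, exploiting the compactness of $\overline{\mathcal U}$ and the characterization $K=\Gamma_+\cap\Gamma_-$. Assuming that no such $U_2$ works, for the shrinking basis $V_n:=\{\rho\in\overline{\mathcal U}\mid d(\rho,K)<1/n\}$ one obtains flow segments $\gamma_n:[0,T_n]\to\overline{\mathcal U}$ with $\gamma_n(0),\gamma_n(T_n)\in V_n$ yet $\gamma_n(s_n)\in\overline{\mathcal U}\setminus U_1$ for some $s_n\in[0,T_n]$. Since $\overline{\mathcal U}$ is compact (as a closed subset of the bounded set $\mathcal U'$, by assumption~\eqref{a:u'}) and $K$ is closed in $\overline{\mathcal U}$, one may pass to a subsequence so that $\gamma_n(s_n)\to\rho_\infty\in\overline{\mathcal U}\setminus U_1$, and simultaneously $\gamma_n(0)\to\rho_0$, $\gamma_n(T_n)\to\rho_1$ with $\rho_0,\rho_1\in K$. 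In particular $\rho_\infty\notin K$, and the task is to extract a contradiction.

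The core step is a case split according to whether $s_n$ and $T_n-s_n$ stay bounded or tend to infinity. If, after further subsequencing, $s_n\to s<\infty$, then continuous dependence of the flow on initial data yields $\rho_\infty=e^{sH_p}(\rho_0)$; by the $H_p$-invariance of $K$ this forces $\rho_\infty\in K$, contradiction. A symmetric argument using $\rho_1$ disposes of the case $T_n-s_n\to\tau<\infty$. In the remaining regime $s_n\to+\infty$ and $T_n-s_n\to+\infty$, for every fixed $t\geq 0$ and all sufficiently large $n$ we have $s_n\pm t\in[0,T_n]$, so
\[
e^{\pm tH_p}(\rho_\infty)=\lim_{n\to\infty}\gamma_n(s_n\pm t)\in\overline{\mathcal U}.
\]
This shows $\rho_\infty\in\Gamma_+\cap\Gamma_-=K$, again contradicting $\rho_\infty\notin K$.

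The main obstacle is purely organizational: making sure the case split is exhaustive and that continuity of the flow is applied in the correct regime. No ingredient beyond compactness of $\overline{\mathcal U}$, closedness of $K$, continuous dependence of $e^{tH_p}$ on initial data, and the defining properties of $\Gamma_\pm$ is required; in particular, neither the convexity hypothesis~\eqref{a:U} nor Lemma~\ref{l:the-flow} is needed here, which makes the proof markedly shorter than that of the preceding lemma.
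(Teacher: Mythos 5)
Your proof is correct and follows essentially the same compactness-and-contradiction scheme as the paper: extract a limit point $\rho_\infty\notin K$ of the excursions out of $U_1$, then use continuity of the flow and the characterization $K=\Gamma_+\cap\Gamma_-$ to force $\rho_\infty\in K$. The only organizational difference is that the paper skips your trichotomy by assuming WLOG $\rho_\infty\notin\Gamma_+$, which immediately bounds the times $t_j$ and collapses everything into your first case; your remark that neither the convexity assumption nor Lemma~\ref{l:the-flow} is needed is consistent with the paper's argument.
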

\begin{proof}
Assume the contrary, then there exist flow lines $\gamma_j(t)$, $0\leq t\leq T_j$,
in $\overline{\mathcal U}$, such that
$d(\gamma_j(0),K)\to 0$, $d(\gamma_j(T_j),K)\to 0$, yet
$\gamma_j(t_j)\not\in U_1$ for some $t_j\in [0,T_j]$.
Passing to a subsequence, we may assume that $\gamma_j(t_j)\to\rho_\infty\in\overline {\mathcal U}\setminus K$.
Without loss of generality, we assume that $\rho_\infty\not\in\Gamma_+$.
Then there exists $T>0$ such that $e^{-TH_p}(\rho_\infty)\in\mathcal U'\setminus\overline{\mathcal U}$,
and thus $e^{-TH_p}(\gamma_j(t_j))\not\in\overline{\mathcal U}$ for $j$ large enough. Since
$\gamma_j([0,T_j])\subset \overline{\mathcal U}$, we have $t_j\leq T$. By passing to a subsequence,
we may assume that $t_j\to t_\infty\in [0,T]$. However, then $\gamma_j(0)\to e^{-t_\infty H_p}(\rho_\infty)$,
which implies that $e^{-t_\infty H_p}(\rho_\infty)\in \Gamma_+$,
contradicting the fact that $\rho_\infty\not\in \Gamma_+$.
\end{proof}

\smallsection{Resolution of dependence on $\omega$}
We reduce the operator $\mathcal P(\omega)$
microlocally near $\mathcal U$ to an operator of the form $P-\omega$,
see also~\cite[\S4]{isz}:
\begin{lemm}
  \label{l:resolution}
There exist:
\begin{itemize}
\item a compactly supported $P\in\Psi^{\comp}(X)$ such that $P^*=P$
and $\sigma(P)=p$ near $\mathcal U$, where~$p$ is defined in~\eqref{e:p}, and
\item a family
of compactly supported operators $\mathcal S(\omega)\in\Psi^{\comp}(X)$,
holomorphic in $\omega\in\Omega$, with $\mathcal S(\omega)^*=\mathcal S(\omega)$
for $\omega\in\mathbb R\cap\Omega$ and $\mathcal S(\omega)$ elliptic near $\mathcal U$,
such that
\end{itemize}
\begin{equation}
  \label{e:resolution}
\mathcal P(\omega)=\mathcal S(\omega)(P-\omega)\mathcal S(\omega)+\mathcal O(h^\infty)
\quad\text{microlocally near }\mathcal U.
\end{equation}
\end{lemm}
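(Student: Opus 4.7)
The plan is to first factor $(\omega-p)$ out of the principal symbol of $\mathcal P(\omega)$. Using that $\mathbf p$ depends holomorphically on $\omega$, a Taylor expansion around the unique zero $\omega=p(x,\xi)$ yields
\[
\mathbf p(x,\xi,\omega) \;=\; q(x,\xi,\omega)\bigl(p(x,\xi)-\omega\bigr),\quad
q(x,\xi,\omega) := -\!\int_0^1 \partial_\omega\mathbf p\bigl(x,\xi,p(x,\xi)+t(\omega-p(x,\xi))\bigr)\,dt.
\]
By assumption~(\ref{a:p-res}), $q(x,\xi,p(x,\xi))=-\partial_\omega\mathbf p(x,\xi,p)>0$; uniqueness of the zero $\omega=p(x,\xi)$ in $\Omega$ forces $q$ to be nonvanishing on $\mathcal U'\times\Omega$. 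Simple connectedness of $\Omega$ lets me choose a holomorphic branch $s_0:=\sqrt q$ normalized to be positive on $\Omega\cap\mathbb R$; this will be the principal symbol of $\mathcal S(\omega)$, and $p$ that of $P$.

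\textbf{Order-one cancellation in Weyl quantization.}
I would quantize throughout using the Weyl quantization, so that $P^*=P$ and $\mathcal S(\omega)^*=\mathcal S(\omega)$ for $\omega\in\mathbb R\cap\Omega$ become equivalent to reality of the full symbols. A direct computation with the Moyal product yields
\[
s_0\,\#_W(p-\omega)\,\#_W\,s_0 \;=\; s_0^2(p-\omega) + \mathcal O(h^2),
\]
the key cancellation being that the two order-$h$ Poisson-bracket contributions $\{s_0,p\}s_0$ and $s_0\{p-\omega,s_0\}=-s_0\{s_0,p\}$ kill each other by symmetry of the triple product. Thus the initial Weyl quantizations $P_0,\mathcal S_0(\omega)$ of $p,s_0$ already satisfy~\eqref{e:resolution} modulo $\mathcal O(h)$ near $\mathcal U$, with the only residual error at order $h$ being the (real for real $\omega$, by~(\ref{a:s-a})) subprincipal symbol of $\mathcal P(\omega)$.

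\textbf{Iterative correction.}
I next write $P\sim\sum_{j\ge 0}h^j P_j$ and $\mathcal S(\omega)\sim\sum_{j\ge 0}h^j \mathcal S_j(\omega)$ as formal Weyl quantizations with $\omega$-independent real symbols $p_j(x,\xi)$ and $\omega$-holomorphic symbols $s_j(x,\xi,\omega)$ real for real $\omega$. Collecting the order-$h^j$ terms of~\eqref{e:resolution} at the symbol level reduces, modulo data determined by $p_0,\dots,p_{j-1}$ and $s_0,\dots,s_{j-1}$, to
\[
q(x,\xi,\omega)\,p_j(x,\xi)+2\,s_0(x,\xi,\omega)\,s_j(x,\xi,\omega)\bigl(p(x,\xi)-\omega\bigr)=r_j(x,\xi,\omega),
\]
where $r_j$ is an explicit remainder. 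I would first set $p_j(x,\xi):=r_j(x,\xi,p(x,\xi))/q(x,\xi,p(x,\xi))$, which makes $r_j-q\,p_j$ vanish at $\omega=p(x,\xi)$; Taylor expansion then factors it as $(p-\omega)\tilde r_j(x,\xi,\omega)$ with $\tilde r_j$ smooth in $(x,\xi)$ and holomorphic in $\omega$, and I would take $s_j:=\tilde r_j/(2s_0)$. Reality of $r_j$ for real $\omega$, and hence of $p_j$ and $s_j$, follows inductively from assumption~(\ref{a:s-a}).

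\textbf{Assembly and main obstacle.}
Finally I would Borel-sum the formal series in $h$, multiply the resulting symbols by a cutoff $\chi\in C_0^\infty(T^*X)$ equal to $1$ on a neighborhood of $\overline{\mathcal U}$ and compactly supported in $\mathcal U'$, and quantize to obtain $P\in\Psi^{\comp}(X)$ and a holomorphic family $\mathcal S(\omega)\in\Psi^{\comp}(X)$; the identity~\eqref{e:resolution} then holds microlocally near $\mathcal U$ by construction, and ellipticity of $\mathcal S(\omega)$ there is automatic from $s_0=\sqrt q>0$. The main technical obstacle will be the iterative step: naively trying to solve $2s_0 s_j(p-\omega)=r_j$ would force a singularity on the characteristic set $\{\omega=p(x,\xi)\}$, and what makes the scheme go through is that the $\omega$-independent subprincipal freedom in $P$ is exactly enough to absorb $r_j|_{\omega=p}$, rendering the division by $(p-\omega)$ regular at every order; preserving simultaneously holomorphy in $\omega$ and reality for real $\omega$ at each step is precisely what forces the use of Weyl (rather than standard) quantization.
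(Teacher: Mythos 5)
Your proof is correct and follows essentially the same route as the paper: an inductive construction order by order in $h$, starting from the principal factorization $\mathbf p=s_0^2(p-\omega)$ and at each step using the $\omega$-independent correction $p_j:=r_j|_{\omega=p}/q|_{\omega=p}$ to make the remainder divisible by $(p-\omega)$, which then determines $s_j$. Your use of the Weyl quantization to encode the self-adjointness/reality constraints, and the observation that the order-$h$ term of the symmetric triple product cancels automatically, are sensible ways of filling in details the paper leaves implicit.
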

\begin{proof}
We argue by induction, constructing compactly supported operators $P_j,\mathcal S_j(\omega)\in\Psi^{\comp}(X)$, such that
$P_j^*=P_j$, $\mathcal S_j^*(\omega)=\mathcal S_j(\omega)$ for $\omega\in\mathbb R\cap\Omega$, and
$\mathcal P(\omega)=\mathcal S_j(\omega)(P_j-\omega)\mathcal S_j(\omega)+\mathcal O(h^{j+1})$
microlocally near $\mathcal U$. It will remain
to take the asymptotic limit.

For $j=0$, it suffices to take any $P_0,\mathcal S_0(\omega)$ such that $\sigma(P_0)=p$
and $\sigma(\mathcal S_0(\omega))(\rho)=s_0(\rho,\omega)$ near $\mathcal U$, where
(with $\mathbf p(\cdot,\omega)$ denoting the principal symbol of $\mathcal P(\omega)$)
$$
\mathbf p(\rho,\omega)=s_0(\rho,\omega)^2(p(\rho)-\omega),\quad
\rho\in \mathcal U';
$$
the existence of such $s_0$ and the fact
that it is real-valued for real $\omega$ follows from assumption~\eqref{a:p-res}.

Now, given $P_j,\mathcal S_j(\omega)$ for some $j\geq 0$,
we construct $P_{j+1},\mathcal S_{j+1}(\omega)$. We have
$\mathcal P(\omega)=\mathcal S_j(\omega)(P_j-\omega)\mathcal S_j(\omega)+h^{j+1} R_j(\omega)$
microlocally near $\mathcal U$, where $R_j(\omega)\in\Psi^{\comp}$ is a holomorphic
family of operators and, by assumption~\eqref{a:s-a}, $R_j(\omega)^*=R_j(\omega)+\mathcal O(h^\infty)$
microlocally near $\mathcal U$ when $\omega\in\mathbb R\cap\Omega$. We then put
$P_{j+1}=P_j+h^{j+1}A_j$, $\mathcal S_{j+1}(\omega)=\mathcal S_j(\omega)+h^{j+1}B_j(\omega)$,
where $\sigma(A_j)=p_j,\sigma(B_j(\omega))(\rho)=s_j(\rho,\omega)$ near $\mathcal U$ and
$$
\sigma(R_j)(\rho,\omega)=2s_0(\rho,\omega)s_j(\rho,\omega)(p(\rho)-\omega)
+s_0(\rho,\omega)^2p_j(\rho),\quad
\rho\in\mathcal U'.
$$
The existence of $s_j(\rho,\omega),p_j(\rho)$ and the fact that
$p_j(\rho)\in\mathbb R$ and $s_j(\rho,\omega)\in\mathbb R$ for $\rho$ near $\mathcal U$ and
$\omega\in\mathbb R\cap\Omega$ follow
from assumption~\eqref{a:p-res}. In particular, we put
$p_j(\rho)=\sigma(R_j)(\rho,p(\rho))/s_0(\rho,p(\rho))^2$.
\end{proof}
Note that, if $u(h)\in \mathcal H_1,f(h)\in \mathcal H_2$ have norms polynomially
bounded in $h$ (and in light of assumption~\eqref{a:spaces} are $h$-tempered in the
sense of~\S\ref{s:prelim-basics}), and $\mathcal P(\omega)u=f$, then
\begin{equation}
  \label{e:conjugated}
(P-\omega)\mathcal S(\omega)u=\mathcal S'(\omega)f+\mathcal O(h^\infty)\quad
\text{microlocally near }\mathcal U,
\end{equation}
where $\mathcal S'(\omega)\in\Psi^{\comp}(X)$ is an elliptic parametrix of $\mathcal S(\omega)$ microlocally 
near $\mathcal U$, constructed in Proposition~\ref{l:eparametrix}.
\begin{figure}
\includegraphics{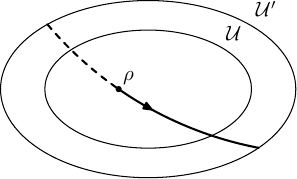}
\qquad\qquad\qquad
\includegraphics{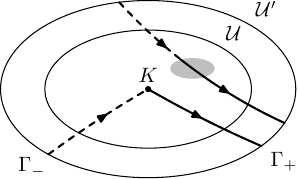}
\hbox to\hsize{\hss (a)\hss\hss (b)\hss}
\caption{(a) Assumption~\eqref{a:outgoing}, with
the undashed part of the flow line of $\rho$
corresponding to $\rho'\in\mathcal U'$
such that $(\rho,\rho')\in\WFh(\mathcal R'(\omega))$.\break
(b) An illustration
of Lemma~\ref{l:propagate-outgoing}, with $\WFh(f)$ the shaded set
and $\WFh(u)$ containing undashed parts of the flow lines.}
\label{f:outgoing}
\end{figure}

\smallsection{Microlocalization of $\mathcal R(\omega)$}
Next, we use the semiclassically outgoing parametrix $\mathcal R'(\omega)$ from~\eqref{e:r'}
to derive a key restriction on the wavefront set of functions in the image of $\mathcal R(\omega)$,
see Figure~\ref{f:outgoing}(b):
\begin{lemm}
  \label{l:propagate-outgoing}
Assume that $u(h)\in \mathcal H_1,f(h)\in \mathcal H_2$ have norms polynomially
bounded in $h$, $\mathcal P(\omega)u=f$ 
for some $\omega=\omega(h)$ satisfying~\eqref{e:omega-region},
and $\WFh(f)\subset \mathcal U$. Then for each $\rho\in\WFh(u)\cap \mathcal U$,
if $\gamma(t)=e^{tH_p}(\rho)$ is the corresponding maximally extended flow line in $\mathcal U'$, then
either $\gamma(t)\in \overline{\mathcal U}$ for all $t\leq 0$
or $\gamma(t)\in\WFh(f)$ for some $t\leq 0$.
\end{lemm}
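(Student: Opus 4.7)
The plan is to combine the semiclassically outgoing parametrix $\mathcal R'(\omega)$ from assumptions~\eqref{a:parametrix}--\eqref{a:outgoing} with the convexity of $\overline{\mathcal U}$ from assumption~\eqref{a:U}, iterated backward along the Hamiltonian flow. I would argue by contradiction.

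First I would use the parametrix to write $u=\mathcal R'(\omega)v$ with $v:=f-i\mathcal Q u$. Since $\WFh(f)\subset\mathcal U$ by hypothesis and $\WFh(\mathcal Q)\subset\mathcal U$ by~\eqref{a:parametrix}, we have $\WFh(v)\subset\mathcal U$. Combining the wavefront composition rule with the outgoing property~\eqref{a:outgoing} then yields: every $\rho^{*}\in\WFh(u)\cap\mathcal U'$ admits a representation $\rho^{*}=e^{sH_p}(\rho^{**})$ for some $\rho^{**}\in\WFh(v)\cap\mathcal U$ and some $s\geq 0$, with $e^{\sigma H_p}(\rho^{**})\in\mathcal U'$ for all $\sigma\in[0,s]$.

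Supposing the conclusion fails, there exists $t_1\in(T_-,0]$ with $\gamma(t_1)\notin\overline{\mathcal U}$, while $\gamma(t)\notin\WFh(f)$ for all $t\in(T_-,0]$; set $T^{*}:=\sup\{t\in(T_-,0]:\gamma(t)\notin\overline{\mathcal U}\}$. Since $\rho\in\mathcal U$ is open, $T^{*}<0$; moreover $\gamma((T^{*},0])\subset\overline{\mathcal U}$ and there is a sequence $\tau_n\uparrow T^{*}$ with $\gamma(\tau_n)\notin\overline{\mathcal U}$. Applying the observation above to $\rho=\gamma(0)$ produces $s_1\geq 0$ such that $\gamma(-s_1)\in\WFh(v)\cap\mathcal U$. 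By hypothesis $\gamma(-s_1)\notin\WFh(f)$, so $\gamma(-s_1)\in\WFh(\mathcal Q u)\subset\WFh(u)\cap\mathcal U$. If $-s_1\leq T^{*}$, the convexity of $\overline{\mathcal U}$ applied to endpoints $\gamma(-s_1),\gamma(0)\in\overline{\mathcal U}$ and connecting segment in $\mathcal U'$ forces $\gamma([-s_1,0])\subset\overline{\mathcal U}$, contradicting the existence of $\tau_n$ close to $T^{*}$ but strictly greater than $-s_1$. Hence $-s_1>T^{*}$, and one iterates the argument from $\gamma(-s_1)$.

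Iteration produces a decreasing sequence $0>-s_1>-s_1-s_2>\cdots$, all lying strictly above $T^{*}$, with $\gamma$ at each time in $\WFh(u)\cap\mathcal U$ and each consecutive segment contained in $\overline{\mathcal U}$ by convexity. Either some iterate yields $\gamma(-s_1-\cdots-s_k)\in\WFh(f)$ (contradicting the hypothesis), or the partial sums converge to some $S\leq-T^{*}$, in which case $\gamma(-S)\in\WFh(u)\cap\overline{\mathcal U}\subset\mathcal U'$ by closedness, and a final application of the parametrix at $\gamma(-S)$ must produce either a $\WFh(f)$-hit (contradiction) or a strictly positive additional step beyond $-S$ (contradicting $S$ being the limit). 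The main obstacle I anticipate is excluding the degenerate case $s=0$ in this limit application: this should follow from part~2 of Proposition~\ref{l:elliptic}, which forces $\mathcal P(\omega)$ to be non-elliptic at $\gamma(-S)$ (otherwise $\gamma(-S)\in\WFh(f)$, already a contradiction), placing $\gamma(-S)$ on the characteristic set where the outgoing parametrix acts through genuine flow propagation rather than merely through the diagonal, yielding the required $s>0$.
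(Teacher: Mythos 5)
Your use of the outgoing parametrix and of convexity is correct as far as it goes, but the iteration scheme has a genuine gap that you yourself flag and do not close: nothing in assumption~\eqref{a:outgoing} forces the time $s$ produced at each step of the iteration to be positive, let alone bounded below. That assumption is only an \emph{upper bound} on $\WFh(\mathcal R'(\omega))$ — it constrains where the wavefront set may lie, it does not assert that the parametrix actually moves singularities a positive distance along the flow — and over the characteristic set the wavefront set of $\mathcal R'(\omega)$ does contain the diagonal. Consequently your second step may return $s_2=0$ and $\rho_2=\rho_1$, and the partial sums $s_1+s_2+\cdots$ need never exceed $-T^{*}$. The proposed fix via part~2 of Proposition~\ref{l:elliptic} does not help: non-ellipticity of $\mathcal P(\omega)$ at $\gamma(-S)$ is precisely the situation in which the diagonal is present, so it cannot yield $s>0$. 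Even granting a strictly positive step at every stage, the sum of the steps could converge short of $-T^{*}$, so the argument would still not terminate.

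The missing ingredient is propagation of singularities applied to the conjugated equation~\eqref{e:conjugated}, $(P-\omega)\mathcal S(\omega)u=\mathcal S'(\omega)f+\mathcal O(h^\infty)$ microlocally near $\mathcal U$. Part~2 of Proposition~\ref{l:microhyperbolic} transports $\WFh(u)$ backward along $\gamma$ through $\overline{\mathcal U}$ with no quantitative lower bound required, and reduces the lemma to three cases: the backward trajectory stays in $\overline{\mathcal U}$, or it meets $\WFh(f)$, or there is $t\leq 0$ with $\gamma(t)\in\WFh(u)\cap(\mathcal U'\setminus\overline{\mathcal U})$. Only the third case needs the parametrix: by convexity (assumption~\eqref{a:U}) the backward flow line from such a $\gamma(t)$ never re-enters $\overline{\mathcal U}$, hence never meets $\WFh(f-i\mathcal Qu)\subset\mathcal U$, and assumption~\eqref{a:outgoing} applied to $u=\mathcal R'(\omega)(f-i\mathcal Qu)$ then forces $\gamma(t)\notin\WFh(u)$, a contradiction. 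So the outgoing parametrix is used exactly once, at the exit point, rather than iterated; your handling of that single exit step is essentially correct, but without the propagation-of-singularities step the proof does not go through.
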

\begin{proof}
By propagation of singularities (Proposition~\ref{l:microhyperbolic})
applied to~\eqref{e:conjugated}, we see that either $\gamma(t)\in\overline{\mathcal U}$ for all $t\leq 0$,
or $\gamma(t)\in\WFh(f)$ for some $t\leq 0$, or there exists $t\leq 0$ such that
$\gamma(t)\in\WFh(u)\cap (\mathcal U'\setminus \overline{\mathcal U})$; we need to exclude the third case.
However, in this case by convexity of $\overline{\mathcal U}$ (assumption~\eqref{a:U}),
$\gamma(t-s)\not\in\overline{\mathcal U}$ for all $s\geq 0$; by assumption~\eqref{a:outgoing},
and since
$u=\mathcal R'(\omega)(f-i \mathcal Qu)$
with $\WFh(f-i\mathcal Q u)\subset \mathcal U$, we see that $\gamma(t)\not\in\WFh(u)$,
a contradiction.
\end{proof}
It follows from Lemma~\ref{l:propagate-outgoing} that any resonant state,
i.e. a function $u$ such that $\|u\|_{\mathcal H_1}\sim 1$ and
$\mathcal P(\omega)u=0$, has to satisfy $\WFh(u)\cap \mathcal U\subset\Gamma_+$.

The next statement improves on the parametrix $\mathcal R'(\omega)$, inverting
the operator $\mathcal P(\omega)$ outside of any given neighborhood of the trapped set.
One can see this as a geometric control statement (see for instance~\cite[Theorem~3]{b-z}).
\begin{lemm}
  \label{l:smart-parametrix}
Let $W\subset \mathcal U$ be a neighborhood of $K\cap p^{-1}([\alpha_0,\alpha_1])$
(which is a compact subset of $\mathcal U$ by assumption~\eqref{a:k-compact}),
and assume that $f(h)\in\mathcal H_2$ has norm bounded polynomially in $h$
and each $\omega=\omega(h)$ is in~\eqref{e:omega-region}. Then there exists
$v(h)\in\mathcal H_1$, with $f-\mathcal P(\omega)v$ compactly supported in $X$
and
$$
\|v\|_{\mathcal H_1}\leq Ch^{-1}\|f\|_{\mathcal H_2},\quad
\|\mathcal P(\omega)v\|_{\mathcal H_2}\leq C\|f\|_{\mathcal H_2},\quad
\WFh(f-\mathcal P(\omega)v)\subset W.
$$
\end{lemm}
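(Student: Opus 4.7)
The plan is to combine the outgoing parametrix $\mathcal R'(\omega)$ from assumption~\eqref{a:parametrix} with a backward hyperbolic-parametrix correction that transports the residual's wavefront set into the prescribed neighborhood $W$ of the trapped set. First I would set $v_0:=\mathcal R'(\omega)f$. By~\eqref{e:r'-bound} this immediately gives $\|v_0\|_{\mathcal H_1}\leq Ch^{-1}\|f\|_{\mathcal H_2}$, and from $(\mathcal P(\omega)-i\mathcal Q)v_0=f$ one obtains
\[
\mathcal P(\omega)v_0=f+i\mathcal Qv_0,
\]
so the initial residual $g_0:=-i\mathcal Qv_0$ is compactly supported in $X$ (since $\mathcal Q\in\Psi^{\comp}$) and has wavefront set in $\WFh(\mathcal Q)\subset\mathcal U$. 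Thus $v_0$ already secures the support and $\mathcal H_1$-norm requirements; what remains is to modify it so that the residual's wavefront set lies in $W$.

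To construct the correction $v_1$, I would first invoke Lemma~\ref{l:resolution} to reduce $\mathcal P(\omega)$ microlocally near $\mathcal U$ to $P-\omega$ with $P$ self-adjoint of real principal symbol $p$, so that Proposition~\ref{l:hparametrix} applies. The required geometric input comes from Lemma~\ref{l:propagate-outgoing}: every point $\rho\in\WFh(g_0)\subset\WFh(v_0)\cap\mathcal U$ lies either in $\Gamma_+$ or on the forward flowout of $\WFh(f)$ within $\mathcal U'$. For $\rho\in\Gamma_+$, Lemma~\ref{l:the-flow} guarantees $e^{tH_p}(\rho)\to K$ as $t\to-\infty$, so backward flow enters the neighborhood $W$ of $K\cap p^{-1}([\alpha_0,\alpha_1])$ after bounded time; contributions from energies outside $[\alpha_0,\alpha_1]$ can be handled separately via the elliptic parametrix in Proposition~\ref{l:eparametrix}. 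Applying the hyperbolic parametrix in the backward time direction to $g_0$ then yields $v_1$ such that $\mathcal P(\omega)(v_0+v_1)-f$ is compactly supported with wavefront set contained in $W$.

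The delicate point, and the main obstacle, will be the norm bookkeeping. Proposition~\ref{l:hparametrix} provides $\|v_1\|_{\mathcal H_1}\leq Ch^{-1}\|g_0\|_{L^2}$ and $\|\mathcal P(\omega)v_1\|_{\mathcal H_2}\leq C\|g_0\|_{L^2}$, but $\|g_0\|_{L^2}$ is only bounded by $Ch^{-1}\|f\|_{\mathcal H_2}$ (using $\|\mathcal Q\|_{\mathcal H_1\to L^2}=\mathcal O(1)$ from assumption~\eqref{a:u'} together with $\|v_0\|_{\mathcal H_1}\leq Ch^{-1}\|f\|$), so a naive application produces $\|v_1\|=\mathcal O(h^{-2}\|f\|)$ and $\|\mathcal P(\omega)v\|=\mathcal O(h^{-1}\|f\|)$, each exceeding the target by one power of $h$. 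I expect to resolve this by exploiting the fact that the residual $r:=f-\mathcal P(\omega)v$ is allowed to have $\mathcal H_2$-norm of order $\|f\|_{\mathcal H_2}$ (rather than $\mathcal O(h^\infty)$): the correction $v_1$ must only \emph{transport} the wavefront of $g_0$ into $W$, not cancel $g_0$ in $L^2$, so the excess factor of $h^{-1}$ can be absorbed into the $W$-portion of the allowed residual while $v_1$ itself stays at the $Ch^{-1}\|f\|$ scale, matching the required bound on $v$.
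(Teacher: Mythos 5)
You correctly identify the central obstacle --- the residual $g_0=-i\mathcal Q v_0$ has $L^2$-norm only $\mathcal O(h^{-1}\|f\|_{\mathcal H_2})$ --- but the proposed resolution does not close the gap. Feeding $g_0$ into Proposition~\ref{l:hparametrix} produces $v_1$ with $\|v_1\|_{L^2}\leq Ch^{-1}\|g_0\|_{L^2}\leq Ch^{-2}\|f\|_{\mathcal H_2}$; this is not merely a problem for the bound on $\|\mathcal P(\omega)v\|_{\mathcal H_2}$, it already violates the required $\|v\|_{\mathcal H_1}\leq Ch^{-1}\|f\|_{\mathcal H_2}$ for $v=v_0+v_1$. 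The observation that $f-\mathcal P(\omega)v$ is allowed to have norm $\mathcal O(\|f\|_{\mathcal H_2})$ does not buy this back: the excess $h^{-1}$ sits in $v_1$ itself, not in the leftover $\mathcal P(\omega)v_1-g_0$. The fix used in the paper is to modify the starting approximation instead of trying to repair $g_0$ afterwards: take $\mathcal Q'\in\Psi^{\comp}(X)$ with $\WFh(\mathcal Q')\subset\mathcal U$ and $\mathcal Q'=1$ microlocally near $\WFh(\mathcal Q)$, and set $v_1:=(1-\mathcal Q')\mathcal R'(\omega)f$. Then
\[
f-\mathcal P(\omega)v_1
=\mathcal Q'f
+[\mathcal P(\omega),\mathcal Q']\mathcal R'(\omega)f
-(1-\mathcal Q')\,i\mathcal Q\,\mathcal R'(\omega)f ,
\]
where the last term is $\mathcal O(h^\infty)$ since $(1-\mathcal Q')\mathcal Q\in h^\infty\Psi^{-\infty}$, the commutator $[\mathcal P(\omega),\mathcal Q']=\mathcal O(h)_{\Psi^{\comp}}$ supplies an extra $h$ that cancels the $h^{-1}$ from $\|\mathcal R'(\omega)f\|_{\mathcal H_1}$, and $\mathcal Q'f$ has norm $\leq C\|f\|_{\mathcal H_2}$. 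The residual after this first step is therefore $\mathcal O(\|f\|_{\mathcal H_2})$ with wavefront in $\WFh(\mathcal Q')\subset\mathcal U$, and only then is the microlocal partition-of-unity case analysis run. This cutoff is exactly the missing step in your argument.

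There is also a secondary issue with the case analysis and the direction of propagation. You propose to propagate backward from $\Gamma_+$ into $W$, but for a residual wavefront point $\rho\not\in\Gamma_+$ (and not elliptic), backward flow escapes $\overline{\mathcal U}$, and applying the outgoing parametrix $\mathcal R'(\omega)$ to a microlocal piece sitting backward of $\overline{\mathcal U}$ does \emph{not} produce an $\mathcal O(h^\infty)$ error: the forward flowout from such a point re-enters $\overline{\mathcal U}$ and can meet $\WFh(\mathcal Q)$. The paper instead splits the wavefront into elliptic, $\Gamma_-$, and non-$\Gamma_-$ regions and propagates \emph{forward}: a $\Gamma_-$ point flows forward into $W$ by Lemma~\ref{l:the-flow}, and a non-$\Gamma_-$ point flows forward out of $\overline{\mathcal U}$, after which $\mathcal R'(\omega)$ gives an $\mathcal O(h^\infty)$ residual by assumption~\eqref{a:outgoing} and convexity. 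Forward propagation is the direction compatible with the outgoing nature of $\mathcal R'(\omega)$, and is what makes the last case close.
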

\begin{proof}
First of all, take compactly supported $\mathcal Q'\in\Psi^{\comp}(X)$ such that
$\WFh(\mathcal Q')\subset \mathcal U$ and $\mathcal Q'=1$ microlocally
near $\WFh(\mathcal Q)$ (with $\mathcal Q$ defined in assumption~\eqref{a:parametrix}), and put
$$
v_1:=(1-\mathcal Q')\mathcal R'(\omega)f.
$$
Then by~\eqref{e:r'-bound}, $\|v_1\|_{\mathcal H_1}\leq Ch^{-1}\|f\|_{\mathcal H_2}$
and $\mathcal P(\omega)v_1=f_1$, where
$$
f_1=(1-\mathcal Q'-[\mathcal P(\omega),\mathcal Q']\mathcal R'(\omega)+
(1-\mathcal Q')i \mathcal Q \mathcal R'(\omega))f.
$$
Since $(1-\mathcal Q')i \mathcal Q=\mathcal O(h^\infty)_{\Psi^{-\infty}}$, by~\eqref{e:r'-bound}
we find $\|f_1\|_{\mathcal H_2}\leq C\|f\|_{\mathcal H_2}$, $f-f_1$ is compactly supported, and
$\WFh(f-f_1)\subset \WFh(\mathcal Q')$. It is now enough to prove our statement
for $f-f_1$ in place of $f$; therefore, we may assume that
$f$ is compactly supported and
$$
\WFh(f)\subset \WFh(\mathcal Q').
$$
Since $\WFh(\mathcal Q')$ is compact, by a microlocal partition of unity
we may assume that $\WFh(f)$ is contained in a small neighborhood
of some fixed $\rho\in\WFh(\mathcal Q')\subset\mathcal U$. We now consider three cases:

\noindent\textbf{Case 1}: $\rho\not\in p^{-1}([\alpha_0,\alpha_1])$. Then the operator
$\mathcal P(\omega)$ is elliptic at $\rho$, therefore we may assume it is elliptic
on $\WFh(f)$. The function $v$ is then obtained by applying to $f$ an elliptic parametrix
of $\mathcal P(\omega)$ given in Proposition~\ref{l:eparametrix}; we have
$f-\mathcal P(\omega)v=\mathcal O(h^\infty)_{C_0^\infty}$.

\noindent\textbf{Case 2}: $\rho\in\Gamma_-\cap p^{-1}([\alpha_0,\alpha_1])$.
By Lemma~\ref{l:the-flow}, there exists $t\geq 0$ such that $e^{tH_p}(\rho)\in W$.
We may then assume that $e^{tH_p}(\WFh(f))\subset W$, and $v$ is then constructed
by Proposition~\ref{l:hparametrix},
using~\eqref{e:resolution}; we have $\WFh(v)\subset\mathcal U$ and
$\WFh(f-\mathcal P(\omega) v)\subset W$.

\noindent\textbf{Case 3}: $\rho\not\in\Gamma_-$. Then there exists $t\geq 0$
such that
$e^{tH_p}(\rho)\in\mathcal U'\setminus\overline{\mathcal U}$. As in case~2,
subtracting from $v$ the parametrix of Proposition~\ref{l:hparametrix},
we may assume that $f$ is instead microlocalized in a neighborhood of $e^{tH_p}(\rho)$.
Now, put $v=\mathcal R'(\omega)f$, with $\mathcal R'(\omega)$ defined
in~\eqref{e:r'}; then $\|v\|_{\mathcal H_1}\leq Ch^{-1}\|f\|_{\mathcal H_2}$
by~\eqref{e:r'-bound} and
$$
f-\mathcal P(\omega)v=-i \mathcal Qv.
$$
However, by assumption~\eqref{a:outgoing}, and by convexity of $\overline{\mathcal U}$ (assumption~\eqref{a:U}),
we have $\WFh(\mathcal Q)\cap \WFh(v)=\emptyset$ and thus $f-\mathcal P(\omega) v=\mathcal O(h^\infty)_{C_0^\infty}$.
\end{proof}
Finally, we can estimate the norm of $u\in\mathcal H_1$ by the norm
of $\mathcal P(\omega)u$ and the norm of $u$ microlocally near the trapped set.
This can be viewed as an observability statement (see for instance~\cite[Theorem~2]{b-z}).
\begin{lemm}
  \label{l:smart-bound}
Let $A\in\Psi^{\comp}(X)$ be compactly supported and elliptic on $K\cap p^{-1}([\alpha_0,\alpha_1])$.
Then we have for any $u\in \mathcal H_1$ and any $\omega$ in~\eqref{e:omega-region},
\begin{equation}
  \label{e:smart-bound}
\|u\|_{\mathcal H_1}\leq C\|Au\|_{L^2}+Ch^{-1}\|\mathcal P(\omega)u\|_{\mathcal H_2}.
\end{equation}
\end{lemm}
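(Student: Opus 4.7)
Plan: Set $f=\mathcal P(\omega)u$. The plan is to reduce, via Lemma~\ref{l:smart-parametrix}, to a microlocal estimate near the trapped set, and then combine the semiclassically outgoing parametrix from assumption~\eqref{a:parametrix} with propagation of singularities.

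First, fix a precompact open neighborhood $W$ of $K\cap p^{-1}([\alpha_0,\alpha_1])$ in $\mathcal U$ whose closure lies inside the elliptic set of $A$; this is possible since $K\cap p^{-1}([\alpha_0,\alpha_1])$ is a compact subset of $\mathcal U$ by assumption~\eqref{a:k-compact}. Apply Lemma~\ref{l:smart-parametrix} to produce $v\in\mathcal H_1$ with $\|v\|_{\mathcal H_1}\le Ch^{-1}\|f\|_{\mathcal H_2}$, $\|\mathcal P(\omega)v\|_{\mathcal H_2}\le C\|f\|_{\mathcal H_2}$, and $\WFh(g)\subset W$, where $g:=f-\mathcal P(\omega)v$. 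Setting $w=u-v$, the boundedness $A:\mathcal H_1\to L^2$ from assumption~\eqref{a:u'} gives $\|Av\|_{L^2}\le Ch^{-1}\|f\|_{\mathcal H_2}$, so the desired inequality reduces to
\[
\|w\|_{\mathcal H_1}\le C\|Aw\|_{L^2}+Ch^{-1}\|g\|_{\mathcal H_2}
\]
up to a negligible $\mathcal O(h^\infty)\|w\|_{\mathcal H_1}$ error.

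Next, use the outgoing parametrix: $(\mathcal P(\omega)-i\mathcal Q)w=g-i\mathcal Qw$ together with \eqref{e:r'-bound} yields
\[
\|w\|_{\mathcal H_1}\le Ch^{-1}\|g\|_{\mathcal H_2}+Ch^{-1}\|\mathcal Qw\|_{\mathcal H_2}.
\]
To bound $\|\mathcal Qw\|_{\mathcal H_2}$, note that by Lemma~\ref{l:propagate-outgoing} applied to $\mathcal P(\omega)w=g$, every $\rho\in\WFh(w)\cap\mathcal U$ has its backward Hamilton flow line in $\mathcal U'$ either contained in $\overline{\mathcal U}$—so $\rho\in\Gamma_+$, and by Lemma~\ref{l:the-flow} the backward orbit eventually enters $W$—or hitting $\WFh(g)\subset W$ in finite time. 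In either case the backward flow reaches the elliptic set of $A$ within $\mathcal U'$ in finite time. Using a microlocal partition of $\mathcal Q$ (so that the piece with wavefront disjoint from $\WFh(w)$ is negligible by $\mathcal O(h^\infty)$) and applying Proposition~\ref{l:microhyperbolic} to the resolved form $(P-\omega)\mathcal S(\omega)w=\mathcal S'(\omega)g+\mathcal O(h^\infty)$ from Lemma~\ref{l:resolution} provides the bound.

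The main obstacle is the careful bookkeeping of $h$-powers. A naive composition of the $h^{-1}$ loss in the outgoing parametrix with the $h^{-1}$ loss from propagation of singularities only yields $\|u\|_{\mathcal H_1}\le Ch^{-1}\|Au\|_{L^2}+Ch^{-2}\|f\|_{\mathcal H_2}$, which is weaker than the claimed estimate. To recover the sharp bound one must exploit that $\mathcal P(\omega)-i\mathcal Q$ is elliptic on $\WFh(\mathcal Q)$ (where $\sigma(\mathcal Q)>0$), so that by an elliptic parametrix construction the composition $\mathcal R'(\omega)\mathcal Q$ is bounded with norm $\mathcal O(1)$ rather than $\mathcal O(h^{-1})$ when applied to functions microlocalized in $\WFh(\mathcal Q)$; alternatively a positive commutator argument based on sharp G\r{a}rding (Proposition~\ref{l:garding}) yields the sharp observability bound directly, which is then upgraded from $L^2$ to $\mathcal H_1$ by Lemma~\ref{l:resolution} and elliptic regularity for $\mathcal P(\omega)$ off its characteristic set.
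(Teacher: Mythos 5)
Your opening reduction (subtracting the $v$ of Lemma~\ref{l:smart-parametrix} to arrange $\WFh(f)\subset W$) matches the paper, but the core of your argument has a gap that you yourself identify and then do not close. Writing $\|w\|_{\mathcal H_1}\leq Ch^{-1}\|g\|_{\mathcal H_2}+Ch^{-1}\|\mathcal Qw\|_{\mathcal H_2}$ and then estimating $\|\mathcal Qw\|$ by propagation of singularities (which itself costs $h^{-1}\|g\|$) gives $h^{-2}\|g\|$ and $h^{-1}\|Aw\|$, i.e.\ an estimate off by a full power of $h$. Your first proposed repair rests on the claim that $\mathcal P(\omega)-i\mathcal Q$ is elliptic on $\WFh(\mathcal Q)$, so that $\mathcal R'(\omega)\mathcal Q=\mathcal O(1)$. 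That premise is not available: in the abstract framework of~\S\ref{s:framework-assumptions}, assumptions~\eqref{a:parametrix}--\eqref{a:outgoing} say nothing about $\sigma(\mathcal Q)$ beyond $\WFh(\mathcal Q)\subset\mathcal U$, and even in the examples one only has $\sigma(\mathcal Q)\geq 0$, so $\sigma(\mathcal Q)$ vanishes on part of $\WFh(\mathcal Q)$, which may meet the characteristic set of $\mathcal P(\omega)$; there the operator is not elliptic and the $\mathcal O(1)$ bound on $\mathcal R'(\omega)\mathcal Q$ does not follow. Your second repair ("a positive commutator argument yields the sharp observability bound directly") is only a name for a technique, not an argument.

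The mechanism the paper uses to avoid the $h$ loss is different and worth internalizing: take $\mathcal Q'$ equal to $1$ microlocally near $\WFh(\mathcal Q)$ and apply $\mathcal R'(\omega)$ not to $g-i\mathcal Qw$ but to the equation satisfied by $(1-\mathcal Q')u$, namely
$$
(1-\mathcal Q')u=\mathcal R'(\omega)\big((1-\mathcal Q')f-[\mathcal P(\omega),\mathcal Q']u-i\mathcal Q(1-\mathcal Q')u\big).
$$
The absorbing term is $\mathcal O(h^\infty)$ because $\mathcal Q(1-\mathcal Q')=\mathcal O(h^\infty)_{\Psi^{-\infty}}$, and the commutator carries an extra factor of $h$ (it is bounded by $Ch\|\mathcal Q''u\|_{L^2}$ for a slightly larger cutoff $\mathcal Q''$), which exactly cancels the $h^{-1}$ in~\eqref{e:r'-bound}. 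One is then left with controlling $\|\mathcal Q''u\|_{L^2}$, a purely $L^2$ quantity near the trapped set, by $C\|Au\|_{L^2}+Ch^{-1}\|f\|_{\mathcal H_2}$; there the $h^{-1}\|f\|$ loss from propagation of singularities is precisely the one allowed by~\eqref{e:smart-bound}, and the case analysis (elliptic points, points flowing back into $W$, points outside $\Gamma_+$ handled by Lemma~\ref{l:propagate-outgoing}) is essentially the one you sketch. Without this commutator step, or a genuine proof that $\mathcal R'(\omega)\mathcal Q=\mathcal O(1)$, your argument does not reach the stated estimate.
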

\begin{proof}
By rescaling, we may assume that $u=u(h)$ has $\|u\|_{\mathcal H_1}=1$
and put $f=\mathcal P(\omega)u$. Take a neighborhood
$W$ of $K\cap p^{-1}([\alpha_0,\alpha_1])$ such that
$A$ is elliptic on $W$.
Replacing $u$ by $u-v$, where $v$
is constructed from $f$ in Lemma~\ref{l:smart-parametrix}, we
may assume that $\WFh(f)\subset W$.

Take $\mathcal Q',\mathcal Q''\in\Psi^{\comp}(X)$ compactly supported, with $\WFh(\mathcal Q'')\subset\mathcal U$,
$\mathcal Q''=1+\mathcal O(h^\infty)$ microlocally near $\WFh(\mathcal Q')$,
and $\mathcal Q'=1+\mathcal O(h^\infty)$ microlocally near $\WFh(\mathcal Q)$
(with $\mathcal Q$ defined in assumption~\eqref{a:parametrix}). Then
by the elliptic estimate (Proposition~\ref{l:elliptic}),
\begin{align}
  \label{e:toronto-1}
\|\mathcal Q' u\|_{\mathcal H_1}&\leq C\|\mathcal Q''u\|_{L^2}+\mathcal O(h^\infty),\\
  \label{e:toronto-2}
\|[\mathcal P(\omega),\mathcal Q']u\|_{\mathcal H_2}&\leq Ch\|\mathcal Q'' u\|_{L^2}
+\mathcal O(h^\infty).
\end{align}
Now,
$$
(1-\mathcal Q')u=\mathcal R'(\omega)((1-\mathcal Q')f-[\mathcal P(\omega),\mathcal Q']u-i\mathcal Q(1-\mathcal Q')u);
$$
since $i\mathcal Q(1-\mathcal Q')=\mathcal O(h^\infty)_{\Psi^{-\infty}}$, we get by~\eqref{e:r'-bound}
and~\eqref{e:toronto-2},
$$
\|(1-\mathcal Q')u\|_{\mathcal H_1}\leq C\|\mathcal Q'' u\|_{L^2}+Ch^{-1}\|f\|_{\mathcal H_2}
+\mathcal O(h^\infty);
$$
by~\eqref{e:toronto-1}, it then remains to prove that
$$
\|\mathcal Q'' u\|_{L^2}\leq C\|Au\|_{L^2}+Ch^{-1}\|f\|_{\mathcal H_2}+\mathcal O(h^\infty).
$$
By a microlocal partition of unity, it suffices to estimate $\|Bu\|_{L^2}$ for
$B\in\Psi^{\comp}(X)$ compactly supported with $\WFh(B)$ in a small neighborhood of some $\rho\in\WFh(\mathcal Q'')\subset\mathcal U$.
We now consider three cases:

\noindent\textbf{Case 1}: $\rho\not\in p^{-1}([\alpha_0,\alpha_1])$. Then
$\mathcal P(\omega)$ is elliptic at $\rho$, therefore we may assume it is elliptic
on $\WFh(B)$. By Proposition~\ref{l:elliptic}, we get
$\|Bu\|_{L^2}\leq C\|f\|_{\mathcal H_2}+\mathcal O(h^\infty)$.

\noindent\textbf{Case 2}: there exists $t\leq 0$ such that
$e^{tH_p}(\rho)\in W$, therefore we may assume
that $e^{tH_p}(\WFh(B))\subset W$. Since $A$ is elliptic on $W$, by Proposition~\ref{l:microhyperbolic}
together with~\eqref{e:resolution},
we get $\|Bu\|_{L^2}\leq C\|Au\|_{L^2}+Ch^{-1}\|f\|_{\mathcal H_2}+\mathcal O(h^\infty)$.

\noindent\textbf{Case 3}: if $\gamma(t)=e^{tH_p}(\rho)$ is the maximally extended
trajectory of $H_p$ in $\mathcal U'$, then $\rho\in p^{-1}([\alpha_0,\alpha_1])$
and $\gamma(t)\not\in W$ for all $t\leq 0$. By Lemma~\ref{l:the-flow},
we have $\rho\not\in\Gamma_+$. Since $\WFh(f)\subset W$, Lemma~\ref{l:propagate-outgoing}
implies that $\rho\not\in\WFh(u)$. We may then assume that $\WFh(B)\cap\WFh(u)=\emptyset$
and thus $\|Bu\|_{L^2}=\mathcal O(h^\infty)$.
\end{proof}

\subsection{Example: Schr\"odinger operators on \texorpdfstring{$\mathbb R^n$}{Rn}}
  \label{s:framework-schrodinger}

In this section, we consider the case described the introduction, namely
a Schr\"odinger operator on $X=\mathbb R^n$ with
$$
P_V=h^2\Delta+V(x),
$$
where $\Delta$ is the Euclidean Laplacian
and $V\in C_0^\infty(\mathbb R^n;\mathbb R)$. We will explain how
this case fits into the framework of~\S\ref{s:framework-assumptions}.

To define resonances for~$P_0$, we use the method of \emph{complex scaling}
of Aguilar--Combes~\cite{ag-co}, which also applies to more general operators and potentials~--
see \cite{sj-z-91}, \cite{sj}, and the references
given there.
Take $R>0$ large enough so that
$$
\supp V\subset \{|x|<R/2\}.
$$
Fix the deformation angle $\theta\in (0,\pi/2)$ and consider a deformation
$\Gamma_{\theta,R}\subset\mathbb C^n$ of $\mathbb R^n$ defined by
$$
\Gamma_{\theta,R}:=\{x+iF_{\theta,R}(x)\mid x\in\mathbb R^n\},
$$
where $F_{\theta,R}:\mathbb R^n\to\mathbb R^n$ is defined in polar coordinates
$(r,\varphi)\in [0,\infty)\times \mathbb S^{n-1}$ by
$$
F_{\theta,R}(r,\varphi)=(f_{\theta,R}(r),\varphi),
$$
and the function $f_{\theta,R}\in C^\infty([0,\infty))$ is chosen so that
(see Figure~\ref{f:complex-scaling}(a))
$$
\begin{aligned}
f_{\theta,R}(r)=0,\quad r\leq R;&\quad
f_{\theta,R}(r)=r\tan\theta,\quad r\geq 2R;\\
f'_{\theta,R}(r)\geq 0,\quad r\geq 0;&\quad
\{f'_{\theta,R}=0\}=\{f_{\theta,R}=0\}.
\end{aligned}
$$
Note that
$$
\begin{aligned}
\Gamma_{\theta,R}\cap \{|\Re z|\leq R\}&=\mathbb R^n\cap \{|\Re z|\leq R\};\\
\Gamma_{\theta,R}\cap \{|\Re z|\geq 2R\}&=e^{i\theta}\mathbb R^n\cap \{|\Re z|\geq 2R\}.
\end{aligned}
$$
Define the deformed differential operator $\widetilde P_V$ on $\Gamma_{\theta,R}$ it as follows:
$\widetilde P_V=P_V$ on $\mathbb R^n\cap\Gamma_{\theta,R}$,
and on the complementing region $\{|\Re z|>R\}$, it is defined by the formula
$$
\widetilde P_V(v)=\sum_{j=1}^n (hD_{z_j})^2 \tilde v|_{\Gamma_{\theta,R}},
$$
for each $v\in C_0^\infty(\Gamma_{\theta,R}\cap \{|\Re z|>R\})$
and each almost analytic continuation $\tilde v$ of $v$
(that is, $\tilde v|_{\Gamma_{\theta,R}}=v$ and $\partial_{\bar z}\tilde v$ vanishes
to infinite order on $\Gamma_{\theta,R}$~-- the existence of such continuation
follows from the fact that $\Gamma_{\theta,R}$ is totally real, that is
for each $z\in \Gamma_{\theta,R}$, $T_z\Gamma_{\theta,R}\cap iT_z\Gamma_{\Theta,R}=0$).
We identify $\Gamma_{\theta,R}$ with $\mathbb R^n$ by the map
$$
\iota:\mathbb R^n\to\Gamma_{\theta,R}\subset\mathbb C^n,\quad
\iota(x)=x+iF_{\theta,R}(x),
$$
so that $\widetilde P_V$ can be viewed as a second order differential operator
on $\mathbb R^n$.
Then in polar coordinates $(r,\varphi)$, we can write for $r>R$,
$$
\widetilde P_V=\bigg({1\over 1+if'_{\theta,R}(r)}hD_r\bigg)^2-{(n-1)i\over (r+if_{\theta,R}(r))(1+if'_{\theta,R}(r))}h^2D_r
+{\Delta_\varphi\over (r+if_{\theta,R}(r))^2},
$$
with $\Delta_\varphi$ denoting the Laplacian on the round sphere $\mathbb S^{n-1}$.
We have
\begin{equation}
  \label{e:cs-symbol}
\sigma(\widetilde P_V)={|\xi_r|^2\over (1+i f'_{\theta,R}(r))^2}+{|\xi_\varphi|^2\over (r+if_{\theta,R}(r))^2}+V(r,\varphi).
\end{equation}
Fix a range of energies $[\alpha_0,\alpha_1]\subset (0,\infty)$ and
a bounded open set $\Omega\subset\mathbb C$ such that
(see Figure~\ref{f:complex-scaling}(b))
\begin{figure}
\includegraphics{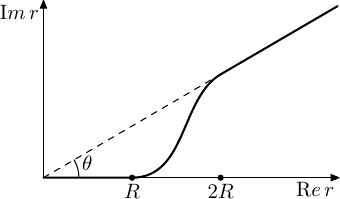}
\qquad\qquad
\includegraphics{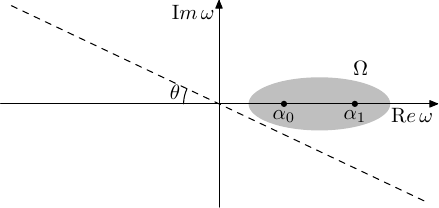}
\hbox to\hsize{\hss (a) \hss\hss (b)\hss}
\caption{(a) The graph of $f_{\theta,r}$. (b) The region where complex scaling provides meromorphic
continuation of the resolvent.}
\label{f:complex-scaling}
\end{figure}
$$
[\alpha_0,\alpha_1]\subset\Omega,\quad
\overline\Omega\subset \{-\theta<\arg \omega<\pi-\theta\}.
$$
For $\omega\in\Omega$, define the operator
$$
\mathcal P(\omega)=\widetilde P_V-\omega^2:\mathcal H_1\to \mathcal H_2,\qquad
\mathcal H_1:=H^2_h(\mathbb R^n),\quad
\mathcal H_2:=L^2(\mathbb R^n).
$$
Then $\mathcal P(\omega)$ is a Fredholm operator $\mathcal H_1\to\mathcal H_2$ for $\omega\in\Omega$. Indeed,
$$
\mathcal P(\omega)=\cos^2\theta e^{-2i\theta}h^2\Delta-\omega^2\quad\text{on }
\{|x|\geq 2R\},
$$
thus $\mathcal P(\omega)$ is elliptic on $\{|x|\geq 2R\}$, as well as for $|\xi|$ large enough,
in the class $S(\langle\xi\rangle^2)$ of~\cite[\S4.4.1]{e-z} (this class
incorporates the behavior of symbols as $x\to\infty$, in contrast with those
used in~\S\ref{s:prelim-basics}).
Using a construction similar to Lemma~\ref{l:eparametrix},
but with symbols in the class $S(\langle\xi\rangle^{-2})$, we can
define a parametrix near (both spatial and fiber) infinity, $\mathcal R_\infty(\omega)$,
with $\|\mathcal R_\infty\|_{L^2(\mathbb R^n)\to H^2_h(\mathbb R^n)}=\mathcal O(1)$ and
\begin{equation}
  \label{e:cs-par}
\begin{aligned}
\mathcal R_\infty(\omega)\mathcal P(\omega)&=1+Z(\omega)+\mathcal O(h^\infty)_{H^2_h(\mathbb R^n)\to H^2_h(\mathbb R^n)},\\
\mathcal P(\omega)\mathcal R_\infty(\omega)&=1+Z'(\omega)+\mathcal O(h^\infty)_{L^2(\mathbb R^n)\to L^2(\mathbb R^n)},
\end{aligned}
\end{equation}
where $Z(\omega),Z'(\omega)\in\Psi^{\comp}(\mathbb R^n)$ are compactly supported inside $\{|x|<2R+1\}$. Since
$1+\mathcal O(h^\infty)$ is invertible and $Z(\omega),Z'(\omega)$ are compact, we see that
$\mathcal P(\omega)$ is indeed a Fredholm operator $\mathcal H_1\to\mathcal H_2$. We have thus
verified assumptions~\eqref{a:basic}--\eqref{a:fredholm} of~\S\ref{s:framework-assumptions}.

The identification of the poles of $\mathcal R(\omega)$ with the 
poles of the meromorphic continuation of the resolvent
$R_V(\omega)=(P_V-\omega^2)^{-1}$ defined in~\eqref{e:r-v}
from $ \{\Im \omega>0\}$ to  $\Omega$, and in fact, the 
existence of such a continuation, follows from the following 
formula (implicit in~\cite{sj}, and discussed in~\cite{TaZw}):
if $ \chi \in C_0^\infty ( \mathbb R^n ) $, $ \supp \chi \Subset B ( 0 , R ) $, 
then 
\begin{equation}
\label{eq:two_r}
  \chi \mathcal R(\omega) \chi = \chi R_V(\omega) \chi.
\end{equation}
This is initially valid in $ \Omega\cap\{\Im \omega > 0\} $ so that the right-hand side
is well-defined, and then by analytic continuation in the region 
where the left hand side is meromorphic.

Now, we take intervals
$$
[\alpha_0,\alpha_1]\Subset [\beta_0,\beta_1]\Subset [\beta'_0,\beta'_1]\subset\Omega\cap (0,\infty)
$$
and put
\begin{equation}
  \label{e:sets-schrodinger}
\begin{aligned}
\mathcal U'&:=\{|x|<R,\
|\xi|^2+V(x)\in ((\beta'_0)^2,(\beta'_1)^2)\},\\
\mathcal U&:=\{|x|<3R/4,\
|\xi|^2+V(x)\in (\beta_0^2,\beta_1^2)\}.
\end{aligned}
\end{equation}
Note that $\mathcal P(\omega)=P_V-\omega^2$ in $\mathcal U'$; this verifies assumptions~\eqref{a:u'} and~\eqref{a:s-a}.
Assumption~\eqref{a:p-res} is also satisfied, with
\begin{equation}
  \label{e:p-schrodinger}
p(x,\xi)=\sqrt{|\xi|^2+V(x)},\quad
(x,\xi)\in\mathcal U'.
\end{equation}
The operators $P$ and $\mathcal S(\omega)$ from Lemma~\ref{l:resolution} take
the form, microlocally near $\mathcal U$,
\begin{equation}
  \label{e:sqrt}
P=\sqrt{P_V},\quad
\mathcal S(\omega)=\sqrt{\sqrt{P_V}+\omega}.
\end{equation}
Here the square root is understood in the microlocal sense: for an
operator $A\in\Psi^k(X)$ with $\sigma(A)>0$ on $\mathcal U'$, we
define the microlocal square root $\sqrt{A}\in\Psi^{\comp}(X)$ of $A$
in $\mathcal U'$ as the (unique modulo $\mathcal O(h^\infty)$
microlocally in $\mathcal U'$) operator such that
$(\sqrt{A})^2=A+\mathcal O(h^\infty)$ microlocally in $\mathcal U'$
and $\sigma(\sqrt{A})=\sqrt{\sigma(A)}$.  See for
example~\cite[Lemma~4.6]{gr-sj} for details of the construction of the
symbol.

Assumption~\eqref{a:U}, namely convexity of $\overline{\mathcal U}$,
is satisfied since for each $(x,\xi)\in\mathcal U'$, if $|x|\geq R/2$
and $H_p|x|^2=0$ at $(x,\xi)$, then $H_p^2|x|^2>0$ at $(x,\xi)$;
therefore, the function $|x|^2$ cannot attain a local maximum on a
trajectory of $e^{tH_p}$ in $\mathcal U'\setminus\overline{\mathcal
U}$.  Same observation shows assumption~\eqref{a:k-compact}; in fact,
$K\subset \{|x|\leq R/2\}$.

Finally, for assumptions~\eqref{a:parametrix} and~\eqref{a:outgoing},
we take any compactly supported $\mathcal Q\in\Psi^{\comp}(X)$ such
that $\WFh(\mathcal Q)\subset\mathcal U$ and
$$
\sigma(\mathcal Q)\geq 0\quad\text{everywhere};\quad
\sigma(\mathcal Q)>0\quad\text{on } p^{-1}([\alpha_0,\alpha_1])\cap \{|x|\leq R/2\}.
$$
To verify assumption~\eqref{a:parametrix}, consider an arbitrary family
$u=u(h)\in H^2_h(\mathbb R^n)$, with norm bounded polynomially in $h$, and put
$$
f=(\mathcal P(\omega)-i\mathcal Q)u,
$$
where $\omega$ satisfies~\eqref{e:omega-region}.
By~\eqref{e:cs-symbol}, and since $\Im\omega=\mathcal O(h)$, we find
$$
\begin{gathered}
\Im\sigma(\mathcal P(\omega))\leq 0\quad\text{everywhere};\\
\{\langle\xi\rangle^{-2}\sigma(\mathcal P(\omega))=0\}\subset \{F_{\theta,R}(x)=0\}.
\end{gathered}
$$
Note also that $\sigma(\mathcal P(\omega))=|\xi|^2+V(x)-\omega^2$
on $\{F_{\theta,R}(x)=0\}$. Together with the convexity property of $|x|^2$ mentioned
above, we see that for each $\rho\in T^*X$, there exists $t\leq 0$ such that
$\mathcal P(\omega)-i\mathcal Q$ is elliptic at
$\exp(tH_{\Re\sigma(\mathcal P(\omega))})(\rho)$. Since $\Im\sigma(\mathcal P(\omega)-i\mathcal Q)\leq 0$ everywhere,
by propagation of singularities with a complex absorbing term (Proposition~\ref{l:microhyperbolic})
and the elliptic estimate (Proposition~\ref{l:elliptic}) we get
$$
\|Z(\omega)u\|_{H^2_h}\leq Ch^{-1}\|f\|_{L^2}+\mathcal O(h^\infty),
$$
where $Z(\omega)$ is defined in~\eqref{e:cs-par}. Then by~\eqref{e:cs-par},
$$
\|u\|_{H^2_h(\mathbb R^n)}\leq C\|f\|_{L^2(\mathbb R^n)}+\|Z(\omega)u\|_{H^2_h}+\mathcal O(h^\infty)
\leq Ch^{-1}\|f\|_{L^2(\mathbb R^n)}+\mathcal O(h^\infty),
$$
proving the estimate~\eqref{e:r'-bound} of assumption~\eqref{a:parametrix}.

Assumption~\eqref{a:outgoing} is proved in a similar fashion: assume that
$\WFh(f)\subset\mathcal U'$ and $\rho'\in\WFh(u)\cap \mathcal U'$.
Denote $\gamma(t)=\exp(tH_{\Re\sigma(\mathcal P(\omega))})(\rho')$.
Then there exists
$t_0\geq 0$ such that $\mathcal P(\omega)-i \mathcal Q$ is elliptic at $\gamma(-t_0)$.
By Proposition~\ref{l:microhyperbolic}, we see that
either $\exp(-tH_{\Re\sigma(\mathcal P(\omega))})(\rho')\in\WFh(f)$ for some $t\in [0,t_0]$
or $\exp(-t_0H_{\Re\sigma(\mathcal P(\omega))})(\rho')\in\WFh(u)$, in which case
this point also lies in $\WFh(f)$ by Proposition~\ref{l:elliptic}; therefore,
$\gamma(-t)\in\WFh(f)$ for some $t\geq 0$.
Let $t_1$ be the minimal nonnegative number such that $\gamma(-t_1)\in\WFh(f)$;
we may assume that $t_1>0$.
Since $\gamma((-t_1,0])$ does not intersect $\WFh(f)$, it also does not intersect
the elliptic set of $\mathcal P(\omega)$; therefore, $\gamma([-t_1,0])\subset \{F_{\theta,R}(x)=0\}$
and thus $\sigma(\mathcal P(\omega))=p^2-\omega^2$ on $\gamma([-t_1,0])$. It follows
that $e^{-tH_p}(\rho')\in\WFh(f)$ for some $t\geq 0$, as required.

\subsection{Example: even asymptotically hyperbolic manifolds}
  \label{s:framework-ah}

In this section, we define resonances, in the framework of~\S\ref{s:framework-assumptions},
for an $n$-dimensional
complete noncompact Riemannian manifold
$(M,g)$ which is \emph{asymptotically hyperbolic} in the following sense: $M$ is diffeomorphic to
the interior of a smooth manifold with boundary $\overline M$, and for some choice
of the boundary defining function $\tilde x\in C^\infty(\overline M)$
and the product decomposition $\{\tilde x<\varepsilon\}\sim [0,\varepsilon)\times\partial\overline M$, the metric $g$ takes the following form in $\{0<\tilde x<\varepsilon\}$:
\begin{equation}
  \label{e:ah-metric}
g={d\tilde x^2+g_1(\tilde x,\tilde y,d\tilde y)\over \tilde x^2}.
\end{equation}
Here $g_1$ is a family of Riemannian metrics on $\partial\overline M$ depending smoothly on $\tilde x\in [0,\varepsilon)$.
We moreover require that the metric is~\emph{even} in the sense that
$g_1$ is a smooth function of $\tilde x^2$.

To put the Laplacian $\Delta_g$ on $M$ into the framework of~\S\ref{s:framework-assumptions},
we use the recent construction of Vasy~\cite{v2}. We follow in part~\cite[\S4.1]{fwl},
see also~\cite[Appendix~B]{fwl} for a detailed description of the phase
space properties of the resulting operator in a model case.
Take the space $\overline M_{\even}$ obtained from $\overline M$ by taking the new boundary defining function
$\mu=\tilde x^2$ and put (see~\cite[\S3.1]{v2})
$$
P_1(\omega)=\mu^{-{1\over 2}-{n+1\over 4}}e^{i\omega\phi\over h}(h^2(\Delta_g-(n-1)^2/4)-\omega^2)
e^{-{i\omega\phi\over h}}\mu^{-{1\over 2}+{n+1\over 4}}.
$$
Here $\phi$ is a smooth real-valued function on $M$ such that
$$
e^\phi=\mu^{1/2}(1+\mu)^{-1/4}\quad\text{on }\{0<\mu<\delta_0\},
$$
where $\delta_0>0$ is a small constant; the values of $\phi$ on $\{\mu\geq\delta_0\}$
are chosen as in the paragraph preceding~\cite[(3.14)]{v2}. We can furthermore choose $e^\phi$
and $\mu$ to be equal to 1 near the set $\{\tilde x>\varepsilon_0/2\}$, for any fixed $\varepsilon_0>0$
(and $\delta_0$ chosen small depending on~$\varepsilon_0$)
so that
\begin{equation}
  \label{e:same}
P_1(\omega)=h^2(\Delta_g-(n-1)^2/4)-\omega^2\quad\text{on }\{\tilde x>\varepsilon_0/2\}.
\end{equation}
The differential operator $P_1(\omega)$ has
coefficients smooth up to the boundary of $\overline M_{\even}$; then it is possible
to find a compact $n$-dimensional manifold $X$ without boundary such that $\overline M_{\even}$
embeds into $X$ as $\{\mu\geq 0\}$
and extend $P_1(\omega)$ to an operator $P_2(\omega)\in\Psi^2(X)$, see~\cite[\S3.5]{v2}
or~\cite[Lemma~4.1]{fwl}. Finally, we fix a complex absorbing operator $Q\in\Psi^2(X)$, with Schwartz kernel supported
in the nonphysical region $\{\mu<0\}$, satisfying the assumptions of~\cite[\S3.5]{v2}.
We now fix an interval $[\alpha_0,\alpha_1]\subset (0,\infty)$, take
$\Omega\subset\mathbb C$ a small neighborhood of $[\alpha_0,\alpha_1]$, and put
$$
\mathcal P(\omega):=P_2(\omega)-iQ,\quad
\omega\in\Omega.
$$
Fix $C_0>0$, take $s>C_0+1/2$, and put $\mathcal H_2=H^{s-1}_h(X)$ and
$$
\mathcal H_1=\{u\in H_h^s(X)\mid P_2(1)u\in H_h^{s-1}(X)\},\quad
\|u\|_{\mathcal H_1}^2=\|u\|_{H_h^s(X)}^2+\|P_2(1)u\|_{H_h^{s-1}(X)}^2.
$$
It is proved in~\cite[Theorem~4.3]{v2} that for $\omega$ satisfying~\eqref{e:omega-region},
the operator $\mathcal P(\omega):\mathcal H_1\to\mathcal H_2$ is a Fredholm operator of index zero;
therefore, we have verified assumptions~\eqref{a:basic}--\eqref{a:fredholm} of~\S\ref{s:framework-assumptions}.
The poles of $\mathcal R(\omega)=\mathcal P(\omega)^{-1}$ coincide with the poles of the meromorphic
continuation of the Schwartz kernel of the resolvent
$$
R_g(\omega):=(h^2(\Delta_g-(n-1)^2/4)-\omega^2)^{-1}:L^2(M)\to L^2(M),\quad
\Im\omega>0,
$$
to the entire $\mathbb C$, first constructed in~\cite{m-m} with improvements by~\cite{gui}~--
see~\cite[Theorem~5.1]{v2}.

We can now proceed similarly to~\S\ref{s:framework-schrodinger}, using that
the regions $\{\tilde x>\varepsilon_0\}$ are geodesically convex for $\varepsilon_0>0$
small enough (see for instance~\cite[Lemma~7.1]{qeefun}). Fix small $\varepsilon_0>0$, take any intervals
$$
[\alpha_0,\alpha_1]\Subset[\beta_0,\beta_1]\Subset [\beta'_0,\beta'_1]\subset\Omega\cap (0,\infty),
$$
and define
\begin{equation}
  \label{e:sets-ah}
\mathcal U':=\{\tilde x>\varepsilon_0/2,\ |\xi|_g\in (\beta'_0,\beta'_1)\},\quad
\mathcal U:=\{\tilde x>\varepsilon_0,\ |\xi|_g\in (\beta_0,\beta_1)\}.
\end{equation}
As in~\S\ref{s:framework-schrodinger}, assumptions~\eqref{a:u'}--\eqref{a:k-compact}
hold, with
\begin{equation}
  \label{e:p-ah}
p(x,\xi)=|\xi|_g.
\end{equation}
The operators $P$ and $\mathcal S(\omega)$ constructed in Lemma~\ref{l:resolution}
are given microlocally near $\mathcal U$ by
$$
P=\sqrt{h^2\Delta_g-(n-1)^2/4},\quad
S(\omega)=\sqrt{\sqrt{h^2\Delta_g-(n-1)^2/4}+\omega},
$$
with the square roots defined as in~\eqref{e:sqrt}.

Finally, for assumptions~\eqref{a:parametrix} and~\eqref{a:outgoing},
take $\mathcal Q\in\Psi^{\comp}(X)$
with $\WFh(\mathcal Q)\subset\mathcal U$ and
$$
\sigma(\mathcal Q)\geq 0\quad\text{everywhere};\quad
\sigma(\mathcal Q)>0\quad\text{on }p^{-1}([\alpha_0,\alpha_1])\cap \{\tilde x\geq 2\varepsilon_0\}.
$$
Then assumption~\eqref{a:parametrix} follows from~\cite[Theorem~4.8]{v2}.
To verify assumption~\eqref{a:outgoing}, we modify the proof of~\cite[Theorem~4.9]{v2}
as follows: assume that $f=f(h)\in \mathcal H_2$ has norm bounded polynomially in $h$
and put $u=\mathcal R'(\omega)f$, for $\omega=\omega(h)$ satisfying~\eqref{e:omega-region}. Assume also that $\WFh(f)\subset\mathcal U'$
and take $\rho'\in\WFh(u)\cap\mathcal U'$. We may assume that $P_2(\omega)$ is not elliptic at $\rho'$,
since otherwise $\rho'\in\WFh(f)$.
If $\gamma(t)$ is the bicharacteristic of
$\sigma(P_2(\omega))$ starting at $\rho'$, then (see~\cite[(3.32) and the end of~\S3.5]{v2})
either $\gamma(t)$ converges to the set $L_+\subset \partial \overline T^*X\cap\{\mu=0\}$ of radial points
as $t\to -\infty$, or $\mathcal Q$ is elliptic at $\gamma(-t_0)$ for some $t_0>0$. In the first case,
$\gamma(-t_0)\not\in\WFh(u)$ for $t_0>0$ large enough by the radial points argument~\cite[Proposition~4.5]{v2};
in the second case, by Proposition~\ref{l:elliptic} we see that if $\gamma(-t_0)\in\WFh(u)$,
then $\gamma(-t_0)\in\WFh(f)$. Combining this
with Proposition~\ref{l:microhyperbolic}, we see that there exists $t_1\geq 0$ such that
$\gamma(-t_1)\in\WFh(f)$. Since $\gamma(0),\gamma(-t_1)\in\mathcal U'$, and $\mathcal U'$
is convex with respect to the bicharacteristic flow of $\sigma(P_2(\omega))$
(the latter being just a rescaling of the geodesic flow pulled back by
a certain diffeomorphism), we see that $\gamma([-t_1,0])\subset\mathcal U'$.
Now, by~\eqref{e:same}, $\gamma([-t_1,0])$ is a flow line of $H_{p^2}$; therefore, 
for some $t\geq 0$, $e^{-tH_p}(\rho')\in\WFh(f)$, as required.

\section{\texorpdfstring{$r$}{r}-normally hyperbolic trapped sets}
\label{s:nh}

In this section, we state the dynamical assumptions on the flow near the
trapped set $K$, namely $r$-normal hyperbolicity,
and define the expansion rates $\nu_{\min},\nu_{\max}$ (\S\ref{s:dynamics}).
We next establish some properties of $r$-normally hyperbolic trapped sets:
existence of special defining functions~$\varphi_\pm$ of the
incoming/outgoing tails $\Gamma_\pm$ near $K$ (\S\ref{s:phi-pm}),
existence of the canonical projections $\pi_\pm$ from open subsets $\Gamma_\pm^\circ\subset\Gamma_\pm$
to $K$ and the canonical relation $\Lambda^\circ$ (\S\ref{s:projections}),
and regularity of solutions to the transport equations (\S\ref{s:transport}).

\subsection{Dynamical assumptions}
  \label{s:dynamics}
  
\newcounter{dynamic}

Let $\mathcal U\subset \mathcal U'$ be the open sets from~\S\ref{s:framework-assumptions},
and $p\in C^\infty(\mathcal U';\mathbb R)$ be the function defined in~\eqref{e:p}.
Consider also the incoming/outgoing tails $\Gamma_\pm\subset\overline {\mathcal U}$
and the trapped set $K=\Gamma_+\cap\Gamma_-$ defined in~\eqref{e:trapped-set}. We assume that,
for a large fixed integer $r$ depending only on the dimension $n$ (see Figure~\ref{f:basic-dynamics}(a)),
\begin{enumerate}
\item \label{aa:basic}
$\Gamma_\pm$ are equal to the intersections of $\overline{\mathcal U}$ with 
codimension 1 orientable $C^r$ submanifolds of $T^*X$;
\item \label{aa:symplectic}
$\Gamma_+$ and $\Gamma_-$ intersect transversely, and the symplectic form $\sigma_S$
is nondegenerate on $TK$; that is, $K$ extends to a symplectic submanifold of $T^*X$ of codimension two.
\setcounter{dynamic}{\value{enumi}}
\end{enumerate}
Consider one-dimensional subbundles $\mathcal V_\pm\subset T\Gamma_\pm$
defined as the symplectic complements of $T\Gamma_\pm$ in $T_{\Gamma_\pm}(T^*X)$
(see Figure~\ref{f:basic-dynamics}(b));
they are invariant under the flow $e^{tH_p}$. By assumption~\eqref{aa:symplectic},
we have $T_K\Gamma_\pm=\mathcal V_\pm|_K\oplus TK$.
Define the minimal expansion rate in the normal direction, $\nu_{\min}$,
as the supremum of all $\nu$ for which there exists a constant $C$ such that
\begin{equation}
  \label{e:nu-min}
\sup_{\rho\in K}\|de^{\mp tH_p}(\rho)|_{\mathcal V_\pm}\|\leq Ce^{-\nu t},\quad t>0.
\end{equation}
Here $\|\cdot\|$ denotes the operator norm with respect to any smooth inner product
on the fibers of $T(T^*X)$. 
Similarly we define the maximal expansion rate in the normal direction,
$\nu_{\max}$, as the infimum of all $\nu$ for which there exists a constant $c>0$ such that
\begin{equation}
  \label{e:nu-max}
\inf_{\rho\in K}\|de^{\mp t H_p}(\rho)|_{\mathcal V_\pm}\|\geq ce^{-\nu t},\quad t>0.
\end{equation}
Since $e^{tH_p}$ preserves the symplectic form $\sigma_S$,
which is nondegenerate on $\mathcal V_+|_K\oplus\mathcal V_-|_K$, it is enough to require~\eqref{e:nu-min}
and~\eqref{e:nu-max} for a specific choice of sign.

\begin{figure}
\includegraphics{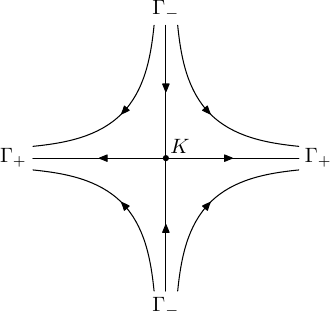}
\qquad
\includegraphics{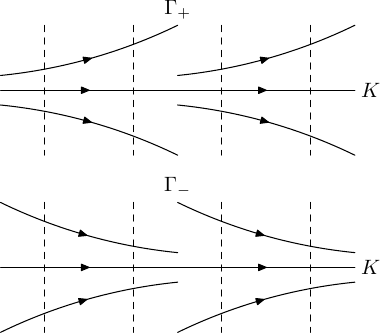}
\hbox to\hsize{\hss (a)\hss\hss (b)\hss}
\caption{(a) Dynamics of $e^{tH_p}$ in the directions transverse to the trapped set.
(b) Dynamics on $\Gamma_\pm$; the flow lines of $\mathcal V_\pm$ are dashed.}
\label{f:basic-dynamics}
\end{figure}

We assume \emph{$r$-normal hyperbolicity}:
\begin{enumerate}
  \setcounter{enumi}{\value{dynamic}}
\item \label{aa:r-nh} 
Let $\mu_{\max}$ be the maximal expansion rate of the flow along $K$,
defined as the infimum of all $\mu$ for which there exists a constant $C$ such that
\begin{equation}
  \label{e:mu-max}
\sup_{\rho\in K}\|de^{tH_p}(\rho)|_{TK}\|\leq Ce^{\mu|t|},\quad t\in \mathbb R.
\end{equation}
Then
\begin{equation}
  \label{e:r-nh}
\nu_{\min}>r\mu_{\max}.
\end{equation}
\end{enumerate}
Assumption~\eqref{aa:r-nh}, rather than a weaker assumption of
\emph{normal hyperbolicity} $\nu_{\min}>0$,
is needed for regularity of solutions
to the transport equations, see Lemma~\ref{l:ode} below.
The number $r$ depends on how many derivatives of the symbols
constructed below are needed for the semiclassical arguments to work.
In the proofs, we will often take $r=\infty$, keeping in mind that a large fixed $r$
is always enough.

\subsection{Stability}
  \label{s:stability}

We now briefly discuss stability of our dynamical assumptions under perturbations;
a more detailed presentation, with applications to general relativity, is
given in~\cite{thesis}. Assume that $p_s$, where $s\in\mathbb R$ varies in a neighborhood
of zero, is a family of real-valued functions on $\mathcal U'$ such that $p_0=p$
and $p_s$ is continuous at $s=0$ with values in $C^\infty(\mathcal U')$.
Assume moreover that conditions~\eqref{a:U} and~\eqref{a:k-compact} of~\S\ref{s:framework-assumptions}
are satisfied with $p$ replaced by any $p_s$. Here $\Gamma_\pm$ and $K$ are replaced
by the sets $\Gamma_\pm(s)$ and $K(s)$ defined using $p_s$ instead of $p$. We claim
that assumptions~\eqref{aa:basic}--\eqref{aa:r-nh} of~\S\ref{s:dynamics} are satisfied
for $p_s,\Gamma_\pm(s),K(s)$ when $s$ is small enough.

We use the work of Hirsch--Pugh--Shub~\cite{HPS} on stability of $r$-normally
hyperbolic invariant manifolds. Assumptions~\eqref{aa:basic}--\eqref{aa:r-nh}
imply that the flow $e^{tH_p}$ is eventually absolutely $r$-normally hyperbolic
on $K$ in the sense of~\cite[Definition~4]{HPS}. Then by~\cite[Theorem~4.1]{HPS},
for $s$ small enough, $\Gamma_\pm(s)$ and $K(s)$ are $C^r$ submanifolds
of $T^*X$, which converge to $\Gamma_\pm$ and $K$ in $C^r$ as $s\to 0$.
It follows immediately that conditions~\eqref{aa:basic} and~\eqref{aa:symplectic}
are satisfied for small $s$.

To see that condition~\eqref{aa:r-nh} is satisfied for small $s$,
as well as stability of the pinching condition~\eqref{e:pinching}
under perturbations, it suffices to show that, with
$\nu_{\min}(s),\nu_{\max}(s),\mu_{\max}(s)$ defined using
$e^{tH_{p_s}},\Gamma_\pm(s),K(s)$,
\begin{gather}
  \label{e:stab-1}
\liminf_{s\to 0}\nu_{\min}(s)\geq \nu_{\min},\\
  \label{e:stab-2}
\limsup_{s\to 0}\nu_{\max}(s)\leq \nu_{\max},\\
  \label{e:stab-3}
\limsup_{s\to 0}\mu_{\max}(s)\leq \mu_{\max}.
\end{gather}
We show~\eqref{e:stab-1}; the other two inequalities are proved similarly.
Fix a smooth metric on the fibers of $T(T^*X)$.
Take arbitrary $\varepsilon>0$, then for $T>0$ large enough,
we have
$$
\sup_{\rho\in K}\|de^{\mp TH_p}(\rho)|_{\mathcal V_\pm}\|\leq e^{-(\nu_{\min}-\varepsilon)T}.
$$
Fix $T$; since $p_s$, $\Gamma_\pm(s)$, $K(s)$, and the corresponding subbundles $\mathcal V_\pm(s)$
depend continuously on $s$ at $s=0$, we have for $s$ small enough,
$$
\sup_{\rho\in K(s)}\|de^{\mp TH_{p_s}}(\rho)|_{\mathcal V_\pm(s)}\|\leq e^{-(\nu_{\min}-\varepsilon/2)T}.
$$
Since $e^{tH_{p_s}}$ is a one-parameter group of diffeomorphisms, we get
$$
\sup_{\rho\in K(s)}\|de^{\mp tH_{p_s}}(\rho)|_{\mathcal V_\pm(s)}\|\leq Ce^{-(\nu_{\min}-\varepsilon/2)t},\quad
t\geq 0;
$$
therefore, $\nu_{\min}(s)\geq \nu_{\min}-\varepsilon/2$ for $s$ small enough and~\eqref{e:stab-1} follows.

\subsection{Adapted defining functions}
  \label{s:phi-pm}

In this section, we construct special defining functions $\varphi_\pm$
of $\Gamma_\pm$ near $K$. We will assume below that $\Gamma_\pm$
are smooth; however, if $\Gamma_\pm$ are $C^r$ with $r\geq 1$,
we can still obtain $\varphi_\pm\in C^r$. A similar construction
can be found in~\cite[Lemma~4.1]{w-z}.
\begin{lemm}
  \label{l:phi-pm}
Fix $\varepsilon>0$.%
\footnote{The parameter $\varepsilon$ is fixed in Theorem~\ref{t:gaps};
it is also taken small enough for the results
of~\S\ref{s:transport} to hold.}
Then
there exist smooth functions $\varphi_\pm$, defined in a neighborhood
of $K$ in $\mathcal U'$, such that for $\delta>0$ small enough, the set
\begin{equation}
  \label{e:u-delta}
U_\delta:=\overline{\mathcal U}\cap \{|\varphi_+|\leq\delta,\ |\varphi_-|\leq\delta\},
\end{equation}
is a compact subset of $\mathcal U$ when intersected with $p^{-1}([\alpha_0,\alpha_1])$, and:
\begin{enumerate}
\item \label{c:defining}
$\Gamma_\pm\cap U_\delta=\{\varphi_\pm=0\}\cap U_\delta$, and $d\varphi_\pm\neq 0$
on $U_\delta$;
\item \label{c:c-pm}
$H_p\varphi_\pm=\mp c_\pm\varphi_\pm$ on $U_\delta$, where $c_\pm$ are smooth functions
on $U_\delta$ and, with $\nu_{\min},\nu_{\max}$
defined in~\eqref{e:nu-min}, \eqref{e:nu-max},
\begin{equation}
  \label{e:nu-bound}
\nu_{\min}-\varepsilon<c_\pm<\nu_{\max}+\varepsilon\quad\text{on }U_\delta;
\end{equation}
\item the Hamiltonian field $H_{\varphi_\pm}$ spans the subbundle $\mathcal V_\pm$
on $\Gamma_\pm\cap U_\delta$ defined before~\eqref{e:nu-min};
\item \label{c:poisson}
$\{\varphi_+,\varphi_-\}>0$ on $U_\delta$;
\item \label{c:convex}
$U_\delta$ is convex, namely if $\gamma(t)$, $0\leq t\leq T$,
is a Hamiltonian flow line of $p$ in $\overline{\mathcal U}$ and
$\gamma(0),\gamma(T)\in U_\delta$, then $\gamma([0,T])\subset U_\delta$.
\end{enumerate}
\end{lemm}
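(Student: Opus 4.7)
The plan is to construct $\varphi_\pm$ as multiplicative corrections $\varphi_\pm = e^{\psi_\pm}\tilde\varphi_\pm$ of arbitrary smooth defining functions $\tilde\varphi_\pm$ of $\Gamma_\pm$ near $K$, where the smooth exponent $\psi_\pm$ is obtained by iterated time-averaging along the Hamiltonian flow and is designed so that the eigenvalue $c_\pm$ in \eqref{c:c-pm} falls inside $(\nu_{\min}-\varepsilon,\nu_{\max}+\varepsilon)$. To begin, pick any smooth $\tilde\varphi_\pm$ near $K$ with $\Gamma_\pm = \{\tilde\varphi_\pm=0\}$ and $d\tilde\varphi_\pm \ne 0$, using assumption \eqref{aa:basic}. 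Since $\Gamma_\pm$ is invariant under $e^{tH_p}$, the function $H_p\tilde\varphi_\pm$ vanishes on $\Gamma_\pm$, so one can write $H_p\tilde\varphi_\pm = \mp\tilde c_\pm\tilde\varphi_\pm$ for some smooth real-valued $\tilde c_\pm$ defined near $K$.

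The next step is to relate $\tilde c_\pm|_K$ to the normal expansion rates. Since $\mathcal V_\pm$ is the symplectic complement of $T\Gamma_\pm$, it is spanned on $\Gamma_\pm$ by $H_{\tilde\varphi_\pm}$. A short computation with $[H_p,H_{\tilde\varphi_\pm}] = H_{H_p\tilde\varphi_\pm} = \mp\tilde c_\pm H_{\tilde\varphi_\pm}$ on $\Gamma_\pm$ yields, for $\rho\in K$,
\[
\|de^{\mp tH_p}(\rho)|_{\mathcal V_\pm}\| = \exp\Big(\!-\!\int_0^t \tilde c_\pm(e^{\mp sH_p}\rho)\,ds\Big)\cdot b_\pm(t,\rho),
\]
with $b_\pm$ bounded above and below by positive constants uniformly in $t,\rho$ (coming from comparing the chosen inner product with the pointwise length of $H_{\tilde\varphi_\pm}$). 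Combining this with \eqref{e:nu-min}, \eqref{e:nu-max}, and the compactness of $K\cap p^{-1}([\alpha_0,\alpha_1])$, one can fix $T>0$ large enough that the time average
\[
c_\pm^T(\rho) := \frac{1}{T}\int_0^T \tilde c_\pm(e^{\mp sH_p}\rho)\,ds
\]
lies in $(\nu_{\min}-\varepsilon/2,\nu_{\max}+\varepsilon/2)$ uniformly over $\rho\in K\cap p^{-1}([\alpha_0,\alpha_1])$.

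Now define $\psi_\pm(\rho):=\frac{1}{T}\int_0^T\!\!\int_0^s \tilde c_\pm(e^{\mp uH_p}\rho)\,du\,ds$. Direct differentiation under the integral gives $H_p\psi_\pm = \pm(\tilde c_\pm - c_\pm^T)$, and a one-line computation then yields $H_p(e^{\psi_\pm}\tilde\varphi_\pm) = \mp c_\pm^T\cdot e^{\psi_\pm}\tilde\varphi_\pm$. Setting $\varphi_\pm := e^{\psi_\pm}\tilde\varphi_\pm$ therefore verifies \eqref{c:c-pm} with $c_\pm = c_\pm^T$; the bound \eqref{e:nu-bound} holds on $K$ by the previous step and on a sufficiently small neighborhood $U_\delta$ by continuity. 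Property \eqref{c:defining} is preserved because $e^{\psi_\pm}>0$. The third listed property is the general fact that $H_{\varphi_\pm}$ spans the symplectic complement $\mathcal V_\pm$ whenever $\varphi_\pm$ is a defining function of $\Gamma_\pm$. For \eqref{c:poisson}, transversality of $\Gamma_+$ and $\Gamma_-$ at $K$ forces $\mathcal V_+|_K$ and $\mathcal V_-|_K$ to be transverse lines inside the $2$-dimensional symplectic subspace $(TK)^{\perp_\sigma}$, so $\{\varphi_+,\varphi_-\} = \sigma(H_{\varphi_+},H_{\varphi_-})\ne 0$ on $K$; flipping the sign of $\tilde\varphi_+$ if necessary makes it positive, and continuity extends positivity to $U_\delta$. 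Finally, \eqref{c:convex} is immediate from \eqref{c:c-pm}: along any $H_p$-flow line, $|\varphi_+|$ is nonincreasing and $|\varphi_-|$ is nondecreasing, so if both endpoints lie in $U_\delta$ then the entire segment does, with compactness of $U_\delta\cap p^{-1}([\alpha_0,\alpha_1])$ in $\mathcal U$ ensured by taking $\delta$ small enough.

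The main technical obstacle is the second paragraph---extracting a \emph{pointwise} bound on $c_\pm^T$ from the \emph{uniform exponential} bounds \eqref{e:nu-min}, \eqref{e:nu-max}. The compactness of $K\cap p^{-1}([\alpha_0,\alpha_1])$ and the uniformity in $\rho$ of those bounds are exactly what allow the bounded factor $b_\pm(t,\rho)$ to be absorbed into the $\varepsilon/2$ slack once $T$ is chosen large. The rest of the proof is algebraic manipulation and a continuity argument to pass from $K$ to the neighborhood $U_\delta$.
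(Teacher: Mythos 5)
Your construction is essentially the paper's: both take arbitrary defining functions $\tilde\varphi_\pm$ with $H_p\tilde\varphi_\pm=\mp\tilde c_\pm\tilde\varphi_\pm$, deduce from \eqref{e:nu-min}--\eqref{e:nu-max} that the time-$T$ ergodic average of $\tilde c_\pm$ along the flow lies in $(\nu_{\min}-\varepsilon/2,\nu_{\max}+\varepsilon/2)$ for $T$ large (your identity relating $\|de^{\mp tH_p}|_{\mathcal V_\pm}\|$ to $\exp(-\int_0^t\tilde c_\pm)$ up to bounded factors is exactly the content of the paper's computation of $\partial_t(e^{\mp tH_p})_*H_{\tilde\varphi_\pm}$), and then set $\varphi_\pm=e^{\psi_\pm}\tilde\varphi_\pm$ with $H_p\psi_\pm=\pm(\tilde c_\pm-\langle\tilde c_\pm\rangle_T)$. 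Your double-integral formula for $\psi_\pm$ is a clean explicit choice, and parts \eqref{c:defining}--\eqref{c:poisson} are handled correctly.

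There is, however, a genuine gap in your treatment of part \eqref{c:convex}, which you dismiss as ``immediate from \eqref{c:c-pm}.'' The monotonicity of $|\varphi_\pm|$ along the flow is only valid on the region where $\varphi_\pm$ are defined and where the identity $H_p\varphi_\pm=\mp c_\pm\varphi_\pm$ with $c_\pm>0$ holds --- that is, on a small neighborhood of $K$. But the hypothesis of part \eqref{c:convex} only places the flow segment $\gamma([0,T])$ in $\overline{\mathcal U}$, which is a much larger set: a priori the trajectory could leave the domain of $\varphi_\pm$ entirely (where the differential inequality is meaningless) and return to $U_\delta$ at time $T$. The monotonicity argument cannot even get started until one knows the whole segment stays in a fixed small neighborhood $U_{\delta_0}$ of $K$ on which $\pm H_p\varphi_\pm^2\le 0$. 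This is precisely what Lemma~\ref{l:the-flow-2} supplies: by a compactness/contradiction argument using the convexity of $\overline{\mathcal U}$ (assumption~\eqref{a:U}) and the definition of $\Gamma_\pm$, for every neighborhood $U_1$ of $K$ there is a smaller neighborhood $U_2$ such that any flow line in $\overline{\mathcal U}$ with both endpoints in $U_2$ stays in $U_1$. One first fixes $\delta_0$ with $\pm H_p\varphi_\pm^2\le0$ on $U_{\delta_0}$, then chooses $\delta$ small enough (via Lemma~\ref{l:the-flow-2}) that endpoints in $U_\delta$ force $\gamma([0,T])\subset U_{\delta_0}$, and only then does your monotonicity argument close the loop and give $\gamma([0,T])\subset U_\delta$. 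Without this global dynamical input the convexity claim does not follow.

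A minor additional remark: you phrase the uniform bound on the averages $c_\pm^T$ over $K\cap p^{-1}([\alpha_0,\alpha_1])$, but \eqref{e:nu-bound} is required on all of $U_\delta$, which is a neighborhood of the whole of $K$; since the suprema in \eqref{e:nu-min}--\eqref{e:nu-max} are taken over all of $K$ (which is compact, being closed in $\overline{\mathcal U}$), the same argument gives the bound on all of $K$ and hence, by continuity, on $U_\delta$ --- but this should be stated.
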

\begin{proof} Since $\Gamma_\pm$ are orientable, there exist defining functions $\tilde\varphi_\pm$
of $\Gamma_\pm$ near $K$; that is, $\tilde\varphi_\pm$ are smooth, defined in
some neighborhood $U$ of $K$, and
$d\tilde\varphi_\pm\neq 0$ on $U$ and $\Gamma_\pm\cap U=\overline{\mathcal U}\cap \{\tilde\varphi_\pm=0\}$.
Since $K$ is symplectic, by changing the sign of $\tilde\varphi_-$ if necessary, we can moreover
assume that $\{\tilde\varphi_+,\tilde\varphi_-\}>0$ on $K$.

Since $e^{tH_p}(\Gamma_\pm)\subset\Gamma_\pm$ for $\mp t\geq 0$, we have $H_p\tilde\varphi_\pm=0$ on $\Gamma_\pm$;
therefore,
$$
H_p\tilde\varphi_\pm=\mp \tilde c_\pm\tilde\varphi_\pm,
$$
where $\tilde c_\pm$ are smooth functions on $U$. The functions $\tilde c_\pm$
control how fast $\tilde\varphi_\pm$ decays along the flow as $t\to\pm\infty$. The constants
$\nu_{\min}$ and $\nu_{\max}$ control the average decay rate; to construct $\varphi_\pm$,
we will modify $\tilde\varphi_\pm$ by averaging along the flow for a large time.

For any $\rho\in \Gamma_\pm\cap U$, the kernel of $d\tilde\varphi_\pm(\rho)$ is equal to $T_\rho\Gamma_\pm$;
therefore, the Hamiltonian fields $H_{\tilde\varphi_\pm}$ span $\mathcal V_\pm$ on $\Gamma_\pm\cap U$. We then see from
the definitions~\eqref{e:nu-min}, \eqref{e:nu-max} of $\nu_{\min},\nu_{\max}$ that there exists a constant $C$ such that,
with $(e^{\mp tH_p})_*H_{\tilde\varphi_\pm}\in\mathcal V_\pm$ denoting the push-forward of the vector
field $H_{\tilde\varphi_\pm}$ by the diffeomorphism $e^{\mp tH_p}$,
$$
C^{-1}e^{-(\nu_{\max}+\varepsilon/2)t}\leq
{(e^{\mp tH_p})_*H_{\tilde\varphi_\pm}\over H_{\tilde\varphi_\pm}}\leq Ce^{-(\nu_{\min}-\varepsilon/2)t}\quad\text{on }
K,\quad
t\geq 0.
$$
Now, we calculate on $K$,
$$
\begin{gathered}
\partial_t ((e^{\mp tH_p})_* H_{\tilde\varphi_\pm})
=\pm (e^{\mp tH_p})_*[H_p,H_{\tilde\varphi_\pm}]\\
=-(e^{\mp tH_p})_*H_{\tilde c_\pm\tilde\varphi_\pm}
=-(\tilde c_\pm\circ e^{\pm tH_p})(e^{\mp tH_p})_* H_{\tilde\varphi_\pm}.
\end{gathered}
$$
Combining these two facts, we get for $T>0$ large enough,
$$
\nu_{\min}-\varepsilon<\langle \tilde c_\pm\rangle_T<\nu_{\max}+\varepsilon\quad\text{on }K,
$$
where $\langle \cdot\rangle_T$ stands for the ergodic average on $K$:
$$
\langle f\rangle_T:={1\over T}\int_0^T f\circ e^{tH_p}\,dt.
$$
Fix $T$. We now put
$\varphi_\pm:=e^{\mp f_\pm}\cdot\tilde\varphi_\pm$,
where $f_\pm$ are smooth functions on $U$ with
$$
f_\pm={1\over T}\int_0^T (T-t)\tilde c_\pm\circ e^{tH_p}\,dt\quad \text{on }K,
$$
so that
$H_p f_\pm=\langle \tilde c_\pm\rangle_T-\tilde c_\pm$ on $K$.
Then $\varphi_\pm$ satisfy conditions~\eqref{c:defining}--\eqref{c:poisson}, with
$$
c_\pm=\mp{H_p \varphi_\pm\over \varphi_\pm}=\langle\tilde c_\pm \rangle_T\in
(\nu_{\min}-\varepsilon,\nu_{\max}+\varepsilon)
$$
on $K$, and thus on $U_\delta$ for $\delta$ small enough.

To verify condition~\eqref{c:convex},
fix $\delta_0>0$ small enough so that $\pm H_p\varphi_\pm^2\leq 0$
on $U_{\delta_0}$. By Lemma~\ref{l:the-flow-2},
for $\delta$ small enough
depending on $\delta_0$, for each Hamiltonian flow line $\gamma(t)$, $0\leq t\leq T$,
of $p$ in $\overline{\mathcal U}$, if $\gamma(0),\gamma(T)\in U_\delta$,
then $\gamma([0,T])\subset U_{\delta_0}$. Since
$\pm \partial_t\varphi_\pm(\gamma(t))^2\leq 0$ for $0\leq t\leq T$
and $|\varphi_\pm(\gamma(t))|\leq \delta$ for $t=0,T$, we see
that $\gamma([0,T])\subset U_\delta$.
\end{proof}

\subsection{The canonical relation \texorpdfstring{$\Lambda^\circ$}{Lambda}}
  \label{s:projections}

We next construct the projections $\pi_\pm$ from subsets $\Gamma_\pm^\circ\subset\Gamma_\pm$
to $K$. Fix $\delta_0,\delta_1>0$ small enough so
that Lemma~\ref{l:phi-pm} holds with $\delta_0$ in place of $\delta$
and $K\cap p^{-1}([\alpha_0-\delta_1,\alpha_1+\delta_1])$ is a compact subset
of $\mathcal U$ (the latter is possible by assumption~\eqref{a:k-compact} in~\S\ref{s:framework-assumptions}),
consider the functions $\varphi_\pm$ from Lemma~\ref{l:phi-pm} and put
\begin{equation}
  \label{e:k-circ}
\Gamma_\pm^\circ:=\Gamma_\pm\cap p^{-1}(\alpha_0-\delta_1,\alpha_1+\delta_1)\cap \{|\varphi_\mp|<\delta_0\},\quad
K^\circ:=K\cap p^{-1}(\alpha_0-\delta_1,\alpha_1+\delta_1),
\end{equation}
so that $K^\circ=\Gamma_+^\circ\cap\Gamma_-^\circ$ and, for $\delta_0$ small enough, $\Gamma_\pm^\circ\subset\mathcal U$.
Note that, by part~\eqref{c:c-pm} of Lemma~\ref{l:phi-pm}, the level sets of $p$ on $\Gamma_\pm$ are invariant
under $H_{\varphi_\pm}$ and $e^{tH_p}(\Gamma_\pm^\circ)\subset\Gamma_\pm^\circ$
for $\mp t\geq 0$.

By part~\eqref{c:poisson} of Lemma~\ref{l:phi-pm}, $\Gamma_\pm^\circ$
is foliated by trajectories of $H_{\varphi_\pm}$ (or equivalently,
by trajectories of $\mathcal V_\pm$), moreover each trajectory intersects $K$ at a single point.
This defines projection maps
$$
\pi_\pm:\Gamma^\circ_\pm\to K^\circ,
$$
mapping
each trajectory to its intersection with $K$. The flow $e^{tH_p}$ preserves the
subbundle $\mathcal V_\pm$ generated by $H_{\varphi_\pm}$, therefore
\begin{equation}
  \label{e:pi-pm-commute}
\pi_\pm\circ e^{\mp tH_p}=e^{\mp tH_p}\circ\pi_\pm,\quad
t\geq 0.
\end{equation}
Now, define the $2n$-dimensional submanifold $\Lambda^\circ\subset T^*X\times T^*X$ by
\begin{equation}
  \label{e:the-Lambda}
\Lambda^\circ:=\{(\rho_-,\rho_+)\in\Gamma_-^\circ\times\Gamma_+^\circ\mid
\pi_-(\rho_-)=\pi_+(\rho_+)\}.
\end{equation}
We claim that $\Lambda^\circ$ is a canonical relation. Indeed, it
is enough to prove that $\sigma_S|_{T\Gamma^\circ_\pm}=\pi_\pm^* (\sigma_S|_{TK^\circ})$,
where $\sigma_S$ is the symplectic form on $T^*M$. This is true since
the Hamiltonian flow $e^{tH_{\varphi_\pm}}$ preserves $\sigma_S$ and
$\mathcal V_\pm|_K$ is symplectically orthogonal to $TK$.

\subsection{The transport equations}
  \label{s:transport}

Finally, we use $r$-normal hyperbolicity to establish existence of solutions to the transport
equations, needed in the construction of the projector $\Pi$
in~\S\ref{s:construction-1}. We start by estimating higher derivatives of the flow.
Take $\delta_0,\Gamma_\pm^\circ,K^\circ$ from~\S\ref{s:projections} and
identify $\Gamma_\pm^\circ\sim K^\circ\times (-\delta_0,\delta_0)$ by the map
\begin{equation}
  \label{e:identify}
\rho_\pm\in \Gamma_\pm^\circ\mapsto (\pi_\pm(\rho_\pm),\varphi_\mp(\rho_\pm)).
\end{equation}
Denote elements of $K^\circ\times (-\delta_0,\delta_0)$ by $(\theta,s)$ and the flow $e^{tH_p}$ on
$\Gamma_\pm^\circ$, $\mp t\geq 0$, by (recall~\eqref{e:pi-pm-commute})
$$
e^{tH_p}:(\theta,s)\mapsto (e^{tH_p}(\theta),\psi^t_\pm(\theta,s)).
$$
Note that $\psi^t_\pm(\theta,0)=0$.
We have the following estimate on higher derivatives of the flow on $K^\circ$ (in any fixed coordinate system),
see for example~\cite[Lemma~C.1]{qeefun}
(which is stated for geodesic flows, but the proof applies to any smooth flow):
\begin{equation}
  \label{e:k-ders}
\sup_{\theta\in K^\circ}|\partial^\alpha_\theta e^{tH_p}(\theta)|\leq C_\alpha e^{(|\alpha|\mu_{\max}+\tilde\varepsilon)|t|},\quad
t\in\mathbb R.
\end{equation}
Here $\mu_{\max}$ is defined by~\eqref{e:mu-max},
$\tilde\varepsilon>0$ is any fixed constant, and $C_\alpha$ depends on $\tilde\varepsilon$.
We choose $\tilde\varepsilon$ small enough in~\eqref{e:r-nh-2} below and the constant $\varepsilon>0$
in Lemma~\ref{l:phi-pm} is small depending on $\tilde\varepsilon$.

Next, we estimate the derivatives of $\psi^t_\pm$. We have, with $c_\pm$ defined
in part~\eqref{c:c-pm} of Lemma~\ref{l:phi-pm},
$$
\partial_t\psi^t_\pm(\theta,s)=\pm c_\mp(e^{tH_p}(\theta),\psi^t_\pm(\theta,s))\psi^t_\pm(\theta,s).
$$
Then
$$
\partial_t (\partial^k_s\partial^\alpha_\theta\psi^t_\pm(\theta,s))
=\pm c_\mp(e^{tH_p}(\theta),0)\partial^k_s\partial^\alpha_\theta\psi^t_\pm(\theta,s)
+\dots,
$$
where $\dots$ is a linear combination, with uniformly bounded variable coefficients depending
on the derivatives of $c_\mp$, of expressions of the form
$$
\partial^{\beta_1}_\theta e^{tH_p}(\theta)\cdots\partial_\theta^{\beta_m}e^{tH_p}(\theta)\,
\partial^{\gamma_1}_\theta\partial^{k_1}_s\psi^t_\pm(\theta,s)\cdots\partial_\theta^{\gamma_l}\partial^{k_l}_s\psi^t_\pm(\theta,s),
$$
where $\beta_1+\dots+\beta_m+\gamma_1+\dots+\gamma_l=\alpha$,
$k_1+\dots+k_l=k$, and $|\beta_j|,|\gamma_j|+k_j>0$.
Moreover, if $l=0$ or $l+m=1$,
then the corresponding coefficient is a bounded multiple of $\psi^t_\pm(\theta,s)$.
It now follows by induction from~\eqref{e:nu-bound} that
\begin{equation}
  \label{e:s-ders}
\sup_{\theta\in K^\circ,\, |s|<\delta_0}|\partial^k_s\partial^\alpha_\theta \psi^{\mp t}_\pm(\theta,s)|\leq C_{\alpha k}e^{(|\alpha|\mu_{\max}-\nu_{\min}+\tilde\varepsilon)t},\quad t\geq 0.
\end{equation}

We can now prove the following
\begin{lemm}
  \label{l:ode}
Assume that~\eqref{e:r-nh} is satisfied, with some integer $r>0$.
Let $f\in C^{r+1}(\Gamma_\pm^\circ)$ be such that $f|_K=0$. Then
there exists unique solution $a\in C^r(\Gamma_\pm^\circ)$ to the equation
\begin{equation}
  \label{e:transport}
H_p a=f,\quad
a|_{K^\circ}=0.
\end{equation}
\end{lemm}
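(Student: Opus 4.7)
I will treat the case of $\Gamma_-^\circ$; the case of $\Gamma_+^\circ$ is symmetric, with $t$ replaced by $-t$. Along each flow line $t\mapsto e^{tH_p}(\rho)$ with $\rho\in\Gamma_-^\circ$, Lemma~\ref{l:the-flow} guarantees $d(e^{tH_p}(\rho),K)\to 0$ as $t\to+\infty$. Hence if $a$ is a $C^1$ solution of $H_p a=f$ with $a|_K=0$, then $a(e^{tH_p}(\rho))\to 0$ as $t\to+\infty$, which forces
\begin{equation}
\label{e:plan-a}
a(\rho)=-\int_0^\infty f(e^{tH_p}(\rho))\,dt.
\end{equation}
This already gives uniqueness. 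For existence I will simply \emph{define} $a$ by~\eqref{e:plan-a} and verify that the right-hand side converges in $C^r(\Gamma_-^\circ)$, after which the identity $H_p a=f$ follows by differentiating under the integral sign and using the fundamental theorem of calculus together with $f|_K=0$.

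The core of the argument is the convergence estimate. Using the identification~\eqref{e:identify} to write $\rho=(\theta,s)\in K^\circ\times(-\delta_0,\delta_0)$, the flow on $\Gamma_-^\circ$ takes the form $e^{tH_p}(\theta,s)=(e^{tH_p}(\theta),\psi^t_-(\theta,s))$ and the hypothesis $f|_K=0$ lets me factor
\begin{equation*}
f(\theta,s)=s\,g(\theta,s),\qquad g\in C^r(\Gamma_-^\circ).
\end{equation*}
Plugging this into~\eqref{e:plan-a}, the integrand becomes $\psi^t_-(\theta,s)\,g(e^{tH_p}(\theta),\psi^t_-(\theta,s))$. When I apply $\partial^\alpha_\theta\partial^k_s$ with $|\alpha|+k\le r$ and expand by the Faà di Bruno / Leibniz rule, every resulting term is a product of: (i) a bounded derivative of $g$, (ii) derivatives $\partial^{\beta_j}_\theta e^{tH_p}(\theta)$ bounded by~\eqref{e:k-ders} with $\sum|\beta_j|\le r$, and (iii) at least one factor that is a derivative of $\psi^t_-(\theta,s)$, which by~\eqref{e:s-ders} decays at rate $e^{(-\nu_{\min}+\tilde\varepsilon)t}$ (possibly accompanied by further $\theta$-derivatives that contribute additional $e^{\mu_{\max}|\beta|t}$ factors). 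Summing the exponents, the worst-case bound on each term is
\begin{equation*}
C\,e^{(r\mu_{\max}-\nu_{\min}+C\tilde\varepsilon)t},
\end{equation*}
which is integrable on $[0,\infty)$ precisely because of the $r$-normal hyperbolicity condition $\nu_{\min}>r\mu_{\max}$ in~\eqref{e:r-nh}, provided $\tilde\varepsilon$ (and hence the $\varepsilon$ fixed in Lemma~\ref{l:phi-pm}) is chosen small enough.

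The main obstacle is precisely this bookkeeping: one has to verify that in \emph{every} term produced by differentiation, at least one factor of $\psi^t_-$ (or of its derivatives) survives, so that the single $e^{-\nu_{\min}t}$ gain can compensate the $r$ possible $e^{\mu_{\max}t}$ losses from differentiating the flow on $K^\circ$. This is where the factorization $f=sg$ is essential: without it, the base ($\alpha=k=0$) term would not decay and the integral would diverge even in $C^0$. Once the $C^r$ convergence of~\eqref{e:plan-a} is established, differentiating the integral along the flow gives
\begin{equation*}
H_p a(\rho)=-\frac{d}{d\tau}\bigg|_{\tau=0}\!\int_0^\infty f(e^{(t+\tau)H_p}(\rho))\,dt=f(\rho),
\end{equation*}
and $a|_{K^\circ}=0$ since $f$ vanishes on $K$ and each $e^{tH_p}$ preserves $K^\circ$. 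This completes the construction.
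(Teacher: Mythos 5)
Your proof is correct and follows essentially the same route as the paper: uniqueness via the integral representation $a=-\int_0^\infty f\circ e^{tH_p}\,dt$ along the flow toward $K$, and existence via term-by-term differentiation of that integral, with convergence in $C^r$ coming from the estimates~\eqref{e:k-ders} and~\eqref{e:s-ders} and the pinching~\eqref{e:r-nh}. The only (cosmetic) difference is that you extract the crucial decay factor by the Hadamard factorization $f=s\,g$, whereas the paper obtains the same gain in the ``no $\psi$-derivative'' terms by noting that $\partial_\theta^m f$ vanishes on $K$ and is hence $\mathcal O(|\psi^{\mp t}_\pm|)$.
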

\begin{proof}
Using~\eqref{e:r-nh}, choose $\tilde\varepsilon>0$ so that
\begin{equation}
  \label{e:r-nh-2}
r\mu_{\max}-\nu_{\min}+\tilde\varepsilon<0.
\end{equation}
Any solution to~\eqref{e:transport} satisfies for each $T>0$,
$$
a=a\circ e^{\mp TH_p}\pm\int_0^T f\circ e^{\mp tH_p}\,dt.
$$
Since $a|_{K^\circ}=0$, by letting $T\to +\infty$ we see that
the unique solution to~\eqref{e:transport} is
\begin{equation}
  \label{e:transport-sol}
a=\pm\int_0^\infty f\circ e^{\mp tH_p}\,dt.
\end{equation}
The integral~\eqref{e:transport-sol} converges exponentially, as
$$
|f\circ e^{\mp tH_p}(\theta,s)|\leq C|\psi^{\mp t}_\pm(\theta,s)|\leq Ce^{-(\nu_{\min}-\varepsilon)t}.
$$
To show that $a\in C^r$, it suffices to prove that when
$|\alpha|+k\leq r$, the integral
$$
\int_0^\infty \partial^k_s\partial^\alpha_\theta(f\circ e^{\mp tH_p})\,dt
$$
converges uniformly in $s,\theta$. Given~\eqref{e:r-nh-2}, it is enough to show that
\begin{equation}
  \label{e:tr-int}
\sup_{\theta,s}|\partial^k_s\partial^\alpha_\theta (f\circ e^{\mp tH_p})(\theta,s)|\leq C_{\alpha k}e^{(|\alpha|\mu_{\max}
-\nu_{\min}+\tilde\varepsilon)t},\quad t>0.
\end{equation}
To see~\eqref{e:tr-int}, we use the chain rule to estimate the left-hand side by a sum
of terms of the form
$$
\partial^m_\theta\partial^l_s f(e^{\mp tH_p}(\theta,s))
\partial_\theta^{\beta_1}e^{\mp tH_p}(\theta)\cdots\partial_\theta^{\beta_m}e^{\mp tH_p}(\theta)
\partial^{\gamma_1}_\theta\partial^{k_1}_s\psi^{\mp t}_\pm(\theta,s)\cdots\partial_\theta^{\gamma_l}\partial^{k_l}_s\psi^{\mp t}_\pm(\theta,s)
$$
where $\beta_1+\dots+\beta_m+\gamma_1+\dots+\gamma_l=\alpha$, $k_1+\dots+k_l=k$, and
$|\beta_j|,|\gamma_j|+k_j>0$. For $l=0$, we have $|\partial^m_\theta f\circ e^{\mp tH_p}|=\mathcal O(e^{-(\nu_{\min}-\varepsilon)t})$ and~\eqref{e:tr-int} follows from~\eqref{e:k-ders}.
For $l>0$, \eqref{e:tr-int} follows from~\eqref{e:k-ders} and~\eqref{e:s-ders}.
\end{proof}

\section{Calculus of microlocal projectors}
  \label{s:calculus}

In this section, we develop tools for handling Fourier integral operators
associated to the canonical relation~$\Lambda^\circ$ introduced in~\S\ref{s:projections}.
We will not use theoperator $P$ or the global dynamics of the
flow $e^{tH_p}$; we will only assume that $X$ is an $n$-dimensional manifold and
\begin{itemize}
\item $\Gamma_\pm^\circ\subset T^*X$ are smooth orientable hypersurfaces;
\item $\Gamma_\pm^\circ$ intersect transversely and $K^\circ:=\Gamma_+^\circ\cap\Gamma_-^\circ$
is symplectic;
\item if $\mathcal V_\pm\subset T\Gamma_\pm^\circ$ is the symplectic complement of $T\Gamma_\pm^\circ$ in $T(T^*X)$, then
each maximally extended flow line of $\mathcal V_\pm$ on $\Gamma_\pm^\circ$ intersects $K^\circ$ at precisely
one point, giving rise to the projection maps $\pi_\pm:\Gamma_\pm^\circ\to K^\circ$;
\item the canonical relation $\Lambda^\circ\subset T^*(X\times X)$ is defined by
$$
\Lambda^\circ=\{(\rho_-,\rho_+)\in\Gamma_-^\circ\times\Gamma_+^\circ\mid
\pi_-(\rho_-)=\pi_+(\rho_+)\};
$$
\item the projections $\tilde\pi_\pm:\Lambda^\circ\to\Gamma_\pm^\circ$
are defined by
\begin{equation}
  \label{e:tilde-pi}
\tilde\pi_\pm(\rho_-,\rho_+)=\rho_\pm.
\end{equation}
\end{itemize}
If we only consider a bounded number of terms in the asymptotic expansions of the studied
symbols, and require existence of a fixed number of derivatives of these symbols,
then the smoothness requirement above can be replaced by $C^r$ for $r$ large enough depending
only on $n$.

We will study the operators in the class $\II$ considered in \S\ref{s:prelim-fio}.
The antiderivative on $\Lambda^\circ$ (see~\S\ref{s:prelim-fio}) is fixed so
that it vanishes on the image of the embedding
\begin{equation}
  \label{e:j-k}
j_K:K^\circ\to\Lambda^\circ,\quad
j_K(\rho)=(\rho,\rho);
\end{equation}
this is possible since $j_K^*(\eta\,dy-\xi\,dx)=0$ and the image of $j_K$
is a deformation retract of $\Lambda^\circ$.

We are particularly interested in defining invariantly the
principal symbol $\sigma_\Lambda(A)$ of an operator $A\in\II$. This could be done
using the global theory of Fourier integral operators; we take instead a more
direct approach based on the model case studied in~\S\ref{s:model}.
The principal symbols on a neighborhood $\widetilde\Lambda$ of a
compact subset $\widehat K\subset K^\circ$
are defined as sections of certain vector
bundles in~\S\ref{s:general}.

We are also interested in the symbol
of a product of two operators in $\II$. Note
that such a product lies again in $\II$, since
$\Lambda^\circ$ satisfies the transversality condition with itself
and, with the composition defined as in~\eqref{e:comp-fio}, $\Lambda^\circ\circ\Lambda^\circ=\Lambda^\circ$.
To study the principal symbol of the product, we again use the model case~-- see Proposition~\ref{l:calculus}.

Next, in~\S\ref{s:idempotents}, we study idempotents in $\II$,
microlocally near $\widehat K$, proving technical lemmas need in the construction
of the microlocal projector $\Pi$ in~\S\ref{s:global-construction}.
Finally, in~\S\ref{s:ideals}, we consider left and right ideals of pseudodifferential
operators annihilating a microlocal idempotent, which are key for proving resolvent
estimates in~\S\ref{s:resolvent-bounds}.

\subsection{Model case}
  \label{s:model}

We start with the model case
\begin{equation}
  \label{e:model-gamma}
X:=\mathbb R^n,\quad
\Gamma^0_+:=\{\xi_n=0\},\quad
\Gamma^0_-:=\{x_n=0\}.
\end{equation}
Then $K^0=\{x_n=\xi_n=0\}$ is canonically diffeomorphic to $T^*\mathbb R^{n-1}$.
If we denote elements of $\mathbb R^{2n}\simeq T^*\mathbb R^n$ by $(x',x_n,\xi',\xi_n)$, with
$x',\xi'\in\mathbb R^{n-1}$, then the projection maps $\pi_\pm:\Gamma_\pm^0\to K^0$ take the form
$$
\pi_+(x,\xi',0)=(x',0,\xi',0),\quad
\pi_-(x',0,\xi)=(x',0,\xi',0),
$$
and the map
\begin{equation}
  \label{e:phi}
\phi:(x,\xi)\mapsto (x',0,\xi;x,\xi',0)\in T^*(\mathbb R^n\times\mathbb R^n)
\end{equation}
gives a diffeomorphism of $\mathbb R^{2n}$ onto the corresponding canonical relation $\Lambda^0$.

\smallsection{Basic calculus}
For a Schwartz function
$a(x,\xi)\in\mathscr S(\mathbb R^{2n})$, define its $\Lambda^0$-quantization
$\Op_h^\Lambda(a):\mathscr S'(\mathbb R^n)\to \mathscr S(\mathbb R^n)$
by the formula
\begin{equation}
  \label{e:lambda-quant}
\Op_h^\Lambda(a)u(x)=(2\pi h)^{-n}
\int_{\mathbb R^{2n}}e^{{i\over h}(x'\cdot\xi'-y\cdot\xi)}a(x,\xi)u(y)\,dyd\xi.
\end{equation}
The operator $\Op_h^\Lambda(a)$ will be a Fourier integral operator associated to $\Lambda^0$,
see below for details.
We also use the standard quantization for pseudodifferential operators~\cite[\S 4.1.1]{e-z},
where $a(x,\xi;h)\in C^\infty(\mathbb R^{2n})$ and all derivatives of $a$ are bounded uniformly
in $h$ by a fixed power of $1+|x|^2+|\xi|^2$:
\begin{equation}
  \label{e:op-h}
\Op_h(a)u(x)=(2\pi h)^{-n}\int_{\mathbb R^{2n}}e^{{i\over h}(x-y)\cdot\xi}a(x,\xi)u(y)\,dyd\xi.
\end{equation}
The symbol $a$ can be extracted from $\Op_h^\Lambda(a)$
or $\Op_h(a)$ by the following oscillatory testing formulas, see~\cite[Theorem~4.19]{e-z}:
\begin{gather}
  \label{e:osc-test}
\Op_h^\Lambda(a)(e^{{i\over h}x\cdot\xi})=e^{{i\over h}x'\cdot\xi'}a(x,\xi),\quad
\xi\in\mathbb R^n,\\
  \label{e:osc-test-2}
\Op_h(a)(e^{{i\over h}x\cdot\xi})=e^{{i\over h}x\cdot\xi}a(x,\xi),\quad
\xi\in\mathbb R^n.
\end{gather}
From here, using stationary phase expansions similarly to~\cite[Theorems~4.11 and~4.12]{e-z}, we get
(where the symbols quantized by $\Op_h^\Lambda$ are Schwartz)
\begin{gather}
  \label{e:calc-model-1}
\Op_h^\Lambda(a)\Op_h^\Lambda(b)=\Op_h^\Lambda(a\#^\Lambda b),\\
  \label{e:calc-model-2}
\Op_h^\Lambda(a)\Op_h(b)=\Op_h^\Lambda(a_{\#b}),\\
  \label{e:calc-model-3}
\Op_h(b)\Op_h^\Lambda(a)=\Op_h^\Lambda(a_{b\#}),
\end{gather}
where the symbols $a\#^\Lambda b,a_{\#b},a_{b\#}\in \mathscr S(\mathbb R^{2n})$ have
asymptotic expansions
\begin{gather}
\label{e:exp-model-1}
a\#^\Lambda b(x,\xi)
\sim\sum_{\alpha} {(-ih)^{|\alpha|}\over\alpha!}\partial_\xi^\alpha a(x,\xi',0)\partial_x^\alpha b(x',0,\xi),\\
\label{e:exp-model-2}
a_{\#b}(x,\xi)\sim\sum_{\alpha}{(-ih)^{|\alpha|}\over\alpha!} \partial_\xi^\alpha a(x,\xi)
\partial_x^\alpha b(x',0,\xi),\\
\label{e:exp-model-3}
a_{b\#}(x,\xi)\sim \sum_{\alpha}{(-ih)^{|\alpha|}\over\alpha!} \partial_\xi^\alpha b(x,\xi',0)\partial_x^\alpha a(x,\xi).
\end{gather}
Finally, the operators $\Op_h^\Lambda(a)$ are bounded $L^2\to L^2$ with norm $\mathcal O(h^{-1/2})$:
\begin{prop}
  \label{l:bund}
If $a\in\mathscr S(\mathbb R^{2n})$, then there exists a constant $C$ such that
$$
\|\Op_h^\Lambda(a)\|_{L^2(\mathbb R^n)\to L^2(\mathbb R^n)}\leq Ch^{-1/2}.
$$
\end{prop}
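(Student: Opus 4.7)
The plan is a $TT^*$ argument: since $\|T\|_{L^2 \to L^2}^2 = \|TT^*\|_{L^2 \to L^2}$ with $T := \Op_h^\Lambda(a)$, it suffices to prove $\|TT^*\|_{L^2 \to L^2} \leq Ch^{-1}$, and the latter will follow from Schur's test applied to an explicit kernel.

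First, I would compute the Schwartz kernel of $TT^*$ directly from~\eqref{e:lambda-quant}. The key cancellation is the $y$-integration $\int e^{\frac{i}{h} y \cdot (\eta - \xi)} dy = (2\pi h)^n \delta(\eta - \xi)$, which forces $\eta = \xi$ and collapses the double Fourier integral into
$$
\mathcal{K}_{TT^*}(x, \tilde x) = (2\pi h)^{-n} \int e^{\frac{i}{h} (x' - \tilde x') \cdot \xi'} a(x, \xi) \overline{a(\tilde x, \xi)} \, d\xi.
$$
Only $\xi'$ enters the phase; the $\xi_n$-direction is free. This asymmetry reflects the fact that $\Lambda^0$ fails by exactly one dimension to be the graph of a canonical transformation, and is ultimately responsible for the $h^{-1/2}$ loss compared with the standard Calderón--Vaillancourt bound.

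Next I would integrate out $\xi_n$ to form the Schwartz function $c(x, \tilde x, \xi') := \int a(x, \xi', \xi_n) \overline{a(\tilde x, \xi', \xi_n)} d\xi_n$, and then recognize the remaining $\xi'$-integral as $(2\pi h)^{n-1}$ times a semiclassical inverse Fourier transform in $\xi'$:
$$
\mathcal{K}_{TT^*}(x, \tilde x) = (2\pi h)^{-1} \tilde c(x, \tilde x, x' - \tilde x'),
\qquad
\tilde c(x, \tilde x, z') := (2\pi h)^{-(n-1)} \int e^{\frac{i}{h} z' \cdot \xi'} c(x, \tilde x, \xi') \, d\xi'.
$$
Only one factor of $h^{-1}$ survives, as promised. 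Iterated integration by parts in $\xi'$, combined with the trivial bound $|\tilde c| \leq C h^{-(n-1)}$, yields the non-stationary phase estimate $|\tilde c(x, \tilde x, z')| \leq C_N h^{-(n-1)} \min(1, (h/|z'|)^N) \langle \tilde x \rangle^{-N}$, the Schwartz weight in $\tilde x$ coming from the corresponding decay of $c$.

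Finally I would apply Schur's test. After the change of variables $z' = x' - \tilde x'$, the integral $\int_{z'} \min(1, (h/|z'|)^N)\,dz'$ over $\mathbb R^{n-1}$ contributes a volume of order exactly $h^{n-1}$ (the bulk is concentrated in $|z'|\lesssim h$), which cancels the prefactor $h^{-(n-1)}$ in $\tilde c$; the $\langle \tilde x\rangle^{-N}$ factor integrates to a constant in the remaining variables. Hence $\int |\mathcal{K}_{TT^*}(x, \tilde x)|\,d\tilde x \leq Ch^{-1}$ uniformly in $x$, and the symmetric estimate with $x$ and $\tilde x$ swapped holds identically. Schur's test gives $\|TT^*\|_{L^2 \to L^2} \leq Ch^{-1}$, and therefore $\|T\|_{L^2 \to L^2} \leq Ch^{-1/2}$. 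The only potential obstacle is book-keeping the $h$-scaling in the definition of $\tilde c$, but once the non-stationary phase and trivial bounds are combined as above, this is routine.
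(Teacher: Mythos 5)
Your $TT^*$ argument is correct, and it is a genuinely different route from the paper's. The paper instead factors the operator through the semiclassical Fourier transform, writing $\Op_h^\Lambda(a)u(x)=(2\pi h)^{-1/2}\int_{\mathbb R} v(x,\xi_n)\,d\xi_n$ where, for each fixed $(x_n,\xi_n)$, the map $\hat u(\cdot,\xi_n)\mapsto v(\cdot,x_n,\xi_n)$ is a pseudodifferential operator on $\mathbb R^{n-1}$; the Calder\'on--Vaillancourt theorem gives an $\mathcal O(1)$ bound on each $(x_n,\xi_n)$-slice, and the surviving one-dimensional $\xi_n$-integration produces the $(2\pi h)^{-1/2}$. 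Both arguments isolate the same structural fact -- that $\Lambda^0$ misses being the graph of a canonical transformation by exactly the $\xi_n$-direction -- but they package it differently: the paper's slicing argument is short and leans on the standard $L^2$-boundedness of $\Psi$DOs, whereas yours is more self-contained (only Schur's test and integration by parts, no Calder\'on--Vaillancourt) and makes the kernel asymmetry between the oscillatory $\xi'$-phase and the non-oscillatory $\xi_n$-integration explicit. One small point worth spelling out when you write it fully: in the Schur integral $\int|\mathcal K_{TT^*}(x,\tilde x)|\,d\tilde x$ you must use the $\min(1,(h/|x'-\tilde x'|)^N)$ factor for the $\tilde x'$-integration (yielding the crucial $h^{n-1}$ that cancels the $h^{-(n-1)}$ prefactor) and the Schwartz weight in $\tilde x$ only for the $\tilde x_n$-integration; using the Schwartz decay alone for the whole $\tilde x$-integral would lose the needed power of $h$.
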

\begin{proof}
Define the semiclassical Fourier transform
$$
\hat u(\xi):=(2\pi h)^{-n/2}\int_{\mathbb R^n} e^{-{i\over h}y\cdot\xi}u(y)\,dy,
$$
then $\|\hat u\|_{L^2}=\|u\|_{L^2}$ and
$$
\Op_h^\Lambda(a)u(x)=(2\pi h)^{-1/2}\int_{\mathbb R} v(x,\xi_n)\,d\xi_n,
$$
where
$$
v(x,\xi_n):=(2\pi h)^{-(n-1)/2}\int_{\mathbb R^{n-1}} e^{{i\over h}x'\cdot\xi'}a(x,\xi',\xi_n)\hat u(\xi',\xi_n)\,d\xi'.
$$
Using the $L^2$-boundedness of pseudodifferential operators on $\mathbb R^{n-1}$, we see that
for each $(x_n,\xi_n)\in \mathbb R^2$,
$$
\|v(\cdot,x_n,\xi_n)\|_{L^2_{x'}}\leq F(x_n,\xi_n)\|\hat u(\cdot,\xi_n)\|_{L^2_{\xi'}},
$$
where $F(x_n,\xi_n)$ is bounded by a certain $\mathscr S(\mathbb R^{2n-2})$ seminorm of $a(\cdot,x_n,\cdot,\xi_n)$.
Then $F$ is rapidly decaying on $\mathbb R^2$ and for any $N$,
$$
\|v(\cdot,\xi_n)\|_{L^2_x}\leq C\langle \xi_n\rangle^{-N}\|\hat u(\cdot,\xi_n)\|_{L^2_{\xi'}}.
$$
Therefore,
$$
\|\Op_h^\Lambda(a)u(x)\|_{L^2}\leq Ch^{-1/2}\int_{\mathbb R}\|v(\cdot,\xi_n)\|_{L^2_x}\,d\xi_n
\leq Ch^{-1/2}\|u\|_{L^2}
$$
as required.
\end{proof}

\smallsection{Microlocal properties}
For $a\in\mathscr S(\mathbb R^{2n})$,
the operator $\Op_h^\Lambda(a)$ is $h$-tempered as defined in Section~\ref{s:prelim-basics}.
Moreover, the following analog of~\eqref{e:lagrangian-wf} follows from~\eqref{e:calc-model-2}
and~\eqref{e:calc-model-3}:
\begin{equation}
  \label{e:model-wf}
\WFh(\Op_h^\Lambda(a))\subset \phi(\supp a)\subset\Lambda^0,
\end{equation}
with $\phi$ defined by~\eqref{e:phi}.

For $a\in C_0^\infty(\mathbb R^{2n})$, we use~\eqref{e:lagrangian}
to check that $\Op_h^\Lambda(a)$ is,
modulo an $\mathcal O(h^\infty)_{\mathscr S'\to\mathscr S}$ remainder, 
a Fourier integral operator in the class
$I_{\comp}(\Lambda^0)$ defined in \S\ref{s:prelim-basics}.

We will also use the operator $\Op_h^\Lambda(1):C^\infty(\mathbb R^n)\to C^\infty(\mathbb R^n)$
defined by
\begin{equation}
  \label{e:model-projector}
\Op_h^\Lambda(1)f(x)=f(x',0),\quad
f\in C^\infty(\mathbb R^n).
\end{equation}
Since~\eqref{e:lambda-quant} was defined only for Schwartz symbols, we understand~\eqref{e:model-projector}
as follows: if $a\in C_0^\infty(\mathbb R^{2n})$ is equal to 1 near some open set $U\subset \mathbb R^{2n}$,
then the operator $\Op_h^\Lambda(1)$ defined in~\eqref{e:model-projector} is equal
to the operator $\Op_h^\Lambda(a)$ defined in~\eqref{e:lambda-quant}, microlocally
near $\phi(U)\subset T^*(\mathbb R^n\times\mathbb R^n)$. Moreover,
$\WFh(\Op_h^\Lambda(1))\cap T^*(\mathbb R^n\times\mathbb R^n)\subset\Lambda^0$.
To see this, it is enough to note that for $a\in C_0^\infty(\mathbb R^{2n})$ and
$\chi\in C_0^\infty(\mathbb R^n)$, we have
$\chi\Op_h^\Lambda(1)\Op_h(a)=\Op_h^\Lambda(\tilde a)$, where
$\tilde a(x,\xi)=\chi(x)a(x',0,\xi)\in C_0^\infty(\mathbb R^{2n})$ and
$\Op_h^\Lambda(\tilde a)$ is defined using~\eqref{e:lambda-quant}.

\smallsection{Canonical transformations}
We now study how $\Op^\Lambda_h(a)$ changes under quantized canonical transformations preserving
its canonical relation (see~\S\ref{s:prelim-fio}). Let $U,V\subset\mathbb R^{2n}$ be two bounded open sets and
$\varkappa:U\to V$ a symplectomorphism such that
$$
\varkappa(\Gamma^0_\pm\cap U)=\Gamma^0_\pm\cap V,
$$
with $\Gamma^0_\pm$ given by~\eqref{e:model-gamma}.
We further assume that
for each $(x',\xi')\in T^*\mathbb R^{n-1}$,
the sets $\{x_n\mid (x',x_n,\xi',0)\in U\}$ and
$\{\xi_n\mid (x',0,\xi',\xi_n)\in U\}$, and the corresponding
sets for $V$, are either empty or intervals containing zero,
so that the maps $\pi_\pm:U\cap\Gamma_\pm^0\to U\cap K^0$ are well-defined.
Since
$\varkappa$ preserves the subbundles $\mathcal V_\pm$,
it commutes with the maps $\pi_\pm$ and thus preserves
$\Lambda^0$; using the map $\phi$ from~\eqref{e:phi}, we define
the open sets $\widehat U,\widehat V\subset\mathbb R^{2n}$ and
the diffeomorphism $\widehat\varkappa:\widehat U\to\widehat V$ by
$$
\begin{gathered}
\widehat U:=\phi^{-1}(U\times U),\
\widehat V:=\phi^{-1}(V\times V),\quad
\phi\circ\widehat\varkappa=\varkappa\circ\phi.
\end{gathered}
$$

\begin{prop}
  \label{l:change-of-variables}
Let $B,B':C^\infty(\mathbb R^{n})\to C_0^\infty(\mathbb R^n)$ be two
compactly microlocalized Fourier integral operators associated to
$\varkappa$ and $\varkappa^{-1}$, respectively,%
\footnote{The choice of antiderivative (see~\S\ref{s:prelim-fio}) is irrelevant here,
since the phase factor in $B$ resulting from choosing another antiderivative will be cancelled
by the phase factor in $B'$.}
such that
\begin{equation}
  \label{e:b-inv}
\begin{gathered}
BB'=1+\mathcal O(h^\infty)\quad\text{microlocally near }V',\\
B'B=1+\mathcal O(h^\infty)\quad\text{microlocally near }U',
\end{gathered}
\end{equation}
for some open $U'\Subset U$, $V'\Subset V$ such that $\varkappa(U')=V'$.
Then for each $a\in C_0^\infty(\widehat V)$,
$$
B'\Op^\Lambda_h(a)B=\Op^\Lambda_h(a_\varkappa)+\mathcal O(h^\infty)_{\mathscr S'\to\mathscr S},
$$
for some classical symbol $a_\varkappa$ compactly supported in $\widehat U$, and
\begin{equation}
  \label{e:a-change}
a_\varkappa(x,\xi)=\gamma^+_\varkappa(x,\xi')\gamma^-_\varkappa(x',\xi) a(\widehat\varkappa(x,\xi))+\mathcal O(h)
\quad\text{on }\phi^{-1}(U'\times U'),
\end{equation}
where $\gamma^\pm_\varkappa$ are smooth functions on $U\cap\Gamma_\pm$ depending on $\varkappa,B,B'$
with $\gamma^\pm_\varkappa|_{K^0\cap U'}=1$.
\end{prop}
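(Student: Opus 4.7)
The plan is to proceed in three steps: identify the canonical relation of the composition $B'\Op_h^\Lambda(a)B$, realize it as a $\Lambda$-quantization $\Op_h^\Lambda(a_\varkappa)$ modulo $\mathcal{O}(h^\infty)$, and compute the principal symbol, verifying the stated normalization on $K^0$. For the first step, I note that since $\varkappa$ preserves each of $\Gamma_\pm^0$ it also preserves the symplectic complements $\mathcal{V}_\pm$ and hence commutes with the projections $\pi_\pm$; therefore $(\varkappa\times\varkappa)(\Lambda^0\cap(U\times U))=\Lambda^0\cap(V\times V)$, and the iterated composition of canonical relations $\operatorname{graph}(\varkappa^{-1})\circ\Lambda^0\circ\operatorname{graph}(\varkappa)$ reduces to $\Lambda^0\cap(U\times U)$. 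Composition with graphs of canonical transformations is automatically transversal, so by the composition theorem of~\S\ref{s:prelim-fio}, $B'\Op_h^\Lambda(a)B$ is a compactly microlocalized Fourier integral operator associated to $\Lambda^0$, with wavefront set contained in $\phi(\widehat U)$.

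For the second step, one checks that $\Phi(x,y,\xi):=x'\cdot\xi'-y\cdot\xi$ is a nondegenerate phase function generating $\Lambda^0$ (with the sign convention of~\eqref{e:op-wf}). Consequently every element of $I_{\comp}(\Lambda^0)$ microlocalized in $\phi(\widehat U)$ admits a representation as an oscillatory integral with phase $\Phi$ and a classical amplitude $c(x,y,\xi;h)$ compactly supported in $(x,\xi)$. Taylor-expanding $c$ in $y$ about the critical locus $\{y'=x',\,y_n=0\}$ and using the identities $(y_j-x_j)e^{i\Phi/h}=ih\partial_{\xi_j}e^{i\Phi/h}$ for $j<n$ and $y_n e^{i\Phi/h}=-ih\partial_{\xi_n}e^{i\Phi/h}$ to integrate by parts in $\xi$ iteratively eliminates the $y$-dependence modulo progressively higher powers of $h$; a Borel asymptotic summation then yields a classical symbol $a_\varkappa(x,\xi;h)\in C_0^\infty(\widehat U)$ satisfying $B'\Op_h^\Lambda(a)B=\Op_h^\Lambda(a_\varkappa)+\mathcal{O}(h^\infty)_{\mathscr{S}'\to\mathscr{S}}$. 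This structural reduction is the main technical step, since it requires showing that the quantization prescription~\eqref{e:lambda-quant} captures every compactly microlocalized element of $I_{\comp}(\Lambda^0)$ up to smoothing errors.

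For the third step, I compute the principal symbol of $B'\Op_h^\Lambda(a)B$ via the composition formula for FIOs of~\S\ref{s:prelim-fio}. Since the canonical relations of $B$ and $B'$ are graphs of canonical transformations, there is a unique intermediate stationary point in each composition, namely $\varkappa(\rho_-)$ and $\varkappa(\rho_+)$, which gives
$$\sigma(B'\Op_h^\Lambda(a)B)(\rho_-,\rho_+)=\sigma(B)(\rho_-,\varkappa(\rho_-))\cdot a(\widehat\varkappa(x,\xi))\cdot\sigma(B')(\varkappa(\rho_+),\rho_+)$$
on $\phi^{-1}(U'\times U')$, where $(\rho_-,\rho_+)=\phi(x,\xi)$ with $\rho_-=(x',0,\xi)\in\Gamma_-^0$ and $\rho_+=(x,\xi',0)\in\Gamma_+^0$; the first factor depends only on $(x',\xi)$ and the third only on $(x,\xi')$. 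To obtain the stated normalization, I define
$$\gamma^-_\varkappa(x',\xi):=\frac{\sigma(B)(\rho_-,\varkappa(\rho_-))}{\sigma(B)(\pi_-(\rho_-),\varkappa(\pi_-(\rho_-)))},\quad \gamma^+_\varkappa(x,\xi'):=\frac{\sigma(B')(\varkappa(\rho_+),\rho_+)}{\sigma(B')(\varkappa(\pi_+(\rho_+)),\pi_+(\rho_+))},$$
so that $\gamma^\pm_\varkappa=1$ on $K^0\cap U'$ by construction. Setting $\rho_K:=\pi_-(\rho_-)=\pi_+(\rho_+)\in K^0$ for $(\rho_-,\rho_+)\in\Lambda^0$, the product of the two denominators equals $\sigma(B)(\rho_K,\varkappa(\rho_K))\cdot\sigma(B')(\varkappa(\rho_K),\rho_K)=\sigma(B'B)(\rho_K,\rho_K)=1$ by the hypothesis $B'B=1+\mathcal{O}(h^\infty)$ microlocally near $U'$, which recovers~\eqref{e:a-change}.
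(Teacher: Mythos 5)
Your route is genuinely different from the paper's: the paper reduces to canonical transformations admitting a generating function $S(x,\eta)$ (factoring a general $\varkappa$ through a tangential map of the form~\eqref{e:tan-can} and a map fixing $K^0$), writes $B,B'$ explicitly, and computes $a_\varkappa$ by oscillatory testing~\eqref{e:osc-test} and stationary phase, with the Hessian determinant and signature worked out by hand. You instead invoke the abstract composition calculus for Fourier integral operators. Your steps 1 and 2 are fine (modulo a sign slip: $\partial_{\xi_n}\Phi=-y_n$ gives $y_ne^{i\Phi/h}=+ih\,\partial_{\xi_n}e^{i\Phi/h}$), and step 2 correctly identifies why every element of $I_{\comp}(\Lambda^0)$ microsupported in $\phi(\widehat U)$ is an $\Op_h^\Lambda$ modulo $\mathcal O(h^\infty)$.

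The gap is in step 3, and it sits exactly where the proposition's content lies. Your displayed symbol identity treats $\sigma_\Lambda(\Op_h^\Lambda(a))$ and $\sigma_\Lambda(\Op_h^\Lambda(a_\varkappa))$ as the scalar functions $a$ and $a_\varkappa$, read off in the trivializations of the half-density (and Maslov) bundle over $\Lambda^0$ given by $\phi_*|dx\,d\xi|^{1/2}$ on the $V$-side and on the $U$-side respectively; but the composition formula relates these two trivializations via $(\varkappa\times\varkappa)^*$, i.e.\ via $\widehat\varkappa$, and this introduces a factor $|\det d\widehat\varkappa(x,\xi)|^{1/2}$ (together with a possible constant Maslov phase) that you do not account for. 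This factor is not identically $1$: comparing with the paper's explicit answer~\eqref{e:transition-coeff}, your candidate $\gamma^\pm_\varkappa$, defined through the Egorov-normalized symbols of $B,B'$, differ from the true transition coefficients by the ratio $\bigl(|\det\partial^2_{x\eta}S(x',0,\eta)|/|\det\partial^2_{x\eta}S(x,\eta',0)|\bigr)^{1/2}$. The proposition survives because this discrepancy happens to vanish on $K^0$ and to factor as a product of a function of $(x,\xi')$ and a function of $(x',\xi)$ — but proving those two facts is precisely the Hessian computation (determinant $(\det\partial^2_{x\eta}S(x',0,\eta))^2$, signature zero, crucially using $S(x,\eta',0)=S(x',0,\eta)$ from~\eqref{e:S-eq}) that occupies the bulk of the paper's proof and that your argument omits. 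As written, your proof establishes the correct structure of the answer but not the identity~\eqref{e:a-change} with correct $\gamma^\pm_\varkappa$; to close it you must either carry out the half-density/Maslov bookkeeping for the triple composition explicitly, or fall back on a generating-function computation as the paper does.
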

\begin{proof}
Assume first that $\varkappa$ has a generating function $S(x,\eta)$:
$$
\varkappa(x,\xi)=(y,\eta)\Longleftrightarrow \xi=\partial_x S(x,\eta),\
y=\partial_\eta S(x,\eta).
$$
If $\mathscr D_S\subset\mathbb R^{2n}$ is the domain of $S$,
then for each $(x',\eta')\in T^*\mathbb R^{n-1}$, the sets
$\{x_n\mid (x',x_n,\eta',0)\in\mathscr D_S\}$ and
$\{\eta_n\mid (x',0,\eta',\eta_n)\in\mathscr D_S\}$ are either empty or intervals containing zero.
Since $\varkappa$ preserves $\Gamma_\pm$, we find
$\partial_{\eta_n}S(x',0,\eta)=\partial_{x_n}S(x,\eta',0)=0$ and thus
\begin{equation}
  \label{e:S-eq}
S(x,\eta',0)=S(x',0,\eta)=S(x',0,\eta',0).
\end{equation}
We can write, modulo $\mathcal O(h^\infty)_{\mathscr S'\to\mathscr S}$ errors,
$$
\begin{gathered}
Bu(y)=(2\pi h)^{-n}\int e^{{i\over h}(y\cdot\eta-S(x,\eta))}b(x,\eta;h)u(x)\,dxd\eta,\\
B'u(x)=(2\pi h)^{-n}\int e^{{i\over h}(S(x,\eta)-y\cdot\eta)}b'(x,\eta;h)u(y)\,dyd\eta,
\end{gathered}
$$
where $b,b'$ are compactly supported classical symbols
and by~\eqref{e:b-inv} the principal symbols $b_0$ and $b'_0$ have to satisfy
for $(x,\xi)\in U'$,
\begin{equation}
  \label{e:b-eq}
b_0(x,\eta)b'_0(x,\eta)=|\det \partial^2_{x\eta}S(x,\eta)|.
\end{equation}
We can now use oscillatory testing~\eqref{e:osc-test} to get
$$
\begin{gathered}
a_\varkappa(x,\xi):=e^{-{i\over h}x'\cdot\xi'}B'\Op_h^\Lambda(a) B(e^{{i\over h}x\cdot\xi})\\
=
(2\pi h)^{-2n}\int_{\mathbb R^{4n}} e^{{i\over h}(
-x'\cdot\xi'+S(x,\tilde\eta)-y\cdot\tilde\eta+y'\cdot \eta'-S(\tilde x,\eta)+\tilde x\cdot\xi)}
b'(x,\tilde\eta;h)a(y,\eta)b(\tilde x,\eta;h)\,dyd\tilde \eta d\eta d\tilde x.
\end{gathered}
$$
We analyse this integral by the method of stationary phase; this will yield
that $a_\varkappa$ is a classical symbol in $h$, compactly supported in $\widehat U$
modulo an $\mathcal O(h^\infty)_{\mathscr S(\mathbb R^{2n})}$ error, and thus
$B'\Op_h^\Lambda(a)B=\Op_h^\Lambda(a_\varkappa)$.

The stationary points are given by
$$
\tilde\eta=(\eta',0),\
\tilde x=(x',0),\
(y,\eta)=\widehat\varkappa(x,\xi).
$$
The value of the phase at stationary points is zero due to~\eqref{e:S-eq}.
To compute the Hessian, we make the change of variables
$\tilde\eta=\check\eta+(\eta',0)$. We can then remove the variables $y,\check\eta$
and pass from the original Hessian
to $\partial^2_{\eta'\eta'} S(x,\eta',0)-\partial^2 S(x',0,\eta)$, where the first matrix
is padded with zeros. Since $\partial_{\eta_n}S(x',0,\eta)=0$, we have
$\partial^2_{\eta_n\eta_n}S=\partial^2_{\eta_n x'}S=\partial^2_{\eta_n \eta'}S=0$
at $(x',0,\eta)$, therefore we can remove the $x_n,\eta_n$ variables, with a multiplicand
of $(\partial^2_{x_n\eta_n}S(x',0,\eta))^2$ in the determinant. Next,
by~\eqref{e:S-eq} $\partial^2_{\eta'\eta'}S(x',0,\eta)=\partial^2_{\eta'\eta'}S(x,\eta',0)$;
therefore, the Hessian has signature zero and determinant
$$
(\partial^2_{x_n\eta_n}S(x',0,\eta)\det \partial^2_{x'\eta'}S(x',0,\eta))^2.
$$
Since $\partial^2_{x'\eta_n}S(x',0,\eta)=0$, this is equal to $(\det \partial^2_{x\eta}S(x',0,\eta))^2$.
Therefore, we get~\eqref{e:a-change} with
$$
\gamma^+_\varkappa(x,\xi')\gamma^-_\varkappa(x',\xi)={b_0'(x,\eta',0)b_0(x',0,\eta)\over |\det \partial^2_{x\eta}S(x',0,\eta)|}
={b'_0(x,\eta',0)\over b'_0(x',0,\eta)};
$$
here $(y,\eta)=\widehat\varkappa(x,\xi)$ and
the last equality follows from~\eqref{e:b-eq}. We then find
\begin{equation}
  \label{e:transition-coeff}
\gamma^+_\varkappa(x,\xi')=b'_0(x,\eta',0)/b'_0(x',0,\eta',0),\
\gamma^-_\varkappa(x',\xi)=b'_0(x',0,\eta',0)/b'_0(x',0,\eta).
\end{equation}

We now consider the case of general $\varkappa$. Using a partition of
unity for $a$, we may assume that the intersection $U\cap K^0$ is
arbitrary small.  We now represent $\varkappa$ as a product of several
canonical relations, each of which satisfies the conditions of this
Proposition and has a generating function; this will finish the proof.

First of all, consider a canonical transformation of the form
\begin{equation}
  \label{e:tan-can}
(x,\xi)\mapsto (y,\eta),\quad
(y',\eta')=\widetilde\varkappa(x',\xi'),\
(y_n,\eta_n)=(x_n,\xi_n),
\end{equation}
with $\widetilde\varkappa$ a canonical transformation on $T^*\mathbb R^{n-1}\simeq K^0$. 
We can write $\widetilde\varkappa$ locally as a product of canonical transformations
close to the identity, each of which has a generating function~-- see~\cite[Theorems~10.4 and~11.4]{e-z}.
If $\widetilde S(x',\eta')$ is a generating function for $\widetilde\varkappa$,
then $\widetilde S(x',\eta')+x_n\eta_n$ is a generating function for~\eqref{e:tan-can}.

Multiplying our $\varkappa$ by a transformation of the form~\eqref{e:tan-can}
with $\widetilde\varkappa=(\varkappa|_{K^0})^{-1}$, we reduce
to the case
$$
\varkappa(x',0,\xi',0)=(x',0,\xi',0)\quad\text{for }
(x',0,\xi',0)\in U\cap K^0.
$$
If $\varkappa(x,\xi)=(y(x,\xi),\eta(x,\xi))$, since $\varkappa$ commutes with $\pi_\pm$ we have
\begin{equation}
  \label{e:can-int}
\begin{gathered}
y'(x,\xi',0)=y'(x',0,\xi)=x',\\
\eta'(x,\xi',0)=\eta'(x',0,\xi)=\xi'.
\end{gathered}
\end{equation}
We now claim that $\varkappa$ has a generating function, if we shrink $U$
to be a small neighborhood of $U\cap (\Gamma^0_+\cup\Gamma^0_-)$ (which does not change
anything since $\Op_h^\Lambda(a)$ is microlocalized in $\Gamma^0_-\times\Gamma^0_+$).
For that, it is enough to show that the map
$$
\psi:(x,\xi)\mapsto (x,\eta(x,\xi))
$$
is a diffeomorphism from $U$ onto some open subset $\mathscr D_S\subset\mathbb R^{2n}$.

We first show that $\psi$ is a local diffeomorphism near
$\Gamma^0_\pm$; that is, the differential $\partial_\xi\eta$ is
nondegenerate on $\Gamma^0_\pm$. By~\eqref{e:can-int},
$\partial_{x',\xi'}(y',\eta')$ equals the identity on
$\Gamma^0_+\cup\Gamma^0_-$; moreover, on $\Gamma^0_+$ we have
$\partial_{x,\xi'}\eta_n=0$ and $\partial_{x_n}(y',\eta')=0$ and on
$\Gamma^0_-$, we have $\partial_{x',\xi}y_n=0$ and
$\partial_{\xi_n}(y',\eta')=0$.  It follows that on
$\Gamma^0_+\cup\Gamma^0_-$, $\det
\partial_\xi\eta=\partial_{\xi_n}\eta_n$ and since $\varkappa$ is a
diffeomorphism, $0\neq
\det\partial_{(x,\xi)}(y,\eta)=\partial_{x_n}y_n\cdot\partial_{\xi_n}\eta_n$,
yielding $\det \partial_\xi\eta\neq 0$.

It remains to note that $\psi$ is one-to-one on $\Gamma^0_+\cup\Gamma^0_-$,
which follows immediately from the identities
$\psi(x,\xi',0)=(x,\xi',0)$
and $\psi(x',0,\xi)=\varkappa(x',0,\xi)$.
\end{proof}

\subsection{General case}
  \label{s:general}

We now consider the case of general $\Gamma_\pm^\circ,K^\circ,\Lambda^\circ$, satisfying the
assumptions from the beginning of~\S\ref{s:calculus}.
We start by shrinking $\Gamma_\pm^\circ$
so that our setup can locally be conjugated to the model case of~\S\ref{s:model}.
(The set $\widehat K$ will be chosen in~\S\ref{s:construction-1}.)
\begin{prop}
  \label{l:reduce-model}
Let $\widehat K\subset K^\circ$ be compact. Then
there exist $\tilde\delta>0$ and
\begin{itemize}
\item a finite collection of open sets $U_i\subset T^*X$, such that
$$
\widehat K\subset \widetilde K:=\bigcup_i K_i,\quad
K_i:=K^\circ\cap U_i.
$$
\item symplectomorphisms $\varkappa_i$ defined in a neighborhood of $U_i$ 
and mapping $U_i$ onto
\begin{equation}
  \label{e:v-delta}
V_{\tilde\delta}:=\{|(x',\xi')|<\tilde\delta,\ |x_n|<\tilde\delta,\ |\xi_n|<\tilde\delta\}\subset T^* \mathbb R^n,
\end{equation}
such that, with $\Gamma_\pm^0$ defined in~\eqref{e:model-gamma},
$$
\varkappa_i(U_i\cap\Gamma_\pm^\circ)=V_{\tilde\delta}\cap\Gamma_\pm^0;
$$
\item compactly microlocalized Fourier integral operators
$$
B_i:C^\infty(X)\to C_0^\infty(\mathbb R^n),\
B'_i:C^\infty(\mathbb R^n)\to C_0^\infty(X),
$$
associated to $\varkappa_i$ and $\varkappa_i^{-1}$, respectively, such that
\begin{equation}
  \label{e:b-i-inv}
B_iB'_i=1\quad\text{near }V_{\tilde\delta},\
B'_iB_i=1\quad\text{near }U_i.
\end{equation}
\end{itemize}
\end{prop}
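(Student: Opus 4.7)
The plan is to produce the data $(U_i,\varkappa_i,B_i,B_i')$ locally near each point of $\widehat K$ and then patch using compactness. The main content is a symplectic normal form: for each $\rho_0\in\widehat K$, I want to exhibit Darboux coordinates $(x,\xi)$ on $T^*X$ centered at $\rho_0$ in which $\Gamma_-^\circ=\{x_n=0\}$ and $\Gamma_+^\circ=\{\xi_n=0\}$. Once such $\varkappa$ is in hand, its quantization is a standard procedure.

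First I would build adapted defining functions. By assumption $\Gamma_\pm^\circ$ are smooth hypersurfaces meeting transversely in the symplectic manifold $K^\circ$, so near $\rho_0$ there exist smooth defining functions $\tilde\varphi_\pm$ of $\Gamma_\pm^\circ$ (after a sign change, supplied by Lemma~\ref{l:phi-pm}) with $\{\tilde\varphi_+,\tilde\varphi_-\}(\rho_0)>0$: this positivity is exactly the condition that $K^\circ$ be symplectic. I would then rescale $\tilde\varphi_-$ to $\varphi_-:=g\tilde\varphi_-$ where the positive smooth factor $g$ is chosen so that $\{\tilde\varphi_+,\varphi_-\}\equiv 1$ in a neighborhood of $\rho_0$. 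Expanding gives the equation
\begin{equation*}
g\,\{\tilde\varphi_+,\tilde\varphi_-\}+\tilde\varphi_-\,H_{\tilde\varphi_+}g=1,
\end{equation*}
an ODE along the flow of $H_{\tilde\varphi_+}$, which is transverse to $\Gamma_-^\circ=\{\tilde\varphi_-=0\}$ precisely because $\{\tilde\varphi_+,\tilde\varphi_-\}\neq 0$; prescribing $g|_{\Gamma_-^\circ}=1/\{\tilde\varphi_+,\tilde\varphi_-\}$ gives a unique local solution. Writing $\varphi_+:=\tilde\varphi_+$, I now have defining functions of $\Gamma_\pm^\circ$ with $\{\varphi_+,\varphi_-\}\equiv 1$ near $\rho_0$.

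Next I would complete $(\varphi_-,\varphi_+)$ to a full Darboux chart by the classical Carath\'eodory--Jacobi--Lie theorem: since $d\varphi_\pm$ are linearly independent at $\rho_0$ and $\{\varphi_+,\varphi_-\}=1$, there exist smooth functions $x_1,\dots,x_{n-1},\xi_1,\dots,\xi_{n-1}$ vanishing at $\rho_0$ such that $(x_1,\dots,x_{n-1},\varphi_-,\xi_1,\dots,\xi_{n-1},\varphi_+)$ is a local symplectic coordinate system. The resulting diffeomorphism $\varkappa$ onto its image in $T^*\mathbb R^n$ sends $\Gamma_\pm^\circ$ to $\Gamma_\pm^0$ by construction; composing with a dilation (which is not symplectic but only affects normalization of $\tilde\delta$ — one instead obtains the common $\tilde\delta$ by shrinking each $U_i$) or, better, by simply restricting $\varkappa$ to a small enough neighborhood $U$ of $\rho_0$, I get $\varkappa(U)=V_{\tilde\delta}$ for some $\tilde\delta>0$ depending on the point.

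The quantization step is standard semiclassical FIO theory, as reviewed in~\S\ref{s:prelim-fio}: for any local symplectomorphism $\varkappa$ there exist compactly microlocalized Fourier integral operators $B$ associated to $\varkappa$ and $B'$ associated to $\varkappa^{-1}$, with elliptic principal symbols, such that $BB'=1+\mathcal O(h^\infty)$ and $B'B=1+\mathcal O(h^\infty)$ microlocally on the relevant neighborhoods (see~\cite[Chapter~11]{e-z}); the construction amounts to a WKB Ansatz using a generating function of $\varkappa$ followed by iterative solution of transport equations to match principal symbols. Finally, since $\widehat K\subset K^\circ$ is compact, finitely many of the neighborhoods $U$ produced above cover $\widehat K$; shrinking if necessary I can take a single $\tilde\delta>0$ for all of them, yielding the desired finite collection $(U_i,\varkappa_i,B_i,B_i')$. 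The only nontrivial step is the symplectic normal form, and within that the only subtle point is the reduction to $\{\varphi_+,\varphi_-\}\equiv 1$ via the local ODE above; once this is done, Carath\'eodory--Jacobi--Lie finishes the geometric side and Zworski's FIO calculus finishes the analytic side.
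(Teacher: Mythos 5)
Your proposal is correct and follows essentially the same route as the paper: both reduce to the local symplectic normal form for the transverse pair $(\Gamma_-^\circ,\Gamma_+^\circ)$ with symplectic intersection, quantize the resulting local symplectomorphisms by standard elliptic Fourier integral operator pairs, and conclude by compactness of $\widehat K$. Your normalization $\{\varphi_+,\varphi_-\}\equiv 1$ followed by Carath\'eodory--Jacobi--Lie is interchangeable with the paper's two successive applications of the Darboux theorem (first sending a defining function of $\Gamma_-^\circ$ to $x_n$, then writing $\Gamma_+^\circ$ as a graph $\{\xi_n=F(x,\xi')\}$ and straightening $\xi_n-F$); note also that your singular ODE for $g$ is most cleanly solved by observing that $\varphi_-$ is simply the flow time of $H_{\varphi_+}$ from $\Gamma_-^\circ$.
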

\begin{proof}
It is enough to show that each point $\rho\in K^\circ$ has a neighborhood
$U_\rho$ and a symplectomorphism $\varkappa_\rho:U_\rho\to V_\rho\subset T^*\mathbb R^n$
such that $\varkappa_\rho(U_\rho\cap\Gamma_\pm^\circ)=V_\rho\cap\Gamma_\pm^0$; see for
example~\cite[Theorem~11.5]{e-z} for how to construct the operators $B_i,B'_i$ locally quantizing
the canonical transformations $\varkappa_\rho,\varkappa_\rho^{-1}$.

By the Darboux theorem~\cite[Theorem~12.1]{e-z} (giving
a symplectomorphism mapping an arbitrarily chosen defining function of $\Gamma_-^\circ$
to $x_n$), we can reduce to the case $\rho=0\in T^*\mathbb R^n$ and
$\Gamma_-^\circ=\{x_n=0\}$ near $0$. Since $\Gamma_+^\circ\cap\Gamma_-^\circ=K^\circ$ is symplectic,
the Poisson bracket of the defining function $x_n$ of $\Gamma_-^\circ$ and any defining
function $\varphi_+$ of $\Gamma_+^\circ$ is nonzero at $0$; thus,
$\partial_{\xi_n}\varphi_+(0)\neq 0$ and 
we can write $\Gamma_+^\circ$ locally as the graph of some function:
$$
\Gamma_+^\circ=\{\xi_n=F(x,\xi')\}.
$$
Put $\varphi_+'(x,\xi)=\xi_n-F(x,\xi')$, then
$\{\varphi_+',x_n\}=1$. It remains to apply the Darboux theorem
once again, obtaining a symplectomorphism preserving $x_n$
and mapping $\varphi'_+$ to $\xi_n$.
\end{proof}
We now consider the sets
\begin{equation}
  \label{e:smaller-stuff}
\begin{gathered}
\widetilde \Gamma_\pm:=\bigcup_i\Gamma_\pm^i,\quad
\Gamma_\pm^i:=\Gamma_\pm^\circ\cap U_i,\\
\widetilde\Lambda:=\bigcup_i\Lambda_i,\quad
\Lambda_i:=\{(\rho_-,\rho_+)\in\Lambda^\circ\mid \rho_\pm\in\Gamma_\pm^i\}.
\end{gathered}
\end{equation}
Let $\widehat\Gamma_\pm\subset\widetilde\Gamma_\pm$ be compact,
with $\pi_\pm(\widehat\Gamma_\pm)=\widehat K$ and for each $\rho\in \widehat K$,
the set $\pi_\pm^{-1}(\rho)\cap\widehat\Gamma_\pm$ is a flow line
of $\mathcal V_\pm$ containing $\rho$. Define the compact set
\begin{equation}
  \label{e:hat-lambda}
\widehat\Lambda:=\{(\rho_-,\rho_+)\in\Lambda^\circ\mid \rho_\pm\in\widehat\Gamma_\pm\}
\end{equation}
and assume that $\widehat\Gamma_\pm$ are chosen so that $\widehat\Lambda\subset\widetilde\Lambda$.
The goal of this subsection is to obtain an invariant notion of the principal
symbol of Fourier integral operators in $\II$, microlocally
near $\widehat\Lambda$.

Define the diffeomorphisms $\widehat\varkappa_i:\Lambda_i\to V_{\tilde\delta}$
by the formula
$$
(\varkappa_i(\rho_-),\varkappa_i(\rho_+))=\phi(\widehat\varkappa_i(\rho_-,\rho_+)),\quad
(\rho_-,\rho_+)\in\Lambda_i;
$$
here $\phi$ is defined in~\eqref{e:phi}.

Consider some $A\in \II$, then $B_iAB'_i$ is a Fourier integral operator
associated to the model canonical relation $\Lambda^0$ from \S\ref{s:model}
(with the antiderivatives on $\Lambda^\circ$ and $\Lambda^0$ chosen
in the beginning of~\S\ref{s:calculus}). 
 Therefore,
there exists a compactly supported classical symbol
$\tilde a^i(x,\xi;h)$ on $\mathbb R^{2n}$ such that, with $\Op_h^\Lambda$ defined in~\eqref{e:lambda-quant},
\begin{equation}
  \label{e:model-reduction}
B_iAB'_i=\Op_h^\Lambda(\tilde a^i)+\mathcal O(h^\infty)_{\mathscr S'\to \mathscr S}.
\end{equation}
By~\eqref{e:b-i-inv}, we find
$$
A=B'_i\Op_h^\Lambda(\tilde a^i)B_i+\mathcal O(h^\infty)\quad\text{microlocally near }\Lambda_i.
$$
Define the function $a^i\in C^\infty(\Lambda_i)$ using the principal symbol $\tilde a^i_0$ by
$$
a^i=\tilde a^i_0\circ\widehat\varkappa_i.
$$
By Proposition~\ref{l:change-of-variables}, applied to the Fourier integral operators
$B_jB'_i$ and $B_iB'_j$ quantizing $\varkappa=\varkappa_j\circ\varkappa_i^{-1}$
and $\varkappa^{-1}$, respectively, with
$U'=\varkappa_i(U_i\cap U_j)$, $V'=\varkappa_j(U_i\cap U_j)$
we see that whenever $\Lambda_i\cap\Lambda_j\neq\emptyset$,
we have
\begin{equation}
  \label{e:transition}
a^i|_{\Lambda_i\cap\Lambda_j}=(\gamma_{ij}^-\otimes \gamma_{ij}^+)a^j|_{\Lambda_i\cap\Lambda_j},
\end{equation}
where $\gamma_{ij}^\pm$ are smooth functions on $\Gamma_\pm^i\cap \Gamma_\pm^j$ and $\gamma^\pm_{ij}|_K=1$.
Moreover, $\gamma_{ji}^\pm=(\gamma_{ij}^\pm)^{-1}$ and $\gamma_{ij}^\pm\gamma_{jk}^\pm=\gamma_{ik}^\pm$
on $\Gamma_\pm^i\cap\Gamma_\pm^j\cap\Gamma_\pm^k$
(this can be seen either from the fact that the formulas~\eqref{e:transition} for different $i,j$
have to be compatible with each other, or directly from~\eqref{e:transition-coeff}).
Therefore, we can consider smooth line
bundles $\mathcal E_\pm$ over $\widetilde\Gamma_\pm$ with smooth sections $e^i_\pm$ of $\mathcal E_\pm|_{\Gamma_\pm^i}$
such that $e^j_\pm=\gamma_{ij}^\pm e^i_\pm$ on $\Gamma_\pm^i\cap\Gamma_\pm^j$~-- see for
example~\cite[\S 6.4]{ho1}. 

Define the line bundle $\mathcal E$ over $\widetilde\Lambda$ using the projection
maps from~\eqref{e:tilde-pi}:
$$
\mathcal E=(\tilde\pi_-^*\mathcal E^-)\otimes(\tilde \pi_+^*\mathcal E^+)
$$
and
for $A\in \II$, the symbol $\sigma_\Lambda(A)\in C^\infty(\widetilde\Lambda;\mathcal E)$
by the formula
\begin{equation}
  \label{e:symbol}
\sigma_\Lambda(A)|_{\Lambda_i}=a^i(\tilde\pi_-^*e^i_-\otimes \tilde\pi_+^*e^i_+).
\end{equation}
Note that the bundle $\mathcal E$ can be studied in detail using the global theory
of Fourier integral operators (see for instance~\cite[\S25.1]{ho4}).
However, the situation in our
special case is considerably simplified, since the Maslov bundle does not appear.

We have $\sigma_\Lambda(A)=0$ near $\widehat\Lambda$
if and only if $A\in h\II$ microlocally near
$\widehat\Lambda$. Moreover, for all
$a\in C^\infty(\widetilde\Lambda;\mathcal E)$, there exists $A\in \II$ such that
$\sigma_\Lambda(A)=a$ near $\widehat\Lambda$.

The restrictions $\mathcal E_\pm|_{\widetilde K}$ are canonically trivial; that is,
for $a_\pm\in C^\infty(\widetilde\Gamma_\pm;\mathcal E_\pm)$, we can view
$a_\pm|_{\widetilde K}$ as a function on $\widetilde K$, by taking $e_\pm^i|_{K_i}=1$.
The bundles $\mathcal E_\pm$ are trivial:
\begin{prop}
  \label{l:trivial}
There exist sections $a_\pm\in C^\infty(\widetilde\Gamma_\pm;\mathcal E_\pm)$, 
nonvanishing near $\widehat\Gamma_\pm$ and such that
$a_\pm|_{\widetilde K}=1$ near $\widehat K$.
\end{prop}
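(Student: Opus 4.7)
The plan is to combine two ingredients: the canonical trivialization of $\mathcal{E}_\pm$ over $\widetilde K$, which exists because $\gamma_{ij}^\pm|_K = 1$, and a smooth deformation retract of $\widetilde\Gamma_\pm$ onto $\widetilde K$ along the leaves of $\mathcal{V}_\pm$. On $\widetilde K$ the local frames $e_\pm^i|_{K_i}$ agree on overlaps and so glue to a canonical global nonvanishing section which we identify with the constant function~$1$.

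First I would construct a smooth global section $b_\pm$ of $\mathcal{E}_\pm$ satisfying $b_\pm|_{\widetilde K} = 1$ by a standard partition-of-unity argument. Let $\{\chi_i\}$ be a smooth partition of unity subordinate to the open cover $\{\Gamma_\pm^i\}$ of $\widetilde\Gamma_\pm$, and set
\[
b_\pm|_{\Gamma_\pm^j} := \Bigl(\sum_i \chi_i\,\gamma_{ji}^\pm\Bigr)\,e_\pm^j,
\]
where each summand $\chi_i \gamma_{ji}^\pm$ is understood as zero outside $\Gamma_\pm^i$. The cocycle identity $\gamma_{kj}^\pm \gamma_{ji}^\pm = \gamma_{ki}^\pm$ (equivalent to the form recorded just after~\eqref{e:transition}) forces these local expressions to agree on $\Gamma_\pm^j \cap \Gamma_\pm^k$, assembling into a smooth global section. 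On $\widetilde K$ the transition coefficients all equal $1$, so $b_\pm|_{\widetilde K} = \sum_i \chi_i = 1$.

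Next I would upgrade $b_\pm$ to a section nonvanishing on all of $\widetilde\Gamma_\pm$, and in particular near $\widehat\Gamma_\pm$. The crucial input is topological triviality of $\mathcal{E}_\pm$: using the diffeomorphism~\eqref{e:identify} identifying $\Gamma_\pm^\circ$ with $K^\circ \times (-\delta_0, \delta_0)$, the straight-line homotopy $(\theta, s, t) \mapsto (\theta, (1-t)s)$ retracts (a suitable cylindrical shrinking of) $\widetilde\Gamma_\pm$ smoothly onto $\widetilde K$ through $\pi_\pm$, so by homotopy invariance of line-bundle classification $\mathcal{E}_\pm \cong \pi_\pm^{*}(\mathcal{E}_\pm|_{\widetilde K})$, which is trivial. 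Picking any smooth nowhere-vanishing global section $\tilde b_\pm$, letting $c := \tilde b_\pm|_{\widetilde K}$ (a smooth nonvanishing function on $\widetilde K$ under the canonical trivialization), and setting
\[
a_\pm := \tilde b_\pm / (c \circ \pi_\pm)
\]
produces a smooth section nowhere zero on $\widetilde\Gamma_\pm$; since $\pi_\pm$ restricts to the identity on $\widetilde K$, one has $a_\pm|_{\widetilde K} = 1$.

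The main obstacle I anticipate is the triviality step. The partition-of-unity section $b_\pm$ alone is only guaranteed nonvanishing in a neighborhood of $\widetilde K$ in $\widetilde\Gamma_\pm$, and $\widehat\Gamma_\pm$ may extend well beyond such a neighborhood along the $\mathcal{V}_\pm$-flow. The geometric fact that $\widetilde\Gamma_\pm$ deformation retracts onto $\widetilde K$ via $\pi_\pm$ is what lets me upgrade to global nonvanishing; once triviality is available, the rescaling by $c\circ\pi_\pm$ to normalize to $1$ on $\widetilde K$ is routine.
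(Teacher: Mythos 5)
Your proof is correct, but it takes a genuinely different route from the paper's. The paper never invokes the topological classification of line bundles: it writes each transition function as $\gamma_{ij}^\pm=\exp(f_{ij}^\pm)$ with the branch normalized by $f_{ij}^\pm|_{K_i\cap K_j}=0$, splits the resulting \emph{additive} cocycle by a partition of unity ($b_\pm^i=\sum_k\chi_kf_{ik}^\pm$), and sets $a_\pm|_{\Gamma_\pm^i}=\exp(b_\pm^i)e_\pm^i$; the exponential makes nonvanishing automatic. This is exactly the defect you correctly identify in your first, arithmetic-mean section $\bigl(\sum_i\chi_i\gamma_{ji}^\pm\bigr)e_\pm^j$, which can vanish away from $\widetilde K$ by cancellation, whereas the paper's ``geometric mean'' cannot. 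You instead repair nonvanishing by noting that $\widetilde\Gamma_\pm$ retracts fiberwise onto $\widetilde K$ via \eqref{e:identify}, so $\mathcal E_\pm\cong\pi_\pm^*(\mathcal E_\pm|_{\widetilde K})$ is trivial, and then normalizing an arbitrary global frame $\tilde b_\pm$ by $c\circ\pi_\pm$. That works, and it makes your first step superfluous (the normalization trick alone produces the desired section); but it leans on facts the paper's argument sidesteps, namely that $\pi_\pm(\widetilde\Gamma_\pm)\subset\widetilde K$ and that each fiber of $\widetilde\Gamma_\pm\to\widetilde K$ is an interval containing its $K$-point and hence is preserved by the shrinking homotopy (both true, by Proposition \ref{l:reduce-model} and the model form of $\pi_\pm$), plus a routine smoothing step if one quotes only the continuous homotopy invariance. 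Conversely, the paper's proof quietly relies on the same retraction structure to guarantee that the normalized logarithm $f_{ij}^\pm$ of the (possibly complex-valued) $\gamma_{ij}^\pm$ exists globally on $\Gamma_\pm^i\cap\Gamma_\pm^j$, so the two arguments are closer than they appear; the paper's version has the advantage of producing an explicit formula and staying entirely within a fine-sheaf computation.
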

\begin{proof}
Since $\gamma_{ij}^\pm$ is a nonvanishing smooth function on $\Gamma_\pm^i\cap\Gamma_\pm^j$ such that
$\gamma_{ij}^\pm|_{K_i\cap K_j}=1$, we can write
$$
\gamma_{ij}^\pm=\exp(f_{ij}^\pm),
$$
where $f_{ij}^\pm$ is a uniquely defined function on $\Gamma_\pm^i\cap\Gamma_\pm^j$,
such that $f_{ij}^\pm|_{K_i\cap K_j}=0$. We now put near $\widehat\Gamma_\pm$,
$$
a_\pm|_{\Gamma_\pm^i}=\exp(b_\pm^i)e_\pm^i,
$$
where $b_\pm^i\in C^\infty(\Gamma_\pm^i)$ are such that near $\widehat\Gamma_\pm$
and $\widehat K$ respectively,
$$
(b_\pm^i-b_\pm^j)|_{\Gamma_\pm^i\cap\Gamma_\pm^j}=f_{ij}^\pm,\quad
b_\pm^i|_{K_i}=0.
$$
Such functions exist since $f_{ij}^\pm$ is a cocycle:
$$
f_{ii}^\pm=f_{ij}^\pm+f_{ji}^\pm=0;\quad
f_{ij}^\pm+f_{jk}^\pm=f_{ik}^\pm\quad\text{on }\Gamma_\pm^i\cap\Gamma_\pm^j\cap\Gamma_\pm^k
$$
and since the sheaf of smooth functions is fine; more precisely, if
$1=\sum_i \chi_i$ is a partition of unity on $\widehat\Gamma_\pm$, with
$\supp\chi_i\subset\Gamma_\pm^i$, we put
$$
b_\pm^i=\sum_k \chi_k f_{ik}^\pm.
\qedhere
$$
\end{proof}
We now state the properties of the calculus, following directly
from~\eqref{e:calc-model-1}--\eqref{e:calc-model-3}, the general
theory of Fourier integral operators, and Egorov's Theorem~\cite[Theorem~11.1]{e-z}
(see the beginning of~\S\ref{s:calculus}
for multiplying two elements of $\II$):
\begin{prop}
  \label{l:calculus}
Assume that $A_1,A_2\in \II,P\in \Psi^k(X)$. Then
$A_1A_2,A_1P,PA_1$ lie in $\II$, and 
\begin{gather}
\label{e:good-calc-1}
\sigma_\Lambda(A_1A_2)(\rho_-,\rho_+)=\sigma_\Lambda(A_2)(\rho_-,\pi_-(\rho_-))
\otimes\sigma_\Lambda(A_1)(\pi_+(\rho_+),\rho_+),\\
\label{e:good-calc-2}
\sigma_\Lambda(A_1P)(\rho_-,\rho_+)=\sigma(P)(\rho_-)\cdot\sigma_\Lambda(A_1)(\rho_-,\rho_+),\\
\label{e:good-calc-3}
\sigma_\Lambda(PA_1)(\rho_-,\rho_+)=\sigma(P)(\rho_+)\cdot\sigma_\Lambda(A_1)(\rho_-,\rho_+).
\end{gather}
Here in~\eqref{e:good-calc-1},
$\sigma_\Lambda(A_2)(\rho_-,\pi_-(\rho_-))$ and $\sigma_\Lambda(A_1)(\pi_+(\rho_+),\rho_+)$
are considered as sections of $\mathcal E_-$ and $\mathcal E_+$, respectively.
\end{prop}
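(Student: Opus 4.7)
The plan is to reduce everything to the model computations of \S\ref{s:model} via the local trivializations provided by Proposition~\ref{l:reduce-model}, then translate the resulting formulas back through the symbol bundle $\mathcal{E}$.

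First I would verify that the products actually lie in $I_{\comp}(\Lambda^\circ)$. For $A_1 P$ and $P A_1$ this follows directly from the remark at the end of~\S\ref{s:prelim-fio}: composition of a Fourier integral operator with a pseudodifferential operator yields a Fourier integral operator associated with the same canonical relation. For $A_1 A_2$ I would verify the self-composition $\Lambda^\circ\circ\Lambda^\circ=\Lambda^\circ$ (if $(\rho_-,\rho)\in\Lambda^\circ$ and $(\rho,\rho_+')\in\Lambda^\circ$, then $\rho\in\Gamma_+^\circ\cap\Gamma_-^\circ=K^\circ$, forcing $\pi_-(\rho_-)=\rho=\pi_+(\rho_+')$; conversely, for any $(\rho_-,\rho_+')\in\Lambda^\circ$ we can take $\rho=\pi_-(\rho_-)$ as the middle point) together with the transversality condition of~\S\ref{s:prelim-fio}, which reduces to the transverse intersection $\Gamma_+\pitchfork\Gamma_-$ assumed in~\eqref{aa:symplectic}.

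Next I would prove the symbol formulas locally on each chart $\Lambda_i\subset\widetilde\Lambda$. Using~\eqref{e:model-reduction} I can write
\[
A_j = B'_i\,\Op_h^\Lambda(\tilde a_j^i)\,B_i + \mathcal O(h^\infty)\quad\text{microlocally near }\Lambda_i.
\]
For $A_1 P$, insert $B'_iB_i=1$ to get $A_1P = B'_i\Op_h^\Lambda(\tilde a_1^i)(B_iPB'_i)B_i$ modulo $\mathcal O(h^\infty)$; by Egorov's theorem $B_iPB'_i\in\Psi^k(\mathbb R^n)$ has principal symbol $\sigma(P)\circ\varkappa_i^{-1}$, and then~\eqref{e:calc-model-2}--\eqref{e:exp-model-2} give the leading term $\tilde a_1^i(x,\xi)\,(\sigma(P)\circ\varkappa_i^{-1})(x',0,\xi)$. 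Under $\widehat\varkappa_i$ the point $(x',0,\xi)$ pulls back to $\rho_-$, yielding~\eqref{e:good-calc-2}. The case $PA_1$ is symmetric, using~\eqref{e:calc-model-3}--\eqref{e:exp-model-3}. For $A_1A_2$, writing both factors in the same chart and using $B_iB'_i=1$ gives $A_1A_2 = B'_i\Op_h^\Lambda(\tilde a_1^i\#^\Lambda\tilde a_2^i)B_i+\mathcal O(h^\infty)$, and~\eqref{e:exp-model-1} produces the leading factor $\tilde a_1^i(x,\xi',0)\,\tilde a_2^i(x',0,\xi)$. Pulling back via $\widehat\varkappa_i$ and noting that the model-case evaluation points $(x,\xi',0)$ and $(x',0,\xi)$ correspond to $(\pi_+(\rho_+),\rho_+)$ and $(\rho_-,\pi_-(\rho_-))$ respectively yields~\eqref{e:good-calc-1}.

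The main technical obstacle is confirming that these locally computed principal symbols really assemble into global sections of the bundle $\mathcal E=\tilde\pi_-^*\mathcal E_-\otimes\tilde\pi_+^*\mathcal E_+$, i.e.\ that they transform correctly under the cocycles $\gamma_{ij}^\pm$ defining $\mathcal E_\pm$. Here I would argue that since $\sigma(P)$ and the symbol of $B_iPB'_i$ are related by Egorov (so $\sigma(P)(\rho_-)$ is an intrinsic scalar factor), and since $\#^\Lambda$ inherits the same $\gamma_{ij}^\pm$ transition behavior as the individual symbols via Proposition~\ref{l:change-of-variables} (applied to $B_jB'_i,B_iB'_j$ acting on each factor separately), the identities~\eqref{e:good-calc-1}--\eqref{e:good-calc-3} pass to the quotient by the cocycle relations~\eqref{e:transition}. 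A clean way to package this is to verify that both sides of each formula transform by the same $\gamma_{ij}^-\otimes\gamma_{ij}^+$ factor when switching charts, which is immediate once one recalls that $\pi_-(\rho_-)\in K$ and $\pi_+(\rho_+)\in K$ so the $\mathcal E_\pm$ fibers at those points are canonically trivial (using $e_\pm^i|_{K_i}=1$). The rest is bookkeeping through the asymptotic expansions.
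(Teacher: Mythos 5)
Your proposal is correct and takes essentially the same route as the paper, whose entire proof is the one-sentence assertion that the proposition follows from the model formulas~\eqref{e:calc-model-1}--\eqref{e:calc-model-3}, the general theory of Fourier integral operators, and Egorov's theorem. You have simply written out the chart-by-chart reduction via~\eqref{e:model-reduction}, the identification of the evaluation points $(x,\xi',0)$ and $(x',0,\xi)$ with $(\pi_+(\rho_+),\rho_+)$ and $(\rho_-,\pi_-(\rho_-))$ under $\phi$, and the $\gamma_{ij}^\pm$ cocycle bookkeeping that the paper leaves implicit.
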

We next give a parametrix construction for operators of the form
$1-A$, with $A\in \II$, needed in~\S\ref{s:grushin}:
\begin{prop}
  \label{l:funny-parametrix}
Let $A\in \II$ and assume that
$$
\WFh(A)\subset\widehat\Lambda;\quad
\sigma_\Lambda(A)|_{\widetilde K}\neq 1\text{ everywhere}.
$$
Then there exists $B\in \II$ with $\WFh(B)\subset\widehat\Lambda$,
and such that
$$
(1-A)(1-B)=1+\mathcal O(h^\infty),\quad
(1-B)(1-A)=1+\mathcal O(h^\infty).
$$
Moreover, $B$ is uniquely defined modulo $\mathcal O(h^\infty)$ and
\begin{equation}
  \label{e:funny-symbol}
\sigma_\Lambda(B)(\rho_-,\rho_+)=
{\sigma_\Lambda(A)(\rho_-,\pi_-(\rho_-))\otimes \sigma_\Lambda(A)(\pi_+(\rho_+),\rho_+)
\over \sigma_\Lambda(A)(\pi_-(\rho_-),\pi_+(\rho_+))-1}-\sigma_\Lambda(A)(\rho_-,\rho_+).
\end{equation}
\end{prop}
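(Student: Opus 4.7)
The plan is to build $B\in\II$ as an asymptotic sum in $h$ by iteratively solving symbol equations coming from Proposition~\ref{l:calculus}. Expanding, the relation $(1-A)(1-B)=1+\mathcal O(h^\infty)$ is equivalent to $A+B-AB=\mathcal O(h^\infty)$ microlocally near $\widehat\Lambda$. Writing $\kappa:=\pi_-(\rho_-)=\pi_+(\rho_+)$ for $(\rho_-,\rho_+)\in\Lambda^\circ$ and using~\eqref{e:good-calc-1}, the leading-order equation on the principal symbol becomes
\[
\sigma_\Lambda(A)(\rho_-,\rho_+) + \sigma_\Lambda(B)(\rho_-,\rho_+) - \sigma_\Lambda(B)(\rho_-,\kappa)\otimes\sigma_\Lambda(A)(\kappa,\rho_+) = 0.
\]
First I would specialize to $\rho_+=\kappa$, so that both arguments lie over $K$, where the line bundle $\mathcal E$ is canonically trivial. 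The equation then reduces to a scalar linear equation for $\sigma_\Lambda(B)(\rho_-,\kappa)$ whose coefficient is $1-\sigma_\Lambda(A)(\kappa,\kappa)$, which is invertible on $\widehat K$ by the hypothesis $\sigma_\Lambda(A)|_{\widetilde K}\neq 1$. Substituting the solution back into the general equation yields the formula~\eqref{e:funny-symbol}. Using Proposition~\ref{l:trivial} to realize this section globally, I then choose $B_0\in\II$ with this principal symbol and $\WFh(B_0)$ in a small neighborhood of $\widehat\Lambda$ contained in $\widetilde\Lambda$; by construction $A+B_0-AB_0\in h\II$ microlocally near $\widehat\Lambda$.

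Next I would run the standard iteration. Suppose $B^{(k)}\in\II$ satisfies $A+B^{(k)}-AB^{(k)} = h^{k+1}E_k$ microlocally near $\widehat\Lambda$ for some $E_k\in\II$. Looking for a correction of the form $B^{(k+1)}=B^{(k)}+h^{k+1}B_{k+1}$, the requirement that the error drop by one order in $h$ becomes, at the principal symbol level,
\[
\sigma_\Lambda(E_k)(\rho_-,\rho_+) + \sigma_\Lambda(B_{k+1})(\rho_-,\rho_+) - \sigma_\Lambda(B_{k+1})(\rho_-,\kappa)\otimes\sigma_\Lambda(A)(\kappa,\rho_+) = 0,
\]
which has exactly the same structure as the leading equation with $E_k$ replacing $-A$, and the same solvability condition. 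Borel summation of $\sum_j h^j B_j$ then produces $B\in\II$ with $\WFh(B)\subset\widehat\Lambda$ satisfying $(1-A)(1-B)=1+\mathcal O(h^\infty)$.

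Running the analogous construction with the order of composition reversed produces $B'\in\II$ with $(1-B')(1-A)=1+\mathcal O(h^\infty)$. Then the standard manipulation
\[
1-B = (1-B')(1-A)(1-B) + \mathcal O(h^\infty) = (1-B') + \mathcal O(h^\infty)
\]
forces $B=B'+\mathcal O(h^\infty)$, so the same $B$ serves as both left and right parametrix, and applying this identity to any other candidate satisfying either equation gives uniqueness modulo $\mathcal O(h^\infty)$. The main obstacle I anticipate is the invariant bookkeeping in the line bundle $\mathcal E=\tilde\pi_-^*\mathcal E_-\otimes\tilde\pi_+^*\mathcal E_+$: one must verify that the quotient in~\eqref{e:funny-symbol} is a well-defined section of $\mathcal E$, which works precisely because the denominator $\sigma_\Lambda(A)(\kappa,\kappa)-1$ lies over the diagonal point $(\kappa,\kappa)\in K^\circ\subset\widetilde\Lambda$, where the fiber of $\mathcal E$ is canonically trivialized, while the numerator is a genuine section of $\mathcal E$ over $\widehat\Lambda$.
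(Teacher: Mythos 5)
Your proposal is correct and follows essentially the same route as the paper: fix the principal symbol via the composition law \eqref{e:good-calc-1} (your derivation by restricting to $\rho_+=\pi_-(\rho_-)$ and substituting back is exactly how \eqref{e:funny-symbol} arises), remove the higher-order errors iteratively, and then use the standard algebraic argument to identify the left and right parametrices and get uniqueness. The only cosmetic difference is that the paper packages all corrections beyond the principal one into a Neumann series $\sum_j h^jR^j$ for the $\mathcal O(h)$ remainder, rather than re-solving the symbol equation at each order as you do; both are equivalent here.
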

\begin{proof}
Take any $B_1\in \II$ with $\WFh(B_1)\subset\widehat\Lambda$
and symbol given by~\eqref{e:funny-symbol}.
By~\eqref{e:good-calc-1}, $(1-A)(1-B_1)=1-hR$,
for some $R\in \II$ with $\WFh(R)\subset\widehat\Lambda$.
Define $B_2\in \II$ by the asymptotic Neumann series
$$
-B_2\sim\sum_{j\geq 1}h^jR^j.
$$
Define $B\in \II$ by the identity $1-B=(1-B_1)(1-B_2)$, then
$(1-A)(1-B)=1+\mathcal O(h^\infty)$.
Similarly, we construct $B'\in \II$ such that $(1-B')(1-A)=1+\mathcal O(h^\infty)$.
A standard algebraic argument, see for example
the proof of~\cite[Theorem~18.1.9]{ho3}, shows that $B'=B+\mathcal O(h^\infty)$
and both are determined uniquely modulo $\mathcal O(h^\infty)$.
\end{proof}
We finish this subsection with a trace formula for operators in $\II$,
used in~\S\ref{s:trace}:
\begin{prop}
  \label{l:trace-basic}
Assume that $A\in \II$ and $\WFh(A)\subset\widehat\Lambda$. Then,
with $d\Vol_\sigma=\sigma_S^{n-1}/(n-1)!$ denoting the symplectic volume form
and $j_K:K^\circ\to\Lambda^\circ$ defined in~\eqref{e:j-k},
$$
(2\pi h)^{n-1}\Tr A=\int_{\widehat K} \sigma_\Lambda(A)\circ j_K \,d\Vol_\sigma+\mathcal O(h).
$$
\end{prop}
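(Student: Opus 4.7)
My plan is to localize to the model case of \S\ref{s:model} via the reduction in Proposition~\ref{l:reduce-model}, compute the trace in the model directly by one application of stationary phase, and then patch back together. Because the canonical relation $\Lambda^\circ$ meets the diagonal exactly along $j_K(\widetilde K)$, the contributions to $\Tr A$ come entirely from a neighborhood of $\widehat K$, which keeps the bookkeeping manageable.

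First, choose a smooth partition of unity $\{\chi_i\}$ on $\widehat\Lambda$ subordinate to the cover $\{\Lambda_i\}$ from Proposition~\ref{l:reduce-model}. By the symbol calculus of~\S\ref{s:general} (together with an asymptotic Borel-type construction), I can decompose $A=\sum_i A_i+\mathcal O(h^\infty)$, with $A_i\in \II$, $\WFh(A_i)\subset\Lambda_i$, and $\sigma_\Lambda(A_i)=\chi_i\,\sigma_\Lambda(A)+\mathcal O(h)$. Using~\eqref{e:model-reduction}, $A_i=B_i'\Op_h^\Lambda(\tilde a^i)B_i+\mathcal O(h^\infty)$ microlocally near $\Lambda_i$, and $B_iB_i'=1$ near the support of $\tilde a^i$, $B_i'B_i=1$ near $\WFh(A_i)$. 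Since $A_i$ has compactly supported Schwartz kernel it is trace-class, and cyclicity of the trace gives
\begin{equation*}
\Tr A_i=\Tr\bigl(B_iA_iB_i'\bigr)+\mathcal O(h^\infty)=\Tr\Op_h^\Lambda(\tilde a^i)+\mathcal O(h^\infty).
\end{equation*}

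Next I compute the model trace. The Schwartz kernel of $\Op_h^\Lambda(\tilde a^i)$ read off from~\eqref{e:lambda-quant} is $(2\pi h)^{-n}\int e^{\frac{i}{h}(x'\cdot\xi'-y\cdot\xi)}\tilde a^i(x,\xi)\,d\xi$, so on the diagonal the exponent becomes $-\frac{i}{h}x_n\xi_n$ and
\begin{equation*}
\Tr\Op_h^\Lambda(\tilde a^i)=(2\pi h)^{-n}\int_{\mathbb R^{2n}}e^{-\frac{i}{h}x_n\xi_n}\tilde a^i(x,\xi)\,dx\,d\xi.
\end{equation*}
Stationary phase in the two variables $(x_n,\xi_n)$, with phase $-x_n\xi_n$ having a unique nondegenerate critical point at the origin of signature $0$ and $|\det|=1$, yields
\begin{equation*}
\Tr\Op_h^\Lambda(\tilde a^i)=(2\pi h)^{1-n}\int_{\mathbb R^{2(n-1)}}\tilde a^i_0(x',0,\xi',0)\,dx'\,d\xi'+\mathcal O(h^{2-n}).
\end{equation*}

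Finally I translate this back to the invariant picture. On $K_i$ the trivializations $e^i_\pm$ equal $1$, so the bundle $\mathcal E$ is canonically trivialized along $j_K(\widetilde K)$ and by~\eqref{e:symbol} one has $\sigma_\Lambda(A_i)\circ j_K=a^i\circ j_K=\tilde a^i_0\circ\varkappa_i$ on $K_i$. Since $\varkappa_i$ is a symplectomorphism sending $K_i$ to $\{x_n=\xi_n=0\}$, it pulls $dx'\,d\xi'$ back to $d\Vol_\sigma|_{K_i}$. Hence
\begin{equation*}
\int \tilde a^i_0(x',0,\xi',0)\,dx'\,d\xi'=\int_{K_i}\sigma_\Lambda(A_i)\circ j_K\,d\Vol_\sigma.
\end{equation*}
Summing over $i$, using $\sum_i\sigma_\Lambda(A_i)=\sigma_\Lambda(A)+\mathcal O(h)$ on $\widetilde K$ and applying the same model trace bound to absorb the $\mathcal O(h)$ symbol error (whose trace is $\mathcal O(h^{2-n})$), we obtain
\begin{equation*}
(2\pi h)^{n-1}\Tr A=\int_{\widehat K}\sigma_\Lambda(A)\circ j_K\,d\Vol_\sigma+\mathcal O(h).
\end{equation*}
The main technical point is the final step: verifying that the invariant symbol on $\Lambda^\circ$ defined via the trivializations $e_\pm^i$ restricts along $j_K$ to the scalar function computed in a Darboux chart, and that the transition factors $\gamma_{ij}^\pm$, being equal to $1$ on $K$, make the partition of unity glue correctly. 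Everything else is stationary phase and cyclicity.
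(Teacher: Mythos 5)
Your proof is correct and follows essentially the same route as the paper: microlocal partition of unity subordinate to the $\Lambda_i$, reduction to $\Op_h^\Lambda(\tilde a^i)$ via cyclicity of the trace, and stationary phase in $(x_n,\xi_n)$ to extract the leading term. The only difference is that you write out the final invariance check — that $\sigma_\Lambda(A_i)\circ j_K=\tilde a^i_0\circ\varkappa_i$ on $K_i$ and that $\varkappa_i|_{K_i}$ is a symplectomorphism onto $\{x_n=\xi_n=0\}$ so pulls $dx'\,d\xi'$ back to $d\Vol_\sigma$ — explicitly, whereas the paper leaves it implicit.
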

\begin{proof}
By a microlocal partition of unity, we reduce to the case when $\WFh(A)$ lies
entirely in one of the sets $\Lambda_i$ defined in~\eqref{e:smaller-stuff}.
If $\tilde a_i$ is defined by~\eqref{e:model-reduction}, then by the
cyclicity of the trace,
$\Tr A=\Tr \Op_h^\Lambda(\tilde a_i)+\mathcal O(h^\infty)$. It remains to note that
for any $a(x,\xi)\in C_0^\infty(\mathbb R^{2n})$,
$$
(2\pi h)^{n-1}\Tr \Op_h^\Lambda(a)=\int_{\mathbb R^{2n-2}}a(x',0,\xi',0)\,dx'd\xi'+\mathcal O(h),
$$
seen directly from~\eqref{e:lambda-quant} by the method of stationary phase
in the $x_n,\xi_n$ variables.
\end{proof}

\subsection{Microlocal idempotents}
\label{s:idempotents}

In this subsection, we establish properties of microlocal idempotents associated
to the Lagrangian $\Lambda^\circ$ considered in \S\ref{s:general}, microlocally on the compact
set $\widehat\Lambda$ defined in~\eqref{e:hat-lambda}. We use
the principal symbol $\sigma_\Lambda$ constructed in~\eqref{e:symbol}.
\begin{defi}
  \label{d:microlocal-idempotent}
We call $A\in \II$
a \emph{microlocal idempotent} of order $k>0$ near $\widehat\Lambda$, if
$A^2=A+\mathcal O(h^k)_{\II}$ microlocally near $\widehat\Lambda$
and $\sigma_\Lambda(A)$ does not vanish on $\widehat\Lambda$.
\end{defi}
In the following Proposition, part~1 is concerned with the principal part of
the idempotent equation; part~2 establishes a normal form for microlocal idempotents,
making it possible to conjugate them microlocally to the operator $\Op_h^\Lambda(1)$
from~\eqref{e:model-projector}. Part~3 is used to construct a global idempotent of
all orders in Proposition~\ref{l:idempotent-exists} below, while part~4 establishes
properties of commutators used in the construction of~\S\ref{s:global-construction}.
\begin{prop}
  \label{l:idempotents}
1. $A\in \II$ is a microlocal idempotent of order 1 near $\widehat\Lambda$ if and only if
near $\widehat\Lambda$,
\begin{equation}
  \label{e:idempotent-principal}
\sigma_\Lambda(A)(\rho_-,\rho_+)=a_0^-(\rho_-)\otimes a_0^+(\rho_+)
\end{equation}
for some sections $a_0^\pm\in C^\infty(\widetilde\Gamma_\pm;\mathcal E_\pm)$ nonvanishing
near $\widehat\Gamma_\pm$ and
such that $a_0^\pm|_{\widetilde K}=1$ near $\widehat K$. Moreover, $a_0^\pm$ are uniquely determined by $A$
on $\widehat \Gamma_\pm$.

2. If $A,B\in \II$ are two microlocal idemptotents of order $k>0$ near $\widehat\Lambda$,
then there exists an operator $Q\in\Psi^{\comp}(X)$, elliptic on $\widehat\Gamma_+\cup\widehat\Gamma_-$
and such that $B=Q A Q^{-1}+\mathcal O(h^k)_{\II}$ microlocally near $\widehat\Lambda$.
Here $Q^{-1}$ denotes an elliptic parametrix of~$Q$
constructed in Proposition~\ref{l:eparametrix}.

3. If $A\in \II$ is a microlocal idempotent of order $k>0$ near $\widehat\Lambda$, and
$A^2-A=h^kR_k+\mathcal O(h^\infty)$ microlocally near $\widehat\Lambda$ for some $R_k\in \II$, then
for $\rho_\pm\in\widetilde\Gamma_\pm$ near $\widehat\Gamma_\pm$,
\begin{equation}
  \label{e:r-eq}
\begin{gathered}
\sigma_\Lambda(R_k)(\pi_+(\rho_+),\rho_+)=\sigma_\Lambda(R_k)(\pi_+(\rho_+),\pi_+(\rho_+))\cdot
a_0^+(\rho_+),\\
\sigma_\Lambda(R_k)(\rho_-,\pi_-(\rho_-))=\sigma_\Lambda(R_k)(\pi_-(\rho_-),\pi_-(\rho_-))\cdot
a_0^-(\rho_-),
\end{gathered}
\end{equation}
with $a_0^\pm$ defined in~\eqref{e:idempotent-principal}.

4. If $A\in \II$ is a microlocal idempotent of all orders near $\widehat\Lambda$,
$P\in\Psi^{\comp}(X)$ is compactly supported, and
$[P,A]=h^k S_k+\mathcal O(h^\infty)$ microlocally near $\widehat\Lambda$ for some $S_k\in \II$, then
near $\widehat\Lambda$,
$$
\sigma_\Lambda(S_k)(\rho_-,\rho_+)=a_0^-(\rho_-)\otimes \sigma_\Lambda(S_k)(\pi_+(\rho_+),\rho_+)
+\sigma_\Lambda(S_k)(\rho_-,\pi_-(\rho_-))\otimes a_0^+(\rho_+).
$$
In particular, $\sigma_\Lambda(S_k)\circ j_K=0$ near $\widehat K$,
with $j_K:K^\circ\to\Lambda^\circ$ defined in~\eqref{e:j-k}.
\end{prop}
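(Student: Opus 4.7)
My plan is to extract all four parts from the composition formula~\eqref{e:good-calc-1} together with a few standard algebraic manipulations; part~2 is the only one requiring real work and will be handled by reduction to a model idempotent.

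For part~1, I would define
$$
a_0^-(\rho_-) := \sigma_\Lambda(A)(\rho_-,\pi_-(\rho_-)),\quad
a_0^+(\rho_+) := \sigma_\Lambda(A)(\pi_+(\rho_+),\rho_+),
$$
viewed canonically as sections of $\mathcal E_\pm$. Applying~\eqref{e:good-calc-1} to the leading-order idempotent identity $A^2\equiv A\pmod h$ immediately yields the factorization~\eqref{e:idempotent-principal}. Restricting to the diagonal $j_K$ reduces the identity to $x=x^2$ with $x=\sigma_\Lambda(A)\circ j_K$, and the non-vanishing assumption forces $x\equiv 1$ on $\widehat K$; since $\mathcal E_\pm$ are canonically trivial on $K^\circ$, this means $a_0^\pm|_{\widetilde K}=1$ near $\widehat K$. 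Uniqueness of $a_0^\pm$ is built into their definition, and the converse direction is a one-line computation via~\eqref{e:good-calc-1}.

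Part~2 is the main technical step. The strategy is to show that every microlocal idempotent of order $k$ is conjugate, modulo $h^k$, to a fixed normal form, so that composing the conjugators for $A$ and for $B$ gives the desired $Q$. Covering $\widehat K$ by the charts of Proposition~\ref{l:reduce-model}, in which $\Lambda^\circ$ becomes $\Lambda^0$ from~\eqref{e:model-gamma}, and transferring symbols via Proposition~\ref{l:change-of-variables}, the leading symbol of a microlocal idempotent becomes a product of pullbacks of functions on $\Gamma_\pm^0$ equal to $1$ on $K^0$. I would trivialize $\mathcal E_\pm$ globally on $\widetilde\Gamma_\pm$ using Proposition~\ref{l:trivial}, and then construct an elliptic $Q_0\in\Psi^{\comp}(X)$ whose principal symbol on $\widehat\Gamma_+\cup\widehat\Gamma_-$ is designed so that $\sigma_\Lambda(Q_0 A Q_0^{-1})$ agrees with the trivialized model; the same construction for $B$ gives the leading-order conjugacy. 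To promote to order $h^k$, I iterate: writing $Q=Q_0(1+hC_1+\cdots+h^{k-1}C_{k-1})$, at each order $j$ the obstruction is an element of $\II$ whose leading symbol, by parts~3 and~4 applied to $Q_0 A Q_0^{-1}$, has the precise shape that can be cancelled by commutation with a suitable pseudodifferential $C_{j+1}$. The main obstacle is the global consistency of these corrections, which I expect to follow from the cohomological argument used in the proof of Proposition~\ref{l:trivial} together with fineness of the sheaf of smooth functions.

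For parts~3 and~4 I would exploit associativity and the Leibniz rule. For part~3, multiplying $A^2=A+h^k R_k+\mathcal O(h^\infty)$ by $A$ on the left and on the right and subtracting yields $[A,R_k]=\mathcal O(h^\infty)$ microlocally near $\widehat\Lambda$; taking $\sigma_\Lambda$ and applying~\eqref{e:good-calc-1} gives
$$
\sigma_\Lambda(R_k)(\rho_-,\pi_-(\rho_-))\,a_0^+(\rho_+)=a_0^-(\rho_-)\,\sigma_\Lambda(R_k)(\pi_+(\rho_+),\rho_+),
$$
and specializing to $\rho_-=\pi_+(\rho_+)\in\widehat K$ (where $a_0^-|_K=1$ and $\pi_-|_K=\mathrm{id}$) yields the first identity of~\eqref{e:r-eq}; the second is symmetric. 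For part~4, the Leibniz identity $A[P,A]+[P,A]A=[P,A^2]$ combined with $A^2=A+\mathcal O(h^\infty)$ gives $AS_k+S_kA=S_k+\mathcal O(h^\infty)$ microlocally, and applying~\eqref{e:good-calc-1} to the two terms on the left produces exactly the claimed factorization of $\sigma_\Lambda(S_k)$. Restricting to $j_K$ and using $a_0^\pm|_K=1$ reduces this to $\sigma_\Lambda(S_k)\circ j_K=2\sigma_\Lambda(S_k)\circ j_K$, forcing the vanishing on $\widehat K$.
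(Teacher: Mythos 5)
Your proof of parts~1, 3, and~4 is correct. Part~1 matches the paper. For parts~3 and~4, however, you take a genuinely different and simpler route than the paper: the paper proves both of them by first reducing locally to the model case of~\S\ref{s:model} and then invoking part~2 to conjugate $A$ to $\Op_h^\Lambda(1)$, after which the identities follow from explicit model computations. You instead obtain the needed symbol constraints directly from the composition law~\eqref{e:good-calc-1}, without any normal-form reduction. For part~3 you observe $[A,R_k]=\mathcal O(h^\infty)$ (by left/right multiplication by $A$ and subtraction), giving $\sigma_\Lambda(R_k)(\rho_-,\pi_-(\rho_-))\otimes a_0^+(\rho_+)=a_0^-(\rho_-)\otimes\sigma_\Lambda(R_k)(\pi_+(\rho_+),\rho_+)$, and~\eqref{e:r-eq} follows by specializing $\rho_-$ or $\rho_+$ to $K$. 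For part~4 the Leibniz identity gives $AS_k+S_kA=S_k+\mathcal O(h^\infty)$, and~\eqref{e:good-calc-1} yields the factorization. A side benefit of your argument is that it eliminates the logical dependence of part~3 on part~2 present in the paper, which is a genuine structural improvement.

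Part~2 as you describe it is essentially the paper's induction rephrased. The $k=1$ step is identical (choose $Q_0$ with prescribed $\sigma(Q_0)|_{\widetilde\Gamma_\pm}$). In the inductive step, however, the citation ``by parts~3 and~4 applied to $Q_0AQ_0^{-1}$'' is not quite right: part~4 requires an idempotent of \emph{all} orders, which is not what is available at an intermediate stage. What is actually needed (and what the paper derives directly) is the identity $R_k=AR_k+R_kA+\mathcal O(h)$ for the order-$h^k$ discrepancy $R_k$ between two order-$(k{+}1)$ idempotents; one gets this by expanding $(A+h^kR_k)^2-(A+h^kR_k)$ and comparing with $A^2-A$ at order $h^k$. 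Its symbol then factors exactly as in part~4's conclusion, and $Q_k$ can be chosen with $\sigma(Q_k)|_{\widetilde\Gamma_\pm}$ specified to cancel it. This is the same algebraic computation as in part~4, so your instinct was sound, but it must be redone for the obstruction $R_k$ rather than borrowed. Finally, the ``cohomological/global consistency'' concern you raise for part~2 is not a real obstacle: once $\mathcal E_\pm$ are trivialized globally via Proposition~\ref{l:trivial}, each $Q_k\in\Psi^{\comp}(X)$ is constructed globally with prescribed restrictions of $\sigma(Q_k)$ to $\widetilde\Gamma_\pm$, and the only compatibility needed is that the two prescriptions agree on $\widetilde K$ --- which is exactly the vanishing $\sigma_\Lambda(R_k)\circ j_K=0$. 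There is no chart-by-chart gluing beyond what Proposition~\ref{l:trivial} already provides.
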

\begin{proof} In this proof, all the equalities of operators in $\II$
and the corresponding symbols are presumed to hold microlocally near $\widehat\Lambda$.

1. By~\eqref{e:good-calc-1}, we have $A^2=A+\mathcal O(h)$
if and only if
$$
\sigma_\Lambda(A)(\rho_-,\rho_+)=\sigma_\Lambda(A)(\rho_-,\pi_-(\rho_-))\otimes
\sigma_\Lambda(A)(\pi_+(\rho_+),\rho_+).
$$
In particular, restricting to $\widetilde K$, we obtain $\sigma_\Lambda(A)=\sigma_\Lambda(A)^2$
near $\widehat K$. Since $\sigma_\Lambda(A)$ is nonvanishing, we get $\sigma_\Lambda(A)|_{\widetilde K}=1$
near $\widehat K$.
It then remains to put $a^-_0(\rho_-)=\sigma_\Lambda(A)(\rho_-,\pi_-(\rho_-))$
and $a^+_0(\rho_+)=\sigma_\Lambda(A)(\pi_+(\rho_+),\rho_+)$.

2. We use induction on $k$. For $k=1$, we have by~\eqref{e:good-calc-2} and~\eqref{e:good-calc-3},
$$
\sigma_\Lambda(QAQ^{-1})(\rho_-,\rho_+)={\sigma(Q)(\rho_+)\over\sigma(Q)(\rho_-)}
\sigma_\Lambda(A)(\rho_-,\rho_+).
$$
If $a_0^\pm$ and $b_0^\pm$ are given by~\eqref{e:idempotent-principal}, then it is enough to take
any $Q$ with
\begin{equation}
  \label{e:q-principal}
\sigma(Q)|_{\widetilde\Gamma_-}=a_0^-/b_0^-,\
\sigma(Q)|_{\widetilde\Gamma_+}=b_0^+/a_0^+,
\end{equation}
this is possible since the restrictions of $a_0^\pm$ and $b_0^\pm$ to $\widetilde K$ are equal to 1.

Now, assuming the statement is true for $k\geq 1$, we prove it for $k+1$.
We have $B=\widetilde Q A\widetilde Q^{-1}+\mathcal O(h^k)$ for some $\widetilde Q\in\Psi^{\comp}$
elliptic on $\widehat\Gamma_+\cup\widehat\Gamma_-$;
replacing $A$ by $\widetilde Q A\widetilde Q^{-1}$, we may assume that $B=A+\mathcal O(h^k)$. Then $B-A=h^k R_k$
for some $R_k\in \II$; since both~$A$ and~$B$ are microlocal idempotents
of order $k+1$, we find $R_k=AR_k+R_kA+\mathcal O(h)$ and thus by~\eqref{e:good-calc-1},
\begin{equation}
  \label{e:r-eqn}
\sigma_\Lambda(R_k)(\rho_-,\rho_+)=a_0^-(\rho_-)\otimes \sigma_\Lambda(R_k)(\pi_+(\rho_+),\rho_+)
+\sigma_\Lambda(R_k)(\rho_-,\pi_-(\rho_-))\otimes a_0^+(\rho_+).
\end{equation}
Take $Q=1+h^kQ_k$ for some $Q_k\in\Psi^{\comp}$, then $Q^{-1}=1-h^kQ_k+\mathcal O(h^{k+1})$ and
$$
QAQ^{-1}=A+h^k[Q_k,A]+\mathcal O(h^{k+1}).
$$
Now, $B=QAQ^{-1}+\mathcal O(h^{k+1})$ if and only if
$$
(\sigma(Q_k)(\rho_+)-\sigma(Q_k)(\rho_-))a_0^-(\rho_-)\otimes a_0^+(\rho_+)=\sigma_\Lambda(R_k)(\rho_-,\rho_+).
$$
By~\eqref{e:r-eqn}, it is enough to choose $Q_k$ such that for $\rho_\pm\in\widetilde \Gamma_\pm$,
$$
\sigma(Q_k)(\rho_-)=-{\sigma_\Lambda(R_k)(\rho_-,\pi_-(\rho_-))\over a_0^-(\rho_-)},\quad
\sigma(Q_k)(\rho_+)={\sigma_\Lambda(R_k)(\pi_+(\rho_+),\rho_+)\over a_0^+(\rho_+)},
$$
this is possible since $\sigma_\Lambda(R_k)\circ j_K=0$ (with $j_K$ defined in~\eqref{e:j-k})
as follows from~\eqref{e:r-eqn}.

3. Since this is a local statement, we can use~\eqref{e:model-reduction} to reduce
to the model case of \S\ref{s:model}.
Using part~2 and the fact that the operator $\Op_h^\Lambda(1)$ 
considered in~\eqref{e:model-projector} is a microlocal idemptotent of all orders,
we can write
$$
A=Q\Op_h^\Lambda(1)Q^{-1}+h^kA_k,
$$
for some elliptic $Q\in\Psi^{\comp}$ and $A_k\in I_{\comp}(\Lambda^0)$. Then
$$
R_k=Q\Op_h^\Lambda(1)Q^{-1}A_k+A_kQ\Op_h^\Lambda(1)Q^{-1}-A_k+\mathcal O(h);
$$
\eqref{e:r-eq} follows by~\eqref{e:good-calc-1}
since $\sigma_\Lambda(Q\Op_h^\Lambda(1)Q^{-1})=\sigma_\Lambda(A)$ is given by~\eqref{e:idempotent-principal}.

4. As in part~3, we reduce to the model case of~\S\ref{s:model} and use part~2 to write
$A=Q\Op_h^\Lambda(1)Q^{-1}+\mathcal O(h^\infty)$; then
$$
[P,A]=Q[Q^{-1}PQ,\Op_h^\Lambda(1)]Q^{-1}+\mathcal O(h^\infty).
$$
Put $\widetilde P=Q^{-1}PQ$; by~\eqref{e:exp-model-2} and~\eqref{e:exp-model-3}
we have $[\widetilde P,\Op_h^\Lambda(1)]=\Op_h^\Lambda(s\circ\phi)$, where
$\phi$ is given by~\eqref{e:phi} and
$$
s(\rho_-,\rho_+;h)=\tilde p(\rho_+;h)-\tilde p(\rho_-;h),
$$
where $\widetilde P=\Op_h(\tilde p)$; thus
$$
s(\rho_-,\rho_+;h)=s(\pi_+(\rho_+),\rho_+;h)+s(\rho_-,\pi_-(\rho_-);h).
$$
It remains to conjugate by $Q$, keeping in mind~\eqref{e:q-principal}.
\end{proof}
We can use part~3 of Proposition~\ref{l:idempotents}, together with
the triviality of the bundles $\mathcal E_\pm$, to show existence
of a global idempotent, which is the starting point of the construction in~\S\ref{s:global-construction}.
\begin{prop}
  \label{l:idempotent-exists}
There exists a microlocal idempotent $\widetilde\Pi\in \II$ of all orders near $\widehat\Lambda$.
\end{prop}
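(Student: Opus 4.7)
\medskip

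\noindent\textbf{Proof proposal.} The plan is to construct $\widetilde\Pi$ as an asymptotic sum $\widetilde\Pi \sim \sum_{k\geq 0} h^k B_k$ with $B_k \in \II$, chosen so that the partial sums $\widetilde\Pi_N := \sum_{k=0}^{N-1} h^k B_k$ are microlocal idempotents of order $N$ near $\widehat\Lambda$, and then applying Borel summation. At each step the symbol equation to be solved is linear in the unknown principal symbol, and the solvability is a direct consequence of the structural identities in parts~1 and~3 of Proposition~\ref{l:idempotents} together with the triviality of the bundles $\mathcal E_\pm$ provided by Proposition~\ref{l:trivial}.

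For the base case, I use Proposition~\ref{l:trivial} to pick nonvanishing sections $a_0^\pm \in C^\infty(\widetilde\Gamma_\pm;\mathcal E_\pm)$ with $a_0^\pm|_{\widetilde K}=1$ near $\widehat K$, and take any $B_0 \in \II$ with $\WFh(B_0)\subset\widehat\Lambda$ and
$$
\sigma_\Lambda(B_0)(\rho_-,\rho_+) = a_0^-(\rho_-)\otimes a_0^+(\rho_+).
$$
By part~1 of Proposition~\ref{l:idempotents}, $\widetilde\Pi_1:=B_0$ is a microlocal idempotent of order~1.

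For the inductive step, suppose $\widetilde\Pi_k$ is a microlocal idempotent of order~$k$, so $\widetilde\Pi_k^2-\widetilde\Pi_k = h^k R_k + \mathcal O(h^\infty)$ for some $R_k \in \II$. Looking for $\widetilde\Pi_{k+1}=\widetilde\Pi_k + h^k B_k$, a short computation using the product formula~\eqref{e:good-calc-1} reduces the condition $\widetilde\Pi_{k+1}^2 = \widetilde\Pi_{k+1}+\mathcal O(h^{k+1})$ to the symbol equation
$$
b(\rho_-,\rho_+) - b(\rho_-,\pi_-(\rho_-))\,a_0^+(\rho_+) - a_0^-(\rho_-)\,b(\pi_+(\rho_+),\rho_+) = \sigma_\Lambda(R_k)(\rho_-,\rho_+),
$$
where $b := \sigma_\Lambda(B_k)$. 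To solve this, set $r_k := \sigma_\Lambda(R_k)\circ j_K \in C^\infty(\widetilde K)$, and using Proposition~\ref{l:trivial} choose extensions $u_\pm \in C^\infty(\widetilde\Gamma_\pm;\mathcal E_\pm)$ with $u_\pm|_{\widetilde K} = -r_k$ near $\widehat K$. Define
$$
b(\rho_-,\rho_+) := \sigma_\Lambda(R_k)(\rho_-,\rho_+) + u_-(\rho_-)\,a_0^+(\rho_+) + a_0^-(\rho_-)\,u_+(\rho_+).
$$
Substituting $\rho_+=\pi_-(\rho_-)$ (so $a_0^+=1$ there) and using the identity
$\sigma_\Lambda(R_k)(\rho_-,\pi_-(\rho_-)) = r_k(\pi_-(\rho_-))\,a_0^-(\rho_-)$ from part~3 of Proposition~\ref{l:idempotents} verifies $b(\rho_-,\pi_-(\rho_-))=u_-(\rho_-)$; the analogous computation on the $+$ side gives $b(\pi_+(\rho_+),\rho_+)=u_+(\rho_+)$. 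Plugging these back into the right-hand side of the symbol equation recovers $b(\rho_-,\rho_+)$. Take any $B_k \in \II$ with $\WFh(B_k)\subset\widehat\Lambda$ and $\sigma_\Lambda(B_k)=b$.

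Finally, form $\widetilde\Pi \in \II$ with $\widetilde\Pi \sim \sum_{k\geq 0} h^k B_k$ (using the standard Borel procedure for classical symbols on $\widetilde\Lambda$). Then $\widetilde\Pi - \widetilde\Pi_N \in h^N\II$ microlocally near $\widehat\Lambda$ for every~$N$, so $\widetilde\Pi^2 - \widetilde\Pi = \mathcal O(h^\infty)_{\II}$ near $\widehat\Lambda$, and $\sigma_\Lambda(\widetilde\Pi)=a_0^-\otimes a_0^+$ is nonvanishing there. The main (modest) obstacle is the bookkeeping verification that the inductive symbol equation is solvable, which collapses to checking that the obstruction $\sigma_\Lambda(R_k)$ is constrained by part~3 of Proposition~\ref{l:idempotents} in exactly the way needed for the ansatz above to work; no cohomological difficulty arises because Proposition~\ref{l:trivial} supplies global trivializations of $\mathcal E_\pm$ on neighborhoods of $\widehat\Gamma_\pm$.
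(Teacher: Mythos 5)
Your construction is correct and is essentially the paper's own proof: the same induction $\widetilde\Pi_{k+1}=\widetilde\Pi_k+h^kB_k$ with corrector symbol $\sigma_\Lambda(B_k)=\sigma_\Lambda(R_k)+u_-\otimes a_0^++a_0^-\otimes u_+$, where $u_\pm|_{\widetilde K}=-\sigma_\Lambda(R_k)\circ j_K$, whose consistency rests on part~3 of Proposition~\ref{l:idempotents} and the trivializations of Proposition~\ref{l:trivial}, followed by Borel summation. One cosmetic point: insisting that $\WFh(B_k)\subset\widehat\Lambda$ is incompatible with prescribing a symbol that does not vanish on all of $\widehat\Lambda$ (such an operator's principal symbol must vanish outside, hence on the boundary of, $\widehat\Lambda$), so you should instead, as the paper implicitly does, take $B_k\in\II$ with the stated symbol on a neighborhood of $\widehat\Lambda$, since all identities are only required microlocally near $\widehat\Lambda$.
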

\begin{proof}
We argue inductively, constructing microlocal idempotents $\widetilde\Pi_k$ of order $k$ for each $k$
and taking the asymptotic limit. To construct $\widetilde\Pi_1$, we use part~1 of Proposition~\ref{l:idempotents};
the existence of symbols $a_0^\pm$ was shown in Proposition~\ref{l:trivial}.

Now, assume that $\widetilde\Pi_k$ is a microlocal idempotent of order $k>0$.
By part~3 of Proposition~\ref{l:idempotents}, we have
$\widetilde\Pi_k^2-\widetilde\Pi_k=h^kR_k+\mathcal O(h^\infty)$ microlocally near $\widehat\Lambda$,
where $R_k\in \II$ and $r_k=\sigma_\Lambda(R_k)$ satisfies~\eqref{e:r-eq}.
Put $\widetilde\Pi_{k+1}=\widetilde \Pi_k+h^k B_k$, for
some $B_k\in \II$. We need to choose $B_k$ so that
microlocally near $\widehat\Lambda$,
$$
R_k+\widetilde\Pi_k  B_k+ B_k\widetilde \Pi_k- B_k=\mathcal O(h).
$$
Taking $b_k=\sigma_\Lambda(B_k)$, by~\eqref{e:good-calc-1} this translates to
$$
b_k(\rho_-,\rho_+)=a_0^-(\rho_-)\otimes b_k(\pi_+(\rho_+),\rho_+)+
b_k(\rho_-,\pi_-(\rho_-))\otimes a_0^+(\rho_+)
+r_k(\rho_-,\rho_+).
$$
By~\eqref{e:r-eq}, it is enough to take any
$b_k^\pm\in C^\infty(\widetilde \Gamma_\pm;\mathcal E_\pm)$ such that near $\widehat K$,
$b_k^\pm|_{\widetilde K}=-r_k\circ j_K$, with
$j_K$ defined in~\eqref{e:j-k} 
(for example, $b_k^\pm=-(r_k\circ j_K\circ \pi_\pm)a_0^\pm$)
and put
$$
b_k(\rho_-,\rho_+):=a_0^-(\rho_-)\otimes b_k^+(\rho_+)+b_k^-(\rho_-)\otimes a_0^+(\rho_+)
+r_k(\rho_-,\rho_+).\qedhere
$$
\end{proof}

\subsection{Annihilating ideals}
\label{s:ideals}

Assume that $\Pi\in \II$ is a microlocal idempotent of all orders
near the set $\widehat\Lambda$ introduced in~\eqref{e:hat-lambda},
see Definition~\ref{d:microlocal-idempotent}.
We are interested in the following equations:
\begin{gather}
\label{e:right-ideal}
\Pi\Theta_-=\mathcal O(h^\infty)\quad\text{microlocally near }\widehat\Lambda,\\
\label{e:left-ideal}
\Theta_+\Pi=\mathcal O(h^\infty)\quad\text{microlocally near }\widehat\Lambda,
\end{gather}
where $\Theta_\pm$ are pseudodifferential operators.
The solutions to~\eqref{e:right-ideal} form a right ideal and
the solutions to~\eqref{e:left-ideal} form a left ideal
in the algebra of pseudodifferential operators.
Moreover, by~\eqref{e:good-calc-2}, \eqref{e:good-calc-3},
each solution $\Theta_\pm$ to the equations~\eqref{e:right-ideal},
\eqref{e:left-ideal} satisfies $\sigma(\Theta_\pm)|_{\Gamma_\pm}=0$ near $\widehat\Gamma_\pm$
and each $\Theta_\pm$ such that $\WFh(\Theta_\pm)\cap\widehat\Gamma_\pm=\emptyset$
solves these equations.

Note that in the model case of~\S\ref{s:model}, with $\Pi$ equaling the operator $\Op_h^\Lambda(1)$
from~\eqref{e:model-projector}, and with the
quantization procedure $\Op_h$ defined in~\eqref{e:op-h},
the set of solutions to~\eqref{e:right-ideal}
is the set of operators $\Op_h(\theta_-)$ with $\theta_-|_{x_n=0}=0$; that is,
the right ideal generated by the operator $x_n$. The set of solutions to~\eqref{e:left-ideal}
is the set of operators $\Op_h(\theta_+)$ with $\theta_+|_{\xi_n=0}=0$; that is,
the left ideal generated by the operator $hD_{x_n}$. This follows from the multiplication
formulas~\eqref{e:exp-model-2} and~\eqref{e:exp-model-3}, together with the
multiplication formulas for the standard quantization~\cite[(4.3.16)]{e-z}.

We start by showing that our ideals are principal in the general setting:
\begin{prop}
  \label{l:principal-ideal}
1. For each defining functions $\varphi_\pm$ of $\Gamma_\pm^\circ$
near $\widehat\Gamma_\pm$,
there exist operators $\Theta_\pm$ solving~\eqref{e:right-ideal},
\eqref{e:left-ideal}, such that $\sigma(\Theta_\pm)=\varphi_\pm$
near $\widehat\Gamma_\pm$. Such operators
are called \emph{basic} solutions of the corresponding equations.

2. If $\Theta_\pm,\Theta'_\pm$ are solutions to~\eqref{e:right-ideal},
\eqref{e:left-ideal}, and moreover $\Theta_\pm$ are basic solutions,
then there exist $Z_\pm\in\Psi^{\comp}$ such that
$\Theta'_-=\Theta_-Z_-+\mathcal O(h^\infty)$ microlocally near $\widehat\Gamma_-$ and
$\Theta'_+=Z_+\Theta_++\mathcal O(h^\infty)$ microlocally near $\widehat\Gamma_+$.
\end{prop}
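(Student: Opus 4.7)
The plan is to handle both parts by iterative constructions at the principal symbol level, using the factorization $\sigma_\Lambda(\Pi)(\rho_-,\rho_+) = a_0^-(\rho_-)\otimes a_0^+(\rho_+)$ supplied by Proposition~\ref{l:idempotents}(1) together with the symbol composition formulas~\eqref{e:good-calc-1}--\eqref{e:good-calc-3}. I treat only $\Theta_-$ in detail, since $\Theta_+$ is entirely symmetric after swapping~\eqref{e:good-calc-2} and~\eqref{e:good-calc-3}.

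For Part 1, start with any $\Theta_{(0)}\in\Psi^{\comp}(X)$ having $\sigma(\Theta_{(0)}) = \varphi_-$ near $\widehat\Gamma_-$. Formula~\eqref{e:good-calc-2} and $\varphi_-|_{\Gamma_-^\circ}=0$ give $\sigma_\Lambda(\Pi\Theta_{(0)})=0$ on $\widetilde\Lambda$, so $\Pi\Theta_{(0)}\in h\II$ microlocally near $\widehat\Lambda$. Inductively, suppose $\Theta_{(k)}\in\Psi^{\comp}$ satisfies $\Pi\Theta_{(k)} = h^{k+1}R_k + \mathcal O(h^{k+2})$ microlocally near $\widehat\Lambda$ with $R_k\in\II$. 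The aim is to find $A_{k+1}\in\Psi^{\comp}$ such that setting $\Theta_{(k+1)}=\Theta_{(k)}+h^{k+1}A_{k+1}$ produces $\Pi\Theta_{(k+1)}=\mathcal O(h^{k+2})$; by~\eqref{e:good-calc-2} this reduces to solving $\sigma(A_{k+1})(\rho_-)\,\sigma_\Lambda(\Pi)(\rho_-,\rho_+) = -\sigma_\Lambda(R_k)(\rho_-,\rho_+)$ on $\widehat\Lambda$. An asymptotic Borel sum of the resulting series then yields $\Theta_-$.

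The heart of the argument is the factorization $\sigma_\Lambda(R_k)(\rho_-,\rho_+)=r_k(\rho_-)\,\sigma_\Lambda(\Pi)(\rho_-,\rho_+)$, which makes the preceding equation solvable because the left hand side depends on $\rho_-$ alone. To prove it, multiply $\Pi\Theta_{(k)}=h^{k+1}R_k+\mathcal O(h^{k+2})$ on the left by $\Pi$ and invoke $\Pi^2=\Pi+\mathcal O(h^\infty)$ to obtain $\Pi R_k = R_k + \mathcal O(h)$ microlocally near $\widehat\Lambda$. Then~\eqref{e:good-calc-1}, combined with the identity $\sigma_\Lambda(\Pi)(\pi_+(\rho_+),\rho_+)=a_0^+(\rho_+)$ (which uses $a_0^-|_{\widetilde K}=1$), gives $\sigma_\Lambda(\Pi R_k)(\rho_-,\rho_+) = \sigma_\Lambda(R_k)(\rho_-,\pi_-(\rho_-))\otimes a_0^+(\rho_+)$; matching this against $\sigma_\Lambda(R_k)(\rho_-,\rho_+)$ forces the desired factorization with $r_k(\rho_-)=\sigma_\Lambda(R_k)(\rho_-,\pi_-(\rho_-))/a_0^-(\rho_-)$, well defined since $a_0^-$ is nonvanishing near $\widehat\Gamma_-$. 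This is the main obstacle in the proof: without idempotency of $\Pi$ to all orders, the remainder $R_k$ need not factor and the induction cannot close.

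For Part 2, formula~\eqref{e:good-calc-2} applied to $\Pi\Theta_-'=\mathcal O(h^\infty)$ together with the nonvanishing of $\sigma_\Lambda(\Pi)$ on $\widehat\Lambda$ forces $\sigma(\Theta_-')|_{\widehat\Gamma_-}=0$. Since $\varphi_-$ has nonvanishing differential on $\widehat\Gamma_-$, I can write $\sigma(\Theta_-')=\varphi_-\,z_0$ in a neighborhood of $\widehat\Gamma_-$ and extend $z_0$ smoothly to all of $T^*X$. Taking $Z_-^{(0)}\in\Psi^{\comp}$ with $\sigma(Z_-^{(0)})=z_0$, the difference $\Theta_-'-\Theta_- Z_-^{(0)}$ lies in $h\Psi^{\comp}$ microlocally near $\widehat\Gamma_-$; moreover it still satisfies $\Pi(\,\cdot\,)=\mathcal O(h^\infty)$ near $\widehat\Lambda$ because $\Pi\Theta_-=\mathcal O(h^\infty)$. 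Dividing by $h$, the quotient satisfies the same hypotheses as $\Theta_-'$, so the argument repeats and asymptotic summation produces $Z_-\sim Z_-^{(0)}+hZ_-^{(1)}+\cdots$ with $\Theta_-'=\Theta_- Z_-+\mathcal O(h^\infty)$ microlocally near $\widehat\Gamma_-$.
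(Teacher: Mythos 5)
Your proof is correct, but it takes a genuinely different route from the paper. The paper disposes of both parts in a few lines by reducing to the model case: using the local conjugations of Proposition~\ref{l:reduce-model} (via~\eqref{e:model-reduction}), a microlocal partition of unity, and part~2 of Proposition~\ref{l:idempotents} to replace $\Pi$ by $\Op_h^\Lambda(1)$, after which the exact composition formula~\eqref{e:exp-model-2} shows that the right annihilating ideal is precisely $\{\Op_h(\theta):\theta|_{x_n=0}=0\}$, i.e.\ the right ideal generated by $x_n$; Part~1 is then $\Theta_-=\Op_h(\varphi_-)$ and Part~2 is the factorization $\Theta_-=x_nY_-$, $\Theta'_-=x_nY'_-$ with $Y_-$ elliptic, $Z_-=Y_-^{-1}Y'_-$. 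You instead work invariantly, building $\Theta_-$ order by order with the symbol calculus of Proposition~\ref{l:calculus}. Your key step --- left-multiplying $\Pi\Theta_{(k)}=h^{k+1}R_k+\mathcal O(h^{k+2})$ by $\Pi$ and using $\Pi^2=\Pi+\mathcal O(h^\infty)$ to force $\sigma_\Lambda(R_k)(\rho_-,\rho_+)=\sigma_\Lambda(R_k)(\rho_-,\pi_-(\rho_-))\otimes a_0^+(\rho_+)$ --- is sound and is essentially a one-sided version of the mechanism behind~\eqref{e:r-eqn} and part~3 of Proposition~\ref{l:idempotents}; dividing by the nonvanishing $a_0^-$ then makes the corrector equation solvable by a function of $\rho_-$ alone, as you say. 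Your Part~2 is an iterated Hadamard-lemma factorization at the symbol level and also closes correctly (the error terms with wavefront set away from $\widehat\Gamma_-$ are harmless since $\WFh(\Pi)\subset\Lambda^\circ$ and composition with pseudodifferential factors does not move wavefront sets into $\widehat\Lambda$). What the paper's route buys is brevity and a full-symbol (not just principal-symbol) description of the ideals in one stroke; what yours buys is independence from the normal form of part~2 of Proposition~\ref{l:idempotents} and a transparent display of exactly where all-orders idempotency of $\Pi$ enters --- a point the paper leaves implicit.
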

\begin{proof}
We concentrate on the equation~\eqref{e:right-ideal}; \eqref{e:left-ideal} is handled similarly.
Since the equations~\eqref{e:right-ideal} and $\Theta'=\Theta_-Z_-$ are linear in $\Theta_-$
and $\Theta',Z_-$, respectively, we can use~\eqref{e:model-reduction} 
and a pseudodifferential partition of unity to reduce to the model case
of~\S\ref{s:model}. Using part~2 of Proposition~\ref{l:idempotents},
we can furthermore assume that $\Pi=\Op_h^\Lambda(1)$. 

To show part~1, in the model case, we can take $\Theta_-=\Op_h(\varphi_-)$, where $\varphi_-(x,\xi)$
is the given defining function of $\{x_n=0\}$. For part~2, if $\Theta_-=\Op_h(\varphi_-)$
and $\Theta'_-=\Op_h(\theta'_-)$, then we can write microlocally near $\widehat\Gamma_-$,
$\Theta_-=x_n Y_-+\mathcal O(h^\infty)$, where $Y_-\in\Psi^{\comp}$ is elliptic on $\widehat\Gamma_-$;
in fact, $Y_-=\Op_h(\varphi_-/x_n)$.
Similarly we can write $\Theta'_-=x_nY'_-+\mathcal O(h^\infty)$ microlocally near $\widehat\Gamma_-$,
for some $Y'_-\in\Psi^{\comp}$;
it remains to put $Z_-=Y_-^{-1}Y'_-$ microlocally near $\widehat\Gamma_-$.
\end{proof}
For the microlocal estimate on the kernel of $\Pi$ in~\S\ref{s:estimate-kernel},
we need an analog of the following fact:
\begin{equation}
  \label{e:division}
f\in C^\infty(\mathbb R^n)\ \Longrightarrow\
f(x)-f(x',0)=x_ng(x),\quad
g\in C^\infty(\mathbb R^n),
\end{equation}
where $f(x',0)$ is replaced by $\Pi f$ and multiplication by $x_n$ is replaced by a basic
solution to~\eqref{e:right-ideal}.
We start with a technical lemma for the model case:
\begin{lemm}
  \label{l:Xi-0}
Consider the operator $\Xi_0:C^\infty(\mathbb R^n)\to C^\infty(\mathbb R^n)$ defined by
$$
\Xi_0f(x',x_n)={f(x',x_n)-f(x',0)\over x_n}=\int_0^1 (\partial_{x_n}f)(x',tx_n)\,dt.
$$
Then:

1. $\Xi_0$ is bounded $H^1(\mathbb R^n)\to L^2(\mathbb R^n)$ and thus
$\|\Xi_0\|_{H^1_h\to L^2}=\mathcal O(h^{-1})$.

2. The wavefront set $\WFh(\Xi_0)$ defined in~\S\ref{s:prelim-basics} satisfies%
\footnote{It would be interesting to understand the microlocal structure of $\Xi_0$, starting
from the fact that its wavefront set lies in the union of three Lagrangian submanifolds.}
$$
\begin{gathered}
\WFh(\Xi_0)\cap T^*(\mathbb R^n\times\mathbb R^n)\,\subset\,\Delta(T^*\mathbb R^n)\cup\Lambda^0\\
\cup \{(x',0,\xi,x',0,\xi',t\xi_n)\mid (x',\xi)\in\mathbb R^{2n-1},t\in [0,1]\},
\end{gathered}
$$
where $\Delta(T^*\mathbb R^n)\subset T^*\mathbb R^n\times T^*\mathbb R^n$ is the diagonal
and $\Lambda^0$ is defined using~\eqref{e:model-gamma}.
\end{lemm}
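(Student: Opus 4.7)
The plan is to apply Minkowski's integral inequality to the representation $\Xi_0 f(x) = \int_0^1 (\partial_{x_n} f)(x', tx_n)\,dt$. For each fixed $t\in(0,1]$, the substitution $x_n\mapsto x_n/t$ yields $\|(\partial_{x_n} f)(\cdot,t\cdot)\|_{L^2(\mathbb{R}^n)}=t^{-1/2}\|\partial_{x_n}f\|_{L^2}$, so Minkowski gives
\[
\|\Xi_0 f\|_{L^2}\leq \int_0^1 t^{-1/2}\,dt\cdot\|\partial_{x_n}f\|_{L^2}=2\|\partial_{x_n}f\|_{L^2}\leq 2\|f\|_{H^1}.
\]
The semiclassical bound $\|\Xi_0\|_{H^1_h\to L^2}=\mathcal{O}(h^{-1})$ then follows from $\|\partial_{x_n}f\|_{L^2}\leq h^{-1}\|f\|_{H^1_h}$.

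\textbf{Part 2.} The plan is to compute the Schwartz kernel $K(x,y)$ of $\Xi_0$ (so that $\Xi_0 f(x)=\int K(x,y)f(y)\,dy$) and analyze its wavefront set in two regimes. Integration by parts gives $K(x,y)=\delta(y'-x')\kappa(x_n,y_n)$ with $\kappa(x_n,y_n)=-\int_0^1\delta'(y_n-tx_n)\,dt$. Evaluating the $t$-integral for $x_n\ne 0$ produces the explicit identity $\kappa(x_n,y_n)=x_n^{-1}[\delta(y_n-x_n)-\delta(y_n)]$, so on $\{x_n\ne 0\}$ the kernel $K$ is a smooth-in-$x_n$ combination of $\delta(y-x)$ and $\delta(y'-x')\delta(y_n)$; the corresponding wavefront sets are $\Delta(T^*\mathbb{R}^n)$ and $\Lambda^0$, covering the first two pieces of the claim.

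For the region $x_n\approx 0$ I would keep the oscillatory representation
\[
\kappa(x_n,y_n)=-(2\pi)^{-1}\int_0^1\int_{\mathbb{R}} e^{i\zeta(y_n-tx_n)}\,i\zeta\,d\zeta\,dt
\]
and run stationary phase in the parameters $(t,\zeta)$. The critical equations $y_n=tx_n$ and $x_n\zeta=0$, specialized to $x_n=0$, force $y_n=0$ with $\zeta$ free and $t\in[0,1]$; the ambient cotangent data $(\partial_{x_n}\psi,\partial_{y_n}\psi)=(-t\zeta,\zeta)$ then translate via the paper's convention $(y,\eta,x,\xi)\in\WFh(\Xi_0)\Leftrightarrow(x,\xi,y,-\eta)\in\WFh(K)$, tensored with the trivial contribution of $\delta(y'-x')$ in the tangential variables, into the relation $\xi'=\eta'$, $\xi_n=t\eta_n$, $t\in[0,1]$, with both source and target positions equal to $(x',0)$. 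This is exactly the third set of the claim.

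The main obstacle is that at the origin $x_n=0=\zeta$ the phase $\zeta(y_n-tx_n)$ is degenerate in the parameter variables $(t,\zeta)$, so the standard Lagrangian-distribution formalism does not apply off the shelf. My plan is to bypass this by absorbing the amplitude $i\zeta$ into a derivative through $i\zeta\,e^{i\zeta(y_n-tx_n)/h}=h\partial_{y_n}e^{i\zeta(y_n-tx_n)/h}$, rewriting $\kappa=-\partial_{y_n}\bigl[\int_0^1\delta(y_n-tx_n)\,dt\bigr]$. The bracketed distribution is supported on the closed wedge $\{(x_n,y_n):y_n/x_n\in[0,1]\}\cup\{(0,0)\}$ and its wavefront set at the origin is an elementary oscillatory-integral calculation which produces exactly the parameter $t\in[0,1]$; differentiation in $y_n$ does not enlarge the wavefront set. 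Combined with the closure of the $\Delta$ and $\Lambda^0$ contributions from $\{x_n\ne 0\}$, this yields the stated inclusion.
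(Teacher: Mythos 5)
Your Part 1 is exactly the paper's argument (Minkowski plus the $t^{-1/2}$ scaling), so nothing to add there. For Part 2 the overall splitting is the same as the paper's -- explicit kernel off the corner $\{x_n=y_n=0\}$, oscillatory analysis at the corner -- but the execution of the corner step is genuinely different. The paper tests $\Xi_0$ against pseudodifferential cutoffs $\Op_h(b)\psi\,\Xi_0\,\Op_h(a)\psi$ and runs semiclassical nonstationary phase in $(\xi,\eta,z)$, reading off the relation $x_n=ty_n$, $\eta_n=t\xi_n$ directly from the critical-point equations; you instead factor the kernel as $\delta(y'-x')\otimes(-\partial_{y_n}w)$ with $w(x_n,y_n)=\int_0^1\delta(y_n-tx_n)\,dt=|x_n|^{-1}\indic_{\{y_n/x_n\in[0,1]\}}$ and compute the wavefront set of this single homogeneous distribution. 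Your route is arguably more transparent: $w$ is locally integrable (the wedge has width $|x_n|$), homogeneous of degree $-1$, and $\hat w(\sigma,\tau)=2\pi\int_0^1\delta(\sigma+t\tau)\,dt$ is supported exactly in the closed cone $\{\sigma=-t\tau,\ t\in[0,1]\}$, which is where the parameter $t$ in the third set comes from; applying $hD_{y_n}$ and tensoring with $\delta(y'-x')$ (whose $\WFh$ forces $x'=y'$ and matching tangential frequencies) cannot enlarge $\WFh$, so the inclusion follows. The one step you assert rather than prove is precisely this identification of $\WFh(w)$ over the origin with the conic support of $\hat w$; you should spell out that because $w$ is $h$-independent and homogeneous of degree $-1$, one has $\hat w(\zeta/h)=h\,\hat w(\zeta)$, so every frequency direction outside the cone is killed identically and every direction inside contributes at a fixed power of $h$ -- this is elementary but it is the load-bearing point, and you also need the sign bookkeeping between the kernel convention~\eqref{e:op-wf} and the slots of $\Lambda^0$, which you do flag. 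What the paper's version buys is that it never needs the tensor-product and homogeneity facts for $\WFh$, only the standard nonstationary phase lemma; what yours buys is an explicit description of the kernel as a conormal-type distribution to three intersecting submanifolds, which speaks directly to the footnote's question about the microlocal structure of $\Xi_0$.
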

\begin{proof}
1. Put $\lambda_t f(x',x_n)=(\partial_{x_n}f)(x',tx_n)$; then
$$
\|\Xi_0 f\|_{L^2}\leq \int_0^1\|\lambda_t f\|_{L^2}\,dt
\leq \int_0^1 t^{-1/2}\|f\|_{H^1}\,dt
\leq 2\|f\|_{H^1}.
$$

2. Denote elements of $T^*(\mathbb R^n\times\mathbb R^n)$
by $(x,\xi,y,\eta)$.
If $\chi\in C_0^\infty(\mathbb R)$ is supported away from zero, then,
with $\Op_h^\Lambda(1)$ defined in~\eqref{e:model-projector},
$$
\chi(x_n) \Xi_0={\chi(x_n)\over x_n} (1-\Op_h^\Lambda(1)).
$$
Since $\chi(x_n)/x_n$ is a smooth function,
the identity operator has wavefront set on the diagonal,
and $\WFh(\Op_h^\Lambda(1))\cap T^*(\mathbb R^n\times \mathbb R^n)\subset\Lambda^0$,
we find
$$
\begin{gathered}
\WFh(\Xi_0)\cap T^*(\mathbb R^n\times\mathbb R^n)\cap\{y_n\neq 0\}\,\subset\,
\Delta(T^*\mathbb R^n)
\cup \Lambda^0.
\end{gathered}
$$
Similarly, one has
$\Xi_0\chi(x_n)=\chi(x_n)/x_n$;
therefore,
$$
\WFh(\Xi_0)\cap\{x_n\neq 0\}\,\subset\, \Delta(T^*\mathbb R^n).
$$
To handle the remaining part of the wavefront set, take $a,b\in C_0^\infty(T^*\mathbb R^n)$
such that
$$
(x',tx_n,\xi)\in\supp a,\ t\in [0,1]\Longrightarrow
(x,\xi',t\xi_n)\not\in\supp b.
$$
We claim that for any $\psi\in C_0^\infty(\mathbb R^n)$,
\begin{equation}
  \label{e:wf-internal}
\Op_h(b)\psi\,\Xi_0\Op_h(a)\psi=\mathcal O(h^\infty);
\end{equation}
indeed, the Schwartz kernel of this operator is
$$
\begin{gathered}
\mathcal K(y,x)=(2\pi h)^{-2n}\int_{\mathbb R^{3n}\times [0,1]} e^{{i\over h}((y-z)\cdot\eta+(z'-x')\cdot\xi'+(tz_n-x_n)\xi_n)}\\
b(y,\eta)\psi(z)(ih^{-1}\xi_n a(z',tz_n,\xi)+(\partial_{z_n}a)(z',tz_n,\xi))\psi(x)\,d\xi d\eta dz dt.
\end{gathered}
$$
The stationary points of the phase in the $(\xi,\eta,z)$ variables are given by
$$
z=y,\
x'=y',\
x_n=ty_n,\
\eta'=\xi',\
\eta_n=t\xi_n
$$
and lie outside of the support of the amplitude; by the method of nonstationary phase
in the $(\xi,\eta,z)$ variables, the integral is $\mathcal O(h^\infty)_{C^\infty}$. Now, \eqref{e:wf-internal} implies that
$$
\begin{gathered}
\WFh(\Xi_0)\cap T^*(\mathbb R^n\times\mathbb R^n)\cap\{x_n=y_n=0\}\\\subset\,
\{(x',0,\xi,x',0,\xi',t\xi_n)\mid (x',\xi)\in\mathbb R^{2n-1},\ t\in [0,1]\},
\end{gathered}
$$
which finishes the proof.
\end{proof}
The microlocal analog of~\eqref{e:division} in the general case is now given by
\begin{prop}
  \label{l:Xi}
Let $\Pi\in \II$ be a microlocal idempotent of all orders near $\widehat\Lambda$
and $\Theta_-$ be a basic solution to~\eqref{e:right-ideal}, see Proposition~\ref{l:principal-ideal}.
Then there exists an operator $\Xi:C^\infty(X)\to C_0^\infty(X)$ such that:

1. $\WFh(\Xi)$ is a compact subset of $T^*(X\times X)$ and
$\|\Xi\|_{L^2\to L^2}=\mathcal O(h^{-1})$;

2. $\WFh(\Xi)\subset \Delta(T^*X)\cup \Lambda^\circ\cup \Upsilon$,
where $\Delta(T^*X)\subset T^*X\times T^*X$ is the diagonal and
 $\Upsilon$ consists of all $(\rho_-,\rho'_-)$ such that
$\rho_-,\rho'_-\in\Gamma_-^\circ$ and $\rho'_-$ lies on the
segment of the flow line of $\mathcal V_-$ between $\rho_-$ and $\pi_-(\rho_-)$;

3. $1-\Pi=\Theta_-\Xi+\mathcal O(h^\infty)$ microlocally near $\widehat K\times\widehat K$.
\end{prop}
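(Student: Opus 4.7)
The plan is to reduce to the model case of~\S\ref{s:model}, where the identity becomes a form of Taylor expansion captured by $\Xi_0$. In the model with $\Pi^0=\Op_h^\Lambda(1)$, the very definition of $\Xi_0$ in Lemma~\ref{l:Xi-0} gives the exact factorization
\[
1-\Pi^0=\Op_h(x_n)\,\Xi_0,
\]
so when the basic solution is $\Op_h(x_n)$ we may take $\Xi=\Xi_0$. For a general basic model solution $\widetilde\Theta_-$ with symbol defining $\{x_n=0\}$, part~2 of Proposition~\ref{l:principal-ideal} produces $Z\in\Psi^{\comp}$ with $\Op_h(x_n)=\widetilde\Theta_- Z$ microlocally near $\widehat\Gamma_-\cap V_{\tilde\delta}$, and the correct model choice becomes $\Xi^{\mathrm{mod}}:=Z\,\Xi_0$.

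To globalize, I cover $\widehat K$ by the finite collection $\{U_i\}$ of Proposition~\ref{l:reduce-model}, with symplectomorphisms $\varkappa_i$ and FIO quantizations $B_i,B_i'$. On each patch, part~2 of Proposition~\ref{l:idempotents} furnishes $\widetilde Q_i\in\Psi^{\comp}$, elliptic on $\widehat\Gamma_\pm\cap U_i$, with $\Op_h^\Lambda(1)=\widetilde Q_i(B_i\Pi B_i')\widetilde Q_i^{-1}+\mathcal O(h^\infty)$ microlocally. Egorov's theorem shows that $\widetilde\Theta_-:=\widetilde Q_i(B_i\Theta_- B_i')\widetilde Q_i^{-1}$ is a basic model solution for $\Op_h^\Lambda(1)$ (its principal symbol is $\sigma(\Theta_-)\circ\varkappa_i^{-1}$, a defining function of $\{x_n=0\}$), so the model construction transported back yields
\[
\Xi^{(i)}:=B_i'\,\widetilde Q_i^{-1}\,\Xi^{\mathrm{mod}}\,\widetilde Q_i\,B_i,
\]
satisfying $1-\Pi=\Theta_-\Xi^{(i)}+\mathcal O(h^\infty)$ microlocally in $U_i\times U_i$. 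With a microlocal partition of unity $\{\chi_i\}\subset\Psi^{\comp}$ obeying $\sum_i\chi_i=1$ microlocally near $\widehat K$ and $\WFh(\chi_i)\Subset U_i$, I set $\Xi:=\sum_i\Xi^{(i)}\chi_i$; the desired factorization on $\widehat K\times\widehat K$ follows term by term.

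The wavefront set of $\Xi$ is obtained by pulling back $\WFh(\Xi_0)\subset\Delta(T^*\mathbb R^n)\cup\Lambda^0\cup\{(x',0,\xi,x',0,\xi',t\xi_n):t\in[0,1]\}$ from Lemma~\ref{l:Xi-0}(2) under the $\varkappa_i^{-1}$: the three pieces map respectively to $\Delta(T^*X)$, $\Lambda^\circ$, and $\Upsilon$. The last identification uses that in the model $\mathcal V_-$ is spanned by $\partial_{\xi_n}$ and the curve $t\mapsto(x',0,\xi',t\xi_n)$ parametrizes the $\mathcal V_-$ flow segment from $(x',0,\xi)$ to $\pi_-(x',0,\xi)=(x',0,\xi',0)$. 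For the $L^2\to L^2$ bound, I factor $\Xi^{(i)}\chi_i$ through $H^1_h$: the cutoff $\chi_i\in\Psi^{\comp}$ has fiber-compact symbol and so maps $L^2\to H^1_h$ with norm $\mathcal O(1)$ (by direct calculation of $hD_{x_j}\chi_i$), the FIOs $B_i,B_i'$ and pseudodifferential factors $\widetilde Q_i^{\pm 1}, Z$ act boundedly on semiclassical Sobolev spaces, and $\Xi_0:H^1_h\to L^2$ has norm $\mathcal O(h^{-1})$ by Lemma~\ref{l:Xi-0}(1).

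The main obstacle I anticipate is the bookkeeping at the gluing step: the equation involves the identity~$1$, whose wavefront set is the full diagonal rather than contained in $\Lambda^\circ$, so the partition-of-unity arrangement must reproduce the diagonal wavefront on $\widehat K\times\widehat K$ to all orders while ensuring the cross-terms have wavefront disjoint from $\widehat K\times\widehat K$. This reduces to checking that commutators of $\chi_i$ with $\Theta_-\Xi^{(j)}$ for $i\neq j$ vanish to infinite order on $\widehat K$, which follows from $\WFh(\chi_i)\Subset U_i$ and the partition property on $\widehat K$.
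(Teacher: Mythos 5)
Your argument follows the paper's proof essentially verbatim: reduce to the model case via Proposition~\ref{l:reduce-model} and a microlocal partition of unity, conjugate $\Pi$ to $\Op_h^\Lambda(1)$ by part~2 of Proposition~\ref{l:idempotents}, normalize $\Theta_-$ to $\Op_h(x_n)$ by part~2 of Proposition~\ref{l:principal-ideal}, and build $\Xi$ from $\Xi_0$ of Lemma~\ref{l:Xi-0}, which supplies both the wavefront-set description and the $\mathcal O(h^{-1})$ bound. The only difference is that you spell out the gluing and cross-term bookkeeping that the paper leaves implicit.
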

\begin{proof}
By~\eqref{e:model-reduction} and a microlocal partition of unity, we can reduce to the model case of~\S\ref{s:model}.
Moreover, by part~2 of Proposition~\ref{l:idempotents}, we may conjugate
by a pseudodifferential operator to make $\Pi=\Op_h^\Lambda(1)$.
Finally, by part~2 of Proposition~\ref{l:principal-ideal} we can multiply $\Theta_-$
on the right by an elliptic pseudodifferential operator to
make $\Theta_-=\Op_h(x_n)$. Then we can take
$\Xi=A\Xi_0 A$, with $\Xi_0$ defined in Lemma~\ref{l:Xi-0}
and $A\in\Psi^{\comp}(\mathbb R^n)$ compactly supported, with $A=1+\mathcal O(h^\infty)$ microlocally near $\widehat K$.
\end{proof}

\section{The projector \texorpdfstring{$\Pi$}{Pi}}
  \label{s:global-construction}

In this section, we construct the microlocal projector $\Pi$ near a neighborhood
$\widehat W$ of $K\cap p^{-1}([\alpha_0,\alpha_1])$ discussed
in the introduction (Theorem~\ref{t:our-Pi} in~\S\ref{s:construction-1}).
In~\S\ref{s:construction-2}, we study the annihilating ideals for $\Pi$ in $\widehat W$
using~\S\ref{s:ideals}.

\subsection{Construction of \texorpdfstring{$\Pi$}{Pi}}
  \label{s:construction-1}

Assume that the conditions of \S\S\ref{s:framework-assumptions} and~\ref{s:dynamics}
hold. Consider the sets $\Gamma_\pm^\circ$ and $K^\circ=\Gamma_+^\circ\cap\Gamma_-^\circ$
defined in~\eqref{e:k-circ} and let $\Lambda^\circ$ be given by~\eqref{e:the-Lambda}. Put
$$
\widehat K := K\cap p^{-1}([\alpha_0-\delta_1/2,\alpha_1+\delta_1/2])\subset K^\circ,
$$
here $\delta_1$ is defined in~\S\ref{s:projections}. The sets
$\Gamma_\pm^\circ$ satisfy the assumptions listed in the beginning of~\S\ref{s:calculus},
as follows from~\S\S\ref{s:dynamics} and~\ref{s:projections}.

We choose $\delta>0$ small enough so that Lemma~\ref{l:phi-pm}
holds (we will impose more conditions on $\delta$ in~\S\ref{s:construction-2})
and consider the sets
\begin{equation}
  \label{e:w-hat}
\begin{gathered}
\widehat W:=U_\delta\cap p^{-1}([\alpha_0-\delta_1/2,\alpha_1+\delta_1/2]),\\
\widehat\Gamma_\pm:=\Gamma_\pm^\circ\cap\widehat W,\quad
\widehat\Lambda:=\Lambda^\circ\cap (\widehat W\times\widehat W).
\end{gathered}
\end{equation}
Here $U_\delta$ is defined in~\eqref{e:u-delta}.
We now apply Proposition~\ref{l:reduce-model};
for $\delta$ small enough, $\widehat W,\widehat\Gamma_\pm$ are compact and
$\widehat\Gamma_\pm,\widehat \Lambda$
satisfy the conditions listed after~\eqref{e:smaller-stuff}.
Then~\eqref{e:symbol}
defines the principal symbol $\sigma_\Lambda(A)$ on a neighborhood
of $\widehat\Lambda$ in $\Lambda^\circ$ for each $A\in\II$.
\begin{theo}
\label{t:our-Pi}
Let the assumptions of~\S\S\ref{s:framework-assumptions} and~\ref{s:dynamics} hold for all $r$,
let $\Lambda^\circ$ be defined in~\eqref{e:the-Lambda} and
$\widehat\Lambda\subset\Lambda^\circ$ be given by~\eqref{e:w-hat}. Then there exists $\Pi\in \II$,
uniquely defined modulo $\mathcal O(h^\infty)$ microlocally near $\widehat\Lambda$,
such that the principal symbol of $\Pi$ is nonvanishing on $\widehat\Lambda$ and,
with $P\in\Psi^{\comp}(X)$ defined in Lemma~\ref{l:resolution},
\begin{align}
\label{e:th-idempotent}
\Pi^2-\Pi&=\mathcal O(h^\infty)\quad\text{microlocally near }\widehat\Lambda,\\
\label{e:th-commute}
[P,\Pi]&=\mathcal O(h^\infty)\quad\text{microlocally near }\widehat\Lambda.
\end{align}
Same can be said if we replace $\mathcal O(h^\infty)$ above by $\mathcal O(h^N)$,
require that the full symbol of $\Pi$ lies in $C^{3N}$ for some large $N$ (rather than being smooth),
and the assumptions of~\S\ref{s:dynamics} hold for $r$ large enough depending on $N$.
\end{theo}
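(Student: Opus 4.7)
The plan is to construct $\Pi$ as an asymptotic series obtained by iteratively correcting the microlocal idempotent supplied by Proposition~\ref{l:idempotent-exists}, enforcing the commutation \eqref{e:th-commute} one power of $h$ at a time by solving transport equations along $H_p$ on $\Gamma_\pm^\circ$. The opening observation is that the commutation is geometrically free at leading order: Lemma~\ref{l:phi-pm} gives $H_p\varphi_\pm=\mp c_\pm\varphi_\pm$, so $\{p,\varphi_\pm\}$ vanishes on $\Gamma_\pm$; hence $p\circ\pi_\pm=p$ on $\Gamma_\pm^\circ$ and therefore $p(\rho_-)=p(\rho_+)$ identically on $\Lambda^\circ$. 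Combined with \eqref{e:good-calc-2}--\eqref{e:good-calc-3}, this forces $\sigma_\Lambda([P,A])\equiv 0$ on $\widehat\Lambda$ for every $A\in\II$, so $[P,A]\in h\cdot\II$ microlocally; in particular $[P,\widetilde\Pi]=\mathcal O(h)$ is automatic for any seed idempotent $\widetilde\Pi$ produced by Proposition~\ref{l:idempotent-exists}.

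The inductive step maintains $\Pi_N\in\II$ with $\sigma_\Lambda(\Pi_N)=a_0^-\otimes a_0^+$ nonvanishing near $\widehat\Lambda$ and
\[
\Pi_N^2-\Pi_N=h^{N+1}R_{N+1}+\mathcal O(h^{N+2}),\qquad [P,\Pi_N]=h^{N+1}S_{N+1}+\mathcal O(h^{N+2})
\]
microlocally near $\widehat\Lambda$, and improves both relations by one power. Applying parts~3 and~4 of Proposition~\ref{l:idempotents} gives principal-symbol splittings
\[
\sigma_\Lambda(R_{N+1})=a_0^-\otimes r^++r^-\otimes a_0^+,\qquad \sigma_\Lambda(S_{N+1})=a_0^-\otimes s^++s^-\otimes a_0^+,
\]
with $r^\pm,s^\pm\in C^\infty(\widetilde\Gamma_\pm;\mathcal E_\pm)$ vanishing on $\widehat K$. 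I look for $\Pi_{N+1}=\Pi_N+h^N B_N+h^{N+1}C_N$ with $\sigma_\Lambda(B_N),\sigma_\Lambda(C_N)$ in the \emph{idempotent-tangent} form $a_0^-\otimes\beta^++\beta^-\otimes a_0^+$ dictated by the linearization of $\Pi^2=\Pi$ around $\Pi_N$. Using the calculus \eqref{e:good-calc-1}--\eqref{e:good-calc-3} together with the model expansions \eqref{e:exp-model-2}--\eqref{e:exp-model-3} conjugated via the local Fourier integral operators from Proposition~\ref{l:reduce-model}, the two cancellation requirements reduce to equations of the form
\[
H_p\beta^\pm=f^\pm\text{ on }\Gamma_\pm^\circ,\qquad \beta^\pm|_{\widehat K}=0,
\]
with $f^\pm$ smooth on $\Gamma_\pm^\circ$ and vanishing on $\widehat K$. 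These are precisely the transport equations solved by Lemma~\ref{l:ode}, whose $C^r$-solvability rests on \eqref{e:r-nh}; taking $r$ sufficiently large depending on $N$ produces $B_N$, $C_N$, and hence $\Pi_{N+1}$.

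Borel summation inside the compactly microlocalized class $\II$ then yields $\Pi$ satisfying \eqref{e:th-idempotent}--\eqref{e:th-commute} with nonvanishing principal symbol on $\widehat\Lambda$. Uniqueness follows analogously: if $\Pi,\Pi'$ are two such operators, then $D=\Pi-\Pi'\in\II$ satisfies $\Pi D+D\Pi-D=\mathcal O(h^\infty)$ and $[P,D]=\mathcal O(h^\infty)$ near $\widehat\Lambda$, and the same symbolic splitting together with the uniqueness clause of Lemma~\ref{l:ode} gives inductively $D\in h^N\II$ for every $N$. The finite-regularity version is obtained by halting the induction after $N$ steps, so that only finitely many derivatives of the flow---hence a finite value of $r$ in \eqref{e:r-nh}---are used, and the symbol need only be $C^{3N}$.

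The main obstacle, and the point where $r$-normal hyperbolicity is essential, lies in coordinating the two corrections: an $h^N$-sized modification of $\Pi_N$ is large enough to be visible at order $h^{N+1}$ in both $\Pi_{N+1}^2-\Pi_{N+1}$ and $[P,\Pi_{N+1}]$, so the freedom used to cancel $S_{N+1}$ must be chosen from the idempotent-tangent subspace---on pain of re-introducing an equally large idempotency obstruction. That this subspace is simultaneously large enough, via the scalar unknowns $\beta^\pm$ on $\Gamma_\pm^\circ$, to absorb any $S_{N+1}$, and regular enough, via Lemma~\ref{l:ode}, to keep the resulting symbols in $C^\infty$ (or $C^r$), is the technical heart of the construction; it is precisely the smoothness requirement on $\beta^\pm$ that prevents the weaker assumption of mere normal hyperbolicity from sufficing.
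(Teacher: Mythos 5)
Your strategy is essentially the paper's: start from the all-orders microlocal idempotent of Proposition~\ref{l:idempotent-exists}, observe that $p\circ\pi_\pm=p$ on $\Gamma_\pm^\circ$ makes $[P,\widetilde\Pi]=\mathcal O(h)$ automatic, and then kill the commutator order by order by solving transport equations $H_p\beta^\pm=f^\pm$, $\beta^\pm|_K=0$ on $\Gamma_\pm$ via Lemma~\ref{l:ode}, with parts~3 and~4 of Proposition~\ref{l:idempotents} supplying the structure of the obstructions and $r$-normal hyperbolicity supplying the regularity. The one substantive divergence is how you parametrize the correction. The paper writes $\Pi_{k+1}=(1+h^kQ_k)\Pi_k(1+h^kQ_k)^{-1}$ (and $\Pi_1=e^{Q_0}\widetilde\Pi e^{-Q_0}$) with $Q_k$ \emph{pseudodifferential}: conjugation preserves exact idempotency to all orders, so no second corrector is needed, and the commutator condition reduces to the transport equations for the scalar symbols $\sigma(Q_k)|_{\Gamma_\pm}$ using only the product formulas~\eqref{e:good-calc-2}--\eqref{e:good-calc-3} already established. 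Your additive ansatz $\Pi_N+h^NB_N+h^{N+1}C_N$ with $B_N,C_N\in\II$ constrained to the idempotent-tangent space is workable, but it forces you to compute $\sigma_\Lambda(h^{-1}[P,B])$ for $B\in\II$, i.e.\ a Duistermaat--H\"ormander-type transport law for the subprincipal behavior of Lagrangian symbols on $\Lambda^\circ$ under $\ad_P$. That is exactly the computation the paper's calculus does not develop (it deliberately avoids invariant subprincipal symbols and Maslov/half-density bookkeeping for $\II$), and your phrase ``conjugated via the local Fourier integral operators from Proposition~\ref{l:reduce-model}'' papers over it; it can be carried out in the model case via~\eqref{e:exp-model-2}--\eqref{e:exp-model-3}, but then you must check invariance under the transition maps of Proposition~\ref{l:change-of-variables}, which is precisely the work the conjugation trick buys you for free.

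Two smaller points. In the uniqueness step, the constraint you actually get on the leading symbol $d=a_0^-\otimes d^++d^-\otimes a_0^+$ of $D=\Pi-\Pi'$ from the idempotency relation is only $d^++d^-=0$ on $K$, not $d^\pm|_K=0$; you need to use the gauge freedom in the splitting (shifting $d^\pm$ by $\mp(c\circ\pi_\pm)a_0^\pm$ for a function $c$ on $K$) to normalize $d^\pm|_K=0$ before the uniqueness clause of Lemma~\ref{l:ode} applies. The paper sidesteps this by normalizing $\sigma(Q_k)|_K=0$ inside part~2 of Proposition~\ref{l:idempotents}. Second, when you invoke part~4 of Proposition~\ref{l:idempotents} for $\Pi_N$, note that it is stated for idempotents of \emph{all} orders, whereas your $\Pi_N$ is only an idempotent modulo $\mathcal O(h^{N+1})$; the conclusion still holds for the coefficient $S_{N+1}$ at level $h^{N+1}$ (the error introduced by the approximate normal form is $\mathcal O(h^{N+2})$), but this should be said.
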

\begin{proof}
We argue by induction, finding a family $\Pi_k$, $k\geq 1$, of microlocal idempotents of all orders
near $\widehat\Lambda$ (see Definition~\ref{d:microlocal-idempotent})
such that $[P,\Pi_k]=\mathcal O(h^{k+1})$ microlocally near $\widehat\Lambda$,
and taking their asymptotic limit to obtain $\Pi$.

We first construct $\Pi_1$.
Take the microlocal idempotent of all orders $\widetilde\Pi\in \II$ near $\widehat\Lambda$
constructed in Proposition~\ref{l:idempotent-exists}.
Since the Hamilton field of $p=\sigma(P)$ is tangent to $\Gamma_\pm$, $dp$ is annihilated by
the subbundles $\mathcal V_\pm$ from~\S\ref{s:projections}; therefore, 
$$
p(\rho_\pm)=p(\pi_\pm(\rho_\pm)),\quad \rho_\pm\in\Gamma_\pm^\circ;
$$
by~\eqref{e:good-calc-2} and~\eqref{e:good-calc-3},
$[P,\widetilde\Pi]=\mathcal O(h)$ microlocally near $\widehat\Lambda$.
We write $[P,\widetilde\Pi]=hS_0$ microlocally near $\widehat\Lambda$, where
$S_0\in \II$ and by part~4 of Proposition~\ref{l:idempotents},
\begin{equation}
  \label{e:sigma-s-0}
\sigma_\Lambda(S_0)(\rho_-,\rho_+)=\tilde a_0^-(\rho_-)\otimes
s_0^+(\rho_+)+s_0^-(\rho_-)\otimes\tilde a_0^+(\rho_+),
\end{equation}
with $s_0^\pm\in C^\infty(\widetilde\Gamma_\pm;\mathcal E_\pm)$ vanishing on $K$ near $\widehat K$ and
$\tilde a_0^\pm\in C^\infty(\widetilde\Gamma_\pm;\mathcal E_\pm)$ giving the principal symbol of
$\widetilde\Pi$ by~\eqref{e:idempotent-principal}. Here $\widetilde\Gamma_\pm$ are the neighborhoods of $\widehat\Gamma_\pm$
in $\Gamma_\pm^\circ$ defined in~\eqref{e:smaller-stuff}.

We look for $\Pi_1$ in the form
\begin{equation}
  \label{e:pi-1}
\Pi_1=e^{Q_0}\widetilde \Pi e^{-Q_0},
\end{equation}
where $Q_0\in\Psi^{\comp}(X)$ is compactly supported and
thus $e^{\pm Q_0}$ are pseudodifferential (see for example~\cite[Proposition~2.7]{zeeman}).
We calculate microlocally near $\widehat\Lambda$,
$$
e^{-Q_0}[P,\Pi_1]e^{Q_0}=[e^{-Q_0}Pe^{Q_0},\widetilde\Pi]=hS_0+[[P,Q_0],\widetilde\Pi]+\mathcal O(h^2).
$$
Here we use that $e^{-Q_0}Pe^{Q_0}=P+[P,Q_0]+\mathcal O(h^2)$. By~\eqref{e:sigma-s-0},
\eqref{e:good-calc-2}, \eqref{e:good-calc-3},
$$
\begin{gathered}
\sigma_\Lambda(S_0+h^{-1}[[P,Q_0],\widetilde \Pi])(\rho_-,\rho_+)\\
=\tilde a_0^-(\rho_-)\otimes(s_0^+(\rho_+)-iH_p\sigma(Q_0)(\rho_+)\tilde a_0^+(\rho_+))\\
+(s_0^-(\rho_-)+iH_p\sigma(Q_0)(\rho_-)\tilde a_0^-(\rho_-))\otimes \tilde a_0^+(\rho_+).
\end{gathered}
$$
It is thus enough to take any $Q_0$ such that for the restrictions
$q_0^\pm=\sigma(Q_0)|_{\widetilde\Gamma_\pm}$, the following transport equations
hold near $\widehat\Gamma_\pm$:
\begin{equation}
  \label{e:transport-0}
H_pq_0^\pm=\mp is_0^\pm/\tilde a_0^\pm,\quad
q_0^\pm|_{\widetilde K}=0.
\end{equation}
Such $q_0^\pm$ exist and are unique and smooth enough by Lemma~\ref{l:ode}, giving $\Pi_1$.
Note that Lemma~\ref{l:ode} can be applied near $\widehat\Gamma_\pm$, instead of the whole $\Gamma_\pm^\circ$,
since $e^{tH_p}(\widehat\Gamma_\pm)\subset \widehat\Gamma_\pm$ for $\mp t\geq 0$ by part~\eqref{c:c-pm}
of Lemma~\ref{l:phi-pm}.

Now, assume that we have constructed $\Pi_k$ for some $k>0$.
Let $a_0^\pm$ be the components of the principal symbol of $\Pi_k$ given
by~\eqref{e:idempotent-principal}. Then microlocally
near $\widehat\Lambda$,
$[P,\Pi_k]=h^{k+1}S_k$, where $S_k\in \II$ and
by part~4 of Proposition~\ref{l:idempotents},
$$
\sigma_\Lambda(S_k)(\rho_-,\rho_+)=a_0^-(\rho_-)\otimes s_k^+(\rho_+)
+s_k^-(\rho_-)\otimes a_0^+(\rho_+),
$$
where $s_k^\pm\in C^\infty(\widetilde\Gamma_\pm;\mathcal E_\pm)$ vanish on $K$ near $\widehat K$. 
We then take
\begin{equation}
  \label{e:pi-k}
\Pi_{k+1}=(1+h^kQ_k)\Pi_k(1+h^kQ_k)^{-1}
\end{equation}
where $Q_k$ is a compactly supported pseudodifferential operator. Microlocally near $\widehat\Lambda$,
$$
[P,\Pi_{k+1}]=h^{k+1}S_k+h^k[[P,Q_k],\Pi_k]+\mathcal O(h^{k+2}).
$$
Therefore, $q_k^\pm=\sigma(Q_k)|_{\widetilde\Gamma_\pm}$ need to satisfy the transport equations
near $\widehat\Gamma_\pm$
\begin{equation}
  \label{e:transport-k}
H_p q_k^\pm=\mp is_k^\pm/a_0^\pm,\quad
q_k^\pm|_{\widetilde K}=0.
\end{equation}
Such $q_k^\pm$ exist and are unique and smooth enough again by Lemma~\ref{l:ode}, giving $\Pi_{k+1}$.

To show that the operator $\Pi$ satisfying~\eqref{e:th-idempotent}
and~\eqref{e:th-commute} is unique microlocally near $\widehat \Lambda$,
we show by induction that each such $\Pi$ satisfies
$\Pi=\Pi_k+\mathcal O(h^k)$ microlocally near $\widehat\Lambda$.
First of all, $\Pi$ has the form~\eqref{e:pi-1}
for some operator $Q_0$ microlocally near $\widehat\Lambda$, by part~2
of Proposition~\ref{l:idempotents}; moreover, by the proof of this fact,
we can take $\sigma(Q_0)|_K=0$ near $\widehat K$.
 Now, $\sigma(Q_0)|_{\widehat\Gamma_\pm}$
are determined uniquely
by the transport equations~\eqref{e:transport-0}, and this gives
$\Pi=\Pi_1+\mathcal O(h)$ microlocally near $\widehat\Lambda$.
Next, if $\Pi=\Pi_k+\mathcal O(h^k)$ for some $k>0$, then,
as follows from the proof of Part~2 of Proposition~\ref{l:idempotents},
$\Pi$ has the form~\eqref{e:pi-k} for some operator $Q_k$ microlocally
near $\widehat\Lambda$, such that $\sigma(Q_k)|_K=0$
near $\widehat K$. Then $\sigma(Q_k)|_{\widehat\Gamma_\pm}$
are determined uniquely by the transport equations~\eqref{e:transport-k},
and this gives $\Pi=\Pi_{k+1}+\mathcal O(h^{k+1})$ microlocally near $\widehat\Lambda$.
\end{proof}

\subsection{Annihilating ideals}
  \label{s:construction-2}

Let $\Pi\in \II$ be the operator constructed in Theorem~\ref{t:our-Pi}.
In this section, we construct pseudodifferential operators $\Theta_\pm$
annihilating $\Pi$ microlocally near $\widehat\Lambda$; they are key
for the microlocal estimates in~\S\ref{s:resolvent-bounds}.
More precisely, we obtain
\begin{prop}
  \label{l:ideals}
If $\delta>0$ in the definition~\eqref{e:w-hat} of $\widehat W$
is small enough, then there exist compactly supported $\Theta_\pm\in\Psi^{\comp}(X)$ such that:
\begin{enumerate}
\item\label{i:ideals-1} $\Pi\Theta_-=\mathcal O(h^\infty)$ and $\Theta_+\Pi=\mathcal O(h^\infty)$
microlocally near $\widehat\Lambda$;
\item\label{i:ideals-2} $\sigma(\Theta_\pm)=\varphi_\pm$ near $\widehat W$, with
$\varphi_\pm$ defined in Lemma~\ref{l:phi-pm};
\item\label{i:ideals-3} if $P$ is the operator constructed in Lemma~\ref{l:resolution},
then
\begin{equation}
  \label{e:ideals-3}
[P,\Theta_-]=-ih\Theta_-Z_-+\mathcal O(h^\infty),\quad
[P,\Theta_+]=ih Z_+\Theta_++\mathcal O(h^\infty)
\end{equation}
microlocally near $\widehat W$, where $Z_\pm\in\Psi^{\comp}(X)$ are compactly
supported and $\sigma(Z_\pm)=c_\pm$ near $\widehat W$, with
$c_\pm$ defined in Lemma~\ref{l:phi-pm};
\item\label{i:ideals-4} if $\Im\Theta_+={1\over 2i}(\Theta_+-\Theta_+^*)$ and
$\zeta=\sigma(h^{-1}\Im\Theta_+)$, then
\begin{equation}
  \label{e:zeta}
H_p\zeta=- c_+\zeta- {1\over 2}\{\varphi_+,c_+\}\quad \text{on }\Gamma_+\text{ near }\widehat W;
\end{equation}
\item\label{i:ideals-5} there exists an operator $\Xi:C^\infty(X)\to C_0^\infty(X)$, satisfying
parts~1 and~2 of Proposition~\ref{l:Xi} and such that
\begin{equation}
  \label{e:Xi}
1-\Pi=\Theta_-\Xi+\mathcal O(h^\infty)\quad\text{microlocally near }\widehat W\times\widehat W.  
\end{equation}
\end{enumerate}
\end{prop}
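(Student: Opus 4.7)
The plan is to build $\Theta_\pm$ from the principal-ideal structure of Proposition~\ref{l:principal-ideal} and then use $[P,\Pi]=\mathcal O(h^\infty)$ and $P^*=P$ to upgrade to the commutator and adjoint identities. For parts~\eqref{i:ideals-1}--\eqref{i:ideals-2}, I apply part~1 of Proposition~\ref{l:principal-ideal} with the adapted defining functions $\varphi_\pm$ from Lemma~\ref{l:phi-pm} to obtain basic solutions $\Theta_\pm\in\Psi^{\comp}(X)$ with $\sigma(\Theta_\pm)=\varphi_\pm$ near $\widehat W$, after shrinking $\delta$ so that $\widehat\Gamma_\pm\subset\widetilde\Gamma_\pm$ in the sense of~\S\ref{s:general}. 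Part~\eqref{i:ideals-5} then follows from Proposition~\ref{l:Xi} applied to this $\Pi$ and $\Theta_-$; to propagate the identity from $\widehat K\times\widehat K$ to the full $\widehat W\times\widehat W$ I use that $\Theta_-$ is elliptic off a neighborhood of $\Gamma_-$ (so $\Xi$ is uniquely determined modulo $\mathcal O(h^\infty)$ by an elliptic parametrix there), while inside that neighborhood the wavefront set of $1-\Pi$ is constrained to $\Delta(T^*X)\cup\Lambda^\circ$, so the error can be absorbed.

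Part~\eqref{i:ideals-3} is the key observation that $\mathrm{ad}_P$ preserves the annihilating ideals of $\Pi$. Applying $[P,\cdot]$ to $\Pi\Theta_-=\mathcal O(h^\infty)$ and using $[P,\Pi]=\mathcal O(h^\infty)$ gives $\Pi[P,\Theta_-]=\mathcal O(h^\infty)$ microlocally near $\widehat\Lambda$, and by part~2 of Proposition~\ref{l:principal-ideal} there exists $W_-\in\Psi^{\comp}(X)$ with $[P,\Theta_-]=\Theta_-W_-+\mathcal O(h^\infty)$. The principal symbol of the left side is $-ihH_p\varphi_-=-ihc_-\varphi_-$ by Lemma~\ref{l:phi-pm}, so $\sigma(W_-)=-ihc_-$ on $\widehat\Gamma_-$; writing $W_-=-ihZ_-$ and choosing $Z_-\in\Psi^{\comp}$ with $\sigma(Z_-)=c_-$ near $\widehat W$ produces the first identity in~\eqref{e:ideals-3}. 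The symmetric argument applied to $\Theta_+\Pi=\mathcal O(h^\infty)$ from the other side, using $H_p\varphi_+=-c_+\varphi_+$, gives the second.

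For part~\eqref{i:ideals-4}, taking adjoints of the commutator identity for $\Theta_+$ and using $P^*=P$ yields $[P,\Theta_+^*]=ih\Theta_+^*Z_+^*+\mathcal O(h^\infty)$, and forming $(2i)^{-1}([P,\Theta_+]-[P,\Theta_+^*])$ gives
\begin{equation*}
[P,\Im\Theta_+]=\tfrac{h}{2}\bigl(Z_+\Theta_+-\Theta_+^*Z_+^*\bigr)+\mathcal O(h^\infty).
\end{equation*}
Since $\sigma(\Theta_+)$ and $\sigma(Z_+)$ are real, the leading symbols of $Z_+\Theta_+$ and $\Theta_+^*Z_+^*$ cancel, and the right side is in fact $\mathcal O(h^2)$. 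Dividing by $h^2$, the left side contributes $-iH_p\zeta$ at principal order (since $\Im\Theta_+\in h\Psi^{\comp}$ has symbol $h\zeta$), while the right side, computed in a Weyl quantization where the subprincipal of a product picks up $(h/2i)\{\cdot,\cdot\}$, evaluates on $\Gamma_+=\{\varphi_+=0\}$ to $ic_+\zeta+\tfrac{i}{2}\{\varphi_+,c_+\}$. Equating and clearing $i$ produces~\eqref{e:zeta}. The main technical obstacle is precisely this subprincipal computation: one must pass to a self-adjoint (Weyl) quantization so that $\sigma^W(P)$ is real to all orders, track the Poisson-bracket contributions in both products, and exploit the vanishing of $\varphi_+$ on $\Gamma_+$ to extract the clean transport equation.
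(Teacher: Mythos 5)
Your proposal is correct and follows essentially the same route as the paper: parts (1)--(3) and (5) come from Propositions~\ref{l:principal-ideal} and~\ref{l:Xi} together with the commutator identity $\Pi[P,\Theta_-]=\mathcal O(h^\infty)$, $[P,\Theta_+]\Pi=\mathcal O(h^\infty)$ (for part~(5) the paper simply shrinks $\delta$ so that $\widehat W$ lies inside the neighborhood of $\widehat K$ where Proposition~\ref{l:Xi} applies, which is cleaner than your propagation argument but amounts to the same thing), and part~(4) is obtained by taking adjoints and comparing the $\mathcal O(h)$ terms. Your Weyl-quantization computation reproduces the paper's exact identity $2[P,h^{-1}\Im\Theta_+]=[Z_+,\Theta_+]+2i((\Im\Theta_+)Z_+^*+\Theta_+\Im Z_+)$ and yields the same transport equation~\eqref{e:zeta}.
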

\begin{proof}
The operators~$\Theta_\pm$ satisfying conditions~\eqref{i:ideals-1} and~\eqref{i:ideals-2} exist
by part~1 of Proposition~\ref{l:principal-ideal}. Next, since
$[P,\Pi]=\mathcal O(h^\infty)$ microlocally near $\widehat\Lambda$,
we find
$$
\Pi[P,\Theta_-]=\mathcal O(h^\infty),\quad
[P,\Theta_+]\Pi=\mathcal O(h^\infty)
$$
microlocally near $\widehat\Lambda$; condition~\eqref{i:ideals-3} now follows
from part~2 of Proposition~\ref{l:principal-ideal}. The symbols
$\sigma(Z_\pm)$ can be computed using the identity $H_p\varphi_\pm=\mp c_\pm \varphi_\pm$
from part~\eqref{c:c-pm} of Lemma~\ref{l:phi-pm}.
Condition~\eqref{i:ideals-5} follows immediately from Proposition~\ref{l:Xi}, keeping in mind
that by making $\delta$ small we can make $\widehat W$ contained in an arbitrary neighborhood of $\widehat K$.

Finally, we verify condition~\eqref{i:ideals-4}. Taking the adjoint of the identity $[P,\Theta_+]=ihZ_+\Theta_++\mathcal O(h^\infty)$
and using
that $P$ is self-adjoint, we get microlocally near $\widehat W$,
$$
[P,\Theta_+^*]=ih\Theta_+^*Z_+^*.
$$
Therefore, microlocally near $\widehat W$
$$
2[P,h^{-1}\Im\Theta_+]=Z_+\Theta_+-\Theta_+^*Z_+^*
=[Z_+,\Theta_+]+2i((\Im\Theta_+)Z_+^*+\Theta_+\Im Z_+).
$$
By comparing the principal symbols, we get~\eqref{e:zeta}.
\end{proof}

\section{Resolvent estimates}
  \label{s:resolvent-bounds}

In this section we give various estimates on the resolvent $\mathcal R(\omega)$,
in particular proving Theorem~\ref{t:gaps}.
In~\S\ref{s:estimate-full}, we reduce Theorem~\ref{t:gaps} to
a microlocal estimate in a neighborhood of the trapped set, which
is further split into two estimates: on the kernel
of the projector $\Pi$ given by Theorem~\ref{t:our-Pi}, proved in~\S\ref{s:estimate-kernel},
and on the image of $\Pi$, proved in~\S\ref{s:estimate-image}.
In~\S\ref{s:estimate-spectral}
we obtain a restriction on the wavefront set $\mathcal R(\omega)$ in $\omega$ on the image of $\Pi$,
needed in~\S\ref{s:trace}. Finally, in~\S\ref{s:resonant-states}, we discuss
the consequences of our methods for microlocal concentration of resonant states
and the corresponding semiclassical measures.

\subsection{Reduction to the trapped set}
  \label{s:estimate-full}

We take $\delta>0$ small enough so that the results
of~\S\ref{s:construction-1},\ref{s:construction-2} hold, and define following~\eqref{e:w-hat}
(with $\delta_1$ chosen in~\S\ref{s:projections}),
\begin{equation}
  \label{e:w'}
\widehat W:=U_\delta\cap p^{-1}([\alpha_0-\delta_1/2,\alpha_1+\delta_1/2]),\quad
W':=U_{\delta/2}\cap p^{-1}([\alpha_0-\delta_1/4,\alpha_1+\delta_1/4]),
\end{equation}
so that $W'$ is a neighborhood of $K\cap p^{-1}([\alpha_0,\alpha_1])$ compactly contained in $\widehat W$.
Here $U_\delta$ is defined in~\eqref{e:u-delta}.

For the reductions of this subsection, it is enough to assume that $\omega$ satisfies~\eqref{e:omega-region}.
The region~\eqref{e:omega-gaps} will arise as the intersection of the regions~\eqref{e:assumption-kernel}
and~\eqref{e:assumption-image} where the two components of the estimate will hold.

To prove Theorem~\ref{t:gaps}, it is enough to show the estimate
\begin{equation}
  \label{e:gaps2}
\|\tilde u\|_{\mathcal H_1}\leq Ch^{-2}\|\tilde f\|_{\mathcal H_2}+\mathcal O(h^\infty)
\end{equation}
for each $\tilde u=\tilde u(h)\in\mathcal H_1$
with $\|\tilde u\|_{\mathcal H_1}$ bounded polynomially in $h$ and for
$\tilde f=\mathcal P(\omega)\tilde u$, where $\omega=\omega(h)$ satisfies~\eqref{e:omega-gaps}.

Subtracting from $\tilde u$ the function $v$ constructed in Lemma~\ref{l:smart-parametrix},
we may assume that
$$
\WFh(\tilde f)\subset W'.
$$
Let $\mathcal S(\omega)$ be the operator constructed in Lemma~\ref{l:resolution},
$\mathcal S'(\omega)$ be its elliptic parametrix near $\mathcal U\supset\widehat W$
constructed in Lemma~\ref{l:eparametrix}, and put
$$
u:=\mathcal S(\omega)\tilde u,\quad
f:=\mathcal S'(\omega)\tilde f,
$$
so by~\eqref{e:conjugated}, for the operator $P$ constructed in Lemma~\ref{l:resolution},
\begin{equation}
  \label{e:assumption-general-1}
(P-\omega)u=f\quad\text{microlocally near }\widehat W,\quad
\WFh(f)\subset \widehat W.
\end{equation}
By ellipticity (Proposition~\ref{l:elliptic}) and since $\WFh(f)\subset W'$,
\begin{equation}
  \label{e:assumption-general-1.5}
\WFh(u)\cap\widehat W\subset p^{-1}([\alpha_0-\delta_1/4,\alpha_1+\delta_1/4]).
\end{equation}
Let $\varphi_\pm$ be the functions constructed in Lemma~\ref{l:phi-pm}.
By Lemma~\ref{l:propagate-outgoing}, $u$
satisfies the conditions (see Figure~\ref{f:reduction})
\begin{gather}
  \label{e:assumption-general-2}
\WFh(u)\cap \widehat W\subset \{|\varphi_+|\leq\delta/2\},\\
  \label{e:assumption-general-2.5}
\WFh(u)\cap\Gamma_-^\circ\subset W'.
\end{gather} 
Indeed, if $\rho\in\WFh(u)\cap\mathcal U$, then either
$\rho\in\Gamma_+$ (in which case~\eqref{e:assumption-general-2}
and~\eqref{e:assumption-general-2.5} follow immediately) or there exists $T\geq 0$ such that
for $\gamma(t)=e^{tH_p}(\rho)$, $\gamma([-T,0])\subset\overline{\mathcal U}$
and $\gamma(-T)\in\WFh(\tilde f)\subset W'$. In the second case, if $\rho\in\widehat W$, then
by convexity of $U_{\delta}$
(part~\eqref{c:convex} of Lemma~\ref{l:phi-pm}) we have
$\gamma([-T,0])\subset\widehat W$. To show~\eqref{e:assumption-general-2}, it remains to use that
$H_p\varphi_+^2\leq 0$ on $\widehat W$, following from part~\eqref{c:c-pm} of Lemma~\ref{l:phi-pm}.
For~\eqref{e:assumption-general-2.5}, note that if $\rho\in\Gamma_-$, then
$\gamma(-T)\in\Gamma_-\cap W'$; however, $e^{tH_p}(\Gamma_-\cap W')\subset \Gamma_-\cap W'$
for all $t\geq 0$ and thus $\rho\in W'$.
\begin{figure}
\includegraphics{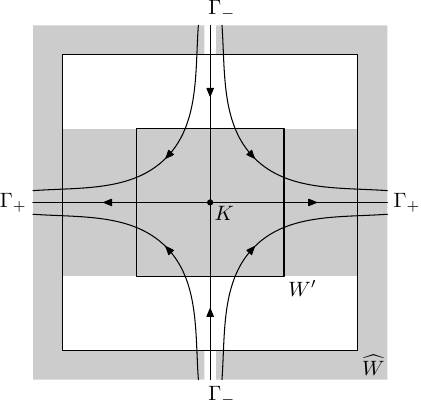}
\caption{A phase space picture of the geodesic flow near $\widehat W$.
The shaded region corresponds to~\eqref{e:assumption-general-2} and~\eqref{e:assumption-general-2.5}.}
  \label{f:reduction}
\end{figure}

By Lemma~\ref{l:smart-bound}, we reduce~\eqref{e:gaps2} to
\begin{equation}
  \label{e:gaps3}
\|A_1u\|_{L^2}\leq Ch^{-2}\|f\|_{L^2}+\mathcal O(h^\infty),
\end{equation}
where $A_1\in\Psi^{\comp}(X)$ is any compactly supported operator elliptic on $W'$.

Now, let $\Pi\in \II$ be the operator constructed in Theorem~\ref{t:our-Pi} in~\S\ref{s:construction-1}.
Note that
\begin{equation}
  \label{e:commutation-yay}
(P-\omega)\Pi u=\Pi f+\mathcal O(h^\infty)\quad\text{microlocally near }
\widehat W,
\end{equation}
since $[P,\Pi]=\mathcal O(h^\infty)$ microlocally near $\widehat W\times\widehat W$,
$\WFh(\Pi)\subset\Lambda^\circ\subset\Gamma_-^\circ\times\Gamma_+^\circ$,
and by~\eqref{e:assumption-general-2.5}.

We finally reduce~\eqref{e:gaps3} to the following two estimates,
which are proved in the following subsections:
\begin{prop}
  \label{l:estimate-kernel}
Assume that $u,f$ are $h$-tempered families
satisfying~\eqref{e:assumption-general-1}--\eqref{e:assumption-general-2.5} and
\begin{equation}
  \label{e:assumption-kernel}
\Re\omega\in [\alpha_0,\alpha_1],\quad
\Im\omega\in [-(\nu_{\min}-\varepsilon)h,C_0h].
\end{equation}
Then there exists compactly supported $A_1\in\Psi^{\comp}(X)$
elliptic on $W'$ such that
\begin{equation}
  \label{e:estimate-kernel}
\|A_1(1-\Pi)u\|_{L^2}\leq Ch^{-1}\|\Xi f\|_{L^2}+\mathcal O(h^\infty),
\end{equation}
where $\Xi$ is the operator from part~\eqref{i:ideals-5} of Proposition~\ref{l:ideals};
note that by part~1 of Proposition~\ref{l:Xi}, $\|\Xi\|_{L^2\to L^2}=\mathcal O(h^{-1})$.
\end{prop}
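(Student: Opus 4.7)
The plan is to make rigorous the Duhamel-based informal argument from Section~\ref{s:ideas}, using $U(t)=e^{-itP/h}$ as the wave propagator. First, I replace the parameter $\varepsilon$ in Lemma~\ref{l:phi-pm} by $\varepsilon/4$ so that the coefficient $c_-$ in $H_p\varphi_-=c_-\varphi_-$ satisfies $c_->\nu_{\min}-\varepsilon/4$ on $\widehat W$. Using $(P-\omega)u=f$ microlocally near $\widehat W$ (from \eqref{e:assumption-general-1}), the Duhamel identity
$$
u=e^{it\omega/h}U(t)u+\frac{i}{h}\int_0^t e^{is\omega/h}U(s)f\,ds,\qquad t\geq 0,
$$
holds microlocally near $\widehat W$. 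Apply $A_1(1-\Pi)$ to both sides. Since $[P,\Pi]=\mathcal O(h^\infty)$ microlocally near $\widehat \Lambda$ (Theorem~\ref{t:our-Pi}), one has $[U(s),\Pi]=\mathcal O(h^\infty)$ microlocally; combined with the factorization $1-\Pi=\Theta_-\Xi+\mathcal O(h^\infty)$ microlocally near $\widehat W\times\widehat W$ (Proposition~\ref{l:ideals}\eqref{i:ideals-5}) and the wavefront constraints \eqref{e:assumption-general-2}--\eqref{e:assumption-general-2.5}, one obtains
$$
A_1(1-\Pi)u=e^{it\omega/h}A_1U(t)\Theta_-\Xi u+\frac{i}{h}\int_0^t e^{is\omega/h}A_1U(s)\Theta_-\Xi f\,ds+\mathcal O(h^\infty).
$$

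The key ingredient is the norm bound
$$
\|A_1U(s)\Theta_-\|_{L^2\to L^2}\leq Ce^{-(\nu_{\min}-\varepsilon/2)s},\qquad 0\leq s\leq C_1\log(1/h).
$$
This is obtained by Egorov: writing $A_1U(s)\Theta_-=A_1B_sU(s)$ with $B_s=U(s)\Theta_-U(-s)\in\Psi^{\comp}$, whose principal symbol equals $\varphi_-\circ e^{-sH_p}$, one uses $H_p\varphi_-=c_-\varphi_-$ to integrate the ODE along backward orbits and deduce $|\varphi_-\circ e^{-sH_p}|\leq e^{-(\nu_{\min}-\varepsilon/4)s}|\varphi_-|$. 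Since $\|A_1\Theta_-\|_{L^2\to L^2}=\mathcal O(1)$ and $U(s)$ is $L^2$-bounded, this gives the claimed decay.

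Now choose $t=N\log(1/h)$ for $N$ large. The hypothesis~\eqref{e:assumption-kernel} on $\Im\omega$ yields $|e^{is\omega/h}|\leq e^{(\nu_{\min}-\varepsilon)s}$, so the boundary term is bounded by $Ce^{-\varepsilon t/4}\|\Xi u\|_{L^2}\leq Ce^{-\varepsilon t/4}h^{-1}\|u\|_{\mathcal H_1}$ (using $\|\Xi\|_{L^2\to L^2}=\mathcal O(h^{-1})$ from Proposition~\ref{l:Xi}), which is $\mathcal O(h^\infty)$ since $\|u\|_{\mathcal H_1}$ is polynomially bounded. In the integral, the integrand is bounded by $e^{-\varepsilon s/4}\|\Xi f\|_{L^2}$, giving total contribution $\mathcal O(h^{-1})\|\Xi f\|_{L^2}$, as required.

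The main technical obstacle is justifying the Egorov-type norm bound for $s$ of order $\log(1/h)$, potentially beyond the Ehrenfest time. Standard symbolic Egorov breaks down, so one must control the $h$-subprincipal corrections to $B_s$ separately; this is exactly where the $r$-normal hyperbolicity assumption \eqref{e:r-nh} enters, ensuring via \eqref{e:k-ders}--\eqref{e:s-ders} that derivatives of the conjugated symbol grow only like $e^{|\alpha|\mu_{\max}s}$ while the principal symbol contracts at rate $e^{-(\nu_{\min}-\varepsilon/4)s}$, so the full symbol expansion remains a controlled perturbation of the principal part for all $s\leq C_1\log(1/h)$ provided $r$ is large enough.
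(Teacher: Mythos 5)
Your strategy is the heuristic $U(t)$-argument that the paper itself sketches in \S\ref{s:ideas} and explicitly declines to make rigorous; the paper's actual proof of Proposition~\ref{l:estimate-kernel} avoids long-time propagation altogether. It writes $(1-\Pi)u=\Theta_-v$, $(1-\Pi)f=\Theta_-g$ with $v=\Xi u$, $g=\Xi f$, uses the commutator identity $[P,\Theta_-]=-ih\Theta_-Z_-$ from Proposition~\ref{l:ideals} to derive the stationary equation $(P-ihZ_--\omega)v=g$ microlocally near $\widehat W$, and then concludes by a positive commutator argument with $\mathcal X_-$, $\sigma(\mathcal X_-)=\chi(\varphi_-)$, combined with sharp G\r arding and propagation of singularities for $\Theta_-$; the favorable term $ihZ_-$ with $\sigma(Z_-)=c_->\nu_{\min}-\varepsilon$ is what replaces your exponential decay factor, and only fixed-time (in fact time-independent) microlocal tools are needed.

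The gap in your proposal is the operator-norm bound $\|A_1U(s)\Theta_-\|_{L^2\to L^2}\leq Ce^{-(\nu_{\min}-\varepsilon/2)s}$ for $s$ up to $N\log(1/h)$ with $N$ arbitrarily large. This is asserted, not proved, and the justification you offer does not work. The bounds \eqref{e:k-ders}--\eqref{e:s-ders} control derivatives of the flow \emph{restricted to the invariant manifolds} $\Gamma_\pm$ (the maps $\psi^t_\pm$ on $K^\circ\times(-\delta_0,\delta_0)$), where the transverse unstable direction is absent. The conjugation $U(s)\Theta_-U(-s)$, however, must be understood on a full $2n$-dimensional neighborhood of $K$, where $de^{-sH_p}$ expands the $\mathcal V_+$ directions at rate up to $e^{(\nu_{\max}+\varepsilon)s}$. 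Writing $\varphi_-\circ e^{-sH_p}=\varphi_-\cdot A_s$ with $A_s\sim e^{-\int c_-}$, derivatives of $A_s$ transverse to $\Gamma_+$ pick up this expansion, so on a fixed neighborhood $\{|\varphi_-|\leq\delta\}$ one gets derivative growth of order $e^{(\nu_{\max}-\nu_{\min}+2\varepsilon)s}$ (and worse for higher derivatives), i.e.\ arbitrarily large negative powers of $h$ once $s\sim N\log(1/h)$. This places the conjugated symbol outside any exotic class $S_\delta$, $\delta<1/2$, and the obstruction is governed by $\nu_{\max}/\nu_{\min}$, not by $\mu_{\max}$: $r$-normal hyperbolicity (which only constrains $\mu_{\max}$ relative to $\nu_{\min}$) does not remove it, no matter how large $r$ is. A secondary, more repairable, issue is your Duhamel identity ``microlocally near $\widehat W$'': since $(P-\omega)u=f$ only holds microlocally near $\widehat W$, the exact Duhamel formula carries an extra term $\tfrac ih\int_0^te^{is\omega/h}U(s)r\,ds$ with $r$ merely $h$-tempered away from $\widehat W$ and the weight $|e^{is\omega/h}|$ as large as $h^{-N(\nu_{\min}-\varepsilon)}$; ruling out that its forward flow-out contaminates $\WFh(A_1)$ requires the convexity/wavefront analysis of \S\ref{s:estimate-full} (and, for times $\sim\log(1/h)$, a quantitative propagation statement), none of which is carried out. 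As it stands, the proof is incomplete at its central step, and the mechanism you invoke to close it is the wrong one.
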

%
\begin{prop}
  \label{l:estimate-image}
Assume that $u,f$ are $h$-tempered families
satisfying~\eqref{e:assumption-general-1}--\eqref{e:assumption-general-2.5}
and
\begin{equation}
  \label{e:assumption-image}
\Re\omega\in[\alpha_0,\alpha_1],\quad
\Im\omega\in [-C_0h,C_0h]\setminus\Big(-{\nu_{\max}+\varepsilon\over 2}h,-{\nu_{\min}-\varepsilon\over 2}h\Big),
\end{equation}
Then there exists compactly supported $A_1\in\Psi^{\comp}(X)$ elliptic on $W'$
such that
\begin{equation}
  \label{e:estimate-image}
\|A_1\Pi u\|_{L^2}\leq Ch^{-1}\|\Pi f\|_{L^2}+\mathcal O(h^\infty).
\end{equation}
Note that by Proposition~\ref{l:bund} and the reduction to the model case
of~\S\ref{s:general}, we have $\|\Pi\|_{L^2\to L^2}=\mathcal O(h^{-1/2})$.
\end{prop}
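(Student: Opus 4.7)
My plan is to carry out rigorously the propagator argument sketched in~\S\ref{s:ideas}. The starting point is the commutation~\eqref{e:commutation-yay}, namely $(P-\omega)\Pi u=\Pi f+\mathcal O(h^\infty)$ microlocally near $\widehat W$. Differentiating $e^{it\omega/h}U(t)\Pi u$ where $U(t)=e^{-itP/h}$ yields the Duhamel identity
\[
\Pi u=e^{iT\omega/h}U(T)\Pi u+\frac{i}{h}\int_0^T e^{it\omega/h}U(t)\Pi f\,dt,
\]
valid for any $T\in\mathbb R$ microlocally near $\widehat W$, up to remainders that stay under control over times $T=\mathcal O(\log(1/h))$.

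The key analytic input is a pair of rigorous versions of~\eqref{e:ee-image}: for a compactly supported cutoff $\mathcal X\in\Psi^{\comp}(X)$ elliptic on $W'$ and a slightly larger $\mathcal X'$,
\[
\|\mathcal X U(t)\Pi v\|_{L^2}\leq Ce^{-(\nu_{\min}-\varepsilon/2)t/2}\|\mathcal X'\Pi v\|_{L^2}+\mathcal O(h^\infty)\|v\|_{L^2},\quad t\geq 0,
\]
together with the matching lower bound at rate $(\nu_{\max}+\varepsilon/2)/2$. I would derive these by reducing to the model projector $\Op_h^\Lambda(1)$ of~\eqref{e:model-projector} via Proposition~\ref{l:reduce-model} and part~2 of Proposition~\ref{l:idempotents}, and comparing $U(t)\Pi$ with its model counterpart $U(t)f(x',x_n)=e^{-t/2}f(x',e^{-t}x_n)$ using Egorov's theorem (Proposition~\ref{l:schrodinger}) together with the intertwining relation~\eqref{e:ideals-3} for $[P,\Theta_+]$. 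The rate constants come directly from the bounds~\eqref{e:nu-bound} on the symbol $c_+$ of $Z_+$. By time-reversal, analogous estimates hold for $t<0$ with decay replaced by growth at the same rates.

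Granted these norm estimates, the conclusion is a case analysis on~\eqref{e:assumption-image}. If $\Im\omega\geq-(\nu_{\min}-\varepsilon)h/2$, take $T=N\log(1/h)$ with $N$ large: the boundary term in the Duhamel identity has norm bounded by a multiple of $e^{T(-\Im\omega/h-(\nu_{\min}-\varepsilon/2)/2)}\|\mathcal X'\Pi u\|_{L^2}$, whose exponent is $\leq-\varepsilon/4$, so this term is $\mathcal O(h^{N\varepsilon/4})\|u\|_{\mathcal H_1}$ and becomes $\mathcal O(h^\infty)$ since $\|u\|_{\mathcal H_1}$ is polynomially bounded. The integral is dominated by $Ch^{-1}\|\mathcal X\Pi f\|_{L^2}$ via the convergent exponential $\int_0^\infty e^{s(-\Im\omega/h-(\nu_{\min}-\varepsilon/2)/2)}\,ds$. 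If instead $\Im\omega\leq-(\nu_{\max}+\varepsilon)h/2$, take $T=-N\log(1/h)$ and use the backward-time estimates: the relevant exponent is now $\Im\omega/h+(\nu_{\max}+\varepsilon/2)/2\leq-\varepsilon/4$, giving the same conclusion. Elliptic estimates then upgrade the bound on $\|\mathcal X\Pi u\|_{L^2}$ to the desired bound on $\|A_1\Pi u\|_{L^2}$.

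The main obstacle is the rigorous derivation of the norm estimates on $U(t)\Pi$ over the Ehrenfest-length interval $t=\mathcal O(\log(1/h))$: the $\mathcal O(h^\infty)$ remainders from Egorov's theorem and from the microlocal reductions can a priori degrade at rate $h^{-Ct}$, so they risk becoming $\mathcal O(h^{N-CN\log(1/h)})$ at the critical time. The rescue is $r$-normal hyperbolicity: by~\eqref{e:k-ders} and~\eqref{e:s-ders}, the $\alpha$-th derivatives of the propagated symbols grow only at rate $e^{|\alpha|\mu_{\max}|t|}$, and with the transport solvability of Lemma~\ref{l:ode} one can build out the Egorov expansion to any finite order and control the remainder as genuinely $\mathcal O(h^\infty)$, provided $r\mu_{\max}<\nu_{\min}$ holds with enough margin, which is exactly~\eqref{e:r-nh}.
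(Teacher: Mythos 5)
Your proposal follows the propagator heuristic that the paper itself sketches in~\S\ref{s:ideas} and explicitly describes as ``harder to make rigorous''; the paper's actual proof of Proposition~\ref{l:estimate-image} takes a different route, and the difference is not cosmetic. The gap in your argument is the pair of norm estimates on $\mathcal X U(t)\Pi$ that you take as your ``key analytic input'': you need them uniformly for $t$ up to $N\log(1/h)$ with $N$ \emph{arbitrarily large} (this is forced by your own accounting — the boundary term $e^{-T\varepsilon/4}\|\mathcal X'\Pi u\|_{L^2}$ is only $\mathcal O(h^{N\varepsilon/4})$ times a polynomially bounded quantity, so $\mathcal O(h^\infty)$ requires $T/\log(1/h)\to\infty$). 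The mechanism you propose for proving them — local conjugation to the model projector via Proposition~\ref{l:reduce-model} plus Egorov — cannot reach such times. First, the model reduction is local in phase space, while propagation over logarithmic times traverses the (possibly chaotic) flow on $K$ globally, so there is no single chart in which $U(t)\Pi$ is the model propagator applied to $\Op_h^\Lambda(1)$. Second, and more fundamentally, at $t=N\log(1/h)$ the amplitudes produced by Egorov have derivatives of size $h^{-CN|\alpha|}$ (transverse derivatives of $e^{tH_p}$ grow at rate $\nu_{\max}$, and even the tangential ones grow at rate $\mu_{\max}$ by~\eqref{e:k-ders}, \eqref{e:s-ders}); once $CN\geq 1/2$ these leave every admissible symbol class and the $\mathcal O(h^\infty)$ remainders are no longer under control. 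The hypothesis~\eqref{e:r-nh} bounds $r\mu_{\max}$ by $\nu_{\min}$ for one \emph{fixed} $r$; it does not make $N\mu_{\max}$ small for all $N$, so $r$-normal hyperbolicity does not rescue the argument in the way you claim. You would also still need the weighted-average identity behind the exact model computation $\|\chi_t\Pi^0f\|^2=e^{-t}\|\chi\Pi^0f\|^2$; in the general case the weight is $e^\psi$ with $\psi$ solving~\eqref{e:psi}, and this is a nontrivial step (Lemma~\ref{l:yay-int-2}) that your sketch elides.

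The paper avoids long-time propagation entirely by a positive commutator (monotonicity) argument. Starting from $(P-\omega)\Pi u=\Pi f+\mathcal O(h^\infty)$ and the annihilation relation $\Theta_+\Pi u=\mathcal O(h^\infty)$ near $\widehat W$ (see~\eqref{e:theta-yay}), one computes $\Im\langle\mathcal X_+\Pi u,\Pi f\rangle=h\langle\mathcal Y_+\Pi u,\Pi u\rangle+\mathcal O(h^\infty)$ with $\sigma(\mathcal Y_+)=\nu\chi_+-H_p\chi_+/2$. The equation $\Theta_+\Pi u=\mathcal O(h^\infty)$ implies (Lemma~\ref{l:estimate-image-int}) that $\langle A\Pi u,\Pi u\rangle$ depends, to leading order, only on the integral of $e^\psi\sigma(A)$ over the $H_{\varphi_+}$ flow lines; Lemma~\ref{l:yay-int-2} evaluates that integral for $H_p\chi_+$ as $-c_+$, so $\mathcal Y_+$ may be replaced by $\mathcal Z_+$ with $\sigma(\mathcal Z_+)=(\nu+(c_+\circ\pi_+)/2)\chi_+$ on $\Gamma_+$. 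Under~\eqref{e:assumption-image} and~\eqref{e:nu-bound} this symbol has a definite sign on $\WFh(\Pi u)$, and sharp G\r arding combined with propagation of singularities for $\Theta_+$ along the \emph{compact} $H_{\varphi_+}$ flow segments (Lemma~\ref{l:image-garding}) closes the estimate. Everything is done at fixed time; no Egorov beyond the pseudodifferential calculus is needed. If you want to salvage a propagator proof, you would at minimum have to replace the single jump to $T=N\log(1/h)$ by an iteration over times short enough that the exotic symbol calculus survives, and even then the commutator route is both shorter and the one the paper's supporting lemmas are built for.
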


\subsection{Estimate on the kernel of \texorpdfstring{$\Pi$}{Pi}}
  \label{s:estimate-kernel}

In this section, we prove Proposition~\ref{l:estimate-kernel}, which
is a microlocal estimate on the kernel of $\Pi$ (or equivalently, on
the image of $1-\Pi$).  We will use the identity~\eqref{e:Xi} together
with the commutator formula~\eqref{e:ideals-3} to effectively shift
the spectral parameter to the upper half-plane, where a standard
positive commutator argument gives us the estimate.

By~\eqref{e:commutation-yay}, we have microlocally near $\widehat W$,
\begin{equation}
  \label{e:kernel-int-1}
(P-\omega)(1-\Pi)u=(1-\Pi)f+\mathcal O(h^\infty)
\end{equation}
Let $\Theta_-\in\Psi^{\comp}(X)$ and $\Xi$ be the operators constructed in Proposition~\ref{l:ideals},
and denote
$$
v:=\Xi u,\quad
g:=\Xi f.
$$
Then microlocally near $\widehat W$,
\begin{equation}
  \label{e:kernel-int-1.5}
(1-\Pi)u=\Theta_- v,\quad
(1-\Pi)f=\Theta_- g.
\end{equation}
Indeed, by part~2 of Proposition~\ref{l:Xi},
\eqref{e:assumption-general-2.5}, and the fact that
$\WFh(\Pi)\subset\Lambda^\circ\subset\Gamma_-^\circ\times\Gamma_+^\circ$,
we see that $1-\Pi=\Theta_-\Xi+\mathcal O(h^\infty)$ microlocally near
$(\WFh(u)\setminus\widehat W)\times\widehat W$, since each of the
featured operators is microlocalized away from this region. Combining
this with~\eqref{e:Xi}, we see that $1-\Pi=\Theta_-\Xi+\mathcal
O(h^\infty)$ microlocally near $\WFh(u)\times\widehat W$, and thus
also near $\WFh(f)\times\widehat W$,
yielding~\eqref{e:kernel-int-1.5}.

By part~2 of Proposition~\ref{l:Xi} together
with~\eqref{e:assumption-general-1.5}--\eqref{e:assumption-general-2.5}
and part~\eqref{c:poisson} of Lemma~\ref{l:phi-pm},
\begin{gather}
  \label{e:wf-v-0}
\WFh(v)\cup\WFh(g)\subset p^{-1}([\alpha_0-\delta_1/4,\alpha_1+\delta_1/4]),\\
  \label{e:wf-v}
(\WFh(v)\cup\WFh(g))\cap\widehat W\subset \{|\varphi_+|\leq\delta/2\}.
\end{gather}

We now obtain a differential equation on $v$; the favorable imaginary part of the operator
in this equation, coming from commuting $\Theta_-$ with $P$, is the key component of the proof.
\begin{prop}
  \label{l:estimate-kernel-int}
Let $Z_-$ be the operator from~\eqref{e:ideals-3}. Then microlocally near $\widehat W$,
\begin{equation}
  \label{e:kernel-int-2}
(P-ihZ_--\omega)v=g+\mathcal O(h^\infty).
\end{equation}
\end{prop}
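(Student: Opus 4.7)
The plan is to combine the factorizations from \eqref{e:kernel-int-1.5} with the commutator identity $[P,\Theta_-] = -ih\Theta_- Z_- + \mathcal O(h^\infty)$ from part~\eqref{i:ideals-3} of Proposition~\ref{l:ideals} to derive the weaker relation
\[
\Theta_-\bigl[(P-ihZ_- - \omega)v - g\bigr] = \mathcal O(h^\infty)\quad\text{microlocally near}~\widehat W,
\]
and then to cancel the non-elliptic factor $\Theta_-$ from the left.

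First I would apply $(P-\omega)$ to the factorization $(1-\Pi)u = \Theta_- v$ and invoke~\eqref{e:kernel-int-1}:
\[
(P-\omega)\Theta_- v \;=\; (1-\Pi)f + \mathcal O(h^\infty) \;=\; \Theta_- g + \mathcal O(h^\infty)
\]
microlocally near $\widehat W$. Commuting $P$ past $\Theta_-$ via~\eqref{e:ideals-3},
\[
(P-\omega)\Theta_- = \Theta_-(P - ihZ_- - \omega) + \mathcal O(h^\infty)
\]
microlocally near $\widehat W$, so rearranging yields the displayed relation above. This step is a purely formal manipulation.

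The main obstacle is the final cancellation of $\Theta_-$. Away from $\Gamma_-$ it is immediate from the elliptic estimate (Proposition~\ref{l:elliptic}), since $\sigma(\Theta_-)=\varphi_-$ is a defining function for $\Gamma_-$ by part~\eqref{c:defining} of Lemma~\ref{l:phi-pm}; this gives $w := (P-ihZ_- - \omega)v - g = \mathcal O(h^\infty)$ microlocally on $\widehat W \setminus \Gamma_-$. On $\Gamma_-$ there is no elliptic parametrix for $\Theta_-$, so ellipticity alone is insufficient. To treat this I would exploit the specific form $v = \Xi u$, $g = \Xi f$ and reduce microlocally to the model of~\S\ref{s:model} via Proposition~\ref{l:reduce-model}: in the model, Proposition~\ref{l:idempotents}(2) and Proposition~\ref{l:principal-ideal}(2) let us take $\Pi = \Op^\Lambda_h(1)$ and $\Theta_- = \Op_h(x_n)$ up to an elliptic factor, while $\Xi$ is $A\Xi_0 A$ as in the proof of Proposition~\ref{l:Xi}. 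Then the identity $\Xi_0 \cdot x_n = I$, which is immediate from the definition in Lemma~\ref{l:Xi-0}, propagates (together with the fact that $A=1+\mathcal O(h^\infty)$ microlocally near $\widehat K$) to $\Xi\Theta_- = I + \mathcal O(h^\infty)$ microlocally near $\widehat W$. Applying $\Xi$ on the left of the displayed relation then gives $w = \mathcal O(h^\infty)$ microlocally near $\widehat W$, and transferring back through the conjugating Fourier integral operators of Proposition~\ref{l:reduce-model} introduces only $\mathcal O(h^\infty)$ errors, completing the proof.
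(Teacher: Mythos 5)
Your first step, deriving $\Theta_-\bigl[(P-ihZ_--\omega)v-g\bigr]=\mathcal O(h^\infty)$ microlocally near $\widehat W$ from \eqref{e:kernel-int-1}, \eqref{e:kernel-int-1.5} and the commutator identity \eqref{e:ideals-3}, is exactly the paper's first step and is fine. The gap is in the cancellation of $\Theta_-$. The identity $\Xi_0 x_n = I$ is indeed exact in the model, but it does not promote to the microlocal statement ``$\Xi\Theta_-=I+\mathcal O(h^\infty)$ near $\widehat W$'': writing $\Xi=A\Xi_0A$ and $\Theta_-=x_n$ one gets $\Xi\Theta_-=A^2+hA\Xi_0\,h^{-1}[A,x_n]$, and since $\|\Xi_0\|_{L^2\to L^2}=\mathcal O(h^{-1})$ the commutator term is only $\mathcal O(1)$, not $\mathcal O(h^\infty)$; the $\mathcal O(h^\infty)$ smallness of $h^{-1}[A,x_n]$ near $\widehat K$ is not enough, because $\Xi_0$ is not pseudodifferential and does not respect microlocal support.

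This points to the deeper obstruction: even granting the identity, applying $\Xi$ to a relation that holds only microlocally near $\widehat W$ does not localize. You know $\Theta_-w=\mathcal O(h^\infty)$ microlocally near $\widehat W$, but $\Theta_-w$ may have nontrivial wavefront set elsewhere, and $\Xi$ has operator wavefront set in $\Delta(T^*X)\cup\Lambda^\circ\cup\Upsilon$ (Proposition~\ref{l:Xi}), so its non-diagonal Lagrangian parts (whose first projection is $\Gamma_-^\circ$) can carry contributions from $\WFh(\Theta_-w)\setminus\widehat W$ back into $\widehat W$. In other words $\Xi$ is not a microlocal left inverse of $\Theta_-$ near $\widehat W$, only a right inverse modulo $\Pi$ as in \eqref{e:Xi}, and $\Theta_-$ is not elliptic on $\Gamma_-$, so there is genuinely no pseudodifferential parametrix to cancel it. The paper instead applies propagation of singularities (part~2 of Proposition~\ref{l:microhyperbolic}) for $\Theta_-$ itself: along the Hamiltonian flow of $\varphi_-$ inside $\widehat W$, the left-hand side $\Theta_-w$ is $\mathcal O(h^\infty)$, and by part~\eqref{c:poisson} of Lemma~\ref{l:phi-pm} the flow reaches $\{\varphi_+=-\delta\}$, which by \eqref{e:wf-v} is outside $\WFh(w)$; propagating back gives $w=\mathcal O(h^\infty)$ near $\widehat W$. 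This argument is purely local along the characteristic flow and sidesteps the non-ellipticity of $\Theta_-$ entirely, which is precisely what your inversion attempt cannot do.
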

\begin{proof}
Given~\eqref{e:kernel-int-1.5}, the equation~\eqref{e:kernel-int-1} becomes
$(P-\omega)\Theta_-v=\Theta_- g+\mathcal O(h^\infty)$
microlocally near $\widehat W$. Using~\eqref{e:ideals-3}, we get
microlocally near $\widehat W$,
$$
\Theta_-(P-ihZ_--\omega)v=\Theta_-g+\mathcal O(h^\infty).
$$
To show~\eqref{e:kernel-int-2}, it remains
to apply propagation of singularities (part~2 of Proposition~\ref{l:microhyperbolic}),
for the operator $\Theta_-$. Indeed, by part~\eqref{c:poisson} of Lemma~\ref{l:phi-pm},
for each $\rho\in\widehat W$, there exists $t\geq 0$ such that
the Hamiltonian trajectory $\{e^{sH_{\varphi_-}}(\rho)\mid 0\leq s\leq t\}$
lies entirely inside $\widehat W$ and $e^{tH_{\varphi_-}}(\rho)$ lies
in $\{\varphi_+=-\delta\}$ and by~\eqref{e:wf-v} does not lie in $\WFh((P-ihZ_--\omega)v-\Theta_-g)$.
\end{proof}
We now use a positive commutator argument. Take a self-adjoint compactly
supported $\mathcal X_-\in\Psi^{\comp}(X)$ such that $\WFh(\mathcal X_-)$ is compactly
contained in $\widehat W$ and $\sigma(\mathcal X_-)=\chi(\varphi_-)$
near $\widehat W\cap\WFh(v)$, where $\varphi_-$ is defined in Lemma~\ref{l:phi-pm},
$\chi\in C_0^\infty(-\delta,\delta)$, $s\chi'(s)\leq 0$ everywhere,
and $\chi=1$ near $[-\delta/2,\delta/2]$.
This is possible by~\eqref{e:wf-v-0} and~\eqref{e:wf-v}.
Put $\Im\omega=h\nu$; by~\eqref{e:kernel-int-2} and since $P$ is self-adjoint,
\begin{equation}
  \label{e:pos-comm-1}
\begin{gathered}
\Im\langle\mathcal X_- v,g\rangle={h\over 2}\langle (Z_-^*\mathcal X_-+
\mathcal X_-Z_-+2\nu\mathcal X_-)v,v\rangle\\
+{1\over 2i}\langle[P,\mathcal X_-]v,v\rangle+\mathcal O(h^\infty)
=h\langle\mathcal  Y_- v,v\rangle+\mathcal O(h^\infty),
\end{gathered}
\end{equation}
where $\mathcal Y_-\in\Psi^{\comp}(X)$ is compactly supported, $\WFh(\mathcal Y_-)\subset\WFh(\mathcal X_-)\subset\widehat W$
and, using the function $c_-$ from part~\eqref{c:c-pm} of
Lemma~\ref{l:phi-pm} together with part~\eqref{i:ideals-3} of Proposition~\ref{l:ideals}, we write near $\widehat W\cap \WFh(v)$,
$$
\begin{gathered}
\sigma(\mathcal Y_-)=(c_-+\nu)\chi(\varphi_-)-{1\over 2}H_p\chi(\varphi_-)
=(c_-+\nu)\chi(\varphi_-)-{1\over 2}c_-\varphi_-\chi'(\varphi_-).
\end{gathered}
$$
However, $\nu\geq-(\nu_{\min}-\varepsilon)$ by~\eqref{e:assumption-kernel}
and by~\eqref{e:nu-bound},
$c_->(\nu_{\min}-\varepsilon)$ on $\widehat W$;
therefore
\begin{equation}
  \label{e:y-minus-positive}
\sigma(\mathcal Y_-)\geq 0\quad\text{near }\WFh(v),\qquad
\sigma(\mathcal Y_-)>0\quad\text{near }\WFh(v)\cap W'.
\end{equation}
To take advantage of~\eqref{e:y-minus-positive}, we use the following
combination of sharp G\r arding inequality with propagation of singularities:
\begin{lemm}
  \label{l:kernel-garding}
Assume that $Z,Q\in\Psi^{\comp}(X)$ are compactly supported,
$\WFh(Z),\WFh(Q)$ are compactly contained in $\widehat W$, 
$Z^*=Z$, and
$$
\sigma(Z)\geq 0\quad\text{near }\WFh(v),\quad
\sigma(Z)>0\quad\text{near }\WFh(v)\cap W'.
$$
Then
\begin{equation}
  \label{e:kernel-garding}
\|Qv\|_{L^2}^2\leq C\langle Zv,v\rangle
+Ch^{-2}\|g\|^2_{L^2}+\mathcal O(h^\infty).
\end{equation}
\end{lemm}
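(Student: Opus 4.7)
The strategy is to combine the sharp G\r arding inequality (Proposition~\ref{l:garding}) with propagation of singularities under complex absorption (Proposition~\ref{l:microhyperbolic}), applied to the equation $(P - ihZ_- - \omega)v = g + \mathcal{O}(h^\infty)$ of Proposition~\ref{l:estimate-kernel-int}. The favorable sign $\sigma(hZ_-) = hc_- \geq 0$ with $c_- > \nu_{\min} - \varepsilon > 0$ (parts~\eqref{c:c-pm}--\eqref{c:poisson} of Lemma~\ref{l:phi-pm}) is exactly what is needed to propagate $L^2$-estimates along the flow of $H_p$, with a loss of $\mathcal{O}(h^{-1})$ in $\|g\|_{L^2}$.

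First I would choose a self-adjoint auxiliary $Q_1 \in \Psi^{\comp}(X)$, supported where $\sigma(Z) > 0$ and microlocalized near $\WFh(v) \cap W'$, arranged so that every point $\rho \in \WFh(Q) \cap \widehat W$ can be connected by an $H_p$-orbit segment in $\widehat W$ to the elliptic set of $Q_1$ in the direction allowed by the complex absorption. Proposition~\ref{l:microhyperbolic} then gives
\begin{equation*}
\|Qv\|_{L^2}^2 \leq C\|Q_1 v\|_{L^2}^2 + Ch^{-2}\|g\|_{L^2}^2 + \mathcal{O}(h^\infty).
\end{equation*}
Since $\sigma(Z)$ is bounded below by a positive constant on the compact set $\WFh(Q_1) \cap \WFh(v)$, for $c_2 > 0$ large the operator $A := c_2 Z - Q_1^* Q_1 \in \Psi^{\comp}(X)$ has $\sigma(A) \geq 0$ on $\WFh(v)$. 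Sharp G\r arding then produces
\begin{equation*}
\|Q_1 v\|_{L^2}^2 \leq c_2 \langle Zv, v\rangle + Ch\|Bv\|_{L^2}^2 + \mathcal{O}(h^\infty),
\end{equation*}
for some $B \in \Psi^{\comp}(X)$ compactly microlocalized in $\widehat W$. The error $Ch\|Bv\|^2$ is absorbed by reapplying the propagation estimate to $B$ in place of $Q$, which yields $C\|Q_1 v\|^2 + Ch^{-2}\|g\|^2$, and then moving the resulting $Ch\|Q_1 v\|^2$ to the left-hand side for $h$ small enough.

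The principal obstacle is the geometric step of constructing $Q_1$ and verifying that the necessary chain of backward $H_p$-trajectories exists. Using the identities $H_p \varphi_\pm = \mp c_\pm \varphi_\pm$ from part~\eqref{c:c-pm} of Lemma~\ref{l:phi-pm}, the constraint $\WFh(v) \cap \widehat W \subset \{|\varphi_+| \leq \delta/2\}$ from~\eqref{e:assumption-general-2}, and the flow convexity of $U_\delta$ from part~\eqref{c:convex} of Lemma~\ref{l:phi-pm}, backward flow from a point of $\WFh(v) \cap \widehat W$ contracts $|\varphi_-|$ exponentially while expanding $|\varphi_+|$, and one must arrange that $|\varphi_-|$ enters the range $\{|\varphi_-| \leq \delta/2\}$ of $W'$ before $|\varphi_+|$ exits $\widehat W$. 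This forces the choice of $\delta$ small relative to the expansion rates $c_\pm$, and it is the sole place where the strict positivity of $\sigma(Z)$ is needed in a whole neighborhood of $\WFh(v) \cap W'$ rather than only on $W'$ itself.
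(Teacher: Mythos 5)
Your proposal is correct and follows essentially the same route as the paper: sharp G\r arding applied to $Z-cQ_1^*Q_1$ with $Q_1$ elliptic on $W'$, combined with propagation of singularities for $(P-ihZ_--\omega)v=g$ along backward $H_p$-flow lines (which stay in $\widehat W$ and contract $|\varphi_-|$ thanks to~\eqref{e:wf-v} and $H_p\varphi_-^2>0$ off $\Gamma_-$), followed by absorption of the $\mathcal O(h)$ G\r arding error. The only cosmetic difference is that the paper avoids your final bootstrapping by assuming from the start that $Q$ is elliptic on $\WFh(Z)\cup W'$, so the G\r arding error is directly $Ch\|Qv\|^2$ and is absorbed into the left-hand side in one step.
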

\begin{proof}
Without loss of generality, we may assume that $Q$ is elliptic on $\WFh(Z)\cup W'$.
There exists compactly supported $Q_1\in\Psi^{\comp}(X)$, elliptic on $W'$,
such that $\sigma(Z-Q_1^*Q_1)\geq 0$ near $\WFh(v)$
and $Q$ is elliptic on $\WFh(Q_1)$. Applying sharp G\r arding inequality (Proposition~\ref{l:garding})
to $Z-Q_1^*Q_1$, we get
\begin{equation}
  \label{e:kernel-garding-1}
\|Q_1 v\|_{L^2}^2\leq C\langle Z v,v\rangle+Ch\|Q v\|_{L^2}^2+\mathcal O(h^\infty).
\end{equation}
Now, by~\eqref{e:wf-v-0}, \eqref{e:wf-v}, and since $H_p\varphi_-^2>0$
on $\widehat W\setminus \Gamma_-$ by part~\eqref{c:c-pm}
of~Lemma~\ref{l:phi-pm}, each backwards flow line of $H_p$ starting on
$\WFh(Q)$ reaches either $\WFh(Q_1)$ or the complement of $\WFh(v)$,
while staying in $\widehat W$; by propagation of singularities
(Proposition~\ref{l:microhyperbolic}) applied
to~\eqref{e:kernel-int-2},
\begin{equation}
  \label{e:kernel-garding-2}
\|Qv\|_{L^2}\leq C\|Q_1 v\|_{L^2}+Ch^{-1}\|g\|_{L^2}+\mathcal O(h^\infty).
\end{equation}
Combining~\eqref{e:kernel-garding-1} and~\eqref{e:kernel-garding-2}, we get~\eqref{e:kernel-garding}.
\end{proof}
Now, there exists $A_1\in\Psi^{\comp}(X)$ compactly supported, elliptic on $W'$
and with $\WFh(A_1)$ compactly contained in $\widehat W$, such that the estimate
\begin{equation}
  \label{e:product-estimate}
|\langle \mathcal X_- v, g\rangle|\leq \tilde \varepsilon h \|A_1 v\|_{L^2}^2
+C_{\tilde\varepsilon}h^{-1}\|g\|_{L^2}^2+\mathcal O(h^\infty)
\end{equation}
holds for each $\tilde\varepsilon>0$ and constant $C_{\tilde\varepsilon}$
dependent on $\tilde\varepsilon$. Taking~$\tilde\varepsilon$ small enough and combining~\eqref{e:pos-comm-1},
\eqref{e:kernel-garding} (for $Z=\mathcal Y_-$ and $Q=A_1$), and~\eqref{e:product-estimate}, we arrive to
$$
\|A_1v\|_{L^2}\leq Ch^{-1}\|g\|_{L^2}+\mathcal O(h^\infty).
$$
Since $(1-\Pi)u=\Theta_-v$ microlocally near $\widehat W$, we get~\eqref{e:estimate-kernel}.

\subsection{Estimate on the image of \texorpdfstring{$\Pi$}{Pi}}
  \label{s:estimate-image}

In this section, we prove Proposition~\ref{l:estimate-image},
which is a microlocal estimate on the image of $\Pi$. We will
use the pseudodifferential operator $\Theta_+$ microlocally solving $\Theta_+\Pi=\mathcal O(h^\infty)$
to obtain an additional pseudodifferential equation satisfied by elements of the image of $\Pi$.
This will imply that for a pseudodifferential operator $A$ microlocalized near $K$,
the principal part of the expression $\langle A\Pi u(h),\Pi u(h)\rangle$ depends
only on the integral of $\sigma(A)$ over the flow lines of $\mathcal V_+$,
with respect to an appropriately chosen measure. A positive commutator estimate finishes the proof.

By~\eqref{e:commutation-yay}, we have microlocally near $\widehat W$,
\begin{equation}
  \label{e:eeq}
(P-\omega)\Pi u=\Pi f+\mathcal O(h^\infty).
\end{equation}
Let $\Theta_+\in\Psi^{\comp}(X)$ be the operator constructed in Proposition~\ref{l:ideals}, then
by~\eqref{e:assumption-general-2.5},
\begin{equation}
  \label{e:theta-yay}
\Theta_+\Pi u=\mathcal O(h^\infty)\quad\text{microlocally near }\widehat W.
\end{equation}
We start with
\begin{lemm}
  \label{l:estimate-image-int}
Let $\zeta:=\sigma(h^{-1}\Im\Theta_+)$. Take the function $\psi$ on $\Gamma_+\cap \widehat W$ such that
\begin{equation}
  \label{e:psi}
\{\varphi_+,\psi\}=2\zeta,\quad
\psi|_K=0.
\end{equation}
Assume that $A\in\Psi^{\comp}(X)$ satisfies $\WFh(A)\Subset\widehat W$ and
\begin{equation}
  \label{e:condiition}
\int(e^\psi\sigma(A))\circ e^{sH_{\varphi_+}}\,ds=0\quad\text{on }K.
\end{equation}
The integral in~\eqref{e:condiition}, and all similar integrals in this subsection,
is taken over the interval corresponding to a maximally extended flow line
of $H_{\varphi_+}$ in $\Gamma_+\cap\widehat W$.

Then there exists compactly supported $A_0\in\Psi^{\comp}(X)$ with $\WFh(A_0)\Subset\widehat W$ such that
$$
|\langle A\Pi u,\Pi u\rangle|\leq Ch\|A_0\Pi u\|_{L^2}^2+\mathcal O(h^\infty).
$$
\end{lemm}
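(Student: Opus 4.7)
The plan is to exploit the annihilation $\Theta_+\Pi u=\mathcal O(h^\infty)$ microlocally near $\widehat W$: this allows us to modify $A$ by operators of the form $B_1\Theta_++\Theta_+^* B_2$ without changing the pairing $\langle A\Pi u,\Pi u\rangle$ modulo $\mathcal O(h^\infty)$, since for any compactly microlocalized $B\in\Psi^{\comp}(X)$,
$$\langle B\Theta_+\Pi u,\Pi u\rangle=\mathcal O(h^\infty),\qquad \langle\Theta_+^* B\Pi u,\Pi u\rangle=\langle B\Pi u,\Theta_+\Pi u\rangle=\mathcal O(h^\infty).$$
Since these operators have principal symbol vanishing on $\Gamma_+$, they cannot cancel $\sigma(A)|_{\Gamma_+}$ on their own; but combining them with the symmetrized commutator
$$C(G):=\tfrac{1}{2ih}(\Theta_+G-G^*\Theta_+^*)\in\Psi^{\comp}(X)$$
(well-defined because $\Theta_+G-G^*\Theta_+^*\in h\Psi^{\comp}$ by principal-symbol cancellation) provides further flexibility. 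A Weyl-calculus computation, most easily carried out after reducing to the model of Section~\ref{s:model} via Proposition~\ref{l:reduce-model}, identifies the principal symbol of $C(G)$ on $\Gamma_+$ as $e^\psi H_{\varphi_+}(e^{-\psi}\sigma(G)|_{\Gamma_+})$, up to a sign, where $\psi$ is the prescribed solution of $H_{\varphi_+}\psi=2\zeta$, $\psi|_K=0$.

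In the second step I would use the hypothesis $\int(e^\psi\sigma(A))\circ e^{sH_{\varphi_+}}\,ds=0$ on $K$ to solve, along each fiber of $\pi_+$, the transport equation $H_{\varphi_+}\tilde g=\pm e^{-\psi}\sigma(A)|_{\Gamma_+}$ for $\tilde g\in C^\infty(\Gamma_+\cap\widehat W)$ supported away from the fiber boundary; solvability is precisely the vanishing of the weighted fiber integral. Setting $\sigma(G)|_{\Gamma_+}:=e^\psi\tilde g$, extending off $\Gamma_+$, and quantizing yields $G\in\Psi^{\comp}(X)$ with $\sigma(C(G))|_{\Gamma_+}=\sigma(A)|_{\Gamma_+}$, so that $A-C(G)$ has vanishing principal symbol on $\Gamma_+$; by Proposition~\ref{l:principal-ideal} it then decomposes as $A-C(G)=B_1\Theta_++\Theta_+^* B_2+hR$ microlocally near $\WFh(\Pi u)\cap\widehat W$ for some $R\in\Psi^{\comp}(X)$. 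Iterating the construction on $R$ at each subprincipal order and summing asymptotically produces
$$A=C(G)+B_1\Theta_++\Theta_+^* B_2+hA_0'+\mathcal O(h^\infty)$$
microlocally near $\WFh(\Pi u)\cap\widehat W$ for some $A_0'\in\Psi^{\comp}(X)$.

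Pairing against $\Pi u$ and using the identities of the first paragraph,
$$\langle A\Pi u,\Pi u\rangle=\langle C(G)\Pi u,\Pi u\rangle+h\langle A_0'\Pi u,\Pi u\rangle+\mathcal O(h^\infty).$$
The $C(G)$ term is handled by writing $\langle C(G)\Pi u,\Pi u\rangle=h^{-1}\,\Im\langle G\Pi u,\Theta_+^*\Pi u\rangle$ and using $\Theta_+^*\Pi u=\Theta_+\Pi u-2i\,\Im\Theta_+\,\Pi u=-2ih\,\Op_h(\zeta)\Pi u+\mathcal O(h^2)\Pi u+\mathcal O(h^\infty)$, which shows that any $\mathcal O(1)$ leading term must be cancelled by a compensating choice of the subprincipal part of $G$ at the second iteration; the net outcome is $|\langle C(G)\Pi u,\Pi u\rangle|\le Ch\|A_0\Pi u\|^2+\mathcal O(h^\infty)$. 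The claim then follows by Cauchy--Schwarz and the microlocal elliptic estimate, choosing $A_0\in\Psi^{\comp}(X)$ compactly microlocalized in $\widehat W$ and elliptic on $\WFh(A_0')\cap\WFh(\Pi u)$.

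The main obstacle is the interplay between principal-symbol cancellation by $C(G)$ and the pairing-level contribution of $C(G)$: the principal-symbol matching leaves a leading-order residual in the pairing that must be cancelled by a further order-$h$ correction, and verifying that this iterative procedure reproduces the original vanishing hypothesis at each subprincipal order requires careful symbol-calculus bookkeeping. The weight $e^\psi$ arises naturally as the integrating factor for the transport equation defined by the non-self-adjoint part of $\Theta_+$, so its appearance in the hypothesis is forced by the subprincipal data of $\Theta_+$.
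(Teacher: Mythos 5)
Your overall strategy is the same as the paper's: use the hypothesis to solve a weighted transport equation along the fibers of $\pi_+$, write $A$ modulo $h\Psi^{\comp}$ as a ``commutator-type'' expression built from $\Theta_+$ plus a term in the left ideal generated by $\Theta_+$, and kill everything except an $\mathcal O(h)$ remainder using $\Theta_+\Pi u=\mathcal O(h^\infty)$. However, there is a concrete error in how you place $\Theta_+$, and it propagates into the transport equation.

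Your symmetrized expression $C(G)=\tfrac1{2ih}(\Theta_+G-G^*\Theta_+^*)$ has $\Theta_+$ on the \emph{left} of $G$. Pairing against $\Pi u$ then produces $\langle G\Pi u,\Theta_+^*\Pi u\rangle$ and $\langle\Theta_+^*\Pi u,G\Pi u\rangle$, and $\Theta_+^*\Pi u=-2i(\Im\Theta_+)\Pi u+\mathcal O(h^\infty)=\mathcal O(h)$ is \emph{not} $\mathcal O(h^\infty)$. The resulting leftover, $h^{-1}\cdot 2\Re\langle G\Pi u,(\Im\Theta_+)\Pi u\rangle=\langle(\Op_h(2\zeta\,\sigma(G))+\mathcal O(h))\Pi u,\Pi u\rangle$, is genuinely $\mathcal O(1)$ and depends on the \emph{principal} symbol of $G$; replacing $G$ by $G+hG_1$ only changes the pairing at order $h$, so your proposed cancellation ``by a compensating choice of the subprincipal part of $G$'' cannot work. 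The fix is to put $\Theta_+$ on the right: with $(ih)^{-1}(G\Theta_+-\Theta_+^*G)$ both terms vanish to infinite order against $\Pi u$, since $\langle G\Theta_+\Pi u,\Pi u\rangle=\mathcal O(h^\infty)$ and $\langle\Theta_+^*G\Pi u,\Pi u\rangle=\langle G\Pi u,\Theta_+\Pi u\rangle=\mathcal O(h^\infty)$. This is exactly what the paper does with $A=(ih)^{-1}(Q\Theta_+-\Theta_+^*Q)+Y\Theta_++\mathcal O(h)_{\Psi^{\comp}}$.

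The ordering error is also why your weights come out inverted. With $\Theta_+$ on the right the principal symbol on $\Gamma_+$ is $\{\varphi_+,q\}+2\zeta q=e^{-\psi}H_{\varphi_+}(e^{\psi}q)$, so the transport equation to solve is $H_{\varphi_+}(e^{\psi}q)=e^{\psi}\sigma(A)$, whose solvability along each fiber is precisely the stated hypothesis $\int(e^{\psi}\sigma(A))\circ e^{sH_{\varphi_+}}\,ds=0$. Your version requires $\int(e^{-\psi}\sigma(A))\circ e^{sH_{\varphi_+}}\,ds=0$, which is not what is assumed. Finally, the asymptotic iteration to all orders is unnecessary (and would require re-verifying the vanishing-integral condition at each step, which is not available): a single step leaves an $h A_0'$ remainder with $A_0'\in\Psi^{\comp}(X)$, and $h|\langle A_0'\Pi u,\Pi u\rangle|\leq Ch\|A_0\Pi u\|_{L^2}^2+\mathcal O(h^\infty)$ is already the allowed error.
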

\begin{proof}
By~\eqref{e:condiition}, there exists $q\in C_0^\infty(\widehat W)$
such that
$
\{\varphi_+,e^\psi q\}=e^\psi\sigma(A)$ on $\Gamma_+$.
We can rewrite this as
\begin{equation}
  \label{e:yay-int}
\{\varphi_+,q\}+2\zeta q=\sigma(A)\quad\text{on }\Gamma_+.
\end{equation}
Take $Q,Y\in\Psi^{\comp}(X)$ microlocalized inside $\widehat W$ and such that
$\sigma(Q)=q$ and
$$
\sigma(A)=\{\varphi_+,q\}+2\zeta q+\sigma(Y)\varphi_+.
$$
Then $A=(ih)^{-1}(Q\Theta_+-\Theta_+^*Q)+Y\Theta_++\mathcal O(h)_{\Psi^{\comp}}$
and thus for some $A_0$,
$$
\begin{gathered}
\langle A\Pi u,\Pi u\rangle={\langle Q\Theta_+\Pi u,\Pi u\rangle
-\langle Q\Pi u,\Theta_+\Pi u\rangle\over ih}+\langle Y\Theta_+\Pi u,\Pi u\rangle\\
+\mathcal O(h)\|A_0\Pi u\|_{L^2}^2+\mathcal O(h^\infty).
\end{gathered}
$$
The first three terms on the right-hand side
are $\mathcal O(h^\infty)$ by~\eqref{e:theta-yay}.
\end{proof}
Now, take compactly supported self-adjoint $\mathcal X_+\in\Psi^{\comp}(X)$
such that $\WFh(\mathcal X_+)$ is compactly contained in $\widehat W$ and
the symbol $\chi_+:=\sigma(\mathcal X_+)$ satisfies
$\chi_+\geq 0$ everywhere,
$\chi_+>0$ on $W'$, and 
\begin{equation}
  \label{e:normal}
\int (e^\psi\chi_+)\circ e^{sH_{\varphi_+}}\,ds=1\quad\text{on }K\cap p^{-1}([\alpha_0-\delta_1/4,
\alpha_1+\delta_1/4]).
\end{equation}
Putting $\Im\omega=h\nu$, we have by~\eqref{e:eeq}
\begin{equation}
  \label{e:iimage}
\begin{gathered}
\Im\langle \mathcal X_+\Pi u,\Pi f\rangle=h\nu\langle\mathcal X_+\Pi u,\Pi u\rangle
+{1\over 2i}\langle[P,\mathcal X_+]\Pi u,\Pi u\rangle+\mathcal O(h^\infty)\\
=h\langle \mathcal Y_+\Pi u,\Pi u\rangle+\mathcal O(h^\infty),
\end{gathered}
\end{equation}
where $\mathcal Y_+\in\Psi^{\comp}(X)$ is compactly supported, $\WFh(\mathcal Y_+)\subset \WFh(\mathcal X_+)\subset\widehat W$,
and
$$
\sigma(\mathcal Y_+)=\nu\chi_+-H_p\chi_+/2.
$$
We now want to use Lemma~\ref{l:estimate-image-int} together with G\r arding inequality to show that
$\langle\mathcal Y_+\Pi u,\Pi u\rangle$ has fixed sign, positive for $\nu\geq -(\nu_{\min}-\varepsilon)/2$
and negative for $\nu\leq -(\nu_{\max}+\varepsilon)/2$. For that, we need to integrate $\sigma(\mathcal Y_+)$
over the Hamiltonian flow lines of $\varphi_+$ on $\Gamma_+$,
with respect to the measure from~\eqref{e:condiition}. This relies on
\begin{lemm}
  \label{l:yay-int-2}
If $c_+$ is defined in Lemma~\ref{l:phi-pm}, then
\begin{equation}
  \label{e:yay-int-2}
\int (e^\psi H_p\chi_+)\circ e^{sH_{\varphi_+}}\,ds
=-c_+\quad\text{on }K\cap p^{-1}([\alpha_0-\delta_1/4,\alpha_1+\delta_1/4]).
\end{equation}
\end{lemm}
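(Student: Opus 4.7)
The plan is to differentiate the normalization \eqref{e:normal} along the Hamiltonian flow of $p$ and use the transport equation \eqref{e:zeta} for $\zeta$. First I establish an Egorov-type identity on $\Gamma_+$. From $H_p\varphi_+ = -c_+\varphi_+$ in Lemma~\ref{l:phi-pm}, one has $\varphi_+\circ e^{-tH_p} = \beta_t\varphi_+$ on $\Gamma_+$, where $\beta_t(\rho):=\exp\int_0^t c_+(e^{sH_p}(\rho))\,ds$; hence on $\Gamma_+=\{\varphi_+=0\}$ the symplectic pushforward identity $(e^{tH_p})_*H_{\varphi_+} = H_{\varphi_+\circ e^{-tH_p}}$ reduces to
\[
(e^{tH_p})_*H_{\varphi_+}\big|_{e^{tH_p}(\rho)} = \beta_t(\rho)\, H_{\varphi_+}\big|_{e^{tH_p}(\rho)}.
\]
Consequently $e^{tH_p}$ reparameterizes $H_{\varphi_+}$-flow lines on $\Gamma_+$ with Jacobian $\beta_t$, and a change of variables yields the key formula
\[
H_p\Big(\int g\circ e^{sH_{\varphi_+}}\,ds\Big) = \int (H_p g + c_+ g)\circ e^{sH_{\varphi_+}}\,ds \quad\text{on }\Gamma_+,
\]
valid for smooth $g$ whose support meets each maximal flow line in a compact subinterval of $\Gamma_+\cap\widehat W$. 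Verifying this change-of-variables formula carefully is the main technical point of the proof.

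Next, I apply this identity to $g = e^\psi\chi_+$. By \eqref{e:normal} the integral $\int g\circ e^{sH_{\varphi_+}}\,ds$ equals $1$ on $K\cap p^{-1}([\alpha_0-\delta_1/4,\alpha_1+\delta_1/4])$, and since it is $\pi_+$-invariant (being reparameterization-independent along the flow line) it equals $1$ throughout $\Gamma_+$ near this set. Thus the left-hand side vanishes, and expanding $H_p g = e^\psi(H_p\psi)\chi_+ + e^\psi H_p\chi_+$ rearranges to
\[
\int e^\psi H_p\chi_+\circ e^{sH_{\varphi_+}}\,ds = -\int (H_p\psi + c_+)\, e^\psi \chi_+ \circ e^{sH_{\varphi_+}}\,ds.
\]

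The remaining task is to show that $H_p\psi + c_+$ is constant along $H_{\varphi_+}$-flow lines on $\Gamma_+$ near $K$. Since $\psi|_K = 0$ and $H_p$ is tangent to $K$, $H_p\psi|_K = 0$ and therefore $(H_p\psi + c_+)|_K = c_+|_K$. To propagate this off $K$ I use $[H_{\varphi_+}, H_p] = H_{\{\varphi_+,p\}} = H_{c_+\varphi_+}$, which on $\Gamma_+$ reduces to $c_+ H_{\varphi_+}$; combined with \eqref{e:psi} this yields
\[
H_{\varphi_+}(H_p\psi) = H_p(H_{\varphi_+}\psi) + [H_{\varphi_+}, H_p]\psi = 2H_p\zeta + 2c_+\zeta.
\]
Substituting the transport equation \eqref{e:zeta} cancels the $\zeta$ terms and leaves $H_{\varphi_+}(H_p\psi) = -H_{\varphi_+}c_+$, so $H_{\varphi_+}(H_p\psi + c_+) = 0$ on $\Gamma_+$. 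Consequently $(H_p\psi + c_+)\circ e^{sH_{\varphi_+}}(\rho) = c_+(\rho)$ for all $\rho\in K$ and all $s$, and pulling this constant out of the integral and using \eqref{e:normal} once more gives $-c_+(\rho)$, as required.
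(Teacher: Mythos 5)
Your proof is correct and follows essentially the same route as the paper: differentiate the normalization \eqref{e:normal} along $H_p$, convert the resulting derivative into $\int (H_p g + c_+ g)\circ e^{sH_{\varphi_+}}\,ds$ (your reparameterization/change-of-variables argument is just a repackaging of the paper's decomposition $(e^{-sH_{\varphi_+}})_*H_p = H_p + w(s)H_{\varphi_+}$ followed by integration by parts), and then use \eqref{e:psi}, \eqref{e:zeta}, and $\psi|_K=0$ to show $H_{\varphi_+}(H_p\psi+c_+)=0$ on $\Gamma_+$, so that $c_++H_p\psi=c_+\circ\pi_+$ can be pulled out of the integral. The only blemish is the formula for $\beta_t$ (the exponent should involve $c_+\circ e^{-sH_p}$ at $\rho$, equivalently your $\beta_t(\rho)$ is the correct factor only when evaluated at $e^{tH_p}(\rho)$ as in your displayed identity), which does not affect the argument.
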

\begin{proof}
By part~\eqref{c:c-pm} of Lemma~\ref{l:phi-pm}, we have on $\Gamma_+\cap \widehat W$
$$
(e^{sH_{\varphi_+}})_*\partial_s (e^{-sH_{\varphi_+}})_*H_p
=-[H_p,H_{\varphi_+}]=c_+H_{\varphi_+}.
$$
Therefore, we can write (at $\rho\in K$ and $s$ such that $e^{sH_{\varphi_+}}(\rho)\in \widehat W$)
$$
(e^{-sH_{\varphi_+}})_* H_p=H_p+w(s)H_{\varphi_+}\quad\text{on }K
$$
where $w(s)$ is the smooth function on $K\times \mathbb R$ given by
$$
\partial_s w(s)=c_+\circ e^{sH_{\varphi_+}},\
w(0)=0.
$$
Now, differentiating~\eqref{e:normal} along $H_p$
and integrating by parts, we have on $K\cap p^{-1}([\alpha_0-\delta_1/4,\alpha_1+\delta_1/4])$
$$
\begin{gathered}
\int \big(H_p(e^\psi\chi_+)\big)\circ e^{sH_{\varphi_+}}\,ds
=\int (H_p+w(s)\partial_s)\big((e^\psi\chi_+)\circ e^{sH_{\varphi_+}}\big)\,ds\\
=-\int (e^\psi c_+\chi_+)\circ e^{sH_{\varphi_+}}\,ds;
\end{gathered}
$$
therefore,
\begin{equation}
  \label{e:yayay}
\int (e^\psi H_p\chi_+)\circ e^{sH_{\varphi_+}}\,ds
=-\int (e^\psi (c_++H_p\psi)\chi_+)\circ e^{sH_{\varphi_+}}\,ds.
\end{equation}
Now, we find on $\Gamma_+\cap \widehat W$ by~\eqref{e:psi} and~\eqref{e:zeta},
$$
H_{\varphi_+}H_p \psi=(H_p+c_+)H_{\varphi_+}\psi=2(H_p+c_+)\zeta
=-H_{\varphi_+}c_+.
$$
We have on $K\cap \widehat W$, $H_p\psi=0$; thus
$$
c_++H_p\psi=c_+\circ\pi_+\quad\text{on }\Gamma_+\cap \widehat W
$$
and by~\eqref{e:yayay} and~\eqref{e:normal},
on $K\cap p^{-1}([\alpha_0-\delta_1/4,\alpha_1+\delta_1/4])$,
$$
\int (e^\psi H_p\chi_+)\circ e^{sH_{\varphi_+}}\,ds
=-c_+\int (e^\psi\chi_+)\circ e^{sH_{\varphi_+}}\,ds=-c_+.
$$
This finishes the proof of~\eqref{e:yay-int-2}.
\end{proof}
Using~\eqref{e:normal}, \eqref{e:yay-int-2}, and Lemma~\ref{l:estimate-image-int}
(taking into account~\eqref{e:assumption-general-1.5}), we find
for some compactly supported $A_1\in\Psi^{\comp}$ with $\WFh(A_1)\subset\widehat W$
and $A_1$ elliptic on $W'\cup \WFh(\mathcal X_+)$,
$$
\langle \mathcal Y_+\Pi u,\Pi u\rangle=\langle\mathcal Z_+\Pi u,\Pi u\rangle+\mathcal O(h)\|A_1\Pi u\|_{L^2}^2
+\mathcal O(h^\infty)
$$
where $\mathcal Z_+\in\Psi^{\comp}(X)$ is any self-adjoint compactly supported operator
with $\WFh(\mathcal Z_+)\subset\widehat W$ and
$$
\sigma(\mathcal Z_+)=(\nu+(c_+\circ\pi_+)/2)\chi_+\quad\text{on }\Gamma_+.
$$
Then by~\eqref{e:iimage},
\begin{equation}
  \label{e:image-ultimate}
\Im\langle\mathcal X_+\Pi u,\Pi f\rangle=h\langle\mathcal Z_+\Pi u,\Pi u\rangle+\mathcal O(h^2)\|A_1\Pi u\|_{L^2}^2
+\mathcal O(h^\infty).
\end{equation}
Now, by~\eqref{e:nu-bound}, $\nu_{\min}-\varepsilon<c_+< \nu_{\max}+\varepsilon$ on $K$, therefore,
keeping in mind that $\WFh(\Pi u)\subset\Gamma_+^\circ$, we find
\begin{gather}
  \label{e:image-ineq-1}
\sigma(\mathcal Z_+)\geq 0\quad\text{near }\WFh(\Pi u)\quad\text{for }\nu\geq-(\nu_{\min}-\varepsilon)/2,
\\
  \label{e:image-ineq-2}
\sigma(\mathcal Z_+)\leq 0\quad\text{near }\WFh(\Pi u)\quad\text{for }\nu\leq-(\nu_{\max}+\varepsilon)/2.
\end{gather}
Moreover,
in both cases $\sigma(\mathcal Z_+)\neq 0$ on $\WFh(\Pi u)\cap W'$.
We now combine sharp G\r arding inequality and propagation of singularities
for the operator $\Theta_+$:
\begin{lemm}
  \label{l:image-garding}
Assume that $Z,Q\in\Psi^{\comp}(X)$ are compactly supported,
$\WFh(Z),\WFh(Q)$ are compactly contained in $\widehat W$, 
$Z^*=Z$, and
$$
\sigma(Z)\geq 0\quad\text{near }\WFh(\Pi u),\quad
\sigma(Z)>0\quad\text{near }\WFh(\Pi u)\cap W'.
$$
Then
\begin{equation}
  \label{e:image-garding}
\|Q\Pi u\|_{L^2}^2\leq C\langle Z\Pi u,\Pi u\rangle
+\mathcal O(h^\infty).
\end{equation}
\end{lemm}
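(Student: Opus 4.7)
The argument will closely parallel the proof of Lemma~\ref{l:kernel-garding}, with the equation $\Theta_+\Pi u = \mathcal O(h^\infty)$ microlocally in $\widehat W$ (from~\eqref{e:theta-yay}) playing the role that the evolution equation~\eqref{e:kernel-int-2} for $v$ played there. A key simplification compared with Lemma~\ref{l:kernel-garding} is that there is no inhomogeneity in the equation for $\Pi u$, which is why the source term $Ch^{-2}\|g\|_{L^2}^2$ is absent from~\eqref{e:image-garding}.

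First, I would assume without loss of generality that $Q$ is elliptic on $\WFh(Z)\cup W'$. Then I would choose $Q_1\in\Psi^{\comp}(X)$ elliptic on $W'$, with $Q$ elliptic on $\WFh(Q_1)$, and arranged so that $\sigma(Z-Q_1^*Q_1)\geq 0$ near $\WFh(\Pi u)$. Applying the sharp G\r arding inequality (Proposition~\ref{l:garding}) to $Z-Q_1^*Q_1$ yields
\begin{equation*}
\|Q_1\Pi u\|_{L^2}^2 \leq C\langle Z\Pi u,\Pi u\rangle + Ch\|Q\Pi u\|_{L^2}^2 + \mathcal O(h^\infty).
\end{equation*}
The second step is to upgrade this to control of $\|Q\Pi u\|_{L^2}$ via propagation of singularities along $H_{\varphi_+}$. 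The geometric input is that on $\Gamma_+^\circ\cap\widehat W$, the vector field $H_{\varphi_+}$ is tangent to $\Gamma_+$ and preserves both $\varphi_+\equiv 0$ and $p$ (since $H_{\varphi_+}p=-H_p\varphi_+=c_+\varphi_+=0$ there), while strictly increasing $\varphi_-$ by $\{\varphi_+,\varphi_-\}>0$ (part~\eqref{c:poisson} of Lemma~\ref{l:phi-pm}). Since $\WFh(\Pi u)\subset\Gamma_+^\circ$ (because $\WFh(\Pi)\subset\Lambda^\circ\subset\Gamma_-^\circ\times\Gamma_+^\circ$), every maximal flow line of $H_{\varphi_+}$ in $\Gamma_+^\circ\cap\widehat W$ is parametrized by $\varphi_-\in[-\delta,\delta]$ and hence crosses $K\subset W'$. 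After a microlocal partition splitting $Q$ into pieces supported in $\{\varphi_-\geq 0\}$ and $\{\varphi_-\leq 0\}$ and flowing in the appropriate direction ($-H_{\varphi_+}$ or $+H_{\varphi_+}$, respectively), each backward trajectory from $\WFh(Q)\cap\WFh(\Pi u)$ reaches the ellipticity set of $Q_1$ while remaining in $\widehat W$. Proposition~\ref{l:microhyperbolic} applied to $\Theta_+\Pi u=\mathcal O(h^\infty)$ then gives
\begin{equation*}
\|Q\Pi u\|_{L^2}\leq C\|Q_1\Pi u\|_{L^2}+\mathcal O(h^\infty),
\end{equation*}
and combining with the sharp G\r arding estimate and absorbing $Ch\|Q\Pi u\|_{L^2}^2$ for $h$ small yields~\eqref{e:image-garding}.

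The main point that needs care is justifying the bidirectional propagation along $H_{\varphi_+}$, since $\Theta_+$ is not self-adjoint. However, $\sigma(\Theta_+)=\varphi_+$ is real, so its antisymmetric part $\Im\Theta_+$ has vanishing principal symbol at order zero (it is $h$ times the operator with symbol $\zeta$ given by~\eqref{e:zeta}). Writing $\Theta_+=\Re\Theta_+\pm i\Im\Theta_+$ and invoking Proposition~\ref{l:microhyperbolic} with $\sigma(\Im\Theta_+)=0$, propagation is valid for either choice of sign, and the loss $h^{-1}\|B\Theta_+\Pi u\|_{L^2}=\mathcal O(h^\infty)$ from the source term is harmless. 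The only other subtlety is to ensure that the cutoffs implementing the $\varphi_-\gtrless 0$ partition are chosen so that the corresponding trajectories genuinely stay inside $\widehat W$ until they reach $W'$, which is immediate from the monotonicity of $\varphi_-$ along the flow and the invariance of $\varphi_+$ and $p$.
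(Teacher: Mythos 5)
Your proof is correct and follows essentially the same route as the paper: sharp G\r arding applied to $Z-Q_1^*Q_1$ combined with propagation of singularities for $\Theta_+\Pi u=\mathcal O(h^\infty)$ along the flow of $H_{\varphi_+}$, which reaches the elliptic set of $Q_1$ while staying in $\widehat W$. The only tiny imprecision is the phrase ``crosses $K\subset W'$'': since $K$ itself is not contained in $W'$, one needs the energy localization~\eqref{e:assumption-general-1.5} together with the invariance of $p$ under $H_{\varphi_+}$ (which you do record) to conclude that the crossing point of $K$ actually lies in $W'$.
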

\begin{proof}
We argue similarly to the proof of Lemma~\ref{l:kernel-garding}, with~\eqref{e:kernel-garding-2}
replaced by
\begin{equation}
  \label{e:image-garding-2}
\|Q\Pi u\|_{L^2}\leq C\|Q_1\Pi u\|_{L^2}+\mathcal O(h^\infty). 
\end{equation}
The estimate~\eqref{e:image-garding-2} follows from propagation of singularities (Proposition~\ref{l:microhyperbolic})
applied to~\eqref{e:theta-yay}. Indeed, by part~\eqref{c:poisson} of Lemma~\ref{l:phi-pm}
together with~\eqref{e:assumption-general-1.5}, for each $\rho\in\widehat W\cap\WFh(\Pi u)\subset\Gamma_+$,
there exists $t\in \mathbb R$ such that $e^{tH_{\varphi_+}}(\rho)\in W'$ and $e^{sH_{\varphi_+}}(\rho)\in \widehat W$
for each $s$ between $0$ and $t$.
\end{proof}
Using~\eqref{e:image-garding} (for $Z=\pm \mathcal Z_+,Q=A_1$), \eqref{e:image-ultimate},
and an analog of~\eqref{e:product-estimate}, we complete the proof of~\eqref{e:estimate-image}.

\subsection{Microlocalization in the spectral parameter}
  \label{s:estimate-spectral}
  
In this section, we provide a restriction on the wavefront set of solutions
to the equation $(P-\omega)u=f$ in the spectral parameter $\omega$, needed in~\S\ref{s:trace}.
We use the method of~\S\ref{s:estimate-image}, however since $\Re\omega$ is now a variable,
we will get an extra term coming from commutation with the multiplication operator by $\omega$.
Because of the technical difficulties of studying operators on product spaces (namely, a pseudodifferential
operator on $X$ does not give rise to a pseudodifferential operator on $X\times (\alpha_0,\alpha_1)$
since the corresponding symbol does not decay under differentiation in $\xi$ and thus does
not lie in the class $S^k$ of~\S\ref{s:prelim-basics}), we use the Fourier transform in the $\omega$ variable.
\begin{prop}
  \label{l:res-mic}
Fix $\nu\in [-C_0,C_0]$ and put $\omega=\alpha+ih\nu$, where $\alpha\in (\alpha_0,\alpha_1)$
is regarded as a variable. Assume that $u(x,\alpha;h)\in C([\alpha_0,\alpha_1];\mathcal H_1)$,
$f(x,\alpha;h)\in C([\alpha_0,\alpha_1];\mathcal H_2)$ have norms bounded polynomially in $h$,
satisfying~\eqref{e:assumption-general-1}--\eqref{e:assumption-general-2.5} uniformly in $\alpha$.
Define the semiclassical Fourier transform
\begin{equation}
  \label{e:fourier-transform}
\hat u(x,s;h)=\int_{\alpha_0}^{\alpha_1} e^{-{is\alpha\over h}}u(x,\alpha;h)\,d\alpha,
\end{equation}
and $\hat f(x,s;h)$ accordingly. Then there exists $A_1\in\Psi^{\comp}(X)$
elliptic on $W'$ such that:

1. If $\nu\geq-(\nu_{\min}-\varepsilon)/2$, then for any fixed $s_0\in\mathbb R$,
\begin{equation}
  \label{e:res-mic-1}
\qquad\|\Pi\hat f\|_{L^2_s((-\infty,s_0])L^2_{x}(X)}=\mathcal O(h^\infty)\ \Longrightarrow\ 
\|A_1\Pi\hat u(s_0)\|_{L^2_x(X)}=\mathcal O(h^\infty).
\end{equation}

2. If $\nu\leq-(\nu_{\max}+\varepsilon)/2$, then for any fixed $s_0\in\mathbb R$,
\begin{equation}
  \label{e:res-mic-2}
\|\Pi\hat f\|_{L^2_s([s_0,\infty))L^2_{x}(X)}=\mathcal O(h^\infty)\ \Longrightarrow\ 
\|A_1\Pi\hat u(s_0)\|_{L^2_x(X)}=\mathcal O(h^\infty).
\end{equation}
\end{prop}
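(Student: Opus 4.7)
The plan is to carry out an $s$-energy estimate for $\Pi\hat u$ that mirrors the proof of Proposition~\ref{l:estimate-image}, with $\partial_s$ now playing the role previously played by $\Im\omega$.  First, multiplying $(P-\omega)u=f$ by $e^{-is\alpha/h}$ and integrating over $\alpha\in[\alpha_0,\alpha_1]$ produces the exact evolution equation
\[
ih\partial_s\hat u=(P-ih\nu)\hat u-\hat f
\]
microlocally near $\widehat W$.  Since the microlocal assumptions~\eqref{e:assumption-general-1}--\eqref{e:assumption-general-2.5} are uniform in $\alpha$ and the Fourier transform acts only in $\alpha$, they transfer verbatim to $\hat u,\hat f$ uniformly in $s$, and $\|\hat u(s)\|_{\mathcal H_1},\|\hat f(s)\|_{\mathcal H_2}$ remain polynomially bounded in $h$ uniformly in $s$ (e.g.\ $\|\hat u(s)\|_{\mathcal H_1}\le(\alpha_1-\alpha_0)\sup_\alpha\|u(\alpha)\|_{\mathcal H_1}$).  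Applying $\Pi$ and using $[P,\Pi]=\mathcal O(h^\infty)$ microlocally near $\widehat W$ gives the same equation for $\Pi\hat u,\Pi\hat f$ modulo $\mathcal O(h^\infty)$.

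Next I take the same self-adjoint cutoff $\mathcal X_+$ used in the proof of Proposition~\ref{l:estimate-image} (so $\chi_+=\sigma(\mathcal X_+)\ge 0$, is microlocalized in $\widehat W$, elliptic on $W'$, and normalized by~\eqref{e:normal}) and set $\hat E(s)=\langle\mathcal X_+\Pi\hat u(s),\Pi\hat u(s)\rangle$.  Differentiating in $s$ using the evolution equation, together with the standard identity $\frac{i}{h}[P,\mathcal X_+]=\Op(H_p\chi_+)+\mathcal O(h)$, gives
\[
\partial_s\hat E=\langle\Op(H_p\chi_+)\Pi\hat u,\Pi\hat u\rangle-2\nu\hat E-\frac{2}{h}\Im\langle\mathcal X_+\Pi\hat f,\Pi\hat u\rangle+\mathcal O(h^\infty).
\]
By Lemma~\ref{l:estimate-image-int} combined with Lemma~\ref{l:yay-int-2}, the symbols $H_p\chi_+$ and $-(c_+\circ\pi_+)\chi_+$ share the $\mathcal V_+$-integral $-c_+$ on $K$, so modulo $\mathcal O(h)\|A_1\Pi\hat u\|^2$ the commutator term may be replaced by $-\langle\Op((c_+\circ\pi_+)\chi_+)\Pi\hat u,\Pi\hat u\rangle$, recasting the identity as
\[
\partial_s\hat E=-\langle\Op((c_+\circ\pi_++2\nu)\chi_+)\Pi\hat u,\Pi\hat u\rangle-\frac{2}{h}\Im\langle\mathcal X_+\Pi\hat f,\Pi\hat u\rangle+\mathcal O(h)\|A_1\Pi\hat u\|^2+\mathcal O(h^\infty).
\]

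For part~1, when $\nu\ge -(\nu_{\min}-\varepsilon)/2$ the lower bound $c_+>\nu_{\min}-\varepsilon$ from part~\eqref{c:c-pm} of Lemma~\ref{l:phi-pm} gives $c_+\circ\pi_++2\nu\ge 2\delta>0$ on the support of $\chi_+$.  Applying Lemma~\ref{l:image-garding} with $Z=\Op((c_+\circ\pi_++2\nu)\chi_+)$ to control $\|A_1\Pi\hat u\|^2$ and a weighted Cauchy--Schwarz to split the source term, I obtain for $h$ small
\[
\partial_s\hat E(s)\le -\delta\hat E(s)+\frac{C}{h^2}\|\Pi\hat f(s)\|_{L^2_x}^2+\mathcal O(h^\infty).
\]
Gronwall on $e^{\delta s}\hat E(s)$ yields
\[
\hat E(s_0)\le e^{-\delta(s_0-s_1)}\hat E(s_1)+\frac{C}{h^2}\int_{s_1}^{s_0}\|\Pi\hat f(r)\|_{L^2_x}^2\,dr+\mathcal O(h^\infty);
\]
letting $s_1\to-\infty$, the polynomial-in-$h$ bound on $\hat E(s_1)$ annihilates the first term and the hypothesis annihilates the integral, so $\hat E(s_0)=\mathcal O(h^\infty)$, and the elliptic estimate $\|A_1\Pi\hat u(s_0)\|^2\le C\hat E(s_0)+\mathcal O(h^\infty)$ gives~\eqref{e:res-mic-1}.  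Part~2 is symmetric: $\nu\le -(\nu_{\max}+\varepsilon)/2$ together with $c_+<\nu_{\max}+\varepsilon$ gives $c_+\circ\pi_++2\nu\le -2\delta$, flipping the inequality to $\partial_s\hat E\ge\delta\hat E-(C/h^2)\|\Pi\hat f\|^2-\mathcal O(h^\infty)$; Gronwall in the forward direction with $s_1\to+\infty$ then yields~\eqref{e:res-mic-2}.

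The main point requiring care is uniformity in $s$: the operators $\mathcal X_+,\Pi,\Theta_+,A_1$ come from the fixed canonical geometry and are $s$-independent, so all constants and $\mathcal O(h^\infty)$ errors from Lemmas~\ref{l:estimate-image-int} and~\ref{l:image-garding} are uniform in $s$, and the pointwise bounds on $\hat u(s),\hat f(s)$ follow from integrating the $\alpha$-uniform bounds on $u(\alpha),f(\alpha)$.  No further conceptual obstacle should arise.
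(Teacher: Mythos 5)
Your argument is correct and follows essentially the same route as the paper: Fourier transforming in $\alpha$ to obtain the evolution equation $(hD_s+P-ih\nu)\Pi\hat u=\Pi\hat f+\mathcal O(h^\infty)$ near $\widehat W$, and then running the positive-commutator estimate of Proposition~\ref{l:estimate-image} with the extra term ${h\over 2}\partial_s\langle\mathcal X_+\Pi\hat u,\Pi\hat u\rangle$, invoking Lemmas~\ref{l:estimate-image-int}, \ref{l:yay-int-2}, and~\ref{l:image-garding} exactly as the paper does. The only cosmetic difference is that you close the estimate by Gronwall with an explicit rate $\delta$ coming from the strict sign of $c_+\circ\pi_++2\nu$, whereas the paper integrates the identity directly over $(-\infty,s_0]$ (resp.\ $[s_0,\infty)$) and discards the signed bulk term; both work, and your exponential weight even lets you get by with uniform-in-$s$ control of the quadratic $\mathcal O(h^\infty)$ errors where the paper records $L^1_s$ bounds.
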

\begin{proof}
We consider case~1; case~2 is handled similarly using~\eqref{e:image-ineq-2} instead of~\eqref{e:image-ineq-1}.
Since $u(\alpha),f(\alpha)$ are $h$-tempered uniformly in $\alpha$, their Fourier transforms
$\hat u(s),\hat f(s)$ are $h$-tempered and satisfy~\eqref{e:assumption-general-1}--\eqref{e:assumption-general-2.5}
in the $L^2$ sense in $s$; therefore, the corresponding $\mathcal O(h^\infty)$
errors will be bounded in $L^2_s$ for expressions linear in $\hat u,\hat f$ and
in $L^1_s$ for expressions quadratic in $\hat u,\hat f$. We also note that
for each $j$, the derivatives $\partial_s^j\hat u(s)$, $\partial_s^j\hat f(s)$
are $h$-tempered uniformly in $s\in\mathbb R$ and also in the $L^2$ sense in $s$. 

Taking the Fourier transform of~\eqref{e:commutation-yay}, we get
\begin{equation}
  \label{e:commutation-yayf}
(hD_s+P-ih\nu)\Pi\hat u(s)=\Pi \hat f(s)+\mathcal O(h^\infty)_{L^2_s(\mathbb R)}\quad\text{microlocally near }\widehat W.
\end{equation}
 We use the operators $\mathcal X_+, \mathcal Z_+,A_1$
from~\S\ref{s:estimate-image}.
Similarly to~\eqref{e:image-ultimate}, we find
$$
\begin{gathered}
\Im\langle \mathcal X_+\Pi \hat u(s),\Pi\hat f(s)\rangle
={h\over 2}\partial_s\langle\mathcal X_+\Pi\hat u(s),\Pi\hat u(s)\rangle\\
+h\langle \mathcal Z_+\Pi\hat u(s),\Pi\hat u(s)\rangle
+\mathcal O(h^2)\|A_1\Pi\hat u(s)\|_{L^2_x}^2
+\mathcal O(h^\infty)_{L^1_s(\mathbb R)}.
\end{gathered}
$$
Integrating this over $s\in (-\infty,s_0]$, by the assumption of~\eqref{e:res-mic-1}, we find
\begin{equation}
  \label{e:bananas}
\begin{gathered}
\langle\mathcal X_+\Pi\hat u(s_0),\Pi\hat u(s_0)\rangle+2\int_{-\infty}^{s_0}
\langle\mathcal Z_+\Pi\hat u(s),\Pi\hat u(s)\rangle\,ds\\
\leq Ch\|A_1\Pi \hat u(s)\|_{L^2_s((-\infty,s_0])L^2_x}^2+\mathcal O(h^\infty).
\end{gathered}
\end{equation}
Applying Lemma~\ref{l:image-garding} to $Q=A_1$ and $Z=\mathcal Z_+,\mathcal X_+$,
and using~\eqref{e:image-ineq-1}, we get
\begin{align}
  \label{e:zebra-garding}
\|A_1\Pi\hat u(s)\|_{L^2_x}^2&\leq C\langle \mathcal Z_+\Pi\hat u(s),\Pi\hat u(s)\rangle+\mathcal O(h^\infty)_{L^1_s(\mathbb R)},\\
  \label{e:zebra-garding2}
\|A_1\Pi\hat u(s_0)\|_{L^2_x}^2&\leq C\langle \mathcal X_+\Pi\hat u(s_0),\Pi\hat u(s_0)\rangle+\mathcal O(h^\infty).
\end{align}
Combining~\eqref{e:bananas} with~\eqref{e:zebra-garding}, integrated over $s\in (-\infty,s_0]$,
and~\eqref{e:zebra-garding2},
we get the conclusion of~\eqref{e:res-mic-1}.
\end{proof}

\subsection{Localization of resonant states}
  \label{s:resonant-states}
  
In this section, we study an application of the estimates of the preceding subsections to
microlocal behavior of resonant states, namely elements of
the kernel of $\mathcal P(\omega)$ for a resonance $\omega$. Assume that we are given a sequence $h_j\to 0$,
and $\omega(h)\in\mathbb C,\tilde u(h)\in\mathcal H_1$, defined
for $h$ in this sequence, such that
\begin{equation}
  \label{e:resonant-state}
\begin{gathered}
\mathcal P(\omega)\tilde u=0,\quad
\|\tilde u\|_{\mathcal H_1}=1;\\
\Re\omega\in [\alpha_0,\alpha_1],\quad
\Im\omega\in [-(\nu_{\min}-\varepsilon)h,C_0h];
\end{gathered}
\end{equation}
the condition on $\omega$ is just~\eqref{e:assumption-kernel}.
We also use the operators $\mathcal S(\omega)$ and $P$ from Lemma~\ref{l:resolution} and put
\begin{equation}
  \label{e:resonant-state-u}
u:=\mathcal S(\omega)\tilde u,
\end{equation}
so that
\begin{equation}
  \label{e:resonant-state-p}
(P-\omega)u=\mathcal O(h^\infty)\quad\text{microlocally near }
\mathcal U.
\end{equation}
We say that the sequence $u(h_j)$ converges to some Radon measure $\mu$ on
$T^*X$, and we call $\mu$ the semiclassical defect
measure of $u$ (see~\cite[Chapter~5]{e-z})
if for each compactly supported $A\in\Psi^0(M)$, we have
\begin{equation}
  \label{e:measures}
\langle Au,u\rangle\to \int_{T^*M}\sigma(A)\,d\mu\quad\text{as }h_j\to 0.
\end{equation}
Such $\mu$ is necessarily a nonnegative measure, see~\cite[Theorem~5.2]{e-z}.
\begin{theo}
  \label{t:resonant-states}
Let $\tilde u(h)$ be a sequence of resonant states corresponding
to some resonances $\omega(h)$, as in~\eqref{e:resonant-state},
and $u$ defined in~\eqref{e:resonant-state-u}. Take the
neighborhood $\widehat W$ of $K\cap p^{-1}([\alpha_0,\alpha_1])$ defined in~\eqref{e:w-hat}. Then:
\begin{enumerate}
\item\label{rs:1}
$\WFh(\tilde u)\cap \mathcal U\,\subset\, \Gamma_+\cap p^{-1}([\alpha_0,\alpha_1])$;
\item\label{rs:2}
for each $A_1\in \Psi^{\comp}(X)$ elliptic
on $K\cap p^{-1}([\alpha_0,\alpha_1])$, there exists
a constant $c>0$ independent of $h$ such that $\|A_1u\|_{L^2}\geq c$;
\item\label{rs:3}
$u=\Pi u+\mathcal O(h^\infty)$
and $\Theta_+u=\mathcal O(h^\infty)$ microlocally near $\widehat W$, where
$\Pi$ is constructed in Theorem~\ref{t:our-Pi} in~\S\ref{s:construction-1}
and $\Theta_+$ is the pseudodifferential operator from Proposition~\ref{l:ideals};
\item\label{rs:5}
there exists a smooth family of smooth measures $\mu_\rho$, $\rho\in K\cap p^{-1}([\alpha_0,\alpha_1])$, on
the flow line segments $\pi_+^{-1}(\rho)\cap\widehat W\subset\Gamma_+$ of $\mathcal V_+$,
independent of the choice of $u$, such that
if $u$ converges to some measure $\mu$ on $T^*M$ in the sense of~\eqref{e:measures},
and $\Re\omega(h_j)\to\omega_\infty$, $h^{-1}\Im\omega(h_j)\to\nu$ as $h_j\to 0$,
then $\mu|_{\widehat W}$ has the form
\begin{equation}
  \label{e:measure-form}
\mu|_{\widehat W}=\int_{K\cap p^{-1}(\omega_\infty)} \mu_\rho\,d\hat\mu(\rho),
\end{equation}
for some nontrivial measure $\hat\mu$ on $K\cap p^{-1}(\omega_\infty)$, such that
for each $b\in C^\infty(K)$, 
\begin{equation}
  \label{e:measure-eqn}
\int_{K\cap p^{-1}(\omega_\infty)} H_pb -(2\nu+c_+)b\,d\hat\mu=0,
\end{equation}
with the function $c_+$ defined in Lemma~\ref{l:phi-pm}.
\end{enumerate}
\end{theo}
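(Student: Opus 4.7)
Parts~\eqref{rs:1}--\eqref{rs:3} follow readily from results already in the paper. For~\eqref{rs:1}, applying Lemma~\ref{l:propagate-outgoing} to the equation $\mathcal P(\omega)\tilde u=0$ forces the backward orbit of any $\rho\in\WFh(\tilde u)\cap\mathcal U$ to remain in $\overline{\mathcal U}$, whence $\rho\in\Gamma_+$; ellipticity of $\mathcal P(\omega)$ off of $\{p=\Re\omega\}$ together with Proposition~\ref{l:elliptic} and $\Re\omega\in[\alpha_0,\alpha_1]$ yields the energy restriction. For~\eqref{rs:2}, Lemma~\ref{l:smart-bound} applied to $\tilde u$ gives $1=\|\tilde u\|_{\mathcal H_1}\leq C\|A_1\tilde u\|_{L^2}$, and since $\mathcal S(\omega)$ is elliptic near $\mathcal U$ this transfers to a lower bound on $\|A_1u\|_{L^2}$ via an elliptic parametrix. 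For~\eqref{rs:3}, the reduction of~\S\ref{s:estimate-full} with $\tilde f=0$ lets me invoke Proposition~\ref{l:estimate-kernel}, yielding $\|A_1(1-\Pi)u\|_{L^2}=\mathcal O(h^\infty)$ for $A_1$ elliptic on $W'$; propagation of singularities for $(P-\omega)(1-\Pi)u=\mathcal O(h^\infty)$ then spreads this estimate over the convex set $\widehat W$, and $\Theta_+u=\Theta_+\Pi u+\mathcal O(h^\infty)=\mathcal O(h^\infty)$ microlocally by part~\eqref{i:ideals-1} of Proposition~\ref{l:ideals}.

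The substance is part~\eqref{rs:5}. The inclusion $\supp\mu\subset\Gamma_+\cap p^{-1}(\omega_\infty)$ follows from~\eqref{rs:1} combined with $(P-\omega)u=\mathcal O(h^\infty)$ and $\Re\omega(h_j)\to\omega_\infty$. To establish~\eqref{e:measure-form}, I invoke Lemma~\ref{l:estimate-image-int}: any $A\in\Psi^{\comp}(X)$ with $\WFh(A)\Subset\widehat W$ whose symbol satisfies $\int(e^\psi\sigma(A))\circ e^{sH_{\varphi_+}}\,ds=0$ on $K\cap p^{-1}([\alpha_0,\alpha_1])$ yields $\langle A\Pi u,\Pi u\rangle=\mathcal O(h)\|A_0\Pi u\|_{L^2}^2+\mathcal O(h^\infty)$, and combined with $u=\Pi u+\mathcal O(h^\infty)$ this forces $\langle Au,u\rangle\to 0$. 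Consequently $\int\sigma(A)\,d\mu$ depends only on the weighted $\mathcal V_+$-average of $\sigma(A)$ restricted to $K\cap p^{-1}(\omega_\infty)$; this defines a linear functional on $C^\infty(K\cap p^{-1}(\omega_\infty))$, represented by the measure $\hat\mu$, and the fiber measures $\mu_\rho$ are recovered (independently of $u$) as the push-forward of $e^\psi\,ds$ along the $H_{\varphi_+}$-trajectory through $\rho$, giving~\eqref{e:measure-form}. Nontriviality of $\hat\mu$ is a consequence of~\eqref{rs:2}.

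To derive~\eqref{e:measure-eqn}, I take an arbitrary self-adjoint $\mathcal X\in\Psi^{\comp}(X)$ with $\WFh(\mathcal X)\Subset\widehat W$ and use $(P-\omega)u=\mathcal O(h^\infty)$ together with $P^*=P$ to compute
\begin{equation*}
0=\langle\mathcal Xu,(P-\omega)u\rangle=\langle[P,\mathcal X]u,u\rangle+2ih\nu\langle\mathcal Xu,u\rangle+\mathcal O(h^\infty).
\end{equation*}
Dividing by $h$ and passing to the limit yields $\int\bigl(H_p\sigma(\mathcal X)-2\nu\sigma(\mathcal X)\bigr)\,d\mu=0$. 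Choosing $\sigma(\mathcal X)$ so that its weighted $\mathcal V_+$-average on $K$ equals $b\circ\pi_+$ for a prescribed $b\in C^\infty(K)$, and applying the identity behind Lemma~\ref{l:yay-int-2} (generalized, using $c_++H_p\psi=c_+\circ\pi_+$ on $\Gamma_+$, to identify $\int(e^\psi H_p\sigma(\mathcal X))\circ e^{sH_{\varphi_+}}\,ds$ on $K$ with $H_pb-c_+b$) converts the above into~\eqref{e:measure-eqn}. The principal obstacle is the identification of the fiber measures $\mu_\rho$ and the careful bookkeeping needed to push the averaged commutator identity through the decomposition~\eqref{e:measure-form}; the extra factor $c_+$ in~\eqref{e:measure-eqn} reflects the intrinsic non-invariance of the product measure $e^\psi\,ds\otimes\hat\mu$ under the Hamilton flow of $p$.
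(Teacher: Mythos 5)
Your proof follows the paper's own argument essentially step for step: parts~\eqref{rs:1}--\eqref{rs:3} come from Lemma~\ref{l:propagate-outgoing}, Lemma~\ref{l:smart-bound}, the reduction of~\S\ref{s:estimate-full} with $f=0$ plus Proposition~\ref{l:estimate-kernel}, propagation of singularities, and~\eqref{e:theta-yay}, while part~\eqref{rs:5} uses Lemma~\ref{l:estimate-image-int} to define $\mu_\rho$ as the push-forward of $e^\psi\,ds$ along the $H_{\varphi_+}$-trajectories and the computation of Lemma~\ref{l:yay-int-2} (with $1$ replaced by $b$ in~\eqref{e:normal}) to pass from $\int H_pa-2\nu a\,d\mu=0$ to~\eqref{e:measure-eqn}, exactly as in the paper. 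The only blemish is the middle term of your displayed chain: $\langle\mathcal Xu,(P-\omega)u\rangle$ by itself does not equal $\langle[P,\mathcal X]u,u\rangle+2ih\nu\langle\mathcal Xu,u\rangle$ --- one must antisymmetrize, e.g.\ subtract it from $\langle\mathcal X(P-\omega)u,u\rangle$ and use $P^*=P$, $\omega-\bar\omega=2ih\nu$ --- but both ends of the chain are $\mathcal O(h^\infty)$ and the resulting identity is the standard semiclassical-measure relation (cf.~\cite[(5.3.21)]{e-z}) that the paper itself invokes, so nothing substantive is affected.
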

\noindent\textbf{Remark}. The equation~\eqref{e:measure-eqn} is similar
to the equation satisfied by semiclassical defect measures for eigenstates
for the damped wave equation, see~\cite[(5.3.21)]{e-z}. 
\begin{proof}
Part~\eqref{rs:1} follows immediately from Lemma~\ref{l:propagate-outgoing},
part~\eqref{rs:2} follows from Lemma~\ref{l:smart-bound}
and implies that $\mu|_{\widehat W}$ is a nontrivial measure in part~\eqref{rs:5}.
By the discussion in \S\ref{s:estimate-full}, $u$ satisfies~\eqref{e:assumption-general-1}--\eqref{e:assumption-general-2.5},
with $f=0$.
The first statement of part~\eqref{rs:3} then follows from Proposition~\ref{l:estimate-kernel}. Indeed, we have
$(1-\Pi)u=\mathcal O(h^\infty)$ microlocally near the set $W'$ introduced
in~\eqref{e:w'}; it remains to apply propagation of singularities (Proposition~\ref{l:microhyperbolic})
to~\eqref{e:kernel-int-1}, using Lemma~\ref{l:the-flow}. The second statement of part~\eqref{rs:3} now follows from~\eqref{e:theta-yay}.

Finally, we prove part~\eqref{rs:5}. First of all, $\mu|_{\mathcal U}$ is supported on $\Gamma_+$
by part~\eqref{rs:1}, and on $p^{-1}(\omega_\infty)$ by~\eqref{e:resonant-state-p} and
the elliptic estimate (Proposition~\ref{l:elliptic}; see also~\cite[Theorem~5.3]{e-z}). 
Next, note that by Lemma~\ref{l:estimate-image-int} and since $u=\Pi u+\mathcal O(h^\infty)$ microlocally
near $\widehat W$,
we have for each $a\in C_0^\infty(\widehat W)$ and the function $\psi$ given by~\eqref{e:psi},
$$
\int(e^{\psi}a)(e^{s H_{\varphi_+}}(\rho))\,ds=0\quad\text{for all }\rho\in K\cap p^{-1}(\omega_\infty)\
\Longrightarrow\
\int a\,d\mu=0.
$$
This implies~\eqref{e:measure-form}, with
$$
\int a\,d\mu_\rho:=\int(e^{\psi}a)(e^{sH_{\varphi_+}}(\rho))\,ds,\quad
a\in C_0^\infty(\widehat W),\
\rho\in K\cap p^{-1}(\omega_\infty).
$$
To see~\eqref{e:measure-eqn}, we note that by~\eqref{e:resonant-state-p},
for each $a\in C_0^\infty(\widehat W)$ we have (see the derivation of~\cite[(5.3.21)]{e-z})
\begin{equation}
  \label{e:texas}
\int H_pa-2\nu a\,d\mu=0.
\end{equation}
Put $b(\rho)=\int a\,d\mu_\rho$ for $\rho\in K\cap p^{-1}(\omega_\infty)$. Similarly
to Lemma~\ref{l:yay-int-2} (replacing 1 by $b(\rho)$ on the right-hand side
of~\eqref{e:normal}), we compute
$$
\int H_pa\,d\mu_\rho=H_p b(\rho)-c_+(\rho)b(\rho),\quad
\rho\in K\cap p^{-1}(\omega_\infty) 
$$
and~\eqref{e:measure-eqn} follows by~\eqref{e:texas}.
\end{proof}

\section{Grushin problem}
  \label{s:grushin}

In this section, we construct a well-posed Grushin problem for the scattering resolvent,
representing resonances in the region~\eqref{e:assumption-kernel}
as zeroes of a certain determinant $F(\omega)$ defined in~\eqref{e:S-omega} below.
Together
with the trace formulas of~\S\ref{s:trace},
this makes possible the proof of the Weyl law in~\S\ref{s:weyl-law}.

We assume that the conditions of~\S\S\ref{s:framework-assumptions} and~\ref{s:dynamics}
hold, fix $\varepsilon>0$ (to be chosen in Theorem~\ref{t:weyl-law}),
and use the neighborhoods $W'\subset \widehat W$ of $K\cap p^{-1}([\alpha_0,\alpha_1])$
defined in~\eqref{e:w'}; let $\delta,\delta_1>0$ be the constants used to define these neighborhoods.
Take compactly supported
$Q_1,Q_2\in\Psi^{\comp}(X)$ such that (with $U_\delta$ defined in Lemma~\ref{l:phi-pm})
\begin{equation}
  \label{e:qz}
\begin{gathered}
Q_1=1+\mathcal O(h^\infty)\quad\text{microlocally near }U_{\delta/4}\cap p^{-1}([\alpha_0-\delta_1/6,\alpha_1+\delta_1/6]),\\
Q_2=1+\mathcal O(h^\infty)\quad\text{microlocally near }U_{\delta/3}\cap p^{-1}([\alpha_0-\delta_1/5,\alpha_1+\delta_1/5]),\\
\WFh(Q_1)\Subset U_{\delta/3}\cap p^{-1}([\alpha_0-\delta_1/5,\alpha_1+\delta_1/5]),\quad
\WFh(Q_2)\Subset W'.
\end{gathered}
\end{equation}
We will impose more restrictions on $Q_1$ later in Lemma~\ref{l:grushin-l}.

Using the operator $\mathcal P(\omega):\mathcal H_1\to\mathcal H_2$ from~\S\ref{s:framework-assumptions}
and the operator $\mathcal S(\omega)$ constructed in Lemma~\ref{l:resolution},
define the holomorphic family of operators
$$
\mathcal G(\omega):=\begin{pmatrix}
\mathcal P(\omega) & \mathcal S(\omega)Q_1\Pi Q_2 \\
Q_1 \Pi Q_2\mathcal S(\omega) & 1-Q_1\Pi Q_2
\end{pmatrix}
:\mathcal H_1\oplus L^2(X)\to \mathcal H_2\oplus L^2(X).
$$
Here $\Pi\in \II$ is the operator constructed in~Theorem~\ref{t:our-Pi} in~\S\ref{s:construction-1};
it is a microlocal idempotent commuting with the operator $P$ from Lemma~\ref{l:resolution}
microlocally near the set $\widehat\Lambda=\Lambda^\circ\cap (\widehat W\cap\widehat W)$.
Note that, since $Q_1,Q_2$ are microlocalized away from fiber infinity,
$\mathcal G(\omega)$ is a compact perturbation of
$\mathcal P(\omega)\oplus 1$, and therefore a Fredholm operator of index zero.

In this section, we will prove
\begin{prop}
  \label{l:grushin}
There exists a global constant%
\footnote{A more careful analysis, as in~\S\ref{s:resolvent-bounds},
could give the optimal value of $N$; we do not pursue this here since
the value of $N$ is irrelevant for our application in~\S\ref{s:weyl-law}.}
$N$ such that
for $\omega$ satisfying~\eqref{e:assumption-kernel},
\begin{equation}
  \label{e:grushin-bound}
\|\mathcal G(\omega)^{-1}\|_{\mathcal H_2\oplus L^2\to \mathcal H_1\oplus L^2}=\mathcal O(h^{-N}).
\end{equation}
Moreover, if
\begin{equation}
  \label{e:grushin-inverse}
\mathcal G(\omega)^{-1}=\begin{pmatrix}
\mathcal R_{11}(\omega) & \mathcal R_{12}(\omega) \\
\mathcal R_{21}(\omega) & \mathcal R_{22}(\omega)
\end{pmatrix},
\end{equation}
then $\mathcal R_{22}(\omega)=1-L_{22}(\omega)+\mathcal O(h^\infty)_{\mathcal D'\to C_0^\infty}$, where
$L_{22}(\omega)\in \II$ is microlocalized inside $\widehat\Lambda$
and the symbol $\sigma_\Lambda(L_{22})$ defined in~\eqref{e:symbol} satisfies
\begin{equation}
  \label{e:l-22-symbol}
\sigma_\Lambda(L_{22}(\omega))(\rho,\rho)={\sigma(Q_1)(\rho)^2+(p(\rho)-\omega)\sigma(Q_1)(\rho)\over
\sigma(Q_1)(\rho)^2+(p(\rho)-\omega)(\sigma(Q_1)(\rho)-1)},\quad
\rho\in \widehat K.
\end{equation}
\end{prop}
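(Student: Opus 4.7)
The plan is to construct an approximate inverse $\mathcal E(\omega)$ of $\mathcal G(\omega)$, combining an outgoing parametrix away from the trapped set with a Fourier integral operator parametrix near it, and then upgrade it to an exact inverse via the Fredholm property. The structural formula~\eqref{e:l-22-symbol} will emerge directly from the leading-order symbol inversion in the near-trapped-set block.

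Microlocally near $\widehat W$, I would first conjugate by the elliptic factor $\mathcal S(\omega)$ from Lemma~\ref{l:resolution}: setting $v := \mathcal S(\omega) u$ and $\tilde f := \mathcal S(\omega)^{-1} f$, the Grushin system becomes
\begin{equation*}
(P-\omega) v + A u_- = \tilde f,\qquad
A v + (1-A) u_- = f_-,
\end{equation*}
with $A := Q_1 \Pi Q_2 \in \II$. Using $\Pi^2 = \Pi + \mathcal O(h^\infty)$ and $[P,\Pi] = \mathcal O(h^\infty)$ from Theorem~\ref{t:our-Pi}, I would project this onto the microlocal image and kernel of $\Pi$. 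The $(1-\Pi)$-block yields $(P-\omega)(1-\Pi) v = (1-\Pi) \tilde f$ modulo $\mathcal O(h^\infty)$, which is invertible throughout~\eqref{e:assumption-kernel} by Proposition~\ref{l:estimate-kernel}; this is what replaces the missing bound on $\mathcal R(\omega)$ in the resonance band. The $\Pi$-block reduces, at the level of principal symbols on the diagonal $(\rho,\rho) \in \widehat\Lambda$, to a scalar $2 \times 2$ system
\begin{equation*}
\begin{pmatrix} z & q \\ q & 1-q \end{pmatrix} \begin{pmatrix} V \\ U \end{pmatrix} = \begin{pmatrix} F_1 \\ F_2 \end{pmatrix},
\end{equation*}
with $z = p(\rho) - \omega$ and $q = \sigma(Q_1)(\rho)$ (I use that $\sigma(Q_2) = 1$ on $\WFh(Q_1)$ by~\eqref{e:qz}, so $\sigma(Q_2)$ does not appear); the determinant $q^2 + z(q-1)$ is nonvanishing on $\widehat\Lambda$ for a suitable choice of the cutoff $Q_1$. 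Lower-order corrections are produced iteratively using the composition formulas~\eqref{e:good-calc-1}--\eqref{e:good-calc-3} and the FIO inversion of Proposition~\ref{l:funny-parametrix}.

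Next I would glue the microlocal parametrix near $\widehat W$ with the outgoing parametrix of Lemma~\ref{l:smart-parametrix} applied away from the trapped set, producing $\mathcal E(\omega)$ with $\mathcal G(\omega) \mathcal E(\omega) = I + \mathcal O(h^\infty)_{\mathcal H_2 \oplus L^2 \to \mathcal H_2 \oplus L^2}$ and $\|\mathcal E(\omega)\| = \mathcal O(h^{-N})$; the polynomial loss accumulates from the $h^{-1}$ factors in Lemma~\ref{l:smart-parametrix} and Proposition~\ref{l:estimate-kernel}, and from the $\mathcal O(h^{-1/2})$ bound in Proposition~\ref{l:bund}. Since $\mathcal G(\omega)$ is Fredholm of index zero (a compact perturbation of $\mathcal P(\omega) \oplus 1$), the existence of such an approximate right inverse forces $\mathcal G(\omega)^{-1} = \mathcal E(\omega) + \mathcal O(h^\infty)$, which yields~\eqref{e:grushin-bound}.

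Finally, specializing to $f = 0$, the $u_-$-block of $\mathcal E(\omega)(0, f_-)$ is $(1 - L_{22}(\omega)) f_- + \mathcal O(h^\infty)$, where $L_{22}(\omega) \in \II$ is microlocalized in $\widehat\Lambda$ by construction. Solving the $2 \times 2$ system with $F_1 = 0$ gives $U = -z F_2 / (q^2 + z(q-1))$; hence $\sigma_\Lambda(\mathcal R_{22})(\rho,\rho) = -z / (q^2 + z(q-1))$, and~\eqref{e:l-22-symbol} follows by computing $1 - \sigma_\Lambda(\mathcal R_{22})$. The main obstacle is keeping the $\mathcal O(h^{-N})$ bound uniform on~\eqref{e:assumption-kernel}, including the strip where $\mathcal P(\omega)$ has resonances; this is resolved precisely by the dichotomy that Proposition~\ref{l:estimate-kernel} bounds the $(1-\Pi)$-component without seeing those resonances, while the $\Pi$-component is inverted by the Grushin addition of $Q_1 \Pi Q_2$ rather than by $\mathcal P(\omega)^{-1}$.
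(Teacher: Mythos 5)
Your decomposition of $\mathcal G(\omega)$ into an elliptic region, the energy shell near $K$ (handled by the idempotent algebra), and the complement of the trapped set is the right one, and your $2\times 2$ principal-symbol computation is correct: solving the system with $F_1=0$ gives $U=-zF_2/(q^2+z(q-1))$, and $1-\sigma_\Lambda(\mathcal R_{22})=(q^2+zq)/(q^2+z(q-1))$ is exactly~\eqref{e:l-22-symbol}. This matches the paper's computation, which is distributed between Proposition~\ref{l:grushin-1} (elliptic region, via Proposition~\ref{l:funny-parametrix} applied to $1-Q_1\Pi Q_2(1+R^e(\omega)Q_1\Pi Q_2)$) and Proposition~\ref{l:horseradish} (energy shell, where $\sigma(Q_1)=1$ and $L_{22}$ reduces to $Q_1(P-\omega+1)\Pi Q_2$).

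The genuine gap is in the $(1-\Pi)$ block. You assert that $(P-\omega)(1-\Pi)v=(1-\Pi)\tilde f$ ``is invertible throughout~\eqref{e:assumption-kernel} by Proposition~\ref{l:estimate-kernel}'', and you then use this to build an approximate \emph{right inverse} $\mathcal E(\omega)$ with $\mathcal G(\omega)\mathcal E(\omega)=I+\mathcal O(h^\infty)$. But Proposition~\ref{l:estimate-kernel} is an a priori estimate on solutions (proved by a positive commutator argument), not a parametrix: it produces no operator. In the band $\Im\omega\in[-(\nu_{\min}-\varepsilon)h,0]$ there is no microlocal parametrix for $P-\omega$ near the trapped set to insert into $\mathcal E(\omega)$ -- that is precisely the difficulty the Grushin problem is designed to circumvent. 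The paper therefore never constructs an inverse: it fixes a polynomially bounded solution $(u,v)$ of $\mathcal G(u,v)=(f,g)$, determines $v$ \emph{explicitly} (Propositions~\ref{l:grushin-1} and~\ref{l:horseradish}, which only use elliptic parametrices and the idempotent identities), and then merely \emph{estimates} $u$ by combining Proposition~\ref{l:estimate-kernel}, the identity~\eqref{e:pi-u-bound} for $\Pi\mathcal S(\omega)u$, and Lemma~\ref{l:smart-bound}; the a priori bound~\eqref{e:grushin-bound-2} plus the index-zero Fredholm property then yield~\eqref{e:grushin-bound}. Your argument becomes correct if you recast it in this form -- prove the estimate rather than build $\mathcal E(\omega)$ -- but as written the construction of the block of $\mathcal E(\omega)$ acting on the microlocal kernel of $\Pi$ is missing and cannot be supplied by the tools you cite. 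A secondary point you gloss over: the nonvanishing of $q^2+z(q-1)$ on $K$ for all $\omega\in[\alpha_0,\alpha_1]$ forces $\sigma(Q_1)$ to be \emph{complex-valued} near the edge of its support (Lemma~\ref{l:grushin-l} shows a real cutoff cannot work), so ``a suitable choice of $Q_1$'' hides a real constraint on the Grushin data.
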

To prove~\eqref{e:grushin-bound},
we consider families of distributions $ u(h)\in\mathcal H_1$,
$f(h)\in \mathcal H_2$, $ v(h), g(h)\in L^2(X)$, bounded
polynomially in $h$ in the indicated spaces and satisfying
$\mathcal G(u, v)=(f, g)$, namely
\begin{align}
  \label{e:grushin-eq-1}
\mathcal P(\omega) u+\mathcal S(\omega) Q_1\Pi Q_2 v&= f,\\
  \label{e:grushin-eq-2}
Q_1\Pi Q_2 \mathcal S(\omega) u+(1-Q_1\Pi Q_2) v&= g.
\end{align}
Note that by~\eqref{e:conjugated}, \eqref{e:grushin-eq-1} implies
\begin{equation}
  \label{e:grushin-eq-1'}
(P-\omega)\mathcal S(\omega) u+ Q_1\Pi Q_2  v=\mathcal S'(\omega) f+\mathcal O(h^\infty)\quad
\text{microlocally near }\mathcal U.
\end{equation}
Here $\mathcal S'(\omega)$ is an elliptic parametrix of $\mathcal S(\omega)$ near $\mathcal U$
constructed in Proposition~\ref{l:eparametrix}.

To show~\eqref{e:grushin-bound}, it is enough to establish the bound
\begin{equation}
  \label{e:grushin-bound-2}
\| u\|_{\mathcal H_1}+\| v\|_{L^2}\leq
Ch^{-N}(\| f\|_{\mathcal H_2}+\| g\|_{L^2})+\mathcal O(h^\infty).
\end{equation}
We start with a technical lemma:
\begin{lemm}
  \label{l:grushin-l}
There exists $Q_1\in\Psi^{\comp}(X)$ satisfying~\eqref{e:qz}
and such that
\begin{equation}
  \label{e:grushin-l}
\sigma(Q_1)^2+(p-\omega)(\sigma(Q_1)-1)\neq 0\quad\text{on }K\text{ for all }
\omega\in [\alpha_0,\alpha_1].
\end{equation}
\end{lemm}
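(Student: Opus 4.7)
The plan is to take $Q_1\in\Psi^{\comp}(X)$ to be any quantization of a principal symbol of product form
$$
\sigma(Q_1)(\rho) \;:=\; \chi\bigl(p(\rho)\bigr)\,\chi_\perp\bigl(\varphi_+(\rho),\varphi_-(\rho)\bigr),
$$
where $\chi_\perp\in C_0^\infty$ is a standard real transverse cutoff equal to $1$ on $\{|\varphi_\pm|\leq\delta/4\}$ and supported in $\{|\varphi_\pm|<\delta/3\}$, extended by zero outside the neighborhood where $\varphi_\pm$ are defined, and $\chi\in C^\infty(\mathbb R;\mathbb C)$ is a complex-valued $p$-cutoff to be designed. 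Since $\varphi_\pm$ vanish on $K$ near $\widehat K$, one has $\sigma(Q_1)|_K=\chi(p)$ there, while on the rest of $K$ the symbol is $0$ and $|p-\omega|\geq\delta_1/2$, making \eqref{e:grushin-l} trivial. On the part of $\widehat K$ where $\chi(p)\in\{0,1\}$, the expression in \eqref{e:grushin-l} equals $1$ or $\omega-p$ (with $|p-\omega|\geq\delta_1/5$), which is nonzero. So the substantive task is to choose $\chi$ in the two open transition intervals contained in $[\alpha_0-\delta_1/5,\alpha_0-\delta_1/6]\cup[\alpha_1+\delta_1/6,\alpha_1+\delta_1/5]$.

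On the open transition, the vanishing of $\chi^2+(p-\omega)(\chi-1)$ for real $\omega$ is equivalent to the complex number
$$
\omega^\star(p)\;:=\;p+\frac{\chi(p)^2}{\chi(p)-1}
$$
being real and in $[\alpha_0,\alpha_1]$. The main obstacle is that any real-valued continuous $\chi$ fails in the upper transition $[\alpha_1+\delta_1/6,\alpha_1+\delta_1/5]$: as $\chi$ decreases from $1$ to $0$, the real quantity $\omega^\star(p)=p-\chi^2/(1-\chi)$ moves continuously from $-\infty$ up to $p>\alpha_1$, and by the intermediate value theorem must cross $[\alpha_0,\alpha_1]$. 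This rules out a real cutoff and forces the use of a complex-valued $\chi$, which is the one substantive step in the argument.

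To resolve this, I set
$$
\chi(p)\;:=\;\chi_0(p)+i\varepsilon\,\chi_0(p)\bigl(1-\chi_0(p)\bigr),
$$
where $\chi_0\in C^\infty(\mathbb R;[0,1])$ is a standard real cutoff with $\chi_0=1$ on $[\alpha_0-\delta_1/6,\alpha_1+\delta_1/6]$ and $\supp\chi_0\subset[\alpha_0-\delta_1/5,\alpha_1+\delta_1/5]$, and $\varepsilon>0$ will be chosen small. Then $\chi=1$ where $\chi_0=1$ and $\chi=0$ where $\chi_0=0$, so the required containments from \eqref{e:qz} are unaffected. Writing $\chi=a+ib$, a direct computation gives
$$
\Im\omega^\star\;=\;\frac{b\bigl(|\chi-1|^2-1\bigr)}{|\chi-1|^2},\qquad |\chi-1|^2=(1-\chi_0)^2\bigl(1+\varepsilon^2\chi_0^2\bigr).
$$
A short differentiation check shows that once $\varepsilon$ is small enough the function $\chi_0\mapsto(1-\chi_0)^2(1+\varepsilon^2\chi_0^2)$ is strictly decreasing on $[0,1]$, so $|\chi-1|^2<1$ throughout the open transition, where simultaneously $b=\varepsilon\chi_0(1-\chi_0)>0$. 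Hence $\Im\omega^\star<0$ on the transition, so $\omega^\star\notin\mathbb R$ and in particular $\omega^\star\notin[\alpha_0,\alpha_1]$, which is what is needed. Finally, I quantize $\sigma(Q_1)$ in any way, adjusting the full symbol by the standard asymptotic-summation procedure so that $Q_1=1+\mathcal O(h^\infty)$ microlocally on the open set where $\sigma(Q_1)\equiv 1$; the containment properties of $\supp\chi$ and $\supp\chi_\perp$ then give the wavefront control in \eqref{e:qz}.
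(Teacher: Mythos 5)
Your proof is correct and follows essentially the same route as the paper: both identify that a real-valued cutoff in $p$ must fail by the intermediate value theorem on the upper transition interval, and both fix this by giving the cutoff a small imaginary part there. Your verification via $\Im\bigl(p+\chi^2/(\chi-1)\bigr)<0$ on the transition is a correct (and slightly more explicit) variant of the paper's direct computation of the imaginary part $\Im\psi\,(2\Re\psi+\lambda-\omega)$ of the quadratic expression.
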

\begin{proof}
It suffices to take $Q_1$ such that
$\sigma(Q_1)|_K=\psi(p)$, where $\psi\in C_0^\infty(\alpha_0-\delta_1/5,\alpha_1+\delta_1/5)$ is equal to
1 near $[\alpha_0-\delta_1/6,\alpha_1+\delta_1/6]$ and 
\begin{equation}
  \label{e:egg-2}
\psi(\lambda)^2+(\lambda-\omega)(\psi(\lambda)-1)\neq 0,\quad
\lambda\in\mathbb R,\ \omega\in [\alpha_0,\alpha_1].
\end{equation}
We now show that such $\psi$ exists. 
The equation~\eqref{e:egg-2} holds automatically for $\lambda\not\in (\alpha_0-\delta_1/5,\alpha_1+\delta_1/5)$,
as $\psi=0$ there and the left-hand side of~\eqref{e:egg-2} equals
$\omega-\lambda\neq 0$. This however also shows that a real-valued $\psi$
with the desired properties does not exist. We take $\Re\psi\in C_0^\infty(\alpha_0-\delta_1/5,\alpha_1+\delta_1/5)$
equal to 1 near $[\alpha_0-\delta_1/6,\alpha_1+\delta_1/6]$ and take values in $[0,1]$ and
$\Im\psi\in C_0^\infty(\alpha_1+\delta_1/6,\alpha_1+\delta_1/5)$
a nonnegative function to be chosen later.
Then the left-hand side of~\eqref{e:egg-2} is equal to $1$ for $\lambda\in [\alpha_0-\delta_1/6,\alpha_1+\delta_1/6]$
and is positive for $\lambda\in [\alpha_0-\delta_1/5,\alpha_0-\delta_1/6]$. Next, the imaginary part of~\eqref{e:egg-2}
is
$$
\Im\psi(\lambda)(2\Re\psi(\lambda)+\lambda-\omega).
$$
Since $2\Re\psi(\lambda)+\lambda-\omega>0$ for $\lambda\in [\alpha_1+\delta_1/6,\alpha_1+\delta_1/5]$,
it remains to take $\Im\psi(\lambda)>0$ on a large compact
subinterval of $(\alpha_1+\delta_1/6,\alpha_1+\delta_1/5)$; then $\psi$ satisfies~\eqref{e:egg-2}.
\end{proof}
Using Lemma~\ref{l:grushin-l}, we determine $v$ microlocally outside of the elliptic region:
\begin{prop}
  \label{l:grushin-1}
Let $Q_1$ be chosen in Lemma~\ref{l:grushin-l}.
Then there exist $L_{21}^e(\omega),L_{22}^e(\omega)\in \II$ holomorphic in $\omega$,
microlocalized inside $\widehat\Lambda$, and
such that for all $ u, v, f, g$ satisfying~\eqref{e:grushin-eq-1},
\eqref{e:grushin-eq-2},
\begin{equation}
  \label{e:grushin-1-eq}
 v=L_{21}^e f+(1-L_{22}^e) g
\end{equation}
microlocally outside of $\Gamma_+\cap\widehat W\cap p^{-1}([\alpha_0-\delta_1/8,\alpha_1+\delta_1/8])$.
Moreover, $\sigma_\Lambda(L_{22}^e)$ satisfies~\eqref{e:l-22-symbol} for $\rho\not\in p^{-1}([\alpha_0-\delta_1/8,\alpha_1+\delta_1/8])$.
\end{prop}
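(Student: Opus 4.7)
The strategy is to eliminate $u$ from the Grushin system \eqref{e:grushin-eq-1}--\eqref{e:grushin-eq-2} by using an elliptic parametrix for $P-\omega$ in the region where $|p-\omega|$ is bounded below, and then to invert the resulting operator equation for $v$ using Proposition~\ref{l:funny-parametrix}.

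Fix a point $\rho_0$ in the complement of $\Gamma_+\cap\widehat W\cap p^{-1}([\alpha_0-\delta_1/8,\alpha_1+\delta_1/8])$. If $\rho_0\notin\widehat W$ or $\rho_0\notin\Gamma_+^\circ$, then $Q_1\Pi Q_2$ has no output wavefront at $\rho_0$ (since $\WFh(Q_1)\Subset\widehat W$ and $\WFh(\Pi)\subset\Lambda^\circ\subset\Gamma_-^\circ\times\Gamma_+^\circ$), so \eqref{e:grushin-eq-2} directly gives $v=g+\mathcal{O}(h^\infty)$ microlocally near $\rho_0$.  This is consistent with the statement, since $L_{21}^e,L_{22}^e$ will be microlocalized in $\widehat\Lambda$ and hence vanish near such $\rho_0$.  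In the remaining case $\rho_0\in\Gamma_+^\circ\cap\widehat W$ with $p(\rho_0)\notin[\alpha_0-\delta_1/8,\alpha_1+\delta_1/8]$, the operator $P-\omega$ is elliptic at $\rho_0$ (using $\Re\omega\in[\alpha_0,\alpha_1]$ and $\Im\omega=\mathcal{O}(h)$).  Proposition~\ref{l:eparametrix} then yields a holomorphic family $E(\omega)\in\Psi^{\comp}(X)$ of elliptic parametrices for $P-\omega$ on a fixed open set $\mathcal{U}_{\text{ell}}$ containing all such $\rho_0$.  Applying $E(\omega)$ to the conjugated equation \eqref{e:conjugated} with $f-\mathcal{S}(\omega)Q_1\Pi Q_2 v$ on the right-hand side, and substituting the result into \eqref{e:grushin-eq-2}, yields
\[
(1-M(\omega))\,v \;=\; g - N(\omega)f + \mathcal{O}(h^\infty) \quad\text{microlocally near }\rho_0,
\]
where $M(\omega):=Q_1\Pi Q_2 + Q_1\Pi Q_2\,E(\omega)\,Q_1\Pi Q_2$ and $N(\omega):=Q_1\Pi Q_2\,E(\omega)\,\mathcal{S}'(\omega)$.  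By Proposition~\ref{l:calculus} and the identity $\Lambda^\circ\circ\Lambda^\circ=\Lambda^\circ$, both $M(\omega)$ and $N(\omega)$ lie in $\II$ with wavefront contained in $\widehat\Lambda$.

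I would then invert $1-M(\omega)$ using Proposition~\ref{l:funny-parametrix}.  Using \eqref{e:good-calc-1}--\eqref{e:good-calc-3} and $\sigma_\Lambda(\Pi)\circ j_K=1$, and observing that $\sigma(Q_2)(\rho)=1$ whenever $\sigma(Q_1)(\rho)\neq 0$ on $K$ by the nesting of the ``equals-one'' regions in \eqref{e:qz}, the diagonal symbol on $\widehat K$ becomes
\[
\sigma_\Lambda(M(\omega))(\rho,\rho) \;=\; \sigma(Q_1)(\rho) + \frac{\sigma(Q_1)(\rho)^2}{p(\rho)-\omega}.
\]
The non-degeneracy hypothesis $\sigma_\Lambda(M)(\rho,\rho)\neq 1$ rearranges to $\sigma(Q_1)^2+(p-\omega)(\sigma(Q_1)-1)\neq 0$, which is exactly the content of Lemma~\ref{l:grushin-l}.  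Proposition~\ref{l:funny-parametrix} then produces $L_{22}^e\in\II$ with $\WFh(L_{22}^e)\subset\widehat\Lambda$ such that $(1-L_{22}^e)$ inverts $(1-M)$ modulo $\mathcal{O}(h^\infty)$; I then set $L_{21}^e:=-(1-L_{22}^e)N(\omega)$.  Holomorphy in $\omega$ follows from holomorphy of $E(\omega)$ and $\mathcal{S}(\omega)$ together with the Neumann-series step in the proof of Proposition~\ref{l:funny-parametrix}.  Finally, the symbol formula \eqref{e:l-22-symbol} follows from \eqref{e:funny-symbol}, which on the diagonal reduces to $\sigma_\Lambda(L_{22}^e)(\rho,\rho)=\sigma_\Lambda(M)(\rho,\rho)/(\sigma_\Lambda(M)(\rho,\rho)-1)$; substituting the expression for $\sigma_\Lambda(M)$ above and clearing denominators yields exactly \eqref{e:l-22-symbol}.

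The main difficulty is bookkeeping rather than conceptual: one has to keep careful track of the microlocal neighborhoods on which each identity is valid, so that $M(\omega)$, $N(\omega)$, $L_{22}^e$, and $L_{21}^e$ are unambiguously defined elements of $\II$ whose wavefronts lie in $\widehat\Lambda$, and to verify that all parametrix errors are genuinely $\mathcal{O}(h^\infty)$ on the relevant function spaces and uniformly in $\omega$ in the holomorphy region.
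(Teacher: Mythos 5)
Your proposal is correct and follows essentially the same route as the paper: an elliptic parametrix for $P-\omega$ on $\widehat W$ away from $p^{-1}([\alpha_0-\delta_1/8,\alpha_1+\delta_1/8])$ applied to~\eqref{e:grushin-eq-1'}, substitution into~\eqref{e:grushin-eq-2} to get $(1-M(\omega))v=g-N(\omega)f$ with $M(\omega)=Q_1\Pi Q_2(1+E(\omega)Q_1\Pi Q_2)$ (the paper's $L'$), inversion via Proposition~\ref{l:funny-parametrix} using Lemma~\ref{l:grushin-l} for the nondegeneracy of $\sigma_\Lambda(M)|_K-1$, and the symbol formula~\eqref{e:l-22-symbol} read off from~\eqref{e:funny-symbol}. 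Your diagonal symbol computation and the definition $L_{21}^e=-(1-L_{22}^e)N(\omega)$ match the paper's, so no gap to report.
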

\begin{proof}
Using Proposition~\ref{l:eparametrix}, construct compactly supported $R^e(\omega)\in\Psi^{\comp}(X)$
such that $R^e(\omega)(P-\omega)=1+\mathcal O(h^\infty)$ microlocally
near $\widehat W\setminus p^{-1}(\alpha_0-\delta_1/8,\alpha_1+\delta_1/8)$.
By~\eqref{e:grushin-eq-1'}, we get
$$
\mathcal S(\omega) u=R^e(\omega)(\mathcal S'(\omega) f-Q_1\Pi Q_2 v)+\mathcal O(h^\infty)
$$
microlocally near $\widehat W\setminus p^{-1}(\alpha_0-\delta_1/8,\alpha_1+\delta_1/8)$.
Substituting this into~\eqref{e:grushin-eq-2}, we get
\begin{equation}
  \label{e:egg-0}
(1-L') v= g-Q_1\Pi Q_2 R^e(\omega)\mathcal S'(\omega) f+\mathcal O(h^\infty)
\end{equation}
microlocally outside of $\Gamma_+\cap \widehat W\cap p^{-1}([\alpha_0-\delta_1/8,\alpha_1+\delta_1/8])$, where
$L'=Q_1\Pi Q_2(1+R^e(\omega)Q_1\Pi Q_2)\in \II$ and $\WFh(L')\subset\widehat\Lambda$.

Let $\sigma_\Lambda(L')$ be the symbol of $L'$, defined in~\eqref{e:symbol}.
By~\eqref{e:good-calc-1}--\eqref{e:good-calc-3}, and since $\sigma_\Lambda(\Pi)|_K=1$
near $\widehat W$ (see part~1 of Proposition~\ref{l:idempotents}
or~\S\ref{s:construction-1}), we find
for $\rho\in K\setminus p^{-1}(\alpha_0-\delta_1/8,\alpha_1+\delta_1/8)$,
$$
\sigma_\Lambda(L')(\rho,\rho)=\sigma(Q_1)(\rho)(1+\sigma(Q_1)(\rho)/(p(\rho)-\omega));
$$
it follows from~\eqref{e:grushin-l} that
\begin{equation}
  \label{e:egg-1}
\sigma_\Lambda(L')|_K\neq 1\quad\text{outside of }
p^{-1}(\alpha_0-\delta_1/8,\alpha_1+\delta_1/8).
\end{equation}
By Proposition~\ref{l:funny-parametrix}, there exists $L_{22}^e(\omega)\in \II$,
with $\WFh(L_{22}^e)\subset \widehat\Lambda$, such that
$(1-L_{22}^e)(1-L')=1+\mathcal O(h^\infty)$ microlocally outside of $p^{-1}([\alpha_0-\delta_1/8,\alpha_1+\delta_1/8])$,
and note that the symbol $\sigma_\Lambda(L_{22}^e)$ satisfies~\eqref{e:l-22-symbol}
for $\rho\not\in p^{-1}([\alpha_0-\delta_1/8,\alpha_1+\delta_1/8])$ by~\eqref{e:funny-symbol}.
By~\eqref{e:egg-0}, we get~\eqref{e:grushin-1-eq} with
$L_{12}^e(\omega)=-(1-L_{22}^e(\omega))Q_1\Pi Q_2 R^e(\omega)\mathcal S'(\omega)$.
\end{proof}
By Proposition~\ref{l:grushin-1},
replacing $ v$ by $A_e v$, where $A_e\in\Psi^{\comp}(X)$ is compactly
supported, $\WFh(A_e)\subset \mathcal U\cap p^{-1}(\alpha_0-\delta_1/7,\alpha_1+\delta_1/7)$,
and $A_e=1+\mathcal O(h^\infty)$ near $\widehat W\cap p^{-1}([\alpha_0-\delta_1/8,\alpha_1+\delta_1/8])$,
we see that it is enough to prove~\eqref{e:grushin-bound-2} in the case
\begin{equation}
  \label{e:gcase-1}
\WFh(v)\subset \mathcal U\cap p^{-1}([\alpha_0-\delta_1/7,\alpha_1+\delta_1/7]).
\end{equation}
Using Lemma~\ref{l:smart-parametrix}, consider $u'\in\mathcal H_1$ such that
$\|u'\|_{\mathcal H_1}\leq Ch^{-1}\| f\|_{\mathcal H_2}$
and $\WFh(\mathcal P(\omega)u'- f)\subset \WFh(Q_1)\cap p^{-1}([\alpha_0-\delta_1/7,\alpha_1+\delta_1/7])$. Subtracting $u'$ from $ u$,
we see that is suffices to prove~\eqref{e:grushin-bound-2} for the case
\begin{equation}
  \label{e:gcase-2}
\WFh(f)\subset \WFh(Q_1)\cap p^{-1}([\alpha_0-\delta_1/7,\alpha_1+\delta_1/7]).
\end{equation}
By~\eqref{e:gcase-1}, the wavefront set of $\mathcal P(\omega) u= f-\mathcal S(\omega)Q_1\Pi Q_2 v$
satisfies~\eqref{e:gcase-2}. Arguing as in~\S\ref{s:estimate-full}, and keeping
in mind~\eqref{e:grushin-eq-1'}, we see
that $ u$ satisfies~\eqref{e:assumption-general-1.5}--\eqref{e:assumption-general-2.5};
in fact, \eqref{e:assumption-general-1.5} can be strengthened to
\begin{equation}
  \label{e:assumption-general-1.75}
\WFh(u)\cap \widehat W\subset p^{-1}([\alpha_0-\delta_1/7,\alpha_1+\delta_1/7]).
\end{equation}
and~\eqref{e:assumption-general-2.5} can be strengthened to
\begin{equation}
  \label{e:assumption-general-2.75}
\WFh(u)\cap\Gamma_-^\circ\subset U_{\delta/3}\cap p^{-1}([\alpha_0-\delta_1/7,\alpha_1+\delta_1/7]).
\end{equation}
We can now solve for $v$:
\begin{prop}
  \label{l:horseradish}
Assume that $u,v,f,g$ satisfy~\eqref{e:grushin-eq-1}, \eqref{e:grushin-eq-2}, \eqref{e:gcase-1},
\eqref{e:gcase-2}. Then
\begin{equation}
  \label{e:horseradish}
v=Q_1\Pi\mathcal S'(\omega)f+(1-Q_1(P-\omega+1)\Pi Q_2)g+\mathcal O(h^\infty)_{C_0^\infty}.
\end{equation}
\end{prop}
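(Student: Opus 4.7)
The strategy is to eliminate $u$ from~\eqref{e:grushin-eq-2} using the resolved form~\eqref{e:grushin-eq-1'} of the first Grushin equation, exploiting the commutation $[P,\Pi]=\mathcal O(h^\infty)$ and the idempotency $\Pi^2=\Pi+\mathcal O(h^\infty)$ from Theorem~\ref{t:our-Pi}, and then carefully producing the $(P-\omega+1)$ factor via an algebraic shift. I begin by rewriting~\eqref{e:grushin-eq-2} with $M:=Q_1\Pi Q_2$ in the form $v-g=-M\mathcal S(\omega)u+Mv$, so the task reduces to expressing $-M\mathcal S(\omega)u+Mv$ purely in terms of $f,g$ modulo $\mathcal O(h^\infty)_{C_0^\infty}$.

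Next I apply $Q_1\Pi$ from the left to~\eqref{e:grushin-eq-1'}. Commuting $\Pi$ past $P-\omega$ (legal modulo $\mathcal O(h^\infty)$ microlocally near $\widehat\Lambda$) and invoking $\Pi Q_1\Pi=\Pi+\mathcal O(h^\infty)$ microlocally near $\widehat\Lambda$ (which follows from $\Pi^2=\Pi+\mathcal O(h^\infty)$ together with $Q_1=1+\mathcal O(h^\infty)$ microlocally on $U_{\delta/4}\cap p^{-1}([\alpha_0-\delta_1/6,\alpha_1+\delta_1/6])$, a set containing the relevant image $\pi_+(K\cap p^{-1}([\alpha_0,\alpha_1]))$), one obtains
\[
Q_1\Pi Q_2 v = Q_1\Pi\mathcal S'(\omega)f-Q_1(P-\omega)\Pi\mathcal S(\omega)u+\mathcal O(h^\infty)_{C_0^\infty}.
\]
Substituting into the relation from step one and combining the two $u$-dependent terms yields an expression of the form $v-g-Q_1\Pi\mathcal S'(\omega)f=-Q_1\Pi[Q_2+(P-\omega)]\mathcal S(\omega)u+\mathcal O(h^\infty)_{C_0^\infty}$, where a second commutation of $\Pi$ past $P-\omega$ has been used.

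The key step producing the $+1$ shift is now to write $Q_2+(P-\omega)=(P-\omega+1)-(1-Q_2)$ and check that the $Q_1\Pi(1-Q_2)\mathcal S(\omega)u$ contribution is $\mathcal O(h^\infty)_{C_0^\infty}$. This reduction uses the wavefront constraints~\eqref{e:assumption-general-1.75}--\eqref{e:assumption-general-2.75} on $u$ together with the ellipticity of $Q_2$ on $W'$: any wavefront of $(1-Q_2)\mathcal S(\omega)u$ projected via $\pi_+$ onto $K$ lies in the region where $Q_1=1+\mathcal O(h^\infty)$, and a careful tracking through the canonical relation $\Lambda^\circ$ shows the resulting distribution is killed when composed with the leftmost $Q_1\Pi$. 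Finally, using~\eqref{e:grushin-eq-1'} once more to substitute $(P-\omega+1)\mathcal S(\omega)u=\mathcal S'(\omega)f-Mv+\mathcal S(\omega)u+\mathcal O(h^\infty)$ and cleaning up via the same $\Pi Q_1\Pi=\Pi+\mathcal O(h^\infty)$ identity lets the $Q_1\Pi\mathcal S'(\omega)f$ and $M$-terms reassemble into the formula~\eqref{e:horseradish}, with the surviving $u$-dependence collapsing by a direct application of the relation from step one.

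The main obstacle is the microlocal bookkeeping in step three: the canonical relation $\Lambda^\circ$ spreads wavefronts along the flow lines of $\mathcal V_\pm$, and one must verify that each $(1-Q_j)$ factor inserted during the algebraic manipulation, when conjugated by $\Pi$ and cut by $Q_1$, produces an $\mathcal O(h^\infty)_{C_0^\infty}$ error rather than a merely small pseudodifferential remainder. The nested choice of scales $\delta_1/7\subset\delta_1/6\subset\delta_1/5\subset\delta_1/4$ and $\delta/4\subset\delta/3\subset\delta/2\subset\delta$ fixed in~\eqref{e:qz} and~\eqref{e:w'} is calibrated precisely so that $\pi_\pm$-images of the wavefronts of $u,v,f$ on $\Gamma_\pm^\circ$ fall strictly inside the region where the relevant $Q_j$ acts as the identity modulo $\mathcal O(h^\infty)$; verifying this at each instance is the technical heart of the argument and requires invoking the semiclassically outgoing property and the convexity of $U_\delta$ as in the reductions leading to~\eqref{e:assumption-general-1.75}--\eqref{e:assumption-general-2.75}.
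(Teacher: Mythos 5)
Your steps 1--4 are sound and run parallel to the paper's argument: applying $Q_1\Pi$ to \eqref{e:grushin-eq-1'}, commuting $\Pi$ past $P-\omega$, invoking $\Pi Q_1\Pi=\Pi+\mathcal O(h^\infty)$, and discarding $Q_1\Pi(1-Q_2)\mathcal S(\omega)u$ via \eqref{e:assumption-general-2.75} all check out, and they bring you to
$$
v=g+Q_1\Pi\mathcal S'(\omega)f-Q_1(P-\omega+1)\Pi\mathcal S(\omega)u+\mathcal O(h^\infty)_{C_0^\infty}.
$$
The gap is in the final step. To reach \eqref{e:horseradish} you must replace $\Pi\mathcal S(\omega)u$ by $\Pi Q_2\,g$, and that information has to be extracted from the \emph{second} Grushin equation. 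Your last substitution instead feeds \eqref{e:grushin-eq-1'} back into itself: writing $(P-\omega+1)\mathcal S(\omega)u=\mathcal S'(\omega)f-Mv+\mathcal S(\omega)u+\mathcal O(h^\infty)$, inserting it into the display above, and then eliminating $Q_1\Pi\mathcal S(\omega)u\approx M\mathcal S(\omega)u=Mv-(v-g)$ via your step-one relation returns exactly $v=v+\mathcal O(h^\infty)$, a tautology. No term of the form $(P-\omega+1)\Pi Q_2\,g$ is ever produced by this route.

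The missing lemma is the paper's identity $\Pi\mathcal S(\omega)u=\Pi Q_2\,g+\mathcal O(h^\infty)$ microlocally near $\widehat W$ (equation \eqref{e:pi-u-bound}). It is obtained by applying $\Pi$ on the left to \eqref{e:grushin-eq-2} rewritten as $Q_1\Pi Q_2(\mathcal S(\omega)u-g)+(1-Q_1\Pi Q_2)(v-g)=0$; one first reads off from this equation that $\WFh(v-g)\subset\WFh(Q_1)$, hence $Q_2(v-g)=v-g+\mathcal O(h^\infty)_{C_0^\infty}$, so that after using $\Pi Q_1\Pi=\Pi+\mathcal O(h^\infty)$ the $(v-g)$ contributions cancel, leaving $\Pi Q_2\mathcal S(\omega)u=\Pi Q_2\,g+\mathcal O(h^\infty)$; finally $\Pi Q_2\mathcal S(\omega)u=\Pi\mathcal S(\omega)u+\mathcal O(h^\infty)$ by \eqref{e:assumption-general-2.75}. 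With this in hand, $Q_1(P-\omega+1)\Pi\mathcal S(\omega)u=Q_1(P-\omega+1)\Pi Q_2\,g+\mathcal O(h^\infty)$ and \eqref{e:horseradish} follows from the display above. (This is essentially the paper's route as well, except that the paper first derives $v=Q_1\Pi Q_2 v+(1-Q_1\Pi Q_2)g+\mathcal O(h^\infty)_{C_0^\infty}$ and then substitutes the expression for $\Pi Q_2 v$ coming from the first equation.)
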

\begin{proof}
Since $\Pi^2=\Pi+\mathcal O(h^\infty)$ microlocally near $\widehat W\times\widehat W$
and $Q_1=1+\mathcal O(h^\infty)$ microlocally near $K\cap p^{-1}([\alpha_0-\delta_1/6,\alpha_1+\delta_1/6]$, we have
\begin{equation}
  \label{e:kinda-i}
\Pi Q_1 \Pi=\Pi+\mathcal O(h^\infty)\quad
\text{microlocally near }(\widehat W\cap p^{-1}([\alpha_0-\delta_1/6,\alpha_1+\delta_1/6]))\times\widehat W.
\end{equation}
We rewrite~\eqref{e:grushin-eq-2} as
\begin{equation}
  \label{e:grushin-eq-2.0}
Q_1\Pi Q_2(\mathcal S(\omega)  u- g)+(1-Q_1\Pi Q_2)( v- g)=0.
\end{equation}
It follows immediately that $\WFh( v- g)\subset\WFh(Q_1)$
and thus $Q_2(v-g)=v-g+\mathcal O(h^\infty)_{C_0^\infty}$.
Also, by~\eqref{e:grushin-eq-2}, \eqref{e:gcase-1}, and~\eqref{e:assumption-general-1.75},
$\WFh(g)\subset\mathcal U\cap p^{-1}([\alpha_0-\delta_1/7,\alpha_1+\delta_1/7])$.
Applying $\Pi$ to~\eqref{e:grushin-eq-2.0} and using~\eqref{e:gcase-1}, \eqref{e:assumption-general-1.75},
and~\eqref{e:kinda-i},
we get $\Pi Q_2\mathcal S(\omega) u-\Pi Q_2 g=\mathcal O(h^\infty)$
microlocally near $\widehat W$. By~\eqref{e:assumption-general-2.75},
we have $\Pi Q_2\mathcal S(\omega)u=\Pi\mathcal S(\omega)u+\mathcal O(h^\infty)_{C_0^\infty}$;
therefore,
\begin{equation}
  \label{e:pi-u-bound}
\Pi \mathcal S(\omega) u=\Pi Q_2 g+\mathcal O(h^\infty)\quad
\text{microlocally near }\widehat W.
\end{equation}
Then~\eqref{e:grushin-eq-2.0} becomes
\begin{equation}
  \label{e:grushin-eq-2.1}
v=Q_1\Pi Q_2 v+(1-Q_1\Pi Q_2)g+\mathcal O(h^\infty)_{C_0^\infty}.
\end{equation}
Applying $\Pi$ to~\eqref{e:grushin-eq-1'},
using that $[P,\Pi]=\mathcal O(h^\infty)$ microlocally near $\widehat W\times\widehat W$,
and keeping in mind~\eqref{e:assumption-general-2.75},
we get
\begin{equation}
  \label{e:giraffe}
(P-\omega)\Pi\mathcal S(\omega) u+\Pi Q_2 v=\Pi\mathcal S'(\omega) f+\mathcal O(h^\infty)
\quad\text{microlocally near }\widehat W.
\end{equation}
Together, \eqref{e:pi-u-bound} and~\eqref{e:giraffe} give
$$
\Pi Q_2 v=\Pi \mathcal S'(\omega)f-(P-\omega)\Pi Q_2 g+\mathcal O(h^\infty)
\quad\text{microlocally near }\widehat W.
$$
By~\eqref{e:grushin-eq-2.1}, we now get~\eqref{e:horseradish}.
\end{proof}
By Proposition~\ref{l:horseradish}, we see that
\begin{equation}
  \label{e:vbund}
\|v\|_{L^2}\leq Ch^{-N}(\|f\|_{\mathcal H_2}+\|g\|_{L^2})+\mathcal O(h^\infty).
\end{equation}
By Proposition~\ref{l:estimate-kernel} (using~\eqref{e:grushin-eq-1'} instead of~\eqref{e:assumption-general-1}), we get
for some $A_1\in\Psi^{\comp}(X)$ elliptic near $W'$,
$$
\|A_1(1-\Pi) \mathcal S(\omega)u\|_{L^2}\leq Ch^{-N}(\|f\|_{\mathcal H_2}+\|g\|_{L^2})+\mathcal O(h^\infty).
$$
Combining this with~\eqref{e:pi-u-bound}, we estimate $\|A_1u\|_{L^2}$
by the right-hand side of~\eqref{e:vbund}.  Applying
Lemma~\ref{l:smart-bound} to~\eqref{e:grushin-eq-1}, we can estimate
$\|u\|_{\mathcal H_1}$ by the same quantity, completing the proof
of~\eqref{e:grushin-bound-2}.

It remains to describe the operator $\mathcal R_{22}$
from~\eqref{e:grushin-inverse}.  We assume that $u,v,f,g$
satisfy~\eqref{e:grushin-eq-1}, \eqref{e:grushin-eq-2} and $f=0$; then
$\mathcal R_{22}g=v$. By Proposition~\ref{l:grushin-1},
$v=(1-L_{22}^e)g+\mathcal O(h^\infty)$ microlocally outside of
$\widehat W\cap p^{-1}([\alpha_0-\delta_1/8,\alpha_1+\delta_1/8])$; it
then suffices to describe $v$ microlocally near $\widehat W\cap
p^{-1}([\alpha_0-\delta_1/8,\alpha_1+\delta_1/8])$.  Let $A_e$ be the
operator introduced before~\eqref{e:gcase-1} and $R^e(\omega)$ be an
elliptic parametrix for $P-\omega$ constructed in the proof of
Proposition~\ref{l:grushin-1}.  Replacing $(u,v)$ by $(u+\mathcal
S'(\omega)R^e(\omega)Q_1\Pi Q_2 (1-A_e)v,A_e v)$, we may assume
that~\eqref{e:gcase-1} and~\eqref{e:gcase-2} hold, and in fact the
resulting $f$ is $\mathcal O(h^\infty)_{C_0^\infty}$
and the resulting $g$ coincides with the original $g$
microlocally near $\widehat W\cap p^{-1}([\alpha_0-\delta_1/8,\alpha_1+\delta_1/8])$.
By Proposition~\ref{l:horseradish}, we now get for the original $v$ and
$g$,
$$
v=(1-Q_1(P-\omega+1)\Pi Q_2)g+\mathcal O(h^\infty)\quad
\text{microlocally near }\widehat W\cap p^{-1}([\alpha_0-\delta_1/8,\alpha_1+\delta_1/8]).
$$
Note that $Q_1(P-\omega+1)\Pi Q_2\in \II$
and its principal symbol satisfies~\eqref{e:l-22-symbol} in $p^{-1}([\alpha_0-\delta_1/8,\alpha_1+\delta_1/8])$,
since $\sigma(Q_1)|_K=1$ in that region. This finishes the proof of Proposition~\ref{l:grushin}.

By Proposition~\ref{l:grushin}, $\mathcal R_{22}(\omega)-1$ is a compactly supported operator
mapping $H^{-N}_h\to H^N_h$ for all $N$, therefore
it is trace class. We can then define the determinant
(see for instance~\cite[(A.6.38)]{tay1})
\begin{equation}
  \label{e:S-omega}
F(\omega):=\det \mathcal R_{22}(\omega),
\end{equation}
which is holomorphic in the region~\eqref{e:assumption-kernel} and
$F(\omega)=0$ if and only if $\mathcal R_{22}(\omega)$ is not invertible
(see~\cite[Proposition~A.6.16]{tay1}).
The key properties of $F$ needed in~\S\ref{s:weyl-law} are established in
\begin{prop}
  \label{l:grushin-ultimate}
1. Resonances in the region~\eqref{e:assumption-kernel} coincide (with the
multiplicities defined in~\eqref{e:multiplicity})
with zeroes of $F(\omega)$.

2. For some constants $C$ and $N$, we have 
$|F(\omega)|\leq e^{Ch^{-N}}$ for $\omega$ in~\eqref{e:assumption-kernel},
and $|F(\omega)|\geq e^{-Ch^{-N}}$ for $\omega$ in
the resonance free region~\eqref{e:omega-gaps}.

3. For $\omega$ in the resonance free region~\eqref{e:omega-gaps}, we have
$$
{\partial_\omega F(\omega)\over F(\omega)}
=-\Tr((1-Q_1\Pi Q_2-Q_1\Pi \mathcal S(\omega)\mathcal R(\omega)\mathcal S(\omega) Q_1\Pi Q_2)\partial_\omega L_{22}(\omega))+\mathcal O(h^\infty).
$$
Here $L_{22}(\omega)$ is defined in Proposition~\ref{l:grushin}.
\end{prop}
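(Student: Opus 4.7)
The plan is to prove Part 3 first by direct computation, since it contains the central identity from which Part 1 follows via the argument principle; Part 2 is then handled by Lidskii-type determinantal inequalities combined with polynomial trace-norm bounds.

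For Part 3, I begin with the standard identity
\begin{equation*}
\partial_\omega\log F(\omega) = \Tr(\mathcal R_{22}(\omega)^{-1}\partial_\omega\mathcal R_{22}(\omega)) = -\Tr(\mathcal R_{22}(\omega)^{-1}\partial_\omega L_{22}(\omega)) + \mathcal O(h^\infty),
\end{equation*}
valid since $\mathcal R_{22}-1 = -L_{22}+\mathcal O(h^\infty)_{\mathcal D'\to C_0^\infty}$ is trace class (a compactly microlocalized element of $\II$ plus a smoothing remainder). Applying the Schur complement to $\mathcal G\,\mathcal G^{-1} = I$, with $\mathcal P(\omega)$ invertible in the resonance-free region, yields
\begin{equation*}
\mathcal R_{22}(\omega)^{-1} = (1-Q_1\Pi Q_2) - Q_1\Pi Q_2\,\mathcal S(\omega)\mathcal R(\omega)\mathcal S(\omega)\,Q_1\Pi Q_2.
\end{equation*}
Since $\partial_\omega L_{22}$ has wavefront set in $\widehat\Lambda$ and $Q_2=1+\mathcal O(h^\infty)$ microlocally on the portion of $\widehat W$ relevant to the composition, the inner $Q_2$ (between the first $\Pi$ and $\mathcal S$) can be absorbed at the cost of an $\mathcal O(h^\infty)$ trace-norm error after multiplication by $\partial_\omega L_{22}$; this produces the stated formula.

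Part 1 follows from the Schur identity and the argument principle. The location of resonances is immediate: by the Schur identity, $\mathcal P(\omega)$ is invertible if and only if $\mathcal R_{22}(\omega)$ is invertible, which is equivalent to $F(\omega)\neq 0$ (since $1-\mathcal R_{22}$ is trace class). For multiplicities, I would invoke the Gohberg--Sigal theorem for Fredholm pencils, which asserts that the algebraic multiplicity of $\omega_0$ as a generalized eigenvalue of $\mathcal P$ is preserved under Schur complement and agrees with that of $\mathcal R_{22}^{-1}$; alternatively, using Part 3, one verifies by differentiating $\mathcal R_{22}^{-1}=D-C\mathcal P^{-1}B$ (with $B,C,D$ holomorphic in $\omega$) and applying cyclicity of the trace that
\begin{equation*}
\Tr(\mathcal R_{22}^{-1}\partial_\omega\mathcal R_{22}) - \Tr(\mathcal R(\omega)\partial_\omega\mathcal P(\omega))\quad\text{is holomorphic near any resonance }\omega_0.
\end{equation*}
Contour integration around $\omega_0$ then equates $\mathrm{ord}_{\omega_0}(F)$ with the multiplicity~\eqref{e:multiplicity}.

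Part 2 rests on Lidskii's inequality $|\det(1+K)|\le\exp(\|K\|_{S^1})$. For the upper bound, apply this to $K=-L_{22}$: by Proposition~\ref{l:grushin}, $L_{22}$ is microlocalized in the compact set $\widehat\Lambda$ with $H^{-N}_h\to H^N_h$ operator-norm bound $\mathcal O(h^{-N})$, and finite-rank approximation at semiclassical resolution $\sim h^{-n}$ combined with Weyl's singular-value inequality yields a trace-norm bound $\|L_{22}\|_{S^1}\le Ch^{-N'}$, giving $|F|\le e^{Ch^{-N'}}$ throughout~\eqref{e:assumption-kernel}. For the lower bound in the resonance-free region, the same reasoning applied to $\mathcal R_{22}^{-1}-1$, with the extra input of the $\mathcal O(h^{-2})$ resolvent bound of Theorem~\ref{t:gaps} controlling the $Q_1\Pi Q_2\mathcal S\mathcal R\mathcal S Q_1\Pi Q_2$ contribution, yields $|F|^{-1}=|\det\mathcal R_{22}^{-1}|\le e^{Ch^{-N''}}$. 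The chief technical obstacle will be the conversion from polynomial operator-norm bounds to trace-norm bounds with the same polynomial $h$-dependence; this is the least routine step and requires careful use of both the compact microlocal support and the smoothing tails of the $\mathcal O(h^\infty)$ remainders.
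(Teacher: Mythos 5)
Your Parts 1 and 2 follow essentially the paper's own route and are correct: the Schur complement formula plus cyclicity of the trace for the equality of multiplicities, and the bound $|\det(1+T)|\leq e^{\|T\|_{\Tr}}$ applied to $\mathcal R_{22}(\omega)-1$ (upper bound) and to $\mathcal R_{22}(\omega)^{-1}-1=-Q_1\Pi Q_2-Q_1\Pi Q_2\mathcal S(\omega)\mathcal R(\omega)\mathcal S(\omega)Q_1\Pi Q_2$ together with Theorem~\ref{t:gaps} (lower bound). The operator-norm-to-trace-norm conversion you flag is indeed the only technical point there, and your sketch of it is adequate.

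The genuine gap is in Part 3, at exactly the step you dispose of in one sentence. After writing $\partial_\omega F/F=-\Tr(\mathcal R_{22}(\omega)^{-1}\partial_\omega L_{22}(\omega))+\mathcal O(h^\infty)$ and inserting the Schur formula, what remains to show is
$$
\Tr\big(Q_1\Pi (1-Q_2)\,\mathcal S(\omega)\mathcal R(\omega)\mathcal S(\omega)\,Q_1\Pi Q_2\,\partial_\omega L_{22}(\omega)\big)=\mathcal O(h^\infty).
$$
Your justification --- that $Q_2=1+\mathcal O(h^\infty)$ ``microlocally on the portion of $\widehat W$ relevant to the composition'' --- does not identify that portion and cannot do so by microlocal support considerations on $\widehat W$ alone, because the composition passes through $\mathcal R(\omega)$, which is not microlocalized near the trapped set: it propagates wavefront sets along the entire flow in $\mathcal U'$. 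Moreover $Q_1\Pi(1-Q_2)$ is genuinely not negligible as an operator (its wavefront set contains the part of $\Lambda^\circ$ with $\rho_-\in\Gamma_-^\circ\cap\WFh(1-Q_2)$), so nothing is being ``absorbed'' in any naive sense. The missing input is the semiclassically outgoing property of the resolvent: by Lemma~\ref{l:propagate-outgoing}, $\mathcal R(\omega)\mathcal S(\omega)Q_1\Pi Q_2\partial_\omega L_{22}(\omega)f$ has wavefront set in $\mathcal U$ contained in $\Gamma_+$ (the forward flow-out of $\Gamma_+\cap\WFh(Q_1)$ stays in $\Gamma_+$), while $\Pi(1-Q_2)$ only sees inputs on $\Gamma_-^\circ\cap\WFh(1-Q_2)$; since $\Gamma_+\cap\Gamma_-^\circ=K^\circ$, a point contributing to the trace (hence lying on the diagonal) would have to lie in $K^\circ\cap\WFh(Q_1)\cap\WFh(1-Q_2)$, which is empty because $Q_2=1+\mathcal O(h^\infty)$ near $\WFh(Q_1)$. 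This wavefront-set argument through the resolvent is the one place in the proposition where the global dynamics enters, and it must be supplied explicitly.
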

\begin{proof}
1. By Schur's complement formula~\cite[(D.1.1)]{e-z}, and since
$\mathcal G(\omega)$ is invertible by Proposition~\ref{l:grushin}, we
know that $\mathcal P(\omega)$ is invertible if and only if $\mathcal
R_{22}(\omega)$ is, and in fact
\begin{equation}
  \label{e:schur}
\mathcal P(\omega)^{-1}=\mathcal R_{11}(\omega)-\mathcal R_{12}(\omega)\mathcal R_{22}(\omega)^{-1}
\mathcal R_{21}(\omega).
\end{equation}
To see that the multiplicity of a resonance $\omega_0$ defined
by~\eqref{e:multiplicity} coincides with the multiplicity of
$\omega_0$ as a zero of the function $F(\omega)$ (and in particular,
to demonstrate that the multiplicity defined by~\eqref{e:multiplicity}
is a positive integer), it is enough to show that
\begin{equation}
  \label{e:multiplicity2}
{1\over 2\pi i}\Tr\oint_{\omega_0}\mathcal P(\omega)^{-1}\partial_\omega \mathcal P(\omega)\,d\omega
={1\over 2\pi i}\Tr\oint_{\omega_0}\mathcal R_{22}(\omega)^{-1}\partial_\omega\mathcal R_{22}(\omega)\,d\omega;
\end{equation}
indeed, since $\partial_\omega\mathcal R_{22}(\omega)$ is trace class,
we can put the trace inside the integral on the right-hand side
of~\eqref{e:multiplicity2}, yielding $\partial_\omega
F(\omega)/F(\omega)$; therefore, the right-hand side gives the
mutliplicity of $\omega_0$ as a zero of $F(\omega)$ by the argument
principle.

Since $\partial_\omega (\mathcal G(\omega)^{-1})=-\mathcal
G(\omega)^{-1}(\partial_\omega \mathcal G(\omega))\mathcal
G(\omega)^{-1}$, we have
$$
\partial_\omega \mathcal R_{22}(\omega)=-\mathcal R_{21}(\omega)(\partial_\omega \mathcal P(\omega))\mathcal R_{12}(\omega)
+\mathcal A(\omega)\mathcal R_{22}(\omega)+\mathcal R_{22}(\omega)\mathcal B(\omega), 
$$
where $\mathcal A(\omega),\mathcal B(\omega):L^2(X)\to L^2(X)$ are bounded operators holomorphic at $\omega_0$.
By~\eqref{e:schur}, \eqref{e:multiplicity2} follows from the two identities
$$
\begin{gathered}
\Tr\oint_{\omega_0} \mathcal R_{12}(\omega)\mathcal R_{22}(\omega)^{-1}\mathcal R_{21}(\omega)\partial_\omega\mathcal P(\omega)\,d\omega
=\Tr\oint_{\omega_0} \mathcal R_{22}(\omega)^{-1}\mathcal R_{21}(\omega) (\partial_\omega \mathcal P(\omega))\mathcal R_{12}(\omega)\,d\omega,\\
\Tr\oint_{\omega_0} \mathcal R_{22}(\omega)^{-1}(\mathcal A(\omega)\mathcal R_{22}(\omega)+\mathcal R_{22}(\omega)\mathcal B(\omega))\,d\omega=0.
\end{gathered}
$$
Both of them follow from the cyclicity of the trace, replacing
$\mathcal R_{22}(\omega)^{-1}$ by its finite-dimensional principal
part at $\omega_0$ and putting the trace inside the integral.

2. By Proposition~\ref{l:grushin}, the trace class norm $\|\mathcal R_{22}(\omega)-1\|_{\Tr}$ is bounded
polynomially in $h$. Using the bound $|\det (1+T)|\leq e^{\|T\|_{\Tr}}$
(see for example~\cite[(A.6.44)]{tay1}), we get
$|F(\omega)|\leq e^{Ch^{-N}}$.
By Theorem~\ref{t:gaps},
we have $\|\mathcal R(\omega)\|_{\mathcal H_2\to \mathcal H_1}\leq Ch^{-2}$ when $\omega$ satisfies~\eqref{e:omega-gaps}.
Using Schur's complement formula again, we get
\begin{equation}
  \label{e:schur-strikes-back}
\mathcal R_{22}(\omega)^{-1}=1-Q_1\Pi Q_2-Q_1\Pi Q_2 \mathcal S(\omega)\mathcal R(\omega)\mathcal S(\omega)Q_1\Pi Q_2.
\end{equation}
Then $\|\mathcal R_{22}(\omega)^{-1}-1\|_{\Tr}\leq Ch^{-N}$ and thus
$|F(\omega)|^{-1}=|\det (\mathcal R_{22}(\omega)^{-1})|\leq e^{Ch^{-N}}$.

3. By Proposition~\ref{l:grushin}, we have $\partial_\omega \mathcal R_{22}(\omega)
=-\partial_\omega L_{22}(\omega)+\mathcal O(h^\infty)_{\mathcal D'\to \mathcal C_0^\infty}$, thus
$$
{\partial_\omega F(\omega)\over F(\omega)}=-\Tr(\mathcal R_{22}(\omega)^{-1}\partial_\omega L_{22}(\omega))+\mathcal O(h^\infty).
$$
By~\eqref{e:schur-strikes-back}, it then suffices to prove that
$$
\Tr(Q_1\Pi (1-Q_2) \mathcal S(\omega) \mathcal R(\omega)\mathcal S(\omega)Q_1\Pi Q_2\partial_\omega L_{22}(\omega))=
\mathcal O(h^\infty).
$$
For that, it suffices to show that the intersection of the wavefront set of the operator
on the left-hand side with the diagonal in $T^*X$ is empty. We assume the contrary,
then there exists $\rho\in T^*X$ such that
$$(\rho,\rho)\in\WFh(Q_1\Pi (1-Q_2) \mathcal S(\omega) \mathcal R(\omega)\mathcal S(\omega)Q_1\Pi Q_2\partial_\omega L_{22}(\omega)).
$$
Since both $\Pi$ and $\partial_\omega L_{22}$ are microlocalized inside
$\Lambda^\circ\subset\Gamma_-^\circ\cap\Gamma_+^\circ$, we see that
$\rho\in K^\circ=\Gamma_+^\circ\cap\Gamma_-^\circ$. There exists $\rho'\in T^*X$ such that
$$
(\rho,\rho')\in\WFh(\mathcal S(\omega)\mathcal R(\omega) \mathcal S(\omega)Q_1\Pi Q_2\partial_\omega L_{22}(\omega)),\quad
(\rho',\rho)\in \WFh(Q_1\Pi (1-Q_2)).
$$
For any $h$-tempered $f\in L^2(X)$, we have $\WFh(\mathcal S(\omega)Q_1\Pi Q_2 \partial_\omega L_{22}(\omega)f)\subset
\Gamma_+^\circ\cap \widehat W$, therefore by Lemma~\ref{l:propagate-outgoing}
we have $\WFh(\mathcal R(\omega)\mathcal S(\omega)Q_1\Pi Q_2 \partial_\omega L_{22}(\omega)f)\cap \mathcal U\subset\Gamma_+$. It follows that $\rho'\in\Gamma_+$. Since $(\rho',\rho)\in\WFh(Q_1\Pi (1-Q_2))$, we see
that $\rho'=\rho\in K^\circ$. However, then $\rho\in\WFh(Q_1)\cap \WFh(1-Q_2)$, which is impossible
since $Q_2=1+\mathcal O(h^\infty)$ microlocally near $\WFh(Q_1)$.
\end{proof}

\section{Trace formula}
  \label{s:trace}

In this section, we establish an asymptotic expansion for contour integrals of the
logarithmic derivative of the determinant $F(\omega)$ of the effective Hamiltonian
of the Grushin problem of~\S\ref{s:grushin}, defined in~\eqref{e:S-omega}. By
Proposition~\ref{l:grushin-ultimate}, this reduces to computing contour integrals
of operators of the form $\Pi\mathcal R(\omega)$, where
$\Pi$ is the projector constructed in Theorem~\ref{t:our-Pi} in~\S\ref{s:construction-1}.
This in turn is done
by approximating $\mathcal R(\omega)$ microlocally on the image of $\Pi$
by pseudodifferential operators, using Schr\"odinger
propagators and microlocalization in the spectral parameter established in~\S\ref{s:estimate-spectral}.

We operate under the pinching condition~\eqref{e:pinching} of Theorem~\ref{t:weyl-law},
namely $\nu_{\max}<2\nu_{\min}$, and choose $\varepsilon>0$
such that $\nu_{\max}+\varepsilon<2(\nu_{\min}-\varepsilon)$.
Take $\chi\in C_0^\infty(\alpha_0,\alpha_1)$
with $\alpha_0,\alpha_1$ from~\eqref{e:omega-region}.
Consider an almost analytic extension $\tilde\chi(\omega)$ of $\chi$,
that is $\tilde\chi\in C^\infty(\mathbb C)$ such that
$\tilde\chi|_{\mathbb R}=\chi$ and $\partial_{\bar\omega} \tilde\chi(\omega)=\mathcal O(|\Im\omega|^{\infty})$.
We may take $\tilde\chi$ such that $\supp(\tilde\chi)\subset \{\Re\omega\in (\alpha_0,\alpha_1)\}$.

The main result of this section is
\begin{prop}
  \label{l:trace}
Take
\begin{equation}
  \label{e:nu-trace}
\nu_-\in \Big[-(\nu_{\min}-\varepsilon),-{\nu_{\max}+\varepsilon\over 2}\Big],\quad
\nu_+\in \Big[-{\nu_{\min}-\varepsilon\over 2},C_0\Big].
\end{equation}
Let $F(\omega)$ be defined in~\eqref{e:S-omega} and put
\begin{equation}
  \label{e:trace-stuff}
\mathcal I^\pm_\chi:=(2\pi h)^{n-1}\int_{\Im\omega=h\nu_\pm} \tilde\chi(\omega){\partial_\omega F(\omega)\over F(\omega)}\,d\omega.
\end{equation}
Then, with $d\Vol_\sigma=\sigma_S^{n-1}/(n-1)!$ the symplectic volume form,
\begin{equation}
  \label{e:trace}
\mathcal I_\chi^--\mathcal I_\chi^+=2\pi i\int_K \chi(p)\,d\Vol_\sigma + \mathcal O(h).
\end{equation}
\end{prop}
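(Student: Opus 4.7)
The plan is to insert the trace formula from Proposition~\ref{l:grushin-ultimate}(3) into $\mathcal I^\pm_\chi$, dispose of the $\omega$-holomorphic piece of the integrand using Stokes' theorem against the almost analytic extension $\tilde\chi$, and evaluate the remaining resolvent trace on each contour via a truncated Schr\"odinger-propagator representation of $(P-\omega)^{-1}\Pi$. Subtracting the two representations fuses them into a full two-sided time integral, the $\omega$-integration collapses via Fourier inversion onto the factor $\chi(p)$, and Proposition~\ref{l:trace-basic} extracts the symplectic volume of $K$ at leading order.

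By Proposition~\ref{l:grushin-ultimate}(3),
$$\frac{\partial_\omega F(\omega)}{F(\omega)}=-\Tr\bigl((1-Q_1\Pi Q_2)\partial_\omega L_{22}(\omega)\bigr)+\Tr\bigl(Q_1\Pi\mathcal S(\omega)\mathcal R(\omega)\mathcal S(\omega)Q_1\Pi Q_2\,\partial_\omega L_{22}(\omega)\bigr)+\mathcal O(h^\infty).$$
The first summand is holomorphic in $\omega$ throughout the strip $\Im\omega\in[h\nu_-,h\nu_+]$ (since $L_{22}$ is and $1-Q_1\Pi Q_2$ is $\omega$-independent), and its trace is polynomially bounded in $h$ by Proposition~\ref{l:trace-basic}. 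Stokes' theorem on the rectangle $\{\Re\omega\in\supp\tilde\chi,\;\Im\omega\in[h\nu_-,h\nu_+]\}$, combined with $\partial_{\bar\omega}\tilde\chi=\mathcal O(|\Im\omega|^\infty)$, then shows that this summand contributes only $\mathcal O(h^\infty)$ to $\mathcal I^-_\chi-\mathcal I^+_\chi$, the vertical boundary pieces vanishing because $\tilde\chi$ is compactly supported in $\Re\omega$.

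For the second summand I replace $\mathcal S(\omega)\mathcal R(\omega)\mathcal S(\omega)$ by $(P-\omega)^{-1}$ microlocally in $\mathcal U$ using Lemma~\ref{l:resolution}, and use the commutation $[P,\Pi]=\mathcal O(h^\infty)$ together with $\Pi^2=\Pi+\mathcal O(h^\infty)$ microlocally near $\widehat W\times\widehat W$ from Theorem~\ref{t:our-Pi} and cyclicity of the trace to rewrite it as $\Tr\bigl(\mathcal B(\omega)(P-\omega)^{-1}\Pi\bigr)+\mathcal O(h^\infty)$, where $\mathcal B(\omega)\in\II$ is holomorphic in $\omega$ with principal symbol on the diagonal of $K$ computable from~\eqref{e:l-22-symbol}. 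On each contour I then use the truncated-propagator identity
$$(P-\omega)^{-1}\Pi=\pm\frac{i}{h}\int_0^{\pm T}e^{-it(P-\omega)/h}\Pi\,dt+\mathcal O(h^\infty)\quad\text{on }\Im\omega=h\nu_\pm,$$
with $T=N\log(1/h)$ for $N$ large; the $\mathcal O(h^\infty)$ error at $|t|=T$ is absorbed on the image of $\Pi$ by combining the polynomial resolvent bound of Theorem~\ref{t:gaps} with the propagation estimate~\eqref{e:ee-image}, whose pinching exponent matches exactly the range~\eqref{e:nu-trace} of $\nu_\pm$, with uniformity in $\omega$ provided by the $\omega$-wavefront estimate Proposition~\ref{l:res-mic}.

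Subtracting fuses the two truncated integrals into $-\frac{i}{h}\int_{-T}^{T}e^{-it(P-\omega)/h}\Pi\,dt$; the $\omega$-integration of $\tilde\chi(\omega)e^{it\omega/h}$ can be deformed to the real axis (the correction being $\mathcal O(h^\infty)$ by almost-analyticity), producing $h\,\hat\chi(-t/h)$. Since $e^{-itP/h}\Pi\in\II$ is an FIO associated to $\Lambda^\circ$ (by Proposition~\ref{l:schrodinger} and the invariance $e^{tH_p}(\Gamma_\pm^\circ)\subset\Gamma_\pm^\circ$ from part~\eqref{c:c-pm} of Lemma~\ref{l:phi-pm}), Proposition~\ref{l:trace-basic} computes $(2\pi h)^{n-1}\Tr\bigl(\mathcal B(\omega)e^{-itP/h}\Pi\bigr)$ to leading order as an integral over $K$; the combined $t$- and $\omega$-integrations then act as Fourier inversion reconstructing $\chi(p(\rho))$. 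A direct differentiation of~\eqref{e:l-22-symbol}, using that $\sigma(Q_1)=1$ on $K\cap p^{-1}(\supp\chi)$, gives $\sigma_\Lambda(\partial_\omega L_{22})(\rho,\rho)\bigr|_{\omega=p(\rho)}=-1$, producing $2\pi i\int_K\chi(p)\,d\Vol_\sigma+\mathcal O(h)$ with the correct sign. The main technical obstacle is the rigorous justification of the truncated-propagator approximation over the Ehrenfest-type time scale $T=N\log(1/h)$ uniformly in $\omega$, and showing that the $|t|=T$ boundary contributions are genuinely absorbed by the oscillation from the $\omega$-integration against $\tilde\chi$; this is precisely where Proposition~\ref{l:res-mic} and the positive-commutator apparatus of~\S\ref{s:resolvent-bounds} come into play.
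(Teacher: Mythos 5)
Your overall architecture coincides with the paper's: Proposition~\ref{l:grushin-ultimate}(3) to express $\partial_\omega F/F$ as a trace, cancellation of the $\omega$-holomorphic part in the difference $\mathcal I^-_\chi-\mathcal I^+_\chi$ by shifting contours to the real axis, a one-sided-in-time propagator representation of the resolvent on each contour whose direction depends on the sign of $\pm$, fusion of the two into a two-sided time integral, and finally Proposition~\ref{l:trace-basic} plus the symbol computation $\sigma_\Lambda(\partial_\omega L_{22})\circ j_K=-1$ to produce $2\pi i\int_K\chi(p)\,d\Vol_\sigma$. All of that matches \S\ref{s:trace} (Propositions~\ref{l:trace-approx} and~\ref{l:trace-kiwi}).

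The genuine gap is in your truncated-propagator identity $(P-\omega)^{-1}\Pi=\pm\tfrac{i}{h}\int_0^{\pm T}e^{-it(P-\omega)/h}\Pi\,dt+\mathcal O(h^\infty)$ with $T=N\log(1/h)$, which you justify by ``combining Theorem~\ref{t:gaps} with the propagation estimate~\eqref{e:ee-image}''. The estimate~\eqref{e:ee-image} is stated in \S\ref{s:ideas} only as a heuristic; its rigorous counterpart, Proposition~\ref{l:estimate-image}, is a \emph{stationary} resolvent estimate and does not directly yield decay of $\mathcal X e^{-iTP/h}\Pi$ over logarithmic times with $\mathcal O(h^\infty)$ errors. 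Moreover the boundary term carries the factor $e^{T|\nu_\pm|}$ (since $\nu_+$ may be as negative as $-(\nu_{\min}-\varepsilon)/2$), so even granting a rigorous version of~\eqref{e:ee-image} the exponent bookkeeping gives only $\mathcal O(h^{N\varepsilon/4-C})$ for each fixed $N$; to reach $\mathcal O(h^\infty)$ you need uniform long-time Egorov-type control that the paper never establishes. The paper circumvents this entirely: it cuts the propagator off at a \emph{fixed small} time $\psi\in C_0^\infty$ (Proposition~\ref{l:trace-approx}) and proves the resulting approximation of $\mathcal R(\omega)$ is admissible \emph{after} integration in $\omega$, by taking the semiclassical Fourier transform in $\alpha=\Re\omega$ and applying the one-sided support statement of Proposition~\ref{l:res-mic} — this is where the asymmetry between $\nu_+$ and $\nu_-$ actually enters. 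Your own final step already contains the fix: since the $\omega$-integration against $\tilde\chi$ kills all contributions from $|t|\geq\tilde\varepsilon$ by nonstationary phase, you should truncate at a fixed small time from the start and use Proposition~\ref{l:res-mic} as the central mechanism (not merely for ``uniformity in $\omega$''), which removes any need for Ehrenfest-time propagator bounds.
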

\noindent\textbf{Remark}.
More precise trace formulas are possible; in particular, one can get
a full asymptotic expansion in $h$ of each of $\mathcal I^\pm_\chi$. For simplicity,
we prove here a less general version which suffices for the analysis of~\S\ref{s:weyl-law}.

\smallskip

The key feature of the expansions for the integrals~\eqref{e:trace-stuff}, which produces a nontrivial
asymptotics for resonances in Theorem~\ref{t:weyl-law}, is that the
principal part of $\mathcal I_\chi^\pm$ depends on the sign of $\pm$.
The reason for this dependence is the difference of directions for propagation in the resolvent approximation
$\mathcal R_\psi^\pm$ of Proposition~\ref{l:trace-approx} for the two cases; this in turn
is explained by the difference between~\eqref{e:res-mic-1} and~\eqref{e:res-mic-2},
which is due to the difference of the signs of the `commutator' $\mathcal Z_+$
between~\eqref{e:image-ineq-1} and~\eqref{e:image-ineq-2}.

We start the proof by using Proposition~\ref{l:res-mic}
to replace $\mathcal R(\omega)$ in the formula for $\partial_\omega F(\omega)/F(\omega)$ from
Proposition~\ref{l:grushin-ultimate}
by an operator $\mathcal R^\pm_\psi(\omega)$ obtained by integrating the Schr\"odinger propagator $e^{- it(P-\omega)/h}$
over a bounded range of times $t$.
\begin{prop}
  \label{l:trace-approx}
Fix $\psi\in C_0^\infty(\mathbb R)$ such that $\psi=1$ near zero.
For $\omega\in\mathbb C$,
define the operators $\mathcal R^\pm_\psi(\omega):L^2(X)\to L^2(X)$ by
\begin{gather}
  \label{e:rr-1}
\mathcal R^+_\psi(\omega):={i\over h}\int_{-\infty}^0 e^{is(P-\omega)/h}\psi(s)\,ds;\\
  \label{e:rr-2}
\mathcal R^-_\psi(\omega):=-{i\over h}\int_0^\infty e^{is(P-\omega)/h}\psi(s)\,ds;
\end{gather}
Then, if $\supp\psi$ is contained in a small enough neighborhood of zero,
\begin{equation}
  \label{e:trace-approx}
\begin{gathered}
\mathcal I_\chi^\pm=-(2\pi h)^{n-1}\Tr\int_{\Im\omega=h\nu_\pm}\tilde\chi(\omega)
(1-Q_1\Pi Q_2\\-Q_1\mathcal R_\psi^\pm(\omega)\Pi Q_1\Pi Q_2)\partial_\omega L_{22}(\omega)\,d\omega
+\mathcal O(h^\infty).
\end{gathered}
\end{equation}
\end{prop}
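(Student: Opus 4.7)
\emph{Proof plan.} Combining Proposition~\ref{l:grushin-ultimate}(3) with the target identity~\eqref{e:trace-approx}, the task reduces to showing that
\[
\mathcal E^\pm := \Tr\int_{\Im\omega=h\nu_\pm}\tilde\chi(\omega)\,Q_1\bigl(\Pi\mathcal S(\omega)\mathcal R(\omega)\mathcal S(\omega)-\mathcal R_\psi^\pm(\omega)\Pi\bigr) Q_1\Pi Q_2\,\partial_\omega L_{22}(\omega)\,d\omega
\]
is $\mathcal O(h^\infty)$. The strategy is to compare $\mathcal S\mathcal R\mathcal S$ and $\mathcal R_\psi^\pm$ as microlocal right-inverses of $P-\omega$. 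By Lemma~\ref{l:resolution}, $\mathcal S\mathcal R\mathcal S$ is a two-sided microlocal inverse of $P-\omega$ in a neighborhood of $\mathcal U$. On the other hand, integration by parts in $s$, using $\psi\equiv 1$ near $0$, gives
\[
(P-\omega)\mathcal R_\psi^\pm(\omega)=I-B_\psi^\pm(\omega),
\]
where $B_\psi^+(\omega)=\int_{-\infty}^0 e^{is(P-\omega)/h}\psi'(s)\,ds$ and $B_\psi^-(\omega)=-\int_0^\infty e^{is(P-\omega)/h}\psi'(s)\,ds$. Choosing $\supp\psi\subset(-T,T)$ with $\psi\equiv 1$ on $(-\delta,\delta)$ places the effective $s$-support of $B_\psi^+$ in $[-T,-\delta]$ and of $B_\psi^-$ in $[\delta,T]$.

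Subtracting the two parametrix identities yields $\mathcal S\mathcal R\mathcal S=\mathcal R_\psi^\pm+\mathcal S\mathcal R\mathcal S\,B_\psi^\pm$ microlocally on $\mathcal U$. Duhamel applied to $[P,\Pi]=\mathcal O(h^\infty)$ microlocally near $\widehat W$ gives $[\mathcal R_\psi^\pm,\Pi]=\mathcal O(h^\infty)$ microlocally near $\widehat W$, provided $T$ is small enough that the Hamiltonian flow through $\WFh(\Pi)\cap\widehat W$ remains in $\widehat W$ for $|s|\leq T$; this is permissible by flow-convexity of $\widehat W$ (part~\eqref{c:convex} of Lemma~\ref{l:phi-pm}). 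Combining this commutation with $\Pi Q_1\Pi=\Pi+\mathcal O(h^\infty)$ near the relevant region (cf.~\eqref{e:kinda-i}) and cyclicity of the trace reduces $\mathcal E^\pm$, modulo $\mathcal O(h^\infty)$, to
\[
\widetilde{\mathcal E}^\pm:=\Tr\int_{\Im\omega=h\nu_\pm}\tilde\chi(\omega)\,B_\psi^\pm(\omega)\,T^\pm(\omega)\,d\omega,
\]
where $T^\pm(\omega):=Q_1\Pi Q_2\,\partial_\omega L_{22}(\omega)\,Q_1\Pi\mathcal S(\omega)\mathcal R(\omega)\mathcal S(\omega)$.

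To show $\widetilde{\mathcal E}^\pm=\mathcal O(h^\infty)$, parametrize the contour by $\omega=\alpha+ih\nu_\pm$ and exchange the $\omega$- and $s$-integrations. Up to bounded factors ($e^{s\nu_\pm}$ and $e^{isP/h}$), the inner $\alpha$-integral equals the semiclassical Fourier transform in $\alpha$ of $\tilde\chi(\alpha+ih\nu_\pm)\,T^\pm(\alpha+ih\nu_\pm)$, evaluated at $s$. By Proposition~\ref{l:res-mic} applied to the resolvent factor in $T^\pm$~-- with the projector $\Pi$ inside $T^\pm$ playing the role of the $\Pi$ appearing in~\eqref{e:res-mic-1}--\eqref{e:res-mic-2}~-- this Fourier transform is $\mathcal O(h^\infty)$ for $s<0$ when $\nu_+\geq-(\nu_{\min}-\varepsilon)/2$, and for $s>0$ when $\nu_-\leq-(\nu_{\max}+\varepsilon)/2$. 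Since the effective $s$-support of $B_\psi^+$ sits in $[-T,-\delta]\subset\{s<0\}$ and that of $B_\psi^-$ in $[\delta,T]\subset\{s>0\}$, the integrand vanishes to all orders in $h$.

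The principal obstacle will be converting Proposition~\ref{l:res-mic}, stated as an implication for pairs $(u(\alpha),f(\alpha))$ of test-function families, into a microlocal statement about the operator-valued map $\omega\mapsto\Pi\mathcal R(\omega)$. The plan is to test against arbitrary fixed $f_0\in L^2(X)$, for which the constant source family $f(\alpha)\equiv f_0$ has $\hat f(s)=\mathcal O(h^\infty)$ away from $s=0$, so the hypothesis of Proposition~\ref{l:res-mic} is satisfied on any half-line avoiding the origin; the uniform polynomial bounds required are furnished by Theorem~\ref{t:gaps} together with Lemma~\ref{l:smart-bound}. The numerous cyclic trace manipulations and pseudodifferential commutations of $\mathcal S$, $Q_1$, $Q_2$, and $\Pi$ in the first two paragraphs, though lengthy, are routine within the framework of~\S\S\ref{s:calculus}--\ref{s:global-construction}.
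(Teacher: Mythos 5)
Your overall architecture matches the paper's: reduce via part~3 of Proposition~\ref{l:grushin-ultimate} to showing that the difference between $\Pi\mathcal S(\omega)\mathcal R(\omega)\mathcal S(\omega)$ and $\mathcal R_\psi^\pm(\omega)\Pi$ contributes $\mathcal O(h^\infty)$, exploit that $\psi'$ is supported away from $s=0$, and kill the resulting error with the spectral-parameter microlocalization of Proposition~\ref{l:res-mic}. However, the final step as you have set it up does not go through, and the obstruction is exactly the one you flag as "the principal obstacle" without resolving it.

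The problem is the cyclic permutation that produces $\widetilde{\mathcal E}^\pm=\Tr\int\tilde\chi\,B_\psi^\pm T^\pm\,d\omega$ with $T^\pm=Q_1\Pi Q_2\,\partial_\omega L_{22}\,Q_1\Pi\,\mathcal S\mathcal R\mathcal S$: it moves the resolvent to the rightmost position, so when you test against $f_0\in L^2(X)$ the input to $\mathcal R(\omega)$ is $\mathcal S(\omega)f_0$ with $f_0$ arbitrary. Proposition~\ref{l:res-mic} is not merely a statement about Fourier decay of the source; its hypotheses are that the pair $(u(\alpha),f(\alpha))$ satisfies~\eqref{e:assumption-general-1}--\eqref{e:assumption-general-2.5} uniformly in $\alpha$, which requires $\WFh(f)\subset\widehat W$ (in fact $\subset\Gamma_+\cap W'$ after the reductions of~\S\ref{s:estimate-full}) and the consequent outgoing structure $\WFh(u)\cap\Gamma_-^\circ\subset W'$ coming from Lemma~\ref{l:propagate-outgoing}. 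An arbitrary $f_0$ satisfies none of this, so the positive-commutator machinery behind~\eqref{e:res-mic-1}--\eqref{e:res-mic-2} (which needs $\WFh(\Pi\hat u)\subset\Gamma_+$ and the equation~\eqref{e:commutation-yayf} near $\widehat W$) is unavailable. Moreover, even the Fourier-decay hypothesis fails for your constant family: $\int_{\alpha_0}^{\alpha_1}e^{-is\alpha/h}\,d\alpha$ is only $\mathcal O(h/|s|)$, not $\mathcal O(h^\infty)$, because of the endpoint contributions at $\alpha_0,\alpha_1$; the $\mathcal O(h^\infty)$ decay requires a source that is smooth and compactly supported in $\alpha$, i.e.\ carries the $\tilde\chi$ cutoff. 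The paper's proof avoids both issues by \emph{not} permuting: it keeps $Q_1\Pi Q_2\,\partial_\omega L_{22}(\omega)$ as the rightmost factor, sets $f(\alpha)=\tilde\chi(\omega)Q_1\Pi Q_2\partial_\omega L_{22}(\omega)\tilde f$ (automatically microlocalized in $\Gamma_+\cap W'$ and smoothly cut off in $\alpha$), and then runs the Duhamel identity~\eqref{e:bazooka} for $\Pi\hat u$ directly, applying Proposition~\ref{l:res-mic} to conclude $\Pi\hat u(s)=\mathcal O(h^\infty)$ on $\supp\psi'\cap(-\infty,0]$. A secondary, more repairable point: your identity $\mathcal S\mathcal R\mathcal S=\mathcal R_\psi^\pm+\mathcal S\mathcal R\mathcal S B_\psi^\pm$ "microlocally on $\mathcal U$" needs justification, since $\mathcal R(\omega)$ is not pseudolocal and left/right microlocal inverse statements for it must be mediated through wavefront-set tracking (Lemma~\ref{l:propagate-outgoing}) rather than asserted as operator identities on a phase-space region.
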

\begin{proof}
We concentrate on the case of $\mathcal I_\chi^+$,
the case of~$\mathcal I_\chi^-$ is handled similarly,
using~\eqref{e:res-mic-2} in place of~\eqref{e:res-mic-1}.
We denote $\omega=\alpha+ih\nu_+$, where $\alpha\in (\alpha_0,\alpha_1)$.
By part~3 of Proposition~\ref{l:grushin-ultimate}, it suffices to prove the trace norm bound
$\|\mathcal W\|_{\Tr}=\mathcal O(h^\infty)$, where
$$
\mathcal W:=\int_{\Im\omega=h\nu_+}\tilde\chi(\omega)
Q_1(\Pi\mathcal S(\omega)\mathcal R(\omega)\mathcal S(\omega)-\mathcal R_\psi^+(\omega)\Pi)Q_1\Pi Q_2\partial_\omega L_{22}(\omega)\,d\omega.
$$
Since $\mathcal W$ is compactly supported and microlocalized away from the fiber infinity,
we can write $\mathcal W=\mathcal Z\mathcal W+\mathcal O(h^\infty)_{\Psi^{-\infty}(X)}$
for some compactly supported $\mathcal Z\in\Psi^{\comp}(X)$. Since $\|\mathcal Z\|_{\Tr}$ is bounded
polynomially in $h$ (see for instance~\cite[Chapter~9]{d-sj}), and trace class operators
form an ideal in the algebra of bounded operators on $L^2$,
it suffices to prove the bound
$$
\|\mathcal W\|_{L^2\to L^2}=\mathcal O(h^\infty).
$$
Take arbitrary $h$-independent family $\tilde f=\tilde f(h)\in L^2(X)$ with $\|\tilde f\|_{L^2}\leq 1$ and put
$$
f(\alpha):=\tilde\chi(\omega) Q_1\Pi Q_2\partial_\omega L_{22}(\omega)\tilde f,\quad
u(\alpha):=\mathcal S(\omega)\mathcal R(\omega)\mathcal S(\omega)f(\alpha).
$$
Then $f(x,\alpha)$ is compactly supported in both
$x\in X$ and $\alpha\in (\alpha_0,\alpha_1)$, $\|f\|_{L^\infty_\alpha L^2_x}$ is polynomially bounded in $h$,
and $\WFh(f(\alpha))\subset \Gamma_+\cap W'$. Since $\mathcal R(\omega)_{\mathcal H_2\to \mathcal H_1}=\mathcal
O(h^{-2})$ by Theorem~\ref{t:gaps}, we see
that $u(\alpha)\in\mathcal H_2$ is compactly supported in $\alpha\in (\alpha_0,\alpha_1)$ and
the norm $\|u\|_{L^\infty_\alpha L^2_x}$ is bounded polynomially in $h$. Using Lemma~\ref{l:propagate-outgoing}
similarly to~\S\ref{s:estimate-full}, we see that $u,f$ satisfy~\eqref{e:assumption-general-1}--\eqref{e:assumption-general-2.5}, uniformly in $\alpha$.

It now suffices to prove that for each choice of $\tilde f$, independent of $\alpha$, we have
\begin{equation}
  \label{e:trace-int}
\int_{\alpha_0}^{\alpha_1}Q_1(\Pi u(\alpha)-R^+_\psi(\omega)\Pi f(\alpha))\,d\alpha=\mathcal O(h^\infty)_{L^2}.
\end{equation}
Define the semiclassical Fourier transforms~$\hat u(s),\hat f(s)$ by~\eqref{e:fourier-transform}.
Then~\eqref{e:trace-int} becomes
\begin{equation}
  \label{e:trace-int-2}
Q_1\bigg(\Pi\hat u(0)-{i\over h}\int_{-\infty}^0e^{is(P-ih\nu_+)/h}\psi(s)\Pi\hat f(s)\,ds\bigg)=\mathcal O(h^\infty)_{L^2}.
\end{equation}
By~\eqref{e:commutation-yayf} and Proposition~\ref{l:schrodinger}, we find
microlocally near $W'$,
\begin{equation}
  \label{e:bazooka}
\Pi\hat u(0)={i\over h}\int_{-\infty}^0e^{is(P-ih\nu_+)/h}(\psi(s)\Pi\hat f(s)-ih\psi'(s)\Pi\hat u(s))\,ds
+\mathcal O(h^\infty).
\end{equation}
Take $\tilde\varepsilon>0$ such that $\psi=1$ near $[-\tilde \varepsilon,\tilde\varepsilon]$,
so that $\psi'(s)$ is compactly supported in $\{|s|>\tilde \varepsilon\}$. 
Since $\chi(\omega)$ and $\partial_\omega L_{22}(\omega)$ depend smoothly on $\alpha$,
we see that $\|\partial^j_\alpha f(\alpha)\|_{L^\infty_\alpha L^2_x}=\mathcal O(h^{-1/2})$ for all $j$.
By repeated integration by parts, we get
$$
\|\hat f(s)\|_{L^2_s((-\infty,-\tilde\varepsilon])L^2_x}=\mathcal O(h^\infty).
$$
Then by~\eqref{e:res-mic-1}, $\Pi\hat u(s)=\mathcal O(h^\infty)$
microlocally near $W'$
locally uniformly in $s\in (-\infty,-\tilde\varepsilon]$,
and thus $Q_1e^{is(P-ih\nu_+)/h}\Pi\hat u(s)=\mathcal O(h^\infty)_{L^2}$
uniformly in $s\in (-\infty,0]\cap \supp \psi'$. By~\eqref{e:bazooka},
we now get~\eqref{e:trace-int-2}.
\end{proof}
Now, note that, since the expression under the integral in~\eqref{e:trace-approx}
is almost analytic in $\omega$, we can replace the integral over $\Im\omega=h\nu_\pm$
by the integral over the real line, with an $\mathcal O(h^\infty)$ error. Then
$$
\begin{gathered}
\mathcal I^-_\chi-\mathcal I^+_\chi=(2\pi h)^{n-1}\Tr\mathcal A_\chi
+\mathcal O(h^\infty),\\
\mathcal A_\chi:=\int_{\mathbb R}\chi(\alpha)\partial_\alpha L_{22}(\alpha)
Q_1(\mathcal R^-_\psi(\alpha)-\mathcal R^+_\psi(\alpha))\Pi Q_1\Pi Q_2\,d\alpha.
\end{gathered}
$$
Proposition~\ref{l:trace} now follows from Proposition~\ref{l:trace-basic},
the fact that $\WFh(\mathcal A_\chi)\subset \widehat W\times\widehat W$,
and the following
\begin{prop}
  \label{l:trace-kiwi}
The operator $\mathcal A_\chi$ lies in $\II$ and its principal symbol, as defined by~\eqref{e:symbol},
satisfies $\sigma_\Lambda(\mathcal A_\chi)\circ j_K=2\pi i \chi(p)$, with $j_K:K^\circ\to\Lambda^\circ$ defined in~\eqref{e:j-k}.
\end{prop}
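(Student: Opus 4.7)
The plan is to reduce $\mathcal A_\chi$ to an oscillatory integral built from the Schr\"odinger propagator, and then to extract both the Fourier integral structure and the principal symbol via stationary phase localized at $s = 0$, $\alpha = p(\rho)$.

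First, I substitute the integral representations \eqref{e:rr-1}--\eqref{e:rr-2}, which give
$$\mathcal R^-_\psi(\alpha) - \mathcal R^+_\psi(\alpha) = -\tfrac{i}{h}\int_{\mathbb R}\psi(s) e^{is(P-\alpha)/h}\,ds.$$
Using \eqref{e:kinda-i}, the normalizations $\sigma(Q_1) = 1$ and $Q_2 = 1 + \mathcal O(h^\infty)$ microlocally on the relevant portion of $\widehat W$ coming from \eqref{e:qz}, and $[P, \Pi] = \mathcal O(h^\infty)$ microlocally near $\widehat\Lambda$ (Theorem~\ref{t:our-Pi}), I reduce $\Pi Q_1 \Pi Q_2 \equiv \Pi$ and commute $\Pi$ past $e^{isP/h}$, obtaining, microlocally near $\widehat\Lambda$,
$$\mathcal A_\chi \equiv -\tfrac{i}{h}\int\!\!\int \chi(\alpha)\psi(s) e^{-is\alpha/h}\, \widetilde L(\alpha)\, e^{isP/h}\,ds\,d\alpha \pmod{\mathcal O(h^\infty)},$$
where $\widetilde L(\alpha) := \partial_\alpha L_{22}(\alpha)\,Q_1\,\Pi \in \II$ depends smoothly on $\alpha$ and is compactly supported in $\alpha$ via $\chi$.

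Second, I localize the $s$-integration. Since $\widetilde L(\alpha)$ depends smoothly on $\alpha$ and $\chi$ has compact support, integrating by parts $N$ times in $\alpha$ yields, for any $N$ and $|s| \geq \tilde\varepsilon > 0$, an $L^2 \to L^2$ bound of $\mathcal O_N(h^N |s|^{-N})$, so $\psi$ may be replaced by an arbitrarily small cutoff around $0$ (equal to $1$ at $0$) without changing $\mathcal A_\chi$ modulo $\mathcal O(h^\infty)$.

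Third, I verify $\mathcal A_\chi \in \II$ modulo $\mathcal O(h^\infty)$ and compute $\sigma_\Lambda(\mathcal A_\chi)\circ j_K$. For $s$ in the small cutoff support, Proposition~\ref{l:schrodinger} presents $e^{isP/h}$ as a smooth family of FIOs on $\mathrm{graph}(e^{sH_p})$; composed with $\widetilde L(\alpha)\in \II$, the product is an FIO on $\Lambda^\circ \circ \mathrm{graph}(e^{sH_p})$, which coincides with $\Lambda^\circ$ at $s = 0$. Reducing to the model coordinates of~\S\ref{s:model} via Proposition~\ref{l:reduce-model} makes the Schwartz kernel an explicit oscillatory integral; on the image of $\Pi$, the propagator phase contributes $e^{is(p-\alpha)/h}$ to leading order, so the combined $(s,\alpha)$-phase has its unique nondegenerate critical point at $s = 0$, $\alpha = p$. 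Stationary phase in $(s,\alpha)$ both produces an oscillatory integral of Lagrangian form on $\Lambda^\circ$ (establishing $\mathcal A_\chi \in \II$) and evaluates
$$\int\!\!\int \chi(\alpha)\psi(s) e^{is(p(\rho) - \alpha)/h}\,ds\,d\alpha = 2\pi h\, \chi(p(\rho)) + \mathcal O(h^2),$$
since $\psi(0) = 1$. Combined with the calculation, using \eqref{e:good-calc-1}--\eqref{e:good-calc-3}, \eqref{e:l-22-symbol}, $\sigma(Q_1)|_K = 1$, and $\sigma_\Lambda(\Pi)|_{\widetilde K} = 1$, that
$$\sigma_\Lambda(\widetilde L(\alpha))\circ j_K(\rho) = \partial_\alpha\bigl(1 + p(\rho) - \alpha\bigr) = -1, \quad \rho \in \widehat K \cap p^{-1}(\mathrm{supp}\,\chi),$$
this gives $\sigma_\Lambda(\mathcal A_\chi)\circ j_K(\rho) = -\tfrac{i}{h}\cdot(-1)\cdot 2\pi h\, \chi(p(\rho)) = 2\pi i\, \chi(p(\rho))$, as required.

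The main obstacle is the third step: assembling the varying family of FIOs $\widetilde L(\alpha) e^{isP/h}$ (living on the $s$-dependent canonical relations $\Lambda^\circ \circ \mathrm{graph}(e^{sH_p})$) into a single element of $\II$ after $(s,\alpha)$-integration. The reduction to the model coordinates of~\S\ref{s:model}, where the kernels of $L_{22}$, $\Pi$, and $e^{isP/h}$ admit explicit leading-order descriptions, is essential to render the stationary-phase analysis tractable and to justify the symbol identification on $j_K$.
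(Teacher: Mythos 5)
Your proposal is correct and follows essentially the same route as the paper: reduce $\mathcal A_\chi$ modulo symbol-$1$ factors to the oscillatory integral $-\tfrac{i}{h}\iint e^{-is\alpha/h}\chi(\alpha)\,\partial_\alpha L_{22}(\alpha)Q_1 e^{isP/h}\psi(s)\,ds\,d\alpha$, apply stationary phase localized at $s=0$, $\alpha=p$ (the paper carries this out jointly in all parametrizing variables $y,\zeta,s,\alpha$ after writing explicit local phases for $\mathcal L(\alpha)$ and $e^{isP/h}$, whereas you do it in $(s,\alpha)$ after invoking the FIO composition structure — an equivalent computation), and finish with $\sigma_\Lambda(\partial_\alpha L_{22})\circ j_K=-1$ from~\eqref{e:l-22-symbol}. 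The factor $2\pi h$ from the nondegenerate Hessian in $(s,\alpha)$ and the resulting value $2\pi i\chi(p)$ match the paper exactly.
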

\begin{proof}
Given the multiplication formula~\eqref{e:good-calc-1}, the fact that $\sigma(Q_1)=\sigma(Q_2)=1$ and
$\sigma_\Lambda(\Pi)\circ j_K=1$ on $K\cap p^{-1}([\alpha_0,\alpha_1])$
and $\supp\chi\subset (\alpha_0,\alpha_1)$, it is enough to prove the proposition with $\mathcal A_\chi$ replaced by
$$
\mathcal A'_\chi:=-{i\over h}\int_{\mathbb R^2}e^{-is\alpha/h}\chi(\alpha)\partial_\alpha L_{22}(\alpha)Q_1 e^{isP/h}\psi(s)\,dsd\alpha.
$$
Denote $\mathcal L(\alpha)=\chi(\alpha)\partial_\alpha L_{22}(\alpha)Q_1$; it is an operator
in $\II$. By applying a microlocal partition of unity to $\mathcal L(\alpha)$, we may reduce to the case
when both $\mathcal L(\alpha)$ and $e^{isP/h}$ have local parametrizations (see~\eqref{e:lagrangian}
for the first one and for example~\cite[Theorem~10.4]{e-z} for the second one)
$$
\begin{gathered}
\mathcal L(\alpha)u(x)=(2\pi h)^{-(N+n)/2}\int_{\mathbb R^{N+n}} e^{{i\over h}\Phi(x,y,\theta)}a(x,y,\theta,\alpha;h)u(y)\,dyd\theta,\\
e^{isP/h}u(y)=(2\pi h)^{-n} \int_{\mathbb R^{2n}} e^{{i\over h}(S(y,\zeta,s)-z\cdot\zeta)}b(y,\zeta,s;h)u(z)\,dzd\zeta.
\end{gathered}
$$
Here $S(y,\zeta,s)=y\cdot\zeta+sp(y,\zeta)+\mathcal O(s^2)$ and $b(y,\zeta,0;0)=1$.
Then $\mathcal A'_\chi$ takes the form
$$
\begin{gathered}
\mathcal A'_\chi u(x)=-ih^{-1}(2\pi h)^{-(N+3n)/2}\int_{\mathbb R^{N+3n}}
e^{{i\over h}(\Phi(x,y,\theta)+S(y,\zeta,s)-z\cdot\zeta-s\alpha)}\\
a(x,y,\theta,\alpha;h)
b(y,\zeta,s;h)\psi(s) u(z)\,dy d\theta dzd\zeta dsd\alpha.
\end{gathered}
$$
We now apply the method of stationary phase in the $y,\zeta,s,\alpha$ variables.
The stationary points are given by $s=0$, $\alpha=p(z,\zeta)$,
$y=z$, $\zeta=-\partial_z \Phi(x,z,\theta)$. We get
$$
\mathcal A'_\chi u(x)=-2\pi i (2\pi h)^{-(N+n)/2}\int_{\mathbb R^{N+n}}
e^{{i\over h}\Phi(x,z,\theta)}c(x,z,\theta;h)u(z)\,d\theta dz,
$$
where $c$ is a classical symbol and $c(x,z,\theta;0)=a(x,z,\theta,p(z,-\partial_z\Phi(x,z,\zeta));0)$.
It follows that $\mathcal A'_\chi\in \II$ and
$\sigma_\Lambda(\mathcal A'_\chi)(\rho_-,\rho_+)=-2\pi i\sigma_\Lambda(\mathcal L(p(\rho_-)))(\rho_-,\rho_+)$.
By~\eqref{e:l-22-symbol}, $\sigma_\Lambda(L_{22}(\alpha))(\rho,\rho)=p(\rho)-\alpha+1$ 
when $\rho\in K\cap p^{-1}([\alpha_0,\alpha_1])$,
and thus $\sigma_\Lambda(\partial_\alpha L_{22}(\alpha))(\rho,\rho)=-1$. Therefore, we find
$\sigma_\Lambda(\mathcal A'_\chi)(\rho,\rho)=2\pi i\chi(p(\rho))$ for $\rho\in K$.
\end{proof}

\section{Weyl law for resonances}
  \label{s:weyl-law}
  
In this section, we prove Theorem~\ref{t:weyl-law}, using the Grushin problem from~\S\ref{s:grushin},
the trace formula of~\S\ref{s:trace}, and several tools from complex analysis. The argument
below is quite standard, see for instance~\cite{markus,sj,sj-bottles}, and is simplified
by the fact that we do not aim for the optimal $\mathcal O(h)$ remainder in the Weyl
law, instead carrying out the argument in a rectangle of width $\sim 1$ and height $\sim h$.
For more sophisticated techniques needed to obtain the optimal remainder, see~\cite{sj-dwe}.

First of all, by Proposition~\ref{l:grushin-ultimate}, resonances in the region of interest
are (with multiplicities) the zeroes of the holomorphic function $F(\omega)$ defined in~\eqref{e:S-omega}.
Take $\alpha''_0\in (\alpha_0,\alpha'_0)$, $\alpha''_1\in (\alpha'_1,\alpha_1)$.
Fix $\nu_\pm$ satisfying~\eqref{e:nu-trace} and
let $\{\omega_j\}_{j=1}^{M(h)}$ denote the set of zeroes (counted with multiplicities) of $F(\omega)$ in
the region (see Figure~\ref{f:complex})
\begin{figure}
\includegraphics{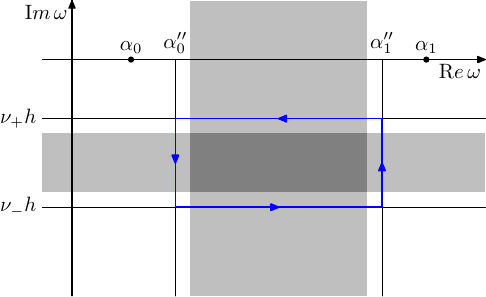}
\caption{The contour $\partial\Omega(h)$ (in blue). The horizontal shaded region is
$\{\Im\omega\in (-(\nu_{\max}+\varepsilon)h/2,-(\nu_{\min}-\varepsilon)h/2)\}$, where
Theorem~\ref{t:gaps} does not provide polynomial resolvent bounds; the vertical
shaded region is the support of $\tilde\chi$.}
\label{f:complex}
\end{figure}
$$
\Omega(h):=\{\Re\omega\in [\alpha''_0,\alpha''_1],\ \Im\omega\in [\nu_-h,\nu_+h]\}
$$
By part~2 of Proposition~\ref{l:grushin-ultimate} and Jensen's inequality, see
for example~\cite[\S 2]{fwl}, we have the polynomial bound, for some $N,C$,
\begin{equation}
  \label{e:rough-upper-bound}
M(h)\leq Ch^{-N}.
\end{equation}
By a standard argument approximating the indicator function of $[\alpha'_0,\alpha'_1]$ by
smooth functions from above and below, it is enough
to prove that for each $\chi\in C_0^\infty(\alpha_0,\alpha_1)$,
\begin{equation}
  \label{e:weyl-law-int}
(2\pi h)^{n-1}\sum_{j=1}^{M(h)} \chi(\Re\omega_j)=\int_K \chi(p)\,d\Vol_\sigma +\mathcal O(h).
\end{equation}
Let $\tilde\chi(\omega)$ be an almost analytic continuation of $\chi$, as discussed
in the beginning of~\S\ref{s:trace}. We may assume that $\supp\tilde\chi\subset \{\Re\omega\in (\alpha''_0,\alpha''_1)\}$.

By Proposition~\ref{l:trace}, we have (with the integral over the vertical parts of $\partial\Omega(h)$ vanishing
since $\tilde\chi=0$ there)
\begin{equation}
  \label{e:intint}
{(2\pi h)^{n-1}\over 2\pi i}\oint_{\partial\Omega(h)}\tilde\chi(\omega){\partial_\omega F(\omega)\over F(\omega)}
\,d\omega=\int_K\chi(p)\,d\Vol_\sigma+\mathcal O(h).
\end{equation}
By Lemma~$\alpha$ in~\cite[\S 3.9]{tit} and the exponential
estimates of part~2 of Proposition~\ref{l:grushin-ultimate} (splitting the region $\Omega(h)$ into boxes of size $h$
and applying Lemma~$\alpha$ to each of these boxes, transformed into the unit disk by the Riemann mapping theorem),
we have for some fixed $N$,
$$
{\partial_\omega F(\omega)\over F(\omega)}=\sum_{j=1}^{M(h)}{1\over \omega-\omega_j}+G(\omega);\quad
G(\omega)=\mathcal O(h^{-N}),\quad
\omega\in\Omega(h)\cap\supp\tilde\chi.
$$
Applying Stokes theorem to~\eqref{e:intint} (over the contour comprised of $\partial\Omega(h)$
minus the sum of circles of small radius $r$ centered at each $\omega_j$, and letting $r\to 0$)
we get
$$
(2\pi h)^{n-1}\sum_{j=1}^{M(h)}\tilde\chi(\omega_j)=\int_K\chi(p)\,d\Vol_\sigma-
{(2\pi h)^{n-1}\over 2\pi i}\int_{\Omega(h)}{\partial_\omega F(\omega)\over F(\omega)}
\partial_{\bar\omega}\tilde\chi(\omega)\,d\bar\omega\wedge d\omega+\mathcal O(h).
$$
Since $\tilde\chi$ is almost analytic and $\Omega(h)$ lies $\mathcal O(h)$ close
to the real line, we have $\partial_{\bar\omega}\tilde\chi(\omega)=\mathcal O(h^\infty)$
for $\omega\in\Omega(h)$. Therefore, the second integral on the right-hand side is $\mathcal O(h^\infty)$
and we get
$$
(2\pi h)^{n-1}\sum_{j=1}^{M(h)}\tilde\chi(\omega_j)=\int_K \chi(p)\,d\Vol_\sigma+\mathcal O(h).
$$
Since $\tilde\chi(\omega)=\chi(\Re\omega)+\mathcal O(h)$ for $\omega\in\Omega(h)$, we get
\begin{equation}
  \label{e:weyl-law-int-2}
(2\pi h)^{n-1}\sum_{j=1}^{M(h)}\chi(\Re\omega_j)=\int_K\chi(p)\,d\Vol_\sigma+\mathcal O(h(1+h^{n-1}M(h))).
\end{equation}
Since one can take $\chi$ to be any compactly supported function on $(\alpha_0,\alpha_1)$,
and $M(h)=\mathcal O(h^{-N})$ for some fixed $N$ and any choice of $(\alpha''_0,\alpha''_1)$,
by induction we see from~\eqref{e:weyl-law-int-2} that $M(h)=\mathcal O(h^{1-n})$.
Given this bound, \eqref{e:weyl-law-int-2} implies~\eqref{e:weyl-law-int}, which finishes
the proof.

\appendix
\section{Example of a manifold with\\ $r$-normally hyperbolic trapping}
\label{s:example}

In this Appendix, we provide a simple example of an even asymptotically
hyperbolic manifold (as defined in~\S\ref{s:framework-ah}) whose geodesic
flow satisfies the dynamical assumptions of~\S\ref{s:dynamics}
and the pinching condition~\eqref{e:pinching}, therefore our Theorems~\ref{t:gaps}--\ref{t:resonant-states} apply.
This example is a higher dimensional generalization of the hyperbolic cylinder,
considered for instance in~\cite[Appendix~B]{fwl}.

The resonances for the
provided example can be described explicitly via the eigenvalues of the
Laplacian on the underlying compact manifold $N$, using separation of variables.
However, our results apply to small perturbations of the metric (see~\S\ref{s:stability}),
as well as to subprincipal perturbations in the considered operator, when
separation of variables no longer takes place.

Let $(N,\tilde g)$ be a compact $n-1$ dimensional Riemannian manifold
(at the end of this appendix, we will impose further conditions on $\tilde g$).
We consider the manifold $M=\mathbb R_r\times N_\theta$ with the metric
$$
g=dr^2+\cosh^2 r\,\tilde g(\theta,d\theta).
$$
Then $M$ has two infinite ends $\{r=\pm\infty\}$; near each of these ends,
one can represent it as an even asymptotically hyperbolic manifold by taking
the boundary defining function $\tilde x=e^{\mp r}$:
$$
g={d\tilde x^2\over \tilde x^2}+{(1+\tilde x^2)^2\over 4\tilde x^2}\tilde g(\theta,d\theta).
$$
The resonances for the Laplace--Beltrami operator
on $M$ therefore fit into the framework of~\S\ref{s:framework-assumptions},
as demonstrated in~\S\ref{s:framework-ah}. The associated flow $e^{tH_p}$
is the geodesic flow on the unit cotangent $S^*M$, extended to a homogeneous
flow of degree zero on the complement of the zero section in $T^*M$.

We now verify the assumptions of~\S\ref{s:dynamics}.
If $\xi_r,\xi_\theta$ are the momenta dual to $r,\theta$, then
$$
p^2=\xi_r^2+\cosh^{-2}r\,\tilde g^{-1}(\theta,\xi_\theta),
$$
where $\tilde g^{-1}$ is the dual metric to $g$, defined on the fibers of $T^*N$.
We then have
$$
H_p r={\xi_r\over p},\quad
H_p\xi_r={p^2-\xi_r^2\over p}\tanh r.
$$
The trapped set $K$ and the incoming/outgoing tails $\Gamma_\pm$
are given by
$$
\Gamma_\pm=\{\xi_r=\pm p\tanh r\},\quad
K=\{r=0,\ \xi_r=0\},
$$
or strictly speaking, by the intersections of the sets above with the set $\overline{\mathcal U}$
from~\eqref{e:sets-ah}.

Consider the following defining functions of $\Gamma_\pm$:
$$
\varphi_\pm=\xi_r\mp p\tanh r,
$$
then $\{\varphi_+,\varphi_-\}|_K=2p$ and thus assumptions~\eqref{aa:basic} and~\eqref{aa:symplectic}
of~\S\ref{s:dynamics} are satisfied. Next,
$$
H_p\varphi_\pm=\mp c_\pm\varphi_\pm,\quad
c_\pm=1\pm {\xi_r\over p}\tanh r.
$$
In particular, $c_\pm|_K=1$ and, arguing as in the proof of Lemma~\ref{l:phi-pm}, we get
$$
\nu_{\min}=\nu_{\max}=1.
$$
In particular, the pinching condition~\eqref{e:pinching} is satisfied.

Finally, in order for the $r$-normal hyperbolicity assumption~\eqref{aa:r-nh} of~\S\ref{s:dynamics}
to be satisfied, we need to make $\mu_{\max}\ll 1$, with $\mu_{\max}$
defined in~\eqref{e:mu-max}. This is a condition on the underlying
compact Riemannian manifold $(N,\tilde g)$, since $\mu_{\max}$ is the maximal expansion
rate of the geodesic flow of $\tilde g$ on the unit cotangent bundle $S^*N$.
To satisfy this condition, we can start with an arbitrary compact Riemannian manifold
and multiply its metric by a large constant $C^2$; indeed, if
$\varphi_t$ is the geodesic flow on the original manifold, then
$\varphi_{C^{-1}t}$ is the geodesic flow on the rescaled manifold and
the resulting $\mu_{\max}$ is divided by $C$.

\smallsection{Acknowledgements} I would like to thank Maciej Zworski for
plenty of helpful advice and constant encouragement throughout this project,
and Andr\'as Vasy, St\'ephane Nonnenmacher, Charles Pugh, Colin Guillarmou,
and Michael Hitrik for many very helpful discussions. I am very grateful to
Fr\'ed\'eric Faure for a discussion of~\cite{f-t,f-t2,f-t3}. I am also
thankful to two anonymous referees for suggestions to improve the manuscript.
Part of this work
was completed while visiting \'Ecol\'e Normale Sup\'erieure in October 2012.
This work was also partially supported by the NSF grant DMS-1201417.


\def\arXiv#1{\href{http://arxiv.org/abs/#1}{arXiv:#1}}

\end{document}